\numberwithin{equation}{section}
\def\ii{\infty}
\def\R{\mathbbm R}
\def\C{\mathbbm C}
\def\N{\mathbbm N}
\def\Z{\mathbbm Z}
\def\BU{\mathcal{U}}
\def\BC{\mathcal{C}}
\def\BE{\mathcal{E}}
\def\e{\varepsilon}
\def\ep{\epsilon}
\def\vp{\varphi}
\def\ex{{\sf e}}
\def\p{\partial}
\def\gs{\geqslant }
\def\ls{\leqslant }
\def\ddef{\overset{\rm def}{=}}
\selectfont\symbol{60}\fontencoding{\encodingdefault}}
\selectfont\symbol{62}\fontencoding{\encodingdefault}}
\newcommand{\assign}{:=}
\newcommand{\nobracket}{}
\newcommand{\tmem}[1]{{\em #1\/}}
\newcommand{\tmop}[1]{\ensuremath{\operatorname{#1}}}
\newenvironment{proof}{\noindent\textbf{Proof.\ }}{\hspace*{\fill}$\Box$\medskip}
\newenvironment{tmparmod}[3]{\begin{list}{}{\setlength{\topsep}{0pt}\setlength{\leftmargin}{#1}\setlength{\rightmargin}{#2}\setlength{\parindent}{#3}\setlength{\listparindent}{\parindent}\setlength{\itemindent}{\parindent}\setlength{\parsep}{\parskip}} \item[]}{\end{list}}
\newtheorem{theorem}{Theorem}[section]
\newtheorem{corollary}[theorem]{Corollary}
\newtheorem{lemma}[theorem]{Lemma}
\newtheorem{proposition}[theorem]{Proposition}
\newtheorem{remark}[theorem]{Remark}
\date{}
\author{David Chiron} 
\affil{Universit{\'e} C\^{o}te d'Azur, \\ CNRS, LJAD, France \\ e-mail: david.chiron@univ-cotedazur.fr }
\author{Eliot Pacherie}
\affil{NYUAD Research Institute, New York University Abu Dhabi, \\ PO Box 129188, Abu Dhabi, UAE \\ e-mail: ep2699@nyu.edu}
\begin{document}

\title{A uniqueness result for the two vortex travelling wave in the Nonlinear Schr\"odinger equation}

\maketitle

\begin{abstract}
  For the Nonlinear Schr\"odinger equation in dimension 2, the existence of a global
  minimizer of the energy at fixed momentum has been established by
  Bethuel-Gravejat-Saut {\cite{Bet_Gra_Saut}} (see also \cite{Chi_Mar}). 
  This minimizer is a travelling wave for the Nonlinear Schr\"odinger equation. 
  For large momentums, the propagation speed is small and the minimizer behaves like 
  two well separated vortices. In that limit, we show the uniqueness of this
  minimizer, up to the invariances of the problem, hence proving the orbital stability of 
  this travelling wave. This work is a follow up to 
  two previous papers {\cite{Chi_Pac_1}}, {\cite{Chi_Pac_2}}, where we constructed and
  studied a particular travelling wave of the equation. We show a uniqueness
  result on this travelling wave in a class of functions that contains in
  particular all possible minimizers of the energy.
\end{abstract}

\section{Introduction and statement of the results}

We consider the Nonlinear Schr\"odinger equation
\begin{equation}
\tag{NLS}
 i \partial_t \Psi + \Delta \Psi - (|\Psi |^2 - 1) \Psi = 0
\end{equation}
in dimension 2 for $\Psi: \mathbbm{R}_t \times \mathbbm{R}_x^2
\rightarrow \mathbbm{C}$, also called the Gross-Pitaevskii equation without potential. 
The Nonlinear Schr\"odinger 
equation is a physical model for Bose-Einstein condensate (see \cite{Ginz-Pit_58a}, \cite{Neu_90}, \cite{RB}, 
\cite{AbidHueMetNoreBra}), superfluidity (\cite{Pismen}) and nonlinear Optics (see \cite{KL}). 
The condition at infinity for $(\tmop{NLS})$ will be
\[ 
\lvert \Psi \rvert \rightarrow 1 \quad \tmop{as} \quad | x | \rightarrow + \infty . 
\]
The $(\tmop{NLS})$ equation is associated with the Ginzburg-Landau energy
\[ E (v) \assign \frac{1}{2} \int_{\mathbbm{R}^2} | \nabla v |^2 + \frac{1}{4}
   \int_{\mathbbm{R}^2} (1 - | v |^2)^2 , \]
which is formally conserved by the $(\tmop{NLS})$ flow. 
We denote by $\mathcal{E}$ the set of functions with finite energy, that is
\[ \mathcal{E} \assign \{ u \in H^1_{\tmop{loc}} (\R^2, \C), E (u) < + \infty \} . \]

\begin{remark} 
The Cauchy problem for (NLS) is globally well-posed in the energy space: see \cite{Gerard_Cauchy1}, 
\cite{Gerard_Cauchy2}, \cite{Gallo}.
\end{remark}

Besides the energy, the momentum is another quantity formally conserved 
by the $(\tmop{NLS})$ flow, and is associated with the invariance by translation of $(\tmop{NLS})$. 
Formally, the momentum of $u$ is $ \frac{1}{2} \int_{\mathbbm{R}^2} \mathfrak{Re} ( i \nabla u \bar{u} ) \in \mathbbm{R}^2 $, 
but its precise definition requires some care in the energy space due to the condition at infinity 
(see \cite{Maris_exist} in dimension larger than two and \cite{Chi_Mar} in dimension two). 
If $ u \in 1 + \BC^\ii_c (\R^2 ) $ for instance, or if $ u$ is a travelling wave tending to $1$ at infinity, 
then the expression of the momentum reduces to
\[ 
 \vec{P} (u) 
 = ( P_1( u) , P_2( u) ) 
 = { \frac{1}{2} } \int_{\R^2} \mathfrak{Re} \big( i \nabla u ( \bar{u} - 1 ) \big) .
\]
In addition to the translation invariance, the $(\tmop{NLS})$ equation is also phase shift 
invariant, that is invariant by multiplication by a complex of modulus one, and 
rotation invariant.

\subsection{Travelling waves for $(\tmop{NLS})$}

Following the works in the physical literature of Jones and Roberts 
(see {\cite{Jon_Rob}}, {\cite{Jon_Put_Rob}}), there has been a large amount of
mathematical works on the question of existence and properties of travelling
wave solutions in the $(\tmop{NLS})$ equation, that are solutions of
\[ 0 = (\tmop{TW}_c) (u) \assign - i c \partial_{x_2} u - \Delta u - (1 - | u |^2) u \]
for some $ c > 0 $, corresponding to particular solutions of $(\tmop{NLS})$ of the form 
$ \Psi (t, x) = u ( x_1 , x_2 + ct ) $ (due to the rotational invariance, we may always 
assume that the traveling wave moves along the direction $ - \overrightarrow{e_2} $). 
We refer to {\cite{Bet_Gra_Saut_TW}} for an overview on 
these problems in several dimensions. A natural approach is to look at the
minimizing problem for $ \mathfrak{p} > 0$
\[ E_{\min} ( \mathfrak{p} ) \assign \inf_{u \in \mathcal{E}} \{ E (u), P_2 (u) = \mathfrak{p} \} .
\]
It was shown by Bethuel-Gravejat-Saut in {\cite{Bet_Gra_Saut}} that there exists a
minimizer to this problem.

\begin{theorem}[{\cite{Bet_Gra_Saut}}] 
\label{th_BGS}
  For any $ \mathfrak{p} > 0$, there exists a non constant function $ u_\mathfrak{p} \in \mathcal{E}$
  and $ c (u_\mathfrak{p}) > 0 $ such that $ P_2(u_\mathfrak{p}) = \mathfrak{p}$, 
  $u_\mathfrak{p}$ is a solution to $(\tmop{TW}_{c(u_\mathfrak{p})}) (u_\mathfrak{p}) = 0$ and
  \[ E (u_\mathfrak{p}) = E_{\min} (\mathfrak{p}) . \]
  Furthermore, any minimizer for $ E_{\min} (\mathfrak{p}) $ is, up to a translation in $ x_1 $, even in $ x_1 $.
\end{theorem}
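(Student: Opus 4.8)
The plan is to combine the direct method of the calculus of variations with a concentration-compactness argument, as in \cite{Bet_Gra_Saut}, in the spirit of Lions adapted to the Gross-Pitaevskii setting. The first step is to record the qualitative properties of the map $\mathfrak{p} \mapsto E_{\min}(\mathfrak{p})$. Suitable test functions (the rarefaction-pulse ansatz of Jones-Roberts \cite{Jon_Rob}, or a well-separated pair of vortices glued into a finite-energy field) show that $E_{\min}(\mathfrak{p})$ is finite and positive for $\mathfrak{p} \in (0, +\infty)$, and the positivity can be made quantitative, of the form $E_{\min}(\mathfrak{p}) \gs c\,\min(\mathfrak{p}, 1)$, using the careful definition of the momentum on $\BE$ in the sense of Chiron-Maris \cite{Chi_Mar}. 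One also checks that $E_{\min}$ is non-decreasing and concave, and --- the point that will later rule out dichotomy --- that it is \emph{strictly subadditive}.

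Next, take a minimizing sequence $(u_n) \subset \BE$ with $P_2(u_n) = \mathfrak{p}$ and $E(u_n) \to E_{\min}(\mathfrak{p})$. The energy bound makes $\nabla u_n$ bounded in $L^2$ and $1 - |u_n|^2$ bounded in $L^2$. One applies the concentration-compactness alternative to the energy densities $e(u_n) \assign \frac12 |\nabla u_n|^2 + \frac14 (1 - |u_n|^2)^2$. Vanishing is excluded because, writing $u_n = \rho_n \, e^{i \vp_n}$ on the region where $\rho_n$ stays away from $0$, a vanishing sequence would have $P_2(u_n) \to 0$, contradicting $P_2(u_n) = \mathfrak{p} \ne 0$. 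Dichotomy is excluded by the strict subadditivity of $E_{\min}$, once the momentum is split consistently with the energy splitting. Hence, after translating by suitable $y_n$, the sequence $u_n(\cdot - y_n)$ is precompact: along a subsequence it converges locally uniformly, and in an energy-type norm, to some $u_\mathfrak{p} \in \BE$.

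Lower semicontinuity of $E$ then gives $E(u_\mathfrak{p}) \ls \liminf_n E(u_n) = E_{\min}(\mathfrak{p})$, while the strength of the convergence allows one to pass to the limit in the momentum, so that $P_2(u_\mathfrak{p}) = \mathfrak{p}$; thus $u_\mathfrak{p}$ is a minimizer, and it is non-constant since a constant of modulus one has zero momentum. The Euler-Lagrange equation of this constrained problem is precisely $(\tmop{TW}_c)(u_\mathfrak{p}) = 0$, with $c = c(u_\mathfrak{p})$ the Lagrange multiplier associated with the constraint $P_2 = \mathfrak{p}$; that $c$ is positive follows from the Pohozaev identities satisfied by any finite-energy solution of $(\tmop{TW}_c)$, combined with the positivity of $\mathfrak{p}$ and the strict monotonicity of $E_{\min}$.

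For the symmetry claim, the main point is that $P_2$ involves only $\partial_{x_2}$, so it is left unchanged both by reflecting a function across any line $\{ x_1 = a \}$ and by replacing $u_\mathfrak{p}$ with the field obtained by gluing it to its own reflection across such a line; moreover the energy of this glued field does not exceed the sum of the energies of $u_\mathfrak{p}$ on the two half-planes $\{ x_1 \ls a \}$ and $\{ x_1 \gs a \}$. Choosing $a$ so that these two energies are equal, the reflected-glued field is again a minimizer, hence a solution of $(\tmop{TW}_c)$, and elliptic regularity together with a unique continuation argument then forces $u_\mathfrak{p}$ to coincide with its reflection across $\{ x_1 = a \}$, that is, to be even in $x_1$ after translating $a$ to the origin. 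I expect the main obstacle to be the functional-analytic care needed to make $P_2$ a well-defined and sufficiently continuous functional on all of $\BE$: the naive integral formula need not converge, yet control of $P_2$ is exactly what excludes vanishing and allows the passage to the limit; the proof of strict subadditivity of $E_{\min}$ is a secondary, but also delicate, ingredient.
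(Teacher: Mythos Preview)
The paper does not give its own proof of this theorem: it is quoted as a result of \cite{Bet_Gra_Saut}, and the only indication the paper provides about the argument is the single sentence following the statement, namely that one works on larger and larger tori (where the momentum is unambiguously defined) and then passes to the limit. Your proposal is therefore not being compared against a proof in this paper but against that brief description.

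Your approach is genuinely different from the one in \cite{Bet_Gra_Saut}. You run a concentration-compactness argument directly in $\R^2$ for a minimizing sequence in $\BE$; this is essentially the strategy of \cite{Chi_Mar} (Theorem~\ref{th_CM} here), which was carried out \emph{after} \cite{Bet_Gra_Saut}. The tori approach of \cite{Bet_Gra_Saut} was designed precisely to sidestep the difficulty you flag at the end of your sketch: on a torus the momentum is a bona fide integral and no renormalization is needed, so one first minimizes on $(\R/N\Z)^2$, obtains a solution there, and only then sends $N\to\infty$ using uniform estimates. Your route requires instead a careful definition of $P_2$ on $\BE$ and its continuity along the minimizing sequence, which is exactly the contribution of \cite{Chi_Mar}. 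Both approaches are valid; the tori method trades the functional-analytic subtlety of $P_2$ for a compactness argument at the level of solutions, while the direct method buys compactness of \emph{all} minimizing sequences (hence orbital stability), at the price of handling $P_2$ on $\BE$.

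One point in your symmetry argument deserves care. Gluing $u_\mathfrak{p}$ restricted to $\{x_1\ls a\}$ with its reflection produces a competitor whose energy is twice the energy on the left half, and whose momentum is twice the momentum on the left half. For this competitor to be admissible you need \emph{both} the energy and the momentum to be split equally at the same $a$; a single intermediate-value choice does not obviously achieve this. The usual way around it is to argue via Steiner symmetrization (which decreases $E$ while preserving $P_2$), or to show that if the energies match but the momenta do not, the resulting competitor still forces a contradiction with strict monotonicity of $E_{\min}$. As written, this step is incomplete.
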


The strategy is to look at the corresponding minimization problem on tori (this avoids the problems 
with the definition of the momentum) larger and larger, and then pass to the limit. 
For the minimizing problem $ E_{\min} (\mathfrak{p}) $, the compactness of minimizing sequences 
has been shown later on in \cite{Chi_Mar} for the natural semi-distance on $ \mathcal{E} $
\[
 D_0 ( u, v) \assign \| \nabla u - \nabla v \|_{L^2(\R^2)} + \| \lvert u \rvert - \lvert v \rvert \|_{L^2(\R^2)} .
\]

\begin{theorem}[\cite{Chi_Mar}]
\label{th_CM}
  For any $ \mathfrak{p} > 0 $, and any minimizing sequence $ (u_n)_{n \in \N} $ for $ E_{\min} (\mathfrak{p}) $, 
  there exists a subsequence $ (u_{n_j})_{j \in \N} $, a sequence of translations $ ( y_j )_{j \in \N} $ and a non constant 
  function $ u_\mathfrak{p} \in \mathcal{E} $ such that $ D_0 ( u_{n_j} , u_\mathfrak{p} ) \to 0 $, 
  $ P_2(u_{n_j}) \to P_2(u_\mathfrak{p}) = \mathfrak{p} $ and $ E(u_{n_j}) \to E(u_\mathfrak{p}) = E_{\min} (\mathfrak{p}) $ 
  as $ j \to + \ii $. In particular, there exists $ c (u_\mathfrak{p}) > 0 $ such that $ P_2( u_\mathfrak{p}) = \mathfrak{p}$, 
  $u_\mathfrak{p}$ is a solution to $(\tmop{TW}_{c(u_\mathfrak{p})}) (u_\mathfrak{p}) = 0$ and
  \[ E (u_\mathfrak{p}) = E_{\min} (\mathfrak{p}) . \]
   Furthermore, the set $ \mathcal{S}_\mathfrak{p} \assign \{ v \in \mathcal{E} , \, P_2 ( v ) = \mathfrak{p} \ and \ E( v) = E_{\min} (\mathfrak{p}) \} $ 
  of minimizers for $ E_{\min} (\mathfrak{p}) $ is orbitally stable for the semi-distance $ D_0 $.
\end{theorem}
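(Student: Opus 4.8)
The plan is to prove the compactness statement by a concentration--compactness argument in the energy space $\BE$, and then to deduce orbital stability by the classical Cazenave--Lions scheme, using the global well-posedness recalled in the Remark and the (formal, but rigorous here) conservation of $E$ and $P_2$ along the $(\tmop{NLS})$ flow. I would first record the elementary properties of $\mathfrak{p}\mapsto E_{\min}(\mathfrak{p})$: it is finite (by Theorem~\ref{th_BGS}, or an explicit competitor), positive on $(0,+\ii)$ (a finite-energy function with $E(u)=0$ is constant, hence has $P_2=0$), nondecreasing, even (via $u(x_1,x_2)\mapsto u(x_1,-x_2)$), and subadditive, $E_{\min}(\mathfrak{p}_1+\mathfrak{p}_2)\ls E_{\min}(\mathfrak{p}_1)+E_{\min}(\mathfrak{p}_2)$ — the last point by gluing two far-apart competitors after a cut-off near infinity, which is compatible with the condition $|u|\to1$ and adds the momenta up to $o(1)$. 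The crucial refinement is the \emph{strict} subadditivity $E_{\min}(\mathfrak{p})<E_{\min}(\mathfrak{p}_1)+E_{\min}(\mathfrak{p}_2)$ whenever $\mathfrak{p}=\mathfrak{p}_1+\mathfrak{p}_2$ with $\mathfrak{p}_1,\mathfrak{p}_2>0$; I expect to obtain it from the concavity of $E_{\min}$ (a consequence of subadditivity together with its continuity and scaling properties) combined with the fact that $E_{\min}$ is not affine, the latter forced by its sublinear behaviour as $\mathfrak{p}\to0^+$.

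Next, to the minimizing sequence $(u_n)$ I would apply Lions' concentration--compactness lemma to the energy densities $\mu_n:=|\nabla u_n|^2+\tfrac12(1-|u_n|^2)^2$, of total mass $\to 2E_{\min}(\mathfrak{p})$. Vanishing is excluded because a local estimate of the renormalized momentum density by the energy density shows that vanishing of $\mu_n$ would force $P_2(u_n)\to0$, contradicting $P_2(u_n)=\mathfrak{p}>0$. Dichotomy is excluded because, by the standard cut-off construction, it would produce $u_n^{(1)},u_n^{(2)}\in\BE$ with $E(u_n^{(1)})+E(u_n^{(2)})\to E_{\min}(\mathfrak{p})$ and nontrivial energies, and with $P_2(u_n^{(1)})+P_2(u_n^{(2)})\to\mathfrak{p}$; passing to the limit in the momenta and using the monotonicity, evenness and strict subadditivity of $E_{\min}$ contradicts $E(u_n^{(1)})+E(u_n^{(2)})\to E_{\min}(\mathfrak{p})$. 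Hence concentration holds: after translating, $\tilde u_n:=u_n(\cdot+y_n)$ has almost all its energy in a fixed ball, uniformly in $n$. Extracting a limit $u_\mathfrak{p}$ (weak-$*$ for the relevant quantities, a.e. after a diagonal argument), non-escape of energy upgrades this to $\nabla\tilde u_n\to\nabla u_\mathfrak{p}$ strongly in $L^2$ and $1-|\tilde u_n|^2\to1-|u_\mathfrak{p}|^2$ strongly in $L^2$; a short argument (using $\big|\,|a|-1\,\big|\ls\big|\,|a|^2-1\,\big|$ and that the total measure near the zeros is small) then gives $\||\tilde u_n|-|u_\mathfrak{p}|\|_{L^2}\to0$, i.e. $D_0(\tilde u_n,u_\mathfrak{p})\to0$. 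Thus $E(u_\mathfrak{p})=E_{\min}(\mathfrak{p})>0$ (so $u_\mathfrak{p}$ is non-constant) and, using the continuity of the renormalized momentum with respect to $D_0$ on bounded-energy sets, $P_2(u_\mathfrak{p})=\mathfrak{p}$. Since $u_\mathfrak{p}$ minimizes $E$ under the non-degenerate constraint $P_2=\mathfrak{p}$, the Lagrange multiplier rule yields $c=c(u_\mathfrak{p})$ with $(\tmop{TW}_{c})(u_\mathfrak{p})=0$, and $c>0$ by the Hamilton relation $c=E_{\min}'(\mathfrak{p})>0$; an elliptic bootstrap gives $u_\mathfrak{p}\in C^\ii$.

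For orbital stability, suppose $\mathcal{S}_\mathfrak{p}$ is not $D_0$-orbitally stable: there are $\e_0>0$, data $\Psi_0^{(n)}\in\BE$ with $\tmop{dist}_{D_0}(\Psi_0^{(n)},\mathcal{S}_\mathfrak{p})\to0$, and times $t_n$ with $\tmop{dist}_{D_0}(\Psi^{(n)}(t_n),\mathcal{S}_\mathfrak{p})\gs\e_0$, where $\Psi^{(n)}$ is the global solution. Being $D_0$-close to $\mathcal{S}_\mathfrak{p}$ forces $E(\Psi_0^{(n)})\to E_{\min}(\mathfrak{p})$ and $P_2(\Psi_0^{(n)})\to\mathfrak{p}$; by conservation of $E$ and $P_2$ the same holds for $v_n:=\Psi^{(n)}(t_n)$. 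A small one-parameter rescaling $v_n\mapsto v_n(\lambda_n\,\cdot)$ with $\lambda_n\to1$ restores the exact constraint $P_2=\mathfrak{p}$ at the cost of $o(1)$ in $E$ and in $D_0$, so the rescaled sequence is a genuine minimizing sequence for $E_{\min}(\mathfrak{p})$. The compactness part then gives a subsequence converging in $D_0$, up to translation, to some element of $\mathcal{S}_\mathfrak{p}$; since $\mathcal{S}_\mathfrak{p}$ is translation-invariant, $\tmop{dist}_{D_0}(v_n,\mathcal{S}_\mathfrak{p})\to0$, contradicting $\gs\e_0$.

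The main obstacle I expect is twofold: (i) the strict subadditivity of $E_{\min}$ (equivalently, ruling out dichotomy), which underpins the entire compactness argument; and (ii) the technical care demanded by the non-Banach energy space with condition at infinity — in particular the continuity of the renormalized momentum with respect to $D_0$ on bounded-energy sets, the non-escape of energy under the concentration alternative, and the fine behaviour near the zeros of $u$ when converting $L^2$-convergence of $1-|u_n|^2$ into convergence of $|u_n|$. Verifying non-degeneracy of the constraint at $u_\mathfrak{p}$ (needed to define $c$) and the admissibility of the rescaling fix in the stability step are more routine, but still require attention.
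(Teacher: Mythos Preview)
This theorem is not proved in the present paper: it is quoted from \cite{Chi_Mar} and used as a black box (see the attribution in the theorem header and the discussion around Theorems~\ref{th_BGS} and~\ref{th_CM}). So there is no ``paper's own proof'' to compare against here.

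That said, your sketch is exactly the strategy carried out in \cite{Chi_Mar}: concentration--compactness on the energy density, exclusion of vanishing via a local momentum/energy inequality, exclusion of dichotomy via strict subadditivity of $E_{\min}$, and then the Cazenave--Lions argument for orbital stability using conservation of $E$ and $P_2$. You have correctly identified the two genuine difficulties specific to this problem: (i) establishing strict subadditivity of $E_{\min}$ (this is where most of the work in \cite{Chi_Mar} lies, and your one-line derivation from ``concavity plus non-affinity'' hides a substantial argument), and (ii) making sense of $P_2$ and of convergence in the energy space $\BE$, which is not a vector space and where the condition $|u|\to 1$ at infinity makes the definition and continuity of the momentum delicate. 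Your rescaling trick in the stability step is fine in 2D (the kinetic part is scale-invariant and $P_2$ scales like $\lambda^{-1}$), though \cite{Chi_Mar} handles this slightly differently.
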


An open and difficult question is to show, up to the invariances of the
problem, the uniqueness of the energy minimizer at fixed momentum. In other words, the problem is to 
determine if $ \mathcal{S}_\mathfrak{p} $ consists of a single orbit under phase shift and space translation, that 
is: do we have, for some minimizer $ U_\mathfrak{p} $,
\[
 \mathcal{S}_\mathfrak{p} = \{ U_\mathfrak{p} (. - X) \ex^{i \gamma} , \, \gamma \in \R , \, X \in \R^2 \}?
\]
The main consequence of our work is to solve this open problem of uniqueness for large momentum.

\begin{theorem}
  \label{th2}There exists $ \mathfrak{p}_0 > 0$ such that, for any $ \mathfrak{p} > \mathfrak{p}_0 $, 
  if $u, v \in \mathcal{E}$ with $ P_2(u) = P_2(v) = \mathfrak{p} $ satisfy
  \[ E (u) = E (v) = E_{\min} (\mathfrak{p}), \]
  then, there exist $X \in \mathbb{R}^2 $ and $ \gamma \in \mathbb{R}$ such that
  \[ u = v (. - X) \ex^{i \gamma} . \]
\end{theorem}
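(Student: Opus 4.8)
The plan is to convert the variational characterization of $u$ and $v$ into a perturbative problem around the explicit two-vortex travelling wave $Q_c$ constructed in \cite{Chi_Pac_1}, \cite{Chi_Pac_2}. Let $u\in\mathcal E$ be a minimizer with $P_2(u)=\mathfrak p$; by Theorem~\ref{th_BGS} it solves $(\mathrm{TW}_{c(u)})(u)=0$ with Lagrange multiplier (speed) $c(u)>0$. Combining the Pohozaev identities for $(\mathrm{TW}_c)$ with the sharp asymptotics of $E_{\min}(\mathfrak p)$ as $\mathfrak p\to+\infty$ — upper bound via $Q_c$, lower bound via the general estimates on $\mathcal E$ — I would first show $c(u)\to 0$ as $\mathfrak p\to+\infty$. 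Since the map $c\mapsto \mathfrak p(c):=P_2(Q_c)$ is, by \cite{Chi_Pac_1}, \cite{Chi_Pac_2}, a decreasing bijection of $(0,c_0)$ onto $(\mathfrak p_0,+\infty)$, ``$P_2=\mathfrak p$ large'' becomes equivalent to ``speed $=c$ small'' with $c$ uniquely determined by $\mathfrak p$, hence the same $c$ for $u$ and for $v$. It then suffices to prove that, for $c$ small, any minimizer $w$ with $(\mathrm{TW}_c)(w)=0$ and $P_2(w)=P_2(Q_c)$ equals $Q_c(\cdot-X)\ex^{i\gamma}$ for some $X\in\R^2$, $\gamma\in\R$.

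\emph{Rigidity: every minimizer is close to $Q_c$.} Next I would show that, for $c$ small, such a $w$ has exactly two zeros, of degrees $+1$ and $-1$, at mutual distance comparable to $1/c$, and that after adjusting the translation and the phase, $w$ lies within $o_{c\to 0}(1)$ of $Q_c$ in the energy norm and in the weighted norms adapted to the two vortices. The inputs are the compactness and orbital stability of minimizing sequences (Theorem~\ref{th_CM}) to localize the profile, the standard fine description of Ginzburg--Landau vortices (each rescaled profile converging to the canonical degree-one vortex), a matching of the far field of $w$ with the explicit far-field expansion of $Q_c$, and the sharp energy expansion above, which forces precisely two vortices and pins down their separation. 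This is the step that places $w$ in the class of functions on which the uniqueness argument operates; I expect it to be a genuine concentration--compactness analysis rather than a soft argument.

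\emph{Linearization and coercivity.} Fixing $c$ as above, write $w=Q_c+\varphi$ after choosing $X,\gamma$ so that $\varphi$ is orthogonal to the (three-dimensional) kernel generated by the invariances, $\mathrm{span}\{\partial_{x_1}Q_c,\ \partial_{x_2}Q_c,\ iQ_c\}$, of $L_c:=\mathrm d(\mathrm{TW}_c)(Q_c)$; by the rigidity step $\varphi$ is small. The equation becomes $L_c\varphi=\mathcal N(\varphi)$ with $\mathcal N$ at least quadratic, so $\|\mathcal N(\varphi)\|\lesssim\|\varphi\|^2$. The three orthogonality conditions absorb $\ker L_c$, while the constraint $P_2(w)=P_2(Q_c)$ controls the extra near-degenerate direction $\partial_c Q_c$; together these allow one to invoke the coercivity estimate $\langle L_c\varphi,\varphi\rangle\gtrsim\eta(c)\,\|\varphi\|^2$ from \cite{Chi_Pac_2}. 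Pairing the equation with $\varphi$ yields $\eta(c)\,\|\varphi\|^2\lesssim\langle\mathcal N(\varphi),\varphi\rangle\lesssim\|\varphi\|^3$, hence $\varphi=0$ once $\|\varphi\|\ll\eta(c)$; therefore $u=Q_c(\cdot-X)\ex^{i\gamma}$, and likewise for $v$, which proves Theorem~\ref{th2}.

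\emph{Main obstacle.} The crux is the coercivity of $L_c$ with a lower bound $\eta(c)$ degenerating slowly enough, as $c\to 0$, to beat the smallness coming from the rigidity step: this requires a precise spectral analysis of the linearization around two slowly decaying, strongly interacting vortices, with quantitative control of how the interaction and the $|x|^{-1}$-type decay perturb the bottom of the spectrum — the technical core inherited from \cite{Chi_Pac_2}. The second difficulty is the rigidity step itself: upgrading the soft compactness of Theorem~\ref{th_CM} to a quantitative two-vortex description valid for \emph{all} minimizers, in a geometry with nonzero conditions at infinity and only a renormalized momentum.
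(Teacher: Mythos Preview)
Your overall architecture (rigidity to place the minimizer near $Q_c$, then coercivity of the linearization to conclude $\varphi=0$) matches the paper's, but two points are genuine gaps, not just missing details.

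\medskip

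\textbf{The symmetry is not optional.} You never invoke the fact (from Theorem~\ref{th_BGS}) that any minimizer is, after a translation, even in $x_1$. The paper uses this crucially: the coercivity result from \cite{Chi_Pac_2} with a \emph{uniform} constant (Theorem~\ref{CP3th22}/Corollary~\ref{CP3coerc} here) holds only on the symmetric space $H_{Q_c}^{\exp,s}$, with three orthogonality conditions. Without the $x_1$-symmetry one needs five orthogonality conditions to match the five-parameter family $(c,c^\perp,X_1,X_2,\gamma)$, and then the coercivity constant $\eta(c)\to 0$ as $c\to 0$ (this is stated explicitly in the introduction, and is the reason the paper restricts to symmetric functions throughout). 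Your ``main obstacle'' paragraph hopes that $\eta(c)$ degenerates slowly enough to be beaten by the smallness of $\varphi$; the paper's point is precisely that this competition is lost without symmetry. Using the momentum constraint to handle $\partial_c Q_c$ is a reasonable idea, but it does not address the sixth bad direction (the relative phase of the two vortices) that the symmetry kills.

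\medskip

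\textbf{$\varphi$ is not small in the coercivity norm.} Your closing estimate $\eta(c)\|\varphi\|^2\lesssim\|\varphi\|^3$ presupposes that the rigidity step makes $\|\varphi\|_{H_Q^{\exp}}$ small. It does not: the $H_Q^{\exp}$-norm contains $\int \mathfrak{Im}(\psi)^2/(\tilde r^2\ln^2\tilde r)$, and the rigidity analysis only controls $|u|$ (hence $\mathfrak{Re}(\psi)$) away from the vortices, not the phase $\mathfrak{Im}(\psi)$, which is merely bounded with no quantitative smallness. Consequently the naive cubic bound on $\langle\mathcal N(\varphi),\varphi\rangle$ fails for the worst nonlinear term $\int |Q|^2\eta\,\mathfrak{Re}(\nabla\psi)\cdot\mathfrak{Im}(\nabla\psi)\,\mathfrak{Im}(\psi)$. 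The paper handles this by an iterated integration-by-parts scheme (the quantities $A_n$ in Section~2.2.3): one integrates by parts, feeds the resulting $\mathfrak{Im}(\Delta\psi)$ back through the equation, and iterates $n$ times to obtain a bound $|A_1|\le (K\nu)^n(1+\|\mathfrak{Im}(\psi)\|_{L^\infty})\|\varphi\|^2 + \ldots$; only then can $n$ be chosen large (depending on the uncontrolled $\|\mathfrak{Im}(\psi)\|_{L^\infty}$) to conclude. This is the technical heart of Proposition~\ref{locC1} and is invisible in your sketch.

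\medskip

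A minor logical slip: the bijection $c\mapsto P_2(Q_c)$ says nothing about the speeds $c(u)$, $c(v)$ of arbitrary minimizers \emph{a priori}; you only get $c(u)=c(v)=\mathcal P^{-1}(\mathfrak p)$ \emph{after} proving $u,v$ lie on the $Q_c$ branch. This is easily reordered and is how the paper proceeds (Corollary~\ref{coro_bore}).
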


In fact, we will be able to show slightly stronger results than Theorem \ref{th2}, 
see Theorem \ref{th8} below. 

Even though we focus on the Ginzburg-Landau nonlinearity, it is plausible that our results 
hold true (still for large momentum) for more general nonlinearities, provided vortices exist. 
For the Ginzburg-Landau (cubic) nonlinearity, it is also possible that uniqueness of minimizers 
holds true for $ E_{\min} (\mathfrak{p}) $ for any $ \mathfrak{p} > 0 $. However, the numerical 
results given in \cite{Chi_Sch1} suggest that this may no longer be the case for more 
general nonlinearities.

\bigbreak
In the analysis of the minimization problem in \cite{Bet_Gra_Saut} (and also \cite{Chi_Mar}), 
the following properties of $ E_{\min} $ play a key role.

\begin{proposition} [\cite{Bet_Gra_Saut}]
\label{propriete_Emin} 
The function $ E_{\min } : \R_+ \to \R $ is concave, nondecreasing and $ \sqrt{2} $-Lipschitz continuous. 
In addition, there exists $K\gs 0 $ such that, for any $ \mathfrak{p} \gs 1 $, we have
\begin{equation}
\label{upper_bound_mini}
 E_{\min}(\mathfrak{p} ) 
 \ls 2 \pi \ln \mathfrak{p} + K .
\end{equation}
\end{proposition}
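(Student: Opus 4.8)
The content of the proposition is one structural fact — concavity of $E_{\min}$ — together with three consequences: monotonicity, the $\sqrt 2$-Lipschitz bound, and the explicit estimate \eqref{upper_bound_mini}; finiteness of $E_{\min}$ on $\R_+$ is clear from Theorem~\ref{th_BGS} (or from the test functions used below). I would establish concavity first and expect it to be the only genuine obstacle. A direct comparison of near-minimizers at three different momenta on $\R^2$ looks hopeless: cutting a finite-energy map produces boundary terms one cannot control, and the property that \emph{is} easy here — subadditivity $E_{\min}(\mathfrak p+\mathfrak q)\ls E_{\min}(\mathfrak p)+E_{\min}(\mathfrak q)$, obtained by placing two configurations far apart along $x_1$ — is strictly weaker than concavity. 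Instead I would argue on the periodic approximations underlying the existence theory: on the torus $\mathbb T_L=(\R/L\Z)^2$ set $E_{\min,L}(\mathfrak p):=\inf\{E(u):u\in H^1(\mathbb T_L,\C),\ P_2(u)=\mathfrak p\}$, the momentum now being a genuine functional on a space of finite measure (up to its unavoidable quantization modulo a lattice). There, compactness and the homogeneity of the torus should allow one to prove that $E_{\min,L}$ is concave and nondecreasing, uniformly in $L$, by a rescaling/gluing construction; one then lets $L\to+\infty$, using that $E_{\min,L}(\mathfrak p)\to E_{\min}(\mathfrak p)$ for each fixed $\mathfrak p$ (a byproduct of that construction) and that a pointwise limit of concave functions is concave. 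The delicate points — formulating the constrained torus problem so that the momentum constraint is meaningful, establishing concavity there, and controlling the error in the limit — are where essentially all the work lies.

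Granting concavity, monotonicity is automatic: $E_{\min}$ is concave on $[0,+\infty)$, vanishes at $0$ (the constant $1$ is admissible and $E\gs 0$) and is bounded below by $0$, and a concave function on a half-line that is bounded below has no negative one-sided derivative, hence is nondecreasing. For the $\sqrt 2$-Lipschitz bound, observe that for a concave nondecreasing function the one-sided derivative is nonnegative and nonincreasing, so it suffices to prove $E_{\min}(\mathfrak p)\ls\sqrt 2\,\mathfrak p$ for all $\mathfrak p>0$. I would get this from the quadratic expansion $u=1+\e\varphi$: then $P_2$ and $E$ are, to leading order in $\e$, equal to $\e^2 q_\varphi$ and $\e^2 e_\varphi$ with $q_\varphi=\int\mathfrak{Re}\,\varphi\,\p_{x_2}\mathfrak{Im}\,\varphi$ and $e_\varphi=\tfrac12\int|\nabla\varphi|^2+\int(\mathfrak{Re}\,\varphi)^2$, and a short Fourier computation gives $e_\varphi\gs\sqrt 2\,q_\varphi$, with the ratio $e_\varphi/q_\varphi$ (for profiles with $q_\varphi>0$) arbitrarily close to $\sqrt 2$ — this is exactly the sound speed $\sqrt 2$ of $(\mathrm{NLS})$ linearised at $1$. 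Hence $E_{\min}(\mathfrak p)/\mathfrak p\ls e_\varphi/q_\varphi$ along a sequence $\mathfrak p\to 0$, and since $\mathfrak p\mapsto E_{\min}(\mathfrak p)/\mathfrak p$ is nonincreasing (concavity together with $E_{\min}(0)=0$), this forces $E_{\min}(\mathfrak p)\ls\sqrt 2\,\mathfrak p$ for every $\mathfrak p$.

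Finally, for \eqref{upper_bound_mini} I would use an explicit smoothed vortex--antivortex pair. For $d>0$ large, away from two fixed-size balls centred at $(\pm d,0)$ set
\[
 v_d(x)=\frac{(x_1-d)+i x_2}{\sqrt{(x_1-d)^2+x_2^2}}\cdot\frac{(x_1+d)-i x_2}{\sqrt{(x_1+d)^2+x_2^2}},
\]
which has degree $+1$ at $(d,0)$, degree $-1$ at $(-d,0)$ and tends to $1$ at infinity, and regularise it inside the two balls by the standard finite-energy vortex core profile, so that $v_d\in\mathcal E$. A direct computation gives $E(v_d)=2\pi\ln d+O(1)$ as $d\to+\infty$ — an $O(1)$ core energy, an $O(1)$ potential part, two ``Coulomb'' contributions $\pi\ln d$ each (the opposite degrees cut off the long-range field at the mutual distance $\asymp d$), and an $O(1)$ dipolar tail — while $d\mapsto P_2(v_d)$ is continuous, grows linearly in $d$, and is unbounded. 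Choosing $d=d(\mathfrak p)$ with $P_2(v_{d(\mathfrak p)})=\mathfrak p$ gives $d(\mathfrak p)\asymp\mathfrak p$, whence $E_{\min}(\mathfrak p)\ls E(v_{d(\mathfrak p)})\ls 2\pi\ln\mathfrak p+K$ for $\mathfrak p\gs 1$. The only non-elementary ingredient here is the (standard) asymptotics of the vortex-pair energy and momentum; everything else reduces to concavity, which remains the crux of the proposition.
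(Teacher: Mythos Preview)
The paper does not prove this proposition; it is quoted from \cite{Bet_Gra_Saut} and used as a black box. There is therefore no in-paper proof to compare against.

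Your sketch is broadly sound. The deductions of monotonicity and of the $\sqrt 2$-Lipschitz bound from concavity are correct and cleanly argued: the reduction via the nonincreasing ratio $E_{\min}(\mathfrak p)/\mathfrak p$ (valid for a concave function vanishing at the origin) is the right one, and the Fourier computation on $u=1+\e\varphi$ exhibiting the sound speed $\sqrt 2$ is standard. The vortex-pair construction for \eqref{upper_bound_mini} is also correct: the asymptotics $E(v_d)=2\pi\ln d+O(1)$ and $P_2(v_d)=2\pi d+O(1)$ are exactly those of the branch $Q_c$ recalled in Theorem~\ref{th1} and in \cite{Chi_Pac_2}, and this part stands on its own.

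On concavity you point to the torus approximations of \cite{Bet_Gra_Saut}, which is indeed where that reference works. The phrase ``rescaling/gluing'' is the weak spot, however: on a fixed torus there is no dilation available, and gluing two near-minimizers does not produce a competitor at a convex combination of their momenta --- that route only yields subadditivity, which you already dismissed. What is actually used in \cite{Bet_Gra_Saut} is a one-parameter deformation of a given torus minimizer that moves $P_2$ to first order while $E$ moves along its tangent line (the Lagrange multiplier being the speed $c$); this shows that $E_{\min,L}$ lies below each of its tangents, hence is concave, once torus minimizers are known to exist at every admissible momentum. Filling in that mechanism is the only real gap in your outline; the passage to the limit $L\to+\infty$ is then as you describe.
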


\subsection{A smooth branch of travelling waves for large momentum}

There have been several ways of constructing travelling waves of the
$(\tmop{NLS})$ equation, with different approaches. For instance, we may use variational methods, 
such as a mountain pass argument in {\cite{Bet_Saut}} and in \cite{Bel_Rui}, or by minimizing the 
energy at fixed kinetic energy (\cite{Bet_Gra_Saut}, \cite{Chi_Mar}). 
Also, we have constructed in {\cite{Chi_Pac_1}} a travelling
wave by perturbative methods, taking for ansatz a pair of vortices, by following the 
Lyapounov-Schmidt reduction method as initiated in \cite{delP_Kow_Mus}. Vortices
are stationary solutions of $(\tmop{NLS})$ of degrees $n \in
\mathbb{Z}^{\ast}$ (see \cite{Ginz-Pit_58a}, \cite{Neu_90}, \cite{Weinstein_vortex}, 
\cite{HerHer_94}, \cite{ChenElliottQi}):
\[ V_n (x) = \rho_n (r) \ex^{i n \theta}, \]
where $x = r \ex^{i \theta}$, solving
\[ \left\{\begin{array}{l}
     \Delta V_n - (| V_n |^2 - 1) V_n = 0\\
     | V_n | \rightarrow 1 \tmop{as} | x | \rightarrow \infty .
   \end{array}\right. \]
In the previous paper {\cite{Chi_Pac_1}}, we constructed solutions of
$(\tmop{TW}_c)$ for small values of $c > 0$ as a perturbation of two
well-separated vortices (the distance between their centers is large when $c$
is small). We have shown the following result.

\begin{theorem}[{\cite{Chi_Pac_1}}, Theorem 1.1 and \cite{Chi_Pac_2}, Proposition 1.2]
  \label{th1}There exists $c_0 > 0$ a small constant such that for any $0 < c
  \leqslant c_0$, there exists a solution of $(\tmop{TW}_c)$ of the form
  \[ Q_c = V_1 (. - d_c \overrightarrow{e_1}) V_{- 1} (. + d_c
     \overrightarrow{e_1}) + \Gamma_c, \]
  where $d_c = \frac{1 + o_{c \rightarrow 0} (1)}{c}$ is a $C^1$ function
  of c. This solution has finite energy, that is $ Q_c \in \mathcal{E} $,  
  and $Q_c \rightarrow 1$ at infinity.
  
  Furthermore, for all $ 2 < p \leqslant + \infty $, there exists $c_0 (p) > 0$
  such that, if $0 < c \leqslant c_0 (p)$, for the norm
  \[ \| h \|_p \assign \| h \|_{L^p (\mathbb{R}^2)} + \| \nabla h \|_{L^{p -
     1} (\mathbb{R}^2)} \]
  and the space $X_p \assign \{ f \in L^p (\mathbb{R}^2), \nabla f \in L^{p -
  1} (\mathbb{R}^2) \}$, one has
  \[ \| \Gamma_c \|_p = o_{c \rightarrow 0} (1) . \]
  In addition,
  \[ c \mapsto Q_c - 1 \in C^1 (] 0, c_0 (p) [, X_p), \]
  with the estimate
  \[ \left\| \partial_c Q_c + \left( \frac{1 + o_{c \rightarrow 0} (1)}{c^2}
     \right) \partial_d (V_1 (. - d \overrightarrow{e_1}) V_{- 1} (. + d
     \overrightarrow{e_1}))_{| d = d_c \nobracket} \right\|_p = o_{c
     \rightarrow 0} \left( \frac{1}{c^2} \right) . \]
  Finally, we have
\[
 \frac{d}{dc} \big( P_2 (Q_c) \big) = \frac{- 2 \pi + o_{c \rightarrow 0} (1)}{c^2} < 0 ,
\]
hence the $ C^1 $ mapping 
\[
 \mathcal{P} : ] 0, c_0 ] \ni c \mapsto P_2 (Q_c) \in \mathbbm{R} 
\]
is a strictly decreasing diffeomorphism from $ ]0 ,c_0] $ onto $ [ P_2( Q_{c_0} ) , +\ii [ $.
\end{theorem}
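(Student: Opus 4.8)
The natural approach is a Lyapunov--Schmidt reduction around the two-vortex profile, following the method initiated in \cite{delP_Kow_Mus}. Write $ V_d \assign V_1(\cdot - d\overrightarrow{e_1}) V_{-1}(\cdot + d\overrightarrow{e_1}) $ and look for $ Q_c = V_d + \Gamma $ with $ d $ close to $ 1/c $. The first step is to estimate the error $ (\tmop{TW}_c)(V_d) $ in the norm $ \|\cdot\|_p $. Using the asymptotics of $ \rho_n $ and of $ \nabla V_n $ (notably $ \rho_n \to 1 $ and the $ 1/r $ decay of the phase gradient), this error splits into a vortex--vortex interaction term of size $ \sim 1/d $ and the transport term $ -ic\partial_{x_2}V_d $ of size $ \sim c $; the two balance exactly when $ d \sim 1/c $, which fixes the scale of $ d_c $. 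Getting sharp bounds requires a functional framework (the spaces $ X_p $) tailored to the slow, non-$ L^2 $ decay of the vortex fields and to the behaviour near each core.

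The core of the argument is the invertibility, modulo a finite-dimensional kernel, of the operator $ \mathcal{L}_{V_d} $ linearizing $ (\tmop{TW}_c) $ at $ V_d $. Near each vortex, after an appropriate gauge, $ \mathcal{L}_{V_d} $ looks like the linearized Ginzburg--Landau operator at a single vortex $ V_{\pm 1} $, whose kernel is spanned by the two translation modes and the non-decaying phase mode $ iV_{\pm 1} $. I would prove a coercivity estimate for $ \mathcal{L}_{V_d} $ on the orthogonal complement of the (finitely many) translation modes, uniform as $ c \to 0 $, by localizing around each core, importing the single-vortex coercivity, and absorbing the coupling and far-field terms as perturbations, small because $ d $ is large; the phase mode causes no trouble since it does not belong to $ X_p $. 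Granting this, a Banach fixed point in an $ o(1) $-ball of $ X_p $ produces, for each admissible $ d $, a solution $ \Gamma = \Gamma_{c,d} $ of the projected equation with $ \|\Gamma_{c,d}\|_p = o(1) $, and the implicit function theorem gives its $ C^1 $ dependence on $ (c,d) $.

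It then remains to solve the reduced system: the projections of $ (\tmop{TW}_c)(V_d + \Gamma_{c,d}) $ onto the translation modes must vanish. Evenness in $ x_1 $ and the antisymmetry between the two vortices eliminate all but one scalar equation, which at leading order reads $ a/d - bc + \text{(lower order)} = 0 $ for explicit positive constants $ a,b $; the implicit function theorem then yields a $ C^1 $ branch $ d_c = \frac{1+o(1)}{c} $, hence $ Q_c = V_{d_c} + \Gamma_{c,d_c} $, and differentiating the whole construction in $ c $ gives $ c\mapsto Q_c - 1 \in C^1 $ with the stated expansion of $ \partial_c Q_c $, whose dominant term is $ \partial_d V_d\,\dot d_c \sim -\partial_d V_d/c^2 $. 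Finally, for the momentum one computes $ P_2(V_d) = 2\pi d + o(d) $ (once the additive normalization of $ P_2 $ on $ \mathcal{E} $ is fixed) and checks that $ \Gamma_c $ contributes only lower-order terms, so $ \frac{d}{dc}P_2(Q_c) = 2\pi\dot d_c + o(1/c^2) = \frac{-2\pi+o(1)}{c^2} $; combined with $ \mathcal{P}\in C^1 $, strict monotonicity, $ \mathcal{P}(c_0) = P_2(Q_{c_0}) $ and $ \mathcal{P}(c)\to +\ii $ as $ c\to 0 $, this gives the claimed diffeomorphism.

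\medskip
\noindent\emph{Main obstacle.} The delicate step is the uniform (in $ c\to 0 $) coercivity of $ \mathcal{L}_{V_d} $: one must set up the functional spaces so that the non-decaying phase modes and the slowly decaying interaction tails are controlled, quantify precisely the error made in patching two single-vortex estimates across a distance $ \sim 1/c $, and make sure the constants do not degenerate. The momentum asymptotics also need care, since $ P_2 $ is only defined via renormalization in $ \mathcal{E} $.
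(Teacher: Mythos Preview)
This theorem is not proved in the present paper: it is quoted verbatim from the authors' earlier works \cite{Chi_Pac_1} (Theorem~1.1) and \cite{Chi_Pac_2} (Proposition~1.2), and the paper only uses it as input. There is therefore no ``paper's own proof'' to compare against here; the paper merely recalls that the construction in \cite{Chi_Pac_1} proceeds ``by perturbative methods, taking for ansatz a pair of vortices, by following the Lyapounov--Schmidt reduction method as initiated in \cite{delP_Kow_Mus}''.

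Your outline is consistent with that description: the ansatz $V_d$, the balance $d\sim 1/c$, the linearized coercivity modulo translation modes borrowed from the single-vortex analysis of \cite{delP_Fel_Kow}, the fixed point for $\Gamma$, the one-dimensional reduced equation after symmetry reduction, and the differentiation yielding $\partial_c Q_c$ and $\frac{d}{dc}P_2(Q_c)$ are exactly the ingredients one expects in \cite{Chi_Pac_1} and \cite{Chi_Pac_2}. Two small cautions at the sketch level: first, in the actual construction the functional setting is more delicate than a single $X_p$ space (weighted H\"older-type norms distinguishing real and imaginary parts near and far from the cores are used to capture the slow phase decay), so ``the phase mode does not belong to $X_p$'' is not quite how the kernel issue is handled; second, the momentum expansion $P_2(Q_c)=2\pi d_c + o(d_c)$ and its derivative require the sharp decay estimates on $\Gamma_c$ and $\partial_c Q_c$ proved in \cite{Chi_Pac_2}, not just $\|\Gamma_c\|_p=o(1)$. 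But as a high-level summary of the strategy, your proposal is on target.
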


\begin{remark} With the same kind of approach, \cite{Liu_Wei} also provides 
an existence result of travelling waves for $(\tmop{NLS}) $, including some cases with 
more than two vortices. Our result has the advantage of showing the smoothness of the branch with respect to the speed. 
In particular, with the last part of Theorem \ref{th1}, we see that we may also parametrize the 
branch $ c \mapsto Q_c $ by its momentum $ \mathcal{P} $.
\end{remark}

It is conjectured that all these constructions yield the same branch of
travelling waves (for large momentum) when they are all defined, and that they are the solutions 
numerically observed in \cite{Jon_Rob} and \cite{Chi_Sch1} for more general 
nonlinearities (see also \cite{Chi_Sch2}). 
We will show here that the construction of Theorem \ref{th1} are the unique, up to the natural 
translation and phase invariances, constrained energy minimizers.

\subsection{A uniqueness result for symmetric functions}

We have shown in {\cite{Chi_Pac_2}} several coercivity results for the travelling waves constructed in 
Theorem \ref{th1}. This will allow us to show the following uniqueness result for symmetric functions 
close to the branch constructed in Theorem \ref{th1}. There, for $d \in \mathbb{R}$, we use the notation $\tilde{r}_d = \tmop{min} (|.- d \overrightarrow{e_1}|,|.+d \overrightarrow{e_1}|)$.

\begin{proposition}
  \label{locC1} 
  There exists $ \lambda_* > 1 $ such that, for any $ \lambda \gs \lambda_* $, there exists 
  $ \varepsilon ( \lambda) > 0 $ such that 
  if a function $u \in \mathcal{E}$ satisfies
  \begin{enumerate}
    \item $\forall (x_1, x_2) \in \mathbb{R}^2, u (x_1, x_2) = u (- x_1, x_2)$,
    
    \item $u = V_1 (x - d \overrightarrow{e_1}) V_{- 1} (x + d
    \overrightarrow{e_1}) + \Gamma$, with $d > \frac{1}{\varepsilon ( \lambda)  }$, $\|
    \Gamma \|_{L^{\infty} (\{ \tilde{r}_d \leqslant 2 \lambda \})} \leqslant \varepsilon ( \lambda) $,
    
    \item $\| | u | - 1 \|_{L^{\infty} (\{ \tilde{r}_d \geqslant \lambda \})} \leqslant \frac{1}{ \lambda_*} $,
    
    \item $(\tmop{TW}_c) (u) = 0$ for some $c > 0$ such that $| d c - 1 | \leqslant \varepsilon ( \lambda)  $,
  \end{enumerate}
  then, there exist $X \in \mathbb{R} $ and $ \gamma \in \mathbb{R}$ such that 
  $u = Q_c (. - X \vec{e}_2) \ex^{i \gamma}$, where $Q_c$ is defined in Theorem
  \ref{th1}.
\end{proposition}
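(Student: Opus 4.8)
The plan is to reduce the statement to a uniqueness result in a Lyapounov--Schmidt framework and then invoke the coercivity estimates obtained in \cite{Chi_Pac_2}. First I would use the hypotheses (1)--(4) to locate $u$ relative to the branch $Q_c$: fix the speed $c$ (which is given), let $d_c$ be the corresponding distance from Theorem \ref{th1}, and note that $|dc-1|\leqslant\varepsilon$ together with $d_c=\frac{1+o(1)}{c}$ forces $d$ to be close to $d_c$ for $c$ small. Up to a translation $X\vec e_2$ in the $x_2$ direction (which is legitimate since $(\tmop{TW}_c)$ is invariant under $x_2$-translations and preserves the $x_1$-evenness), and up to a global phase $\ex^{i\gamma}$, I would write $u=Q_c+\varphi$ and show, using the smallness of $\Gamma$ on the bounded set $\{\tilde r_d\leqslant 2\lambda\}$, the control on $|u|-1$ away from the vortices, and elliptic regularity for $(\tmop{TW}_c)$, that $\varphi$ is small in the relevant weighted norm — small enough to land in the neighbourhood on which the coercivity of the linearised operator $L_{Q_c}$ was established in \cite{Chi_Pac_2}. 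The evenness in $x_1$ is crucial here: it kills the translation direction $\partial_{x_1}Q_c$ from the kernel, so that after also fixing the $x_2$-translation and phase we are orthogonal to the whole (finite-dimensional) kernel of $L_{Q_c}$.

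Next I would set up the fixed-point/variational argument proper. Writing the equation $(\tmop{TW}_c)(Q_c+\varphi)=0$ as $L_{Q_c}\varphi = -N(\varphi)$ where $N$ collects the quadratic and cubic terms in $\varphi$, the coercivity result from \cite{Chi_Pac_2} gives an a priori bound $\|\varphi\|\lesssim \|L_{Q_c}\varphi\|$ on the subspace orthogonal to $\ker L_{Q_c}$ (after the symmetry reductions this is the full relevant space), while the nonlinearity satisfies $\|N(\varphi)\|\lesssim \|\varphi\|^2$ for $\varphi$ small. Combining these, $\|\varphi\|\lesssim \|\varphi\|^2$, which for $\varphi$ sufficiently small forces $\varphi=0$, hence $u=Q_c(\cdot-X\vec e_2)\ex^{i\gamma}$. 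Some care is needed because the norm $\|\cdot\|_p$ from Theorem \ref{th1} and the (weighted, energy-type) norm in which coercivity holds are not the same; I would interpolate or use the explicit decay of $Q_c-1$ and $\Gamma$ to pass between them, and I would handle the far field (where $|u|$ is close to $1$ and one works with the phase/modulus decomposition) separately from the core region near the two vortices.

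The main obstacle, I expect, is exactly this matching of functional frameworks: translating the qualitative hypotheses (1)--(4), which are stated in $L^\infty$ on explicit regions, into the precise norm in which the coercivity inequality of \cite{Chi_Pac_2} is quantitative, and doing so uniformly as $c\to0$ (equivalently as the inter-vortex distance $\to\infty$), since the constants in both the coercivity estimate and the nonlinear estimate a priori degenerate in that limit. A secondary subtlety is ensuring that the three reductions — $x_1$-evenness (given), $x_2$-translation, and global phase — between them exhaust the kernel of $L_{Q_c}$, i.e. that no extra neutral direction survives; this is where one must quote the precise description of $\ker L_{Q_c}$ from \cite{Chi_Pac_2} rather than just its coercivity. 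Once these two points are in place, the contraction/rigidity step is routine.
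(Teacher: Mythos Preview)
Your proposal has a genuine gap at the step where you claim that $\varphi$ can be shown to be small in the coercivity norm. Hypothesis~3 controls only $\big||u|-1\big|$ away from the vortices, not the phase of $u$. In the decomposition $u=Qe^\psi$ that one uses in the far field, this means $\mathfrak{Re}(\psi)$ is small but $\mathfrak{Im}(\psi)$ is merely bounded, with no bound depending on $\varepsilon$. Since the norm $\|\cdot\|_{H_Q^{\exp}}$ contains $\int |\psi|^2/(\tilde r^2\ln^2\tilde r)$, you cannot conclude that $\|\varphi\|_{H_Q^{\exp}}$ is small --- it may be arbitrarily large. The contraction step $\|\varphi\|\lesssim\|\varphi\|^2\Rightarrow\varphi=0$ therefore fails outright. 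There is a second, related issue: the coercivity result from \cite{Chi_Pac_2} requires \emph{three} orthogonality conditions in the symmetric space (against $\partial_{x_2}Q$, $iQ$, and $\partial_c Q$), the last of which is not a kernel direction but a negative one. Fixing only $X$ and $\gamma$ while keeping the speed equal to $c$ leaves you with no parameter to enforce the third condition, so the coercivity inequality is not available.

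The paper's proof addresses both points. It modulates on three parameters $(c'',X,\gamma)$, including the speed, so that all three orthogonality conditions hold; this produces a source term proportional to $(c-c'')$ in the equation for $\varphi$, and a separate argument bounds $|c-c''|\leqslant K\sqrt{c''}\,\|\varphi\|_{H_Q^{\exp}}$. Taking the scalar product of the equation with $\varphi$ then gives $B_Q^{\exp}(\varphi)\geqslant K\|\varphi\|_{H_Q^{\exp}}^2$ on one side, and one must show the remaining terms are $\leqslant K\nu\|\varphi\|_{H_Q^{\exp}}^2$ with $\nu$ small but \emph{without} assuming $\|\varphi\|_{H_Q^{\exp}}$ small. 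The dangerous nonlinear piece is $\int|Q|^2\eta\,\mathfrak{Re}(\psi)\,\mathfrak{Im}(\Delta\psi)\,\mathfrak{Im}(\psi)$, where the factor $\mathfrak{Im}(\psi)$ is only bounded. The paper handles it by an iterative trick: substitute $\mathfrak{Im}(\Delta\psi)$ from the equation, integrate by parts, and obtain a recursion $|A_n|\leqslant 2|A_{n+1}|+(K\nu)^n\|\varphi\|_{H_Q^{\exp}}^2$; taking $n$ large enough (depending on $\|\mathfrak{Im}(\psi)\|_{L^\infty}$, hence on $u$ itself) absorbs the uncontrolled factor. This device --- using smallness in one norm to compensate for mere boundedness in another, via an induction whose depth depends on the solution --- is the new idea here relative to \cite{Chi_Pac_2}, and it is precisely what your proposal is missing.
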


\begin{remark} In view of the symmetry assumption, we may replace the second hypothesis by
\[
 \| u - V_1 (\cdot - d \overrightarrow{e_1}) \| _{L^\infty ( B ( d \overrightarrow{e_1} , 2 \lambda )) } 
 \ls \varepsilon (\lambda ) .
\]
\end{remark}

We will discuss the main arguments of the proof of Proposition \ref{locC1} in
the next section. This result can be seen as a local uniqueness result, but
the uniqueness turns out to be in a rather large class of function. Indeed, two functions that
satisfies the hypothesis of Proposition \ref{locC1} can be very far from each
other, for two main reasons. First, in condition {\it 2.}, the vortices that compose one of them
have no reason to be close to the ones composing the other function since $d$ depends on $u$: 
their centers $\pm d \overrightarrow{e_1}$ only need to satisfy $| d c - 1 |
\leqslant \varepsilon (\lambda)$, but for instance both $d = \frac{1}{c}$ and $d =
\frac{1}{c} + \frac{1}{\sqrt{c}}$ satisfy these conditions for $c > 0$ small enough at fixed $\lambda$. 
Secondly, we only have that far from the vortices, the modulus is close to $1$ from condition {\it 3.}, but we have no information 
on the phase. The proof of Proposition \ref{locC1} will rely on methods used in \cite{Chi_Pac_2} 
in order to prove some coercivity, and we shall need to be very precise 
to take into account all these cases.

A way to see that Proposition \ref{locC1} is a strong unicity result is that it implies the following local
uniqueness result in $L^{\infty}$ for even functions in $x_1$.

\begin{corollary}
  \label{CP3cor}There exist $c_0, \varepsilon > 0$ such that, for $0 < c < c_0$, 
  if a function $u \in \mathcal{E}$ satisfies 
  \begin{enumerate}
  \item 
  $\forall (x_1, x_2) \in \mathbb{R}^2, u (x_1, x_2) = u (- x_1, x_2)$,
  \item $(\tmop{TW}_c) (u) = 0$ in the distributional sense,
  \item $ \| u - Q_c \|_{L^{\infty} (\mathbb{R}^2)} \leqslant \varepsilon $, 
  \end{enumerate}
  then, there exist $X \in \mathbb{R}$ and $ \gamma \in \mathbb{R}$ such that 
  $u = Q_c (. - X \vec{e}_2) \ex^{i \gamma}$.
\end{corollary}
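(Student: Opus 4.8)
The plan is to verify that the hypotheses of Corollary \ref{CP3cor} imply those of Proposition \ref{locC1}, so that the conclusion is immediate. The only nontrivial point is to produce the vortex decomposition and the quantitative bounds in conditions \emph{2.}, \emph{3.}, \emph{4.} of Proposition \ref{locC1} from the single bound $\|u-Q_c\|_{L^\infty(\R^2)}\ls\e$ together with $(\tmop{TW}_c)(u)=0$.

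First I would use Theorem \ref{th1} to expand $Q_c=V_1(\cdot-d_c\vec e_1)V_{-1}(\cdot+d_c\vec e_1)+\Gamma_c$ with $d_c=\frac{1+o_{c\to0}(1)}{c}$ and $\|\Gamma_c\|_p=o_{c\to0}(1)$ for some fixed $p\in(2,\infty)$. Choosing $c_0$ small, this forces $d_c>1/\e$ and $|d_cc-1|\ls\e/2$, which (taking $d=d_c$) immediately gives the constraint on $d$ in condition \emph{2.} and, since $u$ solves $(\tmop{TW}_c)(u)=0$ with the \emph{same} $c$, the constraint $|dc-1|\ls\e$ in condition \emph{4.}. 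Writing $\Gamma:=u-V_1(\cdot-d\vec e_1)V_{-1}(\cdot+d\vec e_1)=(u-Q_c)+\Gamma_c$, on the bounded set $\{\tilde r_d\ls2\lambda\}$ we have $\|u-Q_c\|_{L^\infty}\ls\e$ by hypothesis; the term $\Gamma_c$ needs an $L^\infty$ bound on that set, which follows from elliptic regularity for the equation $(\tmop{TW}_c)$ satisfied by $Q_c$ (bootstrapping the $X_p$ bound $\|\Gamma_c\|_p=o_{c\to0}(1)$ up to $C^1$, uniformly on compact sets, since the coefficients are controlled there). Shrinking $c_0$ makes $\|\Gamma_c\|_{L^\infty(\{\tilde r_d\ls2\lambda\})}$ as small as we like, so $\|\Gamma\|_{L^\infty(\{\tilde r_d\ls2\lambda\})}\ls\e$, giving condition \emph{2.} For condition \emph{3.}, on $\{\tilde r_d\gs\lambda\}$ we write $||u|-1|\ls||u|-|Q_c||+||Q_c|-1|\ls\|u-Q_c\|_{L^\infty}+||Q_c|-1|$; the first term is $\ls\e$, and $||Q_c|-1|$ is small on $\{\tilde r_d\gs\lambda\}$ because away from the two vortex cores $|Q_c|$ is close to $1$ (each $|V_{\pm1}|=\rho_1\to1$, and $\Gamma_c$ is small), so after shrinking $c_0$ once more we obtain $\||u|-1\|_{L^\infty(\{\tilde r_d\gs\lambda\})}\ls\e$. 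Condition \emph{1.} (evenness in $x_1$) is inherited directly from the hypothesis on $u$. Hence Proposition \ref{locC1} applies and yields $X\in\R$, $\gamma\in\R$ with $u=Q_c(\cdot-X\vec e_2)\ex^{i\gamma}$.

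The main obstacle I anticipate is the regularity/smallness claim for $\Gamma_c$ in $L^\infty$ on the compact set $\{\tilde r_d\ls2\lambda\}$: Theorem \ref{th1} only provides control in the $X_p$ norm (an $L^p$–type norm), not in $L^\infty$. Upgrading this requires an elliptic bootstrap using the equation $(\tmop{TW}_c)(Q_c)=0$ — rewriting it as a perturbation of the equation satisfied by the product of vortices and using that on the fixed bounded region the nonlinear and first-order terms have coefficients bounded independently of small $c$ — to convert $o_{c\to0}(1)$ in $L^p$ into $o_{c\to0}(1)$ in $C^0_{\mathrm{loc}}$. This is standard but must be done with the $c$-uniformity made explicit; everything else is a matter of choosing the constants $c_0,\e$ in the right order relative to the $\e,\lambda$ furnished by Proposition \ref{locC1}.
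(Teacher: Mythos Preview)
Your proposal is correct and follows essentially the same route as the paper: take $d=d_c$ and verify the four hypotheses of Proposition \ref{locC1} via the triangle inequality and the known estimates on $Q_c$. The ``main obstacle'' you flag is not actually an obstacle, however: Theorem \ref{th1} explicitly allows $p=+\infty$ (note the range is $2<p\leqslant+\infty$), which gives $\|\Gamma_c\|_{L^\infty(\mathbb{R}^2)}=o_{c\to 0}(1)$ directly, so no elliptic bootstrap from $X_p$ to $L^\infty$ is needed.
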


We may now state our main result. It establishes that any travelling wave solution 
which is even in $ x_1 $ and within $\mathcal{O} (1) $ of the minimizing energy must be, 
for large momentum, the $ Q_c $ travelling wave constructed in Theorem \ref{th1}, 
up to the natural translation and phase invariances.

\begin{theorem}
  \label{th8}
  For any $ \Lambda_0 > 0$ there exists $ \mathfrak{p}_0 (\Lambda_0) > 0$ such that, 
  if $u \in \mathcal{E}$ satisfies
 \begin{enumerate}
 \item $ \forall (x_1, x_2) \in \mathbb{R}^2, u (x_1, x_2) = u (- x_1, x_2)$,
 \item $ (\tmop{TW}_c) (u) = 0$ for some $c > 0$,
 \item $ P_2 (u) \gs \mathfrak{p}_0 (\Lambda_0) $,
 \item $ E (u) \leqslant 2 \pi \ln P_2 (u) + \Lambda_0 $,
 \end{enumerate} 
  then, there exist $X \in \mathbb{R} $ and $ \gamma \in \mathbb{R} $ such that
  \[ u = Q_c (. - X \vec{e}_2) \ex^{i \gamma}, \]
  where $Q_c$ is defined in Theorem \ref{th1}. 
  In particular, $ P_2( u) = \mathcal{P} ( c ) $ (where $\mathcal{P}$ is defined in Theorem \ref{th1}). 
\end{theorem}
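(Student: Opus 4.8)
The plan is to verify that a travelling wave $u$ satisfying the four hypotheses falls into the class of functions covered by Proposition \ref{locC1}, and then conclude by that result. The argument has two main parts: first, using the energy and momentum bounds together with the results of \cite{Bet_Gra_Saut} and \cite{Chi_Mar} (or rather the asymptotic analysis underlying Theorem \ref{th1}), deduce that $u$ is \emph{quantitatively} close to a pair of well-separated $\pm 1$ vortices; second, translate this closeness into the precise hypotheses 2--4 of Proposition \ref{locC1}.

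The key steps, in order, are as follows. \textbf{Step 1 (speed is small).} From $P_2(u) \gs \mathfrak{p}_0(\Lambda_0)$ and the Pohozaev-type identities relating $E(u)$, $P_2(u)$ and the speed $c$ for solutions of $(\mathrm{TW}_c)$, together with the upper bound $E(u) \ls 2\pi \ln P_2(u) + \Lambda_0$ from hypothesis 4 and the lower bound $E_{\min}(\mathfrak p)$ available from Proposition \ref{propriete_Emin}, conclude that $c = c(u)$ is small, with $c \to 0$ as $\mathfrak p_0 \to \infty$, and in fact $c \simeq 2\pi / \mathfrak p$. \textbf{Step 2 (two vortices of degree $\pm 1$).} Using that $E(u)$ is within $\Lambda_0$ of $2\pi \ln P_2(u) \simeq 2\pi \ln(1/c)$, which is (to leading order) exactly the energy of a pair of vortices at mutual distance $\sim 1/c$, run a concentration/quantization argument (Jerrard--Sandier type lower bounds for the Ginzburg--Landau energy, as in \cite{Bet_Gra_Saut}, \cite{Chi_Mar}) to show that $|u|$ is close to $1$ away from exactly two small regions, around which $u$ has degrees $+1$ and $-1$; by the evenness hypothesis 1 these sit at $\pm d\vec e_1$ for some $d$ with $|dc - 1| \ls \e$. \textbf{Step 3 (local profile near a vortex).} Near $d\vec e_1$, use elliptic regularity for $(\mathrm{TW}_c)(u)=0$ and the smallness of $c$ to upgrade the degree information to $C^0$ (indeed $L^\infty$ on compact sets) convergence of $u$ to the vortex profile $V_1(\cdot - d\vec e_1)$, giving $\|\Gamma\|_{L^\infty(\{\tilde r_d \ls 2\lambda\})} \ls \e$ with $\Gamma \assign u - V_1(x - d\vec e_1)V_{-1}(x + d\vec e_1)$. \textbf{Step 4 (modulus away from vortices).} Combine Step 2 with a clearing-out / $\eta$-ellipticity argument to get $\||u| - 1\|_{L^\infty(\{\tilde r_d \gs \lambda\})} \ls \e$. \textbf{Step 5 (apply Proposition \ref{locC1}).} All four hypotheses of Proposition \ref{locC1} now hold for $\mathfrak p_0$ large enough (hence $\e$ small, $d$ large), so there exist $X \in \R$, $\gamma \in \R$ with $u = Q_c(\cdot - X\vec e_2)\ex^{i\gamma}$; the identity $P_2(u) = \mathcal P(c)$ then follows from $P_2(Q_c) = \mathcal P(c)$ and the translation/phase invariance of $P_2$.

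The main obstacle is \textbf{Step 2}: extracting, from only an $\mathcal O(1)$ (not $o(1)$) excess over the two-vortex energy $2\pi\ln(1/c)$, the rigid conclusion that there are exactly two vortices, each of degree exactly $\pm 1$, located essentially at $\pm d\vec e_1$ with $|dc-1|$ small. One must rule out configurations with more vortices (whose total energy could a priori still sit under the bound because of cross terms and the slow logarithmic growth), pinning down both the number and the degrees; this is where the sharp lower bounds for the renormalized Ginzburg--Landau energy and the interaction (dipole) expansion — together with the constraint coming from $(\mathrm{TW}_c)$ and the momentum identity — must be pushed to a quantitative level. The evenness hypothesis helps considerably, since it forces any vortex configuration to be symmetric and thus reduces the bookkeeping, but the quantization step remains the delicate point. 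Steps 3--4 are then comparatively routine elliptic estimates, and Step 5 is immediate from Proposition \ref{locC1}.
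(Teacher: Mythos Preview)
Your outline is essentially the paper's approach: Section~\ref{sec3} verifies the hypotheses of Proposition~\ref{locC1} via rescaling, clearing-out (Theorem~\ref{clearingout}), Jacobian concentration (Theorem~\ref{Jacob}), Sandier's lower bounds (Theorem~\ref{Th_Sandier_lower}), and a blow-up classification (Proposition~\ref{classif_zoom}), then concludes by Proposition~\ref{locC1}.

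One ordering issue worth flagging: in your Step~1 you state $c \simeq 2\pi/\mathfrak p$ as if it follows directly from Pohozaev plus the energy bounds, but in the paper the \emph{precise} asymptotic $\mathfrak p\, c_{\mathfrak p} \to 2\pi$ (equivalently $|dc-1|\le\varepsilon$, hypothesis~4 of Proposition~\ref{locC1}) is obtained only \emph{after} the full vortex analysis --- it comes from the Pohozaev identity $c\mathfrak p = \tfrac12\int(1-|u|^2)^2$ combined with a Hopf-differential computation (subsection~\ref{sec:cvhatx}, Step~2) that pins down $\int(1-|u|^2)^2 \to 4\pi$. Initially one only has the rough bound $c \lesssim (\ln\mathfrak p)/\mathfrak p$. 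So your Steps~1--4 are not quite sequential: the sharp speed estimate and the vortex localization feed into each other.
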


Section \ref{sec3} is devoted to the proof of this result. We show there that a function satisfying the hypothesis of Theorem \ref{th8} also satisfies the hypothesis of Proposition \ref{locC1}.
Our result applies in particular to the constraint minimizers for the problem $ E_{\min} (\mathfrak{p}) $, 
for large $ \mathfrak{p} $. 

\begin{corollary} 
\label{coro_bore}
There exist $ \mathfrak{p}_0 > 0 $ such that, for any $ \mathfrak{p} \gs \mathfrak{p}_0 $, any minimizer $ U $ 
for $ E_{\min} (\mathfrak{p}) $, there exists $ \gamma \in \R $ and $ X \in \R^2 $ such that, 
with $ c = \mathcal{P}^{-1} (\mathfrak{p} ) $,
\[
 U = Q_c ( \cdot - X ) \ex^{ i \gamma } .
\]
Moreover, $ (\tmop{TW}_c) ( U ) = 0 $.
\end{corollary}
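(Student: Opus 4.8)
The plan is to reduce to Theorem \ref{th8}. Let $ U $ be a minimizer for $ E_{\min}(\mathfrak{p}) $ with $ \mathfrak{p} $ large. By Theorem \ref{th_BGS}, up to a translation in $ x_1 $ we may assume $ U $ is even in $ x_1 $, and there is $ c > 0 $ with $ (\tmop{TW}_c)(U) = 0 $ and $ P_2(U) = \mathfrak{p} $. Hence hypotheses 1, 2, 3 of Theorem \ref{th8} hold for $ \mathfrak{p}_0 $ chosen large. For hypothesis 4, we invoke Proposition \ref{propriete_Emin}, which gives $ E(U) = E_{\min}(\mathfrak{p}) \ls 2\pi \ln \mathfrak{p} + K $ for $ \mathfrak{p} \gs 1 $; so the energy bound holds with $ \Lambda_0 = K $, a fixed constant. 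Applying Theorem \ref{th8} with this $ \Lambda_0 $ and $ \mathfrak{p}_0 = \max(\mathfrak{p}_0(K), 1) $, we obtain $ X_0 \in \R $ and $ \gamma \in \R $ with $ U = Q_c(\cdot - X_0 \vec{e}_2)\ex^{i\gamma} $, and in particular $ \mathfrak{p} = P_2(U) = P_2(Q_c) = \mathcal{P}(c) $, so $ c = \mathcal{P}^{-1}(\mathfrak{p}) $ as claimed (this uses that $ \mathcal{P} $ is a diffeomorphism from Theorem \ref{th1}, which forces $ \mathfrak{p} \gs P_2(Q_{c_0}) $, a condition absorbed into enlarging $ \mathfrak{p}_0 $).

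It remains only to account for the initial translation in $ x_1 $ used to make $ U $ even, and to package the two translations into a single $ X \in \R^2 $. Writing $ U = \tilde U(\cdot + a\vec{e}_1) $ where $ \tilde U $ is even in $ x_1 $ and equals $ Q_c(\cdot - X_0\vec{e}_2)\ex^{i\gamma} $, we get $ U = Q_c(\cdot - X_0\vec{e}_2 - a\vec{e}_1)\ex^{i\gamma} = Q_c(\cdot - X)\ex^{i\gamma} $ with $ X = a\vec{e}_1 + X_0\vec{e}_2 \in \R^2 $. The final assertion $ (\tmop{TW}_c)(U) = 0 $ is then immediate, either directly from the hypothesis or because $ Q_c $ solves $ (\tmop{TW}_c) $ and this equation is invariant under translation and phase shift.

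There is essentially no obstacle here: the corollary is a direct specialization of Theorem \ref{th8}, the only inputs being the evenness normalization from Theorem \ref{th_BGS}, the uniform logarithmic energy bound from Proposition \ref{propriete_Emin}, and the parametrization of the branch by momentum from Theorem \ref{th1}. The one point requiring a word of care is verifying that $ \mathfrak{p} $ large enough lies in the range $ [P_2(Q_{c_0}), +\ii[ $ of $ \mathcal{P} $ so that $ \mathcal{P}^{-1}(\mathfrak{p}) $ is well defined; this is guaranteed by taking $ \mathfrak{p}_0 \gs P_2(Q_{c_0}) $ in addition to the previous constraints.
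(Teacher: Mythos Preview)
Your proof is correct and follows essentially the same approach as the paper: translate in $x_1$ to get evenness via Theorem \ref{th_BGS}, use Proposition \ref{propriete_Emin} for the energy bound, apply Theorem \ref{th8}, and read off $c = \mathcal{P}^{-1}(\mathfrak{p})$ from $P_2(U) = \mathcal{P}(c)$. Your extra care about the range of $\mathcal{P}$ and the bookkeeping of the two translations is welcome; note only a harmless sign slip in the final packaging (with your conventions one gets $X = X_0\vec{e}_2 - a\vec{e}_1$ rather than $a\vec{e}_1 + X_0\vec{e}_2$), which does not affect the existence statement.
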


\begin{proof}
By a first translation in $ x_1 $, we may assume, by Theorem \ref{th_BGS}, that this 
minimizer $U$ is even in $ x_1 $. By Proposition \ref{propriete_Emin}, the last hypothesis 4 of 
Theorem \ref{th8} is satisfied hence we may translate in $x_2 $ and use phase shift and 
get that this minimizer $U$ is $ Q_c $. Necessarily, $P_2 ( U) = \mathfrak{p} = P_2 (Q_c ) $, 
thus $ c = \mathcal{P}^{-1} (\mathfrak{p} ) $.
\end{proof}

Theorem \ref{th2} is a direct consequence of this corollary.
This allows to derive several interesting consequences on the function $ E_{\min} $. 
This also shows that the branch of Theorem \ref{th1} coincides with the global
energy minimizer at fixed momentum (up to translation and phase shift).

\begin{theorem}
  \label{th9}
  There exists $ c_* > 0 $ such that, for $ 0 < c \ls c_* $, $ Q_c $ is a minimizer for 
  $ E_{\min} ( P_2 ( Q_c ) ) $. Moreover, there exists $ \mathfrak{p}_0 > 0 $ such that the following statements hold.

 \begin{enumerate}
    \item The function $ E_{\min} $ is of class $ C^2 $ in $ [ \mathfrak{p}_0 , +\ii [ $ and 
    \[ 
    0 > E_{\min}'' ( \mathfrak{p} ) \sim - \frac{2\pi}{\mathfrak{p}^2 } , 
    \quad \quad 
    0 < E_{\min}' ( \mathfrak{p} ) \sim \frac{2\pi}{\mathfrak{p} } , 
    \quad \quad 
    E_{\min} ( \mathfrak{p} ) = 2\pi \ln \mathfrak{p} + \mathcal{O}(1)
    . \]
    \item For $ \mathfrak{p} \gs \mathfrak{p}_0 $
    $ \mathcal{S}_\mathfrak{p} = \{ Q_{\mathcal{P}^{-1} (\mathfrak{p})} (. - X) \ex^{i \gamma} , \, \gamma \in \R , \, X \in \R^2 \} $, 
    hence, for any $ \mathfrak{p} \gs \mathfrak{p}_0 $, $ E_{\min}' ( \mathfrak{p} ) $ is the speed of any 
    minimizer for $ E_{\min} ( \mathfrak{p} ) $.
    \item For any $ \mathfrak{p} \gs \mathfrak{p}_0 $, $ Q_{ \mathcal{P}^{-1} ( \mathfrak{p} ) } $ is orbitally stable 
    for the semi-distance $ D_0 $ (or, equivalently, for $ 0 < c \ls c_* $, $ Q_c $ is orbitally stable 
    for the semi-distance $ D_0 $).
    \item For $ \mathfrak{p} \gs \mathfrak{p}_0 $ and any minimizer $u$ for $ E_{\min} (\mathfrak{p} ) $, then, 
    up to a space translation and a phase shift, $u$ enjoys the symmetry
    \[
    \forall (x_1, x_2 ) \in \R^2 , \quad \quad 
    u ( x_1, -x_2 ) = \bar{u} (x_1, x_2 ) 
    \]
    in addition to the symmetry in $ x_1 $.
    
    \item For any $ \Lambda > 0$, there exists $ \mathfrak{p}_0 (\Lambda) > 0$ such that, 
    if $u \in \mathcal{E}$ satisfies $(\tmop{TW}_c) (u) = 0$ for some $c > 0$,
    $ P_2 (u) \geqslant \mathfrak{p}_0 (\Lambda )$ and $u$ is even in $x_1$, then either
    $ E (u) = E_{\min} ( P_2 (u) ) $, or $E (u) \geqslant E_{\min} (P_2 (u) ) + \Lambda $.
     \end{enumerate}
\end{theorem}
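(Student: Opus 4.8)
The plan is to derive Theorem~\ref{th9} from Corollary~\ref{coro_bore} (uniqueness of the constrained minimizers), Theorem~\ref{th_CM} (compactness and orbital stability of $\mathcal{S}_\mathfrak{p}$), Theorem~\ref{th1} (the smooth branch $c \mapsto Q_c$ and the diffeomorphism $\mathcal{P}$), and Proposition~\ref{propriete_Emin}. The first step is that $Q_c$ is itself a minimizer for small $c$: let $\mathfrak{p}_0$ be as in Corollary~\ref{coro_bore} (enlarged so that $\mathfrak{p}_0 \gs P_2(Q_{c_0})$) and set $c_* \assign \mathcal{P}^{-1}(\mathfrak{p}_0)$; for $0 < c \ls c_*$ put $\mathfrak{p} \assign P_2(Q_c) = \mathcal{P}(c) \gs \mathfrak{p}_0$, take a minimizer $u_\mathfrak{p}$ for $E_{\min}(\mathfrak{p})$ (Theorem~\ref{th_CM}), and note that Corollary~\ref{coro_bore} forces $u_\mathfrak{p} = Q_c(\cdot - X)\ex^{i\gamma}$ for some $X \in \R^2$, $\gamma \in \R$, with speed $\mathcal{P}^{-1}(\mathfrak{p}) = c$. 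Since $E$ and $P_2$ are invariant under translations and phase shifts, $Q_c$ is a minimizer for $E_{\min}(P_2(Q_c))$. Part~2 is then Corollary~\ref{coro_bore} reread --- $\mathcal{S}_\mathfrak{p}$ is exactly the orbit of $Q_{\mathcal{P}^{-1}(\mathfrak{p})}$ under translations and phase, every element being a minimizer --- and Part~3 follows by combining this with the orbital stability of $\mathcal{S}_\mathfrak{p}$ from Theorem~\ref{th_CM}.

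For Part~1, on $[\mathfrak{p}_0, +\ii[$ we have $E_{\min}(\mathfrak{p}) = E(Q_{\mathcal{P}^{-1}(\mathfrak{p})})$ by Part~2. By \cite{Chi_Pac_1}, \cite{Chi_Pac_2} the map $c \mapsto E(Q_c)$ is $C^1$ on $]0,c_0[$ and, because $Q_c$ solves $(\tmop{TW}_c)(Q_c) = 0$, i.e.\ is a critical point of $E - c P_2$, it satisfies the Hamilton-type identity $\frac{d}{dc} E(Q_c) = c\,\frac{d}{dc} P_2(Q_c) = c\,\mathcal{P}'(c)$ (this is where the delicate definition of the momentum in the energy space enters). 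As $\mathcal{P}$ is a $C^1$ diffeomorphism with $\mathcal{P}' \neq 0$, composing with $\mathfrak{p} \mapsto c = \mathcal{P}^{-1}(\mathfrak{p})$ gives $E_{\min}'(\mathfrak{p}) = c\,\mathcal{P}'(c)\,(\mathcal{P}^{-1})'(\mathfrak{p}) = \mathcal{P}^{-1}(\mathfrak{p})$; since $\mathcal{P}^{-1}$ is $C^1$, $E_{\min}$ is $C^2$ on $[\mathfrak{p}_0, +\ii[$ with $E_{\min}''(\mathfrak{p}) = (\mathcal{P}^{-1})'(\mathfrak{p}) = 1/\mathcal{P}'(\mathcal{P}^{-1}(\mathfrak{p}))$. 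The asymptotics come from $\mathcal{P}'(c) = (-2\pi + o_{c \to 0}(1))/c^2$ (Theorem~\ref{th1}): integrating gives $\mathcal{P}(c) = (2\pi + o_{c\to 0}(1))/c$, hence $\mathcal{P}^{-1}(\mathfrak{p}) \sim 2\pi/\mathfrak{p}$, so $E_{\min}'(\mathfrak{p}) \sim 2\pi/\mathfrak{p} > 0$ and $E_{\min}''(\mathfrak{p}) = 1/\mathcal{P}'(c) \sim -c^2/(2\pi) \sim -2\pi/\mathfrak{p}^2 < 0$, while $E_{\min}(\mathfrak{p}) = 2\pi\ln\mathfrak{p} + \mathcal{O}(1)$ follows from the energy expansion $E(Q_c) = 2\pi\ln(1/c) + \mathcal{O}(1)$ of \cite{Chi_Pac_1} and $c \sim 2\pi/\mathfrak{p}$ (consistently with the upper bound of Proposition~\ref{propriete_Emin}). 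The last sentence of Part~2 is now immediate, as any minimizer equals some $Q_c(\cdot - X)\ex^{i\gamma}$ with $c = \mathcal{P}^{-1}(\mathfrak{p}) = E_{\min}'(\mathfrak{p})$.

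For Part~4, by Part~2 any minimizer $u$ lies in the orbit of $Q_c = V_1(\cdot - d_c\vec{e}_1) V_{-1}(\cdot + d_c\vec{e}_1) + \Gamma_c$, so it suffices to produce a single representative of that orbit that is even in $x_1$ and satisfies $v(x_1,-x_2) = \overline{v}(x_1,x_2)$. Set $w(x_1,x_2) \assign \overline{Q_c}(x_1,-x_2)$; a direct computation gives $(\tmop{TW}_c)(w)(x_1,x_2) = \overline{(\tmop{TW}_c)(Q_c)(x_1,-x_2)} = 0$, $w \in \mathcal{E}$ with $E(w) = E(Q_c)$, and $w$ even in $x_1$. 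Using $V_{-1} = \overline{V_1}$ and $V_1(y_1,-y_2) = \overline{V_1}(y_1,y_2)$, the ansatz $V_1(\cdot - d_c\vec{e}_1) V_{-1}(\cdot + d_c\vec{e}_1)$ is unchanged under $(x_1,x_2) \mapsto (x_1,-x_2)$ composed with conjugation, so $w - Q_c$ involves only $\Gamma_c$ and its reflection--conjugate, whence $\|w - Q_c\|_{L^\ii} \ls 2\|\Gamma_c\|_{L^\ii} \lesssim \|\Gamma_c\|_p = o_{c\to 0}(1)$. For $c$ small this is $\ls \e$, so Corollary~\ref{CP3cor} gives $w = Q_c(\cdot - X\vec{e}_2)\ex^{i\gamma}$, i.e.\ $\overline{Q_c}(x_1,-x_2) = Q_c(x_1,x_2-X)\ex^{i\gamma}$; a short computation then shows that $\tilde{Q}(x) \assign Q_c(x - \tfrac{X}{2}\vec{e}_2)\ex^{i\gamma/2}$ is even in $x_1$, satisfies $\tilde{Q}(x_1,-x_2) = \overline{\tilde{Q}}(x_1,x_2)$, and is a translation (in $x_2$) and phase shift of $Q_c$, hence of $u$ --- the desired representative.

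For Part~5, assume $u$ is even in $x_1$, solves $(\tmop{TW}_c)(u) = 0$, has $P_2(u) \gs \mathfrak{p}_0(\Lambda)$, and $E(u) < E_{\min}(P_2(u)) + \Lambda$. By Proposition~\ref{propriete_Emin}, $E(u) < 2\pi\ln P_2(u) + K + \Lambda$, so with $\Lambda_0 \assign K+\Lambda$ and $\mathfrak{p}_0(\Lambda) \assign \max(\mathfrak{p}_0(\Lambda_0), \mathfrak{p}_0)$, Theorem~\ref{th8} yields $u = Q_c(\cdot - X\vec{e}_2)\ex^{i\gamma}$, hence $E(u) = E(Q_c) = E_{\min}(P_2(Q_c)) = E_{\min}(P_2(u))$ by the first step; as always $E(u) \gs E_{\min}(P_2(u))$, this is the dichotomy. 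The genuine obstacles are not in this section: everything is bookkeeping over Corollary~\ref{coro_bore}, Theorem~\ref{th8} and Theorem~\ref{th_CM}. The only points needing care are the $C^1$-dependence of $E(Q_c)$ on $c$ together with the Hamilton identity --- to be imported from \cite{Chi_Pac_1}, \cite{Chi_Pac_2}, with attention to the energy-space definition of the momentum --- and the rigidity step in Part~4 converting Corollary~\ref{CP3cor} into the conjugation symmetry.
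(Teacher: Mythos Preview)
Your argument is correct and, for the opening claim and Parts~1, 2, 3 and 5, follows exactly the paper's route: existence of a minimizer plus Corollary~\ref{coro_bore} gives that $Q_c$ minimizes; then $E_{\min}(\mathfrak{p})=E(Q_{\mathcal{P}^{-1}(\mathfrak{p})})$, the Hamilton relation $\tfrac{d}{dc}E(Q_c)=c\,\tfrac{d}{dc}P_2(Q_c)$ and the $C^1$ diffeomorphism $\mathcal{P}$ yield $E_{\min}'(\mathfrak{p})=\mathcal{P}^{-1}(\mathfrak{p})$ and the asymptotics; Parts~2, 3, 5 are read off from Corollary~\ref{coro_bore}, Theorem~\ref{th_CM} and Theorem~\ref{th8} respectively. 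One small difference: for $E_{\min}(\mathfrak{p})=2\pi\ln\mathfrak{p}+\mathcal{O}(1)$ the paper does not integrate the derivative but quotes the upper bound of Proposition~\ref{propriete_Emin} together with a Sandier-type lower bound (Theorem~\ref{Th_Sandier_lower}); your use of the explicit expansion of $E(Q_c)$ from \cite{Chi_Pac_1} is an equally valid shortcut.

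The only substantive divergence is Part~4. The paper dispatches it in one line by recalling (see subsection~\ref{CP3sym}) that $Q_c$ is \emph{constructed} in \cite{Chi_Pac_1} with the symmetry $Q_c(x_1,-x_2)=\overline{Q_c(x_1,x_2)}$, so every element of the orbit inherits it after recentering. You instead re-derive this symmetry from the local uniqueness Corollary~\ref{CP3cor}: setting $w(x_1,x_2)=\overline{Q_c}(x_1,-x_2)$, checking $(\mathrm{TW}_c)(w)=0$, $\|w-Q_c\|_{L^\infty}\le 2\|\Gamma_c\|_{L^\infty}=o_{c\to0}(1)$, and then solving for the translation/phase to build a symmetric representative. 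Your computation is correct, but unnecessary once one remembers that the branch was built symmetric; conversely, your argument has the mild advantage of being self-contained and of showing that the conjugation symmetry is a \emph{consequence} of uniqueness rather than an input from the construction.
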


\begin{proof} By Theorems \ref{th_BGS} and \ref{th_CM}, we have existence of at least one minimizer 
$ U_\mathfrak{p} $ for $ E_{\min} (\mathfrak{p} ) $, whatever is $ \mathfrak{p} > 0 $. 
For large $\mathfrak{p} $, by applying Corollary \ref{coro_bore}, we have $ U_\mathfrak{p} = Q_c ( \cdot - X ) \ex^{i\gamma} $ 
for some $ X \in \R^2 $ and $ \gamma \in \R $, thus proving that $ Q_c $ is a minimizer for $ E_{\min} (\mathfrak{p} ) $ 
and that $ P_2 ( Q_c ) = \mathcal{P} ( c) = \mathfrak{p} $. 

For {\it 1.}, it suffices to notice that, in view of Corollary \ref{coro_bore} applied 
to any minimizer (we have existence by Theorems \ref{th_BGS} and \ref{th_CM}) 
$ E_{\min} (\mathfrak{p} ) = E ( Q_{ \mathcal{P}^{-1} (\mathfrak{p} ) } ) $. 
We then conclude by using that $ \mathcal{P} $ is a $ C^1 $ diffeomorphism and that 
$ c \mapsto E (Q_c) $ is also of class $ C^1 $ (see \cite{Chi_Pac_2}, Proposition 1.2) 
that $ E_{\min} $ is of class $ C^1 $ in $ [\mathfrak{p}_0 , +\ii [ $ and that
\[
 E_{\min}' (\mathfrak{p} ) 
 = \frac{d}{ dc} E ( Q_{ c } )_{|c = \mathcal{P}^{-1}(\mathfrak{p})} \times \frac{1}{ \mathcal{P}' ( \mathcal{P}^{-1} (\mathfrak{p} )) } 
 = \mathcal{P}^{-1}(\mathfrak{p}) ,
\]
in view of the Hamilton like relation (formally shown in \cite{Jon_Rob} and rigorously proved for the 
branch constructed in Theorem \ref{th1} in \cite{Chi_Pac_2})
\[
 \frac{d}{ dc} E ( Q_{ c } ) = c \frac{d}{ dc} P_2 ( Q_{ c } ) .
\]

Since $ \mathcal{P} $ is a $ C^1 $ diffeomorphism, we deduce that $ E_{\min}' $ is of class $ C^1 $. 
The asymptotics for $ E_{\min}' $ and $ E_{\min}'' $ then follow from Proposition 1.2 in \cite{Chi_Pac_2}. 
Integration would yield $ E_{\min} ( \mathfrak{p} ) \sim 2\pi \ln \mathfrak{p} $, but we may slightly improve 
this estimate. Indeed, Proposition \ref{propriete_Emin} gives $ E_{\min} ( \mathfrak{p} ) \ls 2\pi \ln \mathfrak{p} + \mathcal{O} (1) $, 
and the lower bound is a straightforward consequence of Theorem \ref{Th_Sandier_lower} $ (i) $ and 
the study in subsection \ref{sec:lower}.

Statement {\it 2.} is a rephrasing of Corollary \ref{coro_bore} combined with the existence of at least  
one constrained minimizer. Statement {\it 3.} is then a direct consequence of Theorem \ref{th_CM}. 
Statement {\it 4.} simply follows from the fact that $ Q_c $ enjoys by construction this symmetry 
(see \cite{Chi_Pac_1}). Finally, statement {\it 5.} is also a rephrasing of Theorem \ref{th8}. 
\end{proof}

\begin{remark} 
Concerning the stability stated in statement {\it 3.} in the above theorem, we quote the work \cite{Lin_Zeng}, 
where a linear "spectral" stability result is proved (through {\it ad hoc} hypotheses that have been checked in \cite{Chi_Pac_2}), 
namely that the linearized equation $ i \p_t v = L_{Q_c} (v) $ does not have exponentially growing solutions 
(in $ \dot{H}_ 1(\R^2; \C) $, say). Statement {\it 3.} in the above theorem does not rely on the result in \cite{Lin_Zeng}, 
and is for the nonlinear (orbital) stability (following the Cazenave-Lions approach).
\end{remark}

Let us conclude this section with several comments on our result. 
First, let us explain the relevance of the symmetry hypothesis, namely 
that we restrict to mappings even in $x_1 $. This symmetry is used in the 
coercivity of the branch of Theorem \ref{th1}, along the following arguments. The
quadratic form around the travelling wave $Q_c$ is decomposed in three areas,
close to the two vortices, and far from them. In the latter region, the
coercivity can be shown without any orthogonality condition. Close to the
vortices, the quadratic form is close to the one of a single vortex, that has been
studied in {\cite{delP_Fel_Kow}}. Its coercivity requires three orthogonality
conditions, two for the translation, and one for the phase. Therefore, we can
show the coercivity of the full quadratic form with six orthogonality
conditions, three for each vortex. However, the family of travelling waves
of Theorem \ref{th1} has only five parameters (two for the speed, two for the
translation, and one for the phase). The symmetry is then used to reduce the
problem to three orthogonality conditions into a family with three parameters.
With this symmetry, both orthogonality conditions on the phase for the two
vortices become the same condition. It is possible to prove a coercivity result with
only five orthogonality conditions without symmetry (see \cite{Chi_Pac_2}), 
but then the coercivity constant goes to $0$ when $ c \rightarrow 0 $. 
This would pose a problem for the uniqueness result. The last statement in Theorem 
\ref{th9} shows that, when restricting ourselves to symmetric travelling waves, 
there is an energy threshold under which there is no travelling wave 
except the $Q_c $ branch. 

Secondly, the proof of the fact that $ Q_c $ is a minimizer of the energy for fixed 
momentum relies on the existence of such minimizers. In particular, we have not 
been able to use our coercivity results in \cite{Chi_Pac_2} in order to prove 
directly that $ Q_c $ is orbitally stable (for small $ c$). 

Thirdly, the symmetry in $ x_2 $ for the minimizers (statement {\it 4)} is 
established as a consequence of the uniqueness result and not in itself. 
Notice that the numerical studies in \cite{Jon_Rob}, \cite{Chi_Sch1} and 
\cite{Chi_Sch2} assume the two symmetries.

\subsection{The travelling wave $ Q_c $ and two other variational characterizations}

Before providing other variational characterizations of $ Q_c $, we have to define a 
distance on the energy space $ \mathcal{E} $. One can use (see \cite{Gerard_Cauchy2}) 
\[
 D_\mathcal{E} (\psi_1 , \psi_2 ) \assign 
 \| \psi_1 - \psi_2 \|_{L^2(\R^2) + L^\ii(\R^2) } 
 + \| \nabla \psi_1 - \nabla \psi_2 \|_{L^2(\R^2) } + \| \lvert \psi_1 \rvert - \lvert \psi_2 \rvert \|_{L^2(\R^2)} ,
\]
which is adapted to the Cauchy problem. Actually, we may also use the pseudo-distance\footnote{$D_0 ( \psi_1 , \psi_2 ) $ 
is zero if and only if $ \psi_2 - \psi_1 $ is constant with $ | \psi_1 | -1 = | \psi_2 | -1 \in L^2 (\R^2 ) $.}
\[
 D_0 ( \psi_1 , \psi_2 ) \assign 
 \| \nabla \psi_1 - \nabla \psi_2 \|_{L^2(\R^2) } + \| \lvert \psi_1 \rvert - \lvert \psi_2 \rvert \|_{L^2(\R^2)} ,
\]
Is it shown in \cite{Chi_Mar}, Corollary 4.13 there, that both the energy $E$ and the momentum 
$ P_2 $ are continuous for the distance $ D_\mathcal{E} $, and actually even for the pseudo-distance $ D_0 $.

\bigbreak 

{\bf The travelling wave $ Q_c $ as a mountain pass solution.} 
Thanks to the results in Theorem \ref{th9}, it is easy to show that we have locally, 
near $ Q_c $, a mountain-pass geometry. Indeed, let $ c_* > 0 $ be small, and define
\[
 \Upsilon_{c_*} 
 \assign \{ \upsilon : [-1 , +1 ] \to \BE \ {\rm continuous}, \, v(-1) = Q_{3c_*/2} , \, v(+1) = Q_{c_*/2} \}
\]
the set of continuous paths from $ Q_{3c_*/2} $ to $ Q_{ c_*/2} $ in $ \BE $. Then, we claim that
\begin{equation}
\label{critic_val}
 \inf_{ \upsilon \in \Upsilon_{c_*} } \max_{ t \in [-1,+1] } ( E - c_* P_2 ) ( \upsilon(t) ) 
 = ( E - c_* P_2 ) ( Q_{c_*} ) .
\end{equation}
Indeed, let $ \upsilon \in \Upsilon_{c_*} $. By the intermediate value theorem, there 
exists $ t_* \in [ -1, + 1 ] $ such that $ P_2 ( \upsilon(t) ) = P_2 (Q_{c_*} ) $ ($c \mapsto P_2(Q_c)$ is a $C^1$ function, see 
Proposition 1.2 in \cite{Chi_Pac_2}). Since 
$ Q_{c_*} $ is a minimizer for $ E_{\rm min} ( Q_{c_*} ) $, we infer
\[
 \max_{ t \in [-1,+1] } ( E - c_* P_2 ) ( \upsilon (t) ) 
 \gs E ( v(t_*) ) - c_* P_2 ( Q_{c_*} ) 
 \gs E ( Q_{c_*} ) - c_* P_2 ( Q_{c_*} ) .
\]
Moreover, considering the particular $ \BC^1 $ path $ \upsilon_* : [-1, +1 ] \to \BE $ 
defined by $ \upsilon ( t) \assign Q_{c_* - t c_*/2 } $, we see that
\[
 \frac{d}{dt} ( E - c_* P_2 ) ( \upsilon_* (t) ) 
 = - \frac{c_*}{2} \Big( \frac{d}{dc} E(Q_c) - c_* \frac{d}{dc} P_2(Q_c) \Big)_{|c = c_*- t c_*/2 } 
 = \frac{c_*^2 t }{4} \Big( \frac{d}{dc} P_2(Q_c) \Big)_{|c = c_*- t c_*/2 } 
\]
in view of the Hamilton group relation $ \frac{d}{dc} E(Q_c) = c \frac{d}{dc} P_2(Q_c) $ (see 
Proposition 1.2 in \cite{Chi_Pac_2}). Since $ \frac{d}{dc} P_2(Q_c) < 0 $, we deduce that 
$ ( E - c_* P_2 ) ( \upsilon_*(t) ) $ increases in $ [-1 , 0 ] $ and decreases in $ [0,+1 ] $, hence 
has maximal value $ E ( Q_{c_*} ) - c_* P_2 ( Q_{c_*} ) $, as wished.

Furthermore, by the asymptotics given in \cite{Chi_Pac_2} and the above mentioned 
Hamilton group relation $ \frac{d}{dc} E(Q_c) = c \frac{d}{dc} P_2(Q_c) $, we have
\[
 ( E - c_* P_2 ) ( Q_{c_*} ) - ( E - c_* P_2 ) ( Q_{c_*/2} ) 
 = \int_{c_*/2}^{c_*} (c - c_*) \frac{d}{dc} P_2(Q_c) \, dc
 > 0
\]
since $ c - c_* < 0 $ and $ \frac{d}{dc} P_2(Q_c) < 0 $. Similarly, we prove that 
$ ( E - c_* P_2 ) ( Q_{c_*} ) - ( E - c_* P_2 ) ( Q_{3c_*/2} ) < 0 $.

We now claim that if $ u \in \mathcal{E} $ is such that $ (\tmop{TW}_{c_*}) (u) = 0 $ and
\begin{equation}
\label{critic_val_atteinte}
 ( E - c_* P_2 ) ( u ) 
 = \inf_{ \upsilon \in \Upsilon_{c_*} } \max_{ t \in [-1,+1] } ( E - c_* P_2 ) ( \upsilon(t) ) 
 = ( E - c_* P_2 ) ( Q_{c_*} ) ,
\end{equation}
by \eqref{critic_val}, that is if $u $ is a critical point of $ E - c_* P_2 $ at the good critical value, 
then we must have $ P_2 ( u) = P_2 ( Q_{c_*} ) $. Indeed, by the Pohozaev identity \eqref{Pohoz}, 
we have
\[
 c_* P_2( u) = \frac{1}{2} \int_{ \R^2 } ( 1- \lvert u \rvert^2 )^2 \, dx 
 \gs 0 ,
\]
hence $ P_2 (u) \gs 0 $. Furthermore, we know that $ E_{\min} $ is concave in $ \R_+ $ 
(Proposition \ref{propriete_Emin}), and that $ E_{\min} $ is of class $ C^1 $ and strictly 
concave on $ [ \mathfrak{p}_0, +\ii [ $ (by statement {\it 1.} of Theorem \ref{th9}). 
Therefore, if $ P_2 ( u) \neq P_2 ( Q_{c_*} ) $, then
\begin{align*}
 E(u) \gs E_{\min}(P_2(u)) 
 & > E_{\min} ( P_2 ( Q_{c_*} )) + E_{\min} '( P_2 ( Q_{c_*} ) ) \big( P_2 ( u) - P_2 ( Q_{c_*} ) \big) 
 \\ & = E ( Q_{c_*} ) + c_* \big( P_2 ( u) - P_2 ( Q_{c_*} ) \big) ,
\end{align*}
in contradiction with \eqref{critic_val_atteinte}. 

As a consequence, we have
\[
 E( u) = E ( Q_{c_*} ) = E_{\min} ( P_2 ( u) ) = E_{\min} ( P_2 (Q_{c_*} ) ) ,
\]
implying that $ u $ is a minimizer for $ E_{\min} ( P_2 (Q_{c_*} ) ) $, hence there exist 
$ \gamma \in \R $ and $ X \in \R^2 $ such that $ u = Q_{c_*} ( \cdot - X ) \ex^{i \gamma } $, 
hence proving a uniqueness result for mountain pass type travelling wave solutions. 
However, stating rigorously a useful uniqueness result for this kind of variational solution is not 
so easy: in \cite{Bet_Saut}, the mountain pass is implemented in the space $ 1 + H^1 (\R^2 ) $ 
whereas we know (by the result in \cite{Gra_asy}) that the nontrivial traveling wave 
do not belong to this affine space; in \cite{Bel_Rui}, the solution is constructed by 
working first on $ [-N,+N ] \times \R $ and then passing to the limit, and it is then not 
immediate to compute the functional $ E - c P $ on the solution; in addition, the method 
does not provide easily some explicit bounds on the energy or the momentum. 
We shall then not go further in this discussion even though the previous arguments 
indicate that mountain pass solutions are (at least for small $c$) only the orbit of $Q_c$.

\bigbreak

{\bf The travelling wave $ Q_c $ as a minimizer of $ E - c P_2 $ for fixed kinetic energy.} 
In \cite{Chi_Mar}, for $ \kappa \gs 0 $, the following variational problem is investigated: 
\[
 I_{\min} ( \kappa ) = \inf \Big\{ \frac{1}{4} \int_{\R^2} ( 1 - \lvert v \rvert^2 )^2 \, dx - P_2 (v), \, 
 v \in \mathcal{E} \textrm{ s.t. } \frac{1}{2} \int_{\R^2} | \nabla v |^2 \, d x = \kappa \Big\} .
\]
Any minimizer $v $ for $ I_{\min} ( \kappa ) $ is such that there exists $c >0 $ satisfying 
$ (\tmop{TW}_{c}) ( v ( \cdot / c ) ) = 0 $. In 2d and for the Ginzburg-Landau nonlinearity, 
existence of minimizers for $ \kappa > 0 $ is established in Theorem 1.2 there. 
Furthermore, it is shown in \cite{Chi_Mar} (see Proposition 8.4 there) that if $ \mathfrak{p} > 0 $ 
and if $ U $ is a minimizer for 
$ E_{\min} ( \mathfrak{p}) $ with speed $ c $, then $ U ( c\, \cdot ) $ is a minimizer 
for $ I_{\min} ( \kappa ) $ with $ \kappa = \frac{1}{2} \int_{\R^2} | \nabla U |^2 \, d x $ (this last quantity 
is scale-invariant in 2d) and $ I_{\min} $ is differentiable at this $ \kappa $, with 
$ I_{\min} ' ( \kappa ) = - 1 / c^2 $. Since $ Q_c $ is a minimizer for 
$ E_{\min} ( P_2 (Q_c) ) $, if we prove that $ c \mapsto \frac{1}{2} \int_{\R^2} | \nabla Q_c |^2 \, d x $ 
is a decreasing $ C^1 $-diffeomorphism from $ ]0, c_0 ] $, for some small $ c_0 $, 
onto $ [ \kappa_0 , +\ii [ $, with $ \kappa_0 \assign \frac{1}{2} \int_{\R^2} | \nabla Q_{c_0} |^2 \, d x $, 
then we shall conclude that $ I_{\min} $ is of class $ C^1 $ on $ [ \kappa_0 , +\ii [ $, 
and that (by the arguments in \cite{Chi_Mar}), the only minimizer for 
$ \kappa = \frac{1}{2} \int_{\R^2} | \nabla Q_c |^2 \, d x $ (for some suitable $ c \in ]0, c_0 ] $) is 
$ Q_c ( c \, \cdot ) $ up to the natural translation and phase invariances and, in 
addition, $ I_{\min} ' ( \kappa ) = - 1 / c^2 $. In order to prove that statement, it suffices to 
use the Pohozaev identity \eqref{Pohoz} and deduce
\[
  \frac{1}{2} \int_{\R^2} | \nabla Q_c |^2 \, d x 
  = E ( Q_c) - \frac{1}{4} \int_{\R^2} ( 1 - \lvert Q_c \rvert^2 )^2 \, dx 
  = E ( Q_c) - \frac{ c P_2 (Q_c) }{2 } .
\]
Therefore, by using the Hamilton like relation $ \frac{d}{ dc} E ( Q_{ c } ) = c \frac{d}{ dc} P_2 ( Q_{ c } ) $ 
and then the asymptotics of $c \mapsto P_2(Q_c)$ obtained in \cite{Chi_Pac_2}, we arrive at
\[
  \frac{d}{2 dc} 
  \int_{\R^2} | \nabla Q_c |^2 \, d x 
  = \frac{d}{dc} ( E ( Q_c) ) - \frac{ c }{2} \frac{d}{dc} P_2 (Q_c) - \frac{1}{2} P_2 (Q_c)
  = \frac{ c }{2} \frac{d}{dc} P_2 (Q_c) - \frac{1}{2} P_2 (Q_c) 
  \sim - \frac{2\pi}{c} < 0 
\]
and this concludes.

\bigbreak 
The paper is organized as follows. In section \ref{pf:propC1}, we give the proof 
of the uniqueness result given in Proposition \ref{locC1}. Section \ref{sec:quasimin} 
is devoted to the vortex analysis of travelling waves with energy 
$ E_{\min} (\mathfrak{p}) + \mathcal{O} (1) $ and even in $x_2 $, in order to show that 
they satisfy the hypotheses of Proposition \ref{locC1}. 
Subsection \ref{subsec:nonsym} contains a few remarks on the nonsymmetrical case. 
Finally, in subsection \ref{subsec:decaylightly}, we provide some decay estimates 
slightly away from the vortices. For the Ginzburg-Landau (stationary) model, 
such estimates have been first given in \cite{Miro_explicit} for minimizing solutions 
and later generalized in \cite{Com_Mir} to non-minimizing solutions. 
They improve some estimates in \cite{Chi_Pac_1} 
and are not specific to the way we construct the solutions.

\bigbreak
\noindent {\bf Acknowledgement:} E.P. is supported by Tamkeen under the NYU Abu Dhabi 
Research Institute grant CG002. We would like to thank the referee for her/his careful reading 
of the manuscript and for suggestions that have helped and clarified the presentation.

%%%%%%%%%%%%%%%%%%%%%%%%%%%%%%%%%%%%%%%%%%%%
%%%%%%%%%%%%%%%%%%%%%%%%%%%%%%%%%%%%%%%%%%%%
\section{Proof of the local uniqueness result (Proposition \ref{locC1})}
\label{pf:propC1}

This section is devoted to the proof of Proposition \ref{locC1} and Corollary
\ref{CP3cor}. The proof of Proposition \ref{locC1} uses arguments from the
proof of Theorem 1.14 of {\cite{Chi_Pac_2}}, another local uniqueness result for this
problem, but in different spaces. We explain here the core ideas of the proof.

Let us explain schematically the proof of Proposition \ref{locC1}. We first pick $c' $, $X $, $ \gamma' $ 
in such a way that $ Q = Q_c'(.-X) \ex^{i \gamma}$ has the same vortices as $u$. This is possible because $c \rightarrow d_c$, 
the position of the vortices, is smooth. We then decompose $u=Q \ex^{\psi} $, where $\psi $ is the error term. 
This can not be done near the zeros of $Q$, but we focus here on the domain far from the vortices.

The equation satisfied by $\psi$ is then $ (\tmop{TW}_c) (u) = 0 =(\tmop{TW}_c)(Q)+\it{L}(\psi)+\it{NL}(\psi)$, 
where we regroup the linear terms in $\it{L}$ and the nonlinear terms in $\it{NL}$, and $(\tmop{TW}_c)(Q) \neq 0$ 
because $c \neq c'$. We then take the scalar product of this equation with $\psi$, and we get 
$ 0 = \langle (\tmop{TW}_c)(Q), \psi \rangle + B_Q(\psi) + \langle \it{NL}(\psi) , \psi \rangle $. 
Now, the coercivity of $B_Q$ has been studied in \cite{Chi_Pac_2}. It holds (for even functions in $x_1$) 
up to three orthogonality conditions, that can be satisfied by changing slightly the modulation parameters 
$c',X,\gamma$. We deduce that $B_Q(\psi) \geqslant K \| \psi \|^2_1$ for some norm $ \| \cdot \|_1$. 

There are two main difficulties at this point. First, since the hypothesis on $u$ in Proposition \ref{locC1} 
are weak, we simply have $||\psi||_1 < + \infty$, but not the fact that it is small. Therefore, an estimate 
of the form $ \lvert \langle \it{NL}(\psi) , \psi \rangle \rvert \leqslant K \| \psi \|^3_1$ would not be enough to conclude. 
Secondly, the norm $ \| \cdot \|_1$ is rather weak, and in fact $\langle \it{NL}(\psi) , \psi \rangle$ can not be controlled 
by powers of $ \| \psi \|_1$. 

Concerning the term $\langle (TW_c)(Q), \psi \rangle $, we may show that we always have $|c-c'| \leqslant o(1) \| \psi \|_1$, 
thus $ \lvert \langle (TW_c)(Q), \psi \rangle \rvert \leqslant o(1) \| \psi \|^2_1 $. Therefore, we are led to 
$ (K/2) \| \psi \|^2_1 \ls \langle (\tmop{TW}_c)(Q), \psi \rangle + B_Q(\psi) = - \langle \it{NL}(\psi) , \psi \rangle $. 
Then, even if $\| \psi \|_1$ is not small, by the hypothesis of Proposition \ref{locC1}, $\psi$ will be small in other 
(non equivalent) norms. Let us write one of them $ \| \cdot \|_2$. Our goal is then to show an estimate of the 
form $ \lvert \langle \it{NL}(\psi) , \psi \rangle \rvert \leqslant K \| \psi \|_2 \| \psi \|^2_1$, which would conclude. 
This is possible, except for one nonlinear term, which contains two derivatives. We then perform some 
integrations by parts on it. When both derivatives fall on the same term, we get a term containing $\Delta \psi$, 
which also appears in the equation $0= (\tmop{TW}_c)(Q)+\it{L}(\psi)+\it{NL}(\psi)$ (in $\it{L}(\psi)$). 
We thus replace it using the equation, which leads to another term containing two derivatives (from $\it{NL}(\psi)$), 
and other terms that can be successfully estimated. After $n$ such integration by parts, we have an estimate 
of the form $ \lvert \langle \it{NL}(\psi) , \psi \rangle \rvert \leqslant K\| \psi \|_2 \| \psi \|^2_1 + \| \psi \|_3 \| \psi \|_2^n \| \psi \|_1^2 $, 
where $\|\cdot \|_3$ is another (semi-)norm in which $\psi$ is not necessarily small. Now, taking $n$ large enough 
(depending on $\psi$), since $ \| \psi \|_2 \ll 1$, we get $| \langle \it{NL}(\psi) , \psi \rangle | \leqslant o(1) \| \psi \|^2_1$, 
concluding the proof. 

The problem is a lot simpler near the vortices. There, we write $u = Q +\phi$ and the coercivity norm is equivalent 
to the $H^1$ norm, and the hypothesis of Proposition \ref{locC1} gives us $ \| \phi \|_{L^\infty} = o(1)$. 
The estimate of the nonlinear terms then becomes trivial.

As stated in the introduction, the symmetry condition is necessary to have a
coercivity result where the coercivity constant is uniform, see Corollary
\ref{CP3coerc} below. This is the only place where the symmetry is used in a crucial way.

%%%%%%%%%%%%%%%%%%%%%%%%%%%%%%%%%%%%%%%%%%%%
\subsection{Some properties of the branch of travelling waves from Theorem \ref{th1}}

We recall here properties on the branch $c \mapsto Q_c$ from Theorem
\ref{th1}, coming mainly from {\cite{Chi_Pac_2}} and {\cite{Chi_Pac_1}}. In this section, we will use the notation
\[ \langle f,g \rangle \assign \int_{\mathbb{R}^2} \mathfrak{Re} (f \bar{g}) . \]

\subsubsection{Properties of vortices}\label{CP3vor}

We start with some estimates on vortices, that compose the travelling wave
(see Theorem \ref{th1}).

\begin{lemma}[{\cite{ChenElliottQi}} and {\cite{HerHer_94}}]
  \label{lemme3new}A vortex centered around $0$, $V_1 (x) = \rho_1 (r) \ex^{i
  \theta}$, verifies $V_1 (0) = 0$, $E (V_1) = + \infty$ and there exist
  constants $K, \kappa > 0$ such that
  \[ \forall r > 0, \ 0 < \rho_1 (r) < 1; 
  \quad \quad 
  \rho_1 (r) \sim_{r \rightarrow 0} \kappa r ;
  \quad \quad 
  \rho'_1 (r) \sim_{r \rightarrow 0} \kappa ; \]
  \[ \forall r > 0, \ \rho_1' (r) > 0 ; 
  \quad \quad 
  \rho_1' (r) = O_{r \rightarrow \infty} \left( \frac{1}{r^3} \right); 
  \quad \quad | \rho_1'' (r) | + | \rho_1''' (r) | \leqslant K ; 
  \]
  \[ 1 - | V_1 (x) | = \frac{1}{2 r^2} + O_{r \rightarrow \infty} \left(
     \frac{1}{r^3} \right) ; \]
  \[ | \nabla V_1 | \leqslant \frac{K}{1 + r}; 
  \quad \quad | \nabla^2 V_1 | \leqslant
     \frac{K}{(1 + r)^2} \]
  and
  \[ \nabla V_1 (x) = i V_1 (x) \frac{x^{\bot}}{r^2} + O_{r \rightarrow
     \infty} \left( \frac{1}{r^3} \right), \]
  where $x^{\perp} \assign (- x_2, x_1)$, $x = r \ex^{i \theta} \in
  \mathbb{R}^2$. Furthermore, similar properties hold for $V_{- 1}$, since
  \[ V_{- 1} (x) = \overline{V_1 (x)_{}} . \]
\end{lemma}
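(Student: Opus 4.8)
The statement gathers the standard facts about the degree-one Ginzburg--Landau/Gross--Pitaevskii vortex, and the plan is to read them all off from the radial ODE satisfied by $\rho_1$. Writing $V_1 = \rho_1(r)\ex^{i\theta}$ in $\Delta V_1 - (|V_1|^2-1)V_1 = 0$ gives
\[
 \rho_1'' + \frac{\rho_1'}{r} - \frac{\rho_1}{r^2} - (\rho_1^2 - 1)\rho_1 = 0,
 \qquad \rho_1(0) = 0, \quad \rho_1(r) \to 1 \ \text{as } r \to \infty,
\]
the value $0$ at the origin being forced by single-valuedness of $V_1$ for a degree-one profile; this already yields $V_1(0)=0$. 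I would first recall from \cite{ChenElliottQi}, \cite{HerHer_94} the existence of a solution with these boundary conditions together with its regularity (real-analyticity on $(0,\infty)$ and smoothness up to $r=0$), so that all the pointwise derivative bounds away from the two regimes $r\to 0$ and $r\to\infty$ are automatic by continuity on compact sets.

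Near the origin, $r=0$ is a regular singular point of the ODE with indicial roots $\pm 1$, so the bounded solution behaves like $\rho_1(r) = \kappa r + O(r^3)$ with $\kappa \ddef \rho_1'(0)$; this gives $\rho_1(r) \sim \kappa r$ and $\rho_1'(r) \sim \kappa$ as $r\to 0$, and differentiating the ODE shows $\rho_1''$ and $\rho_1'''$ are bounded near $0$. For $0 < \rho_1 < 1$ and the monotonicity $\rho_1' > 0$ I would use maximum-principle arguments on the ODE: at a first interior point where $\rho_1$ reaches the value $1$ one has $\rho_1' = 0$, $\rho_1'' \ls 0$ and $(\rho_1^2-1)\rho_1 \gs 0$, which is incompatible with the equation; positivity is obtained similarly, and monotonicity by noting that $\rho_1'$ cannot vanish (at a first zero the linearized equation forces the wrong sign). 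In particular $\kappa = \rho_1'(0)>0$. These are exactly the arguments of the cited works.

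For $r\to\infty$, set $\eta \ddef 1 - \rho_1 \to 0^+$. The ODE reads $2\eta + O(\eta^2) = (1-\rho_1^2)\rho_1 = -\rho_1'' - \rho_1'/r + \rho_1/r^2$, and a sub/supersolution (barrier) argument with comparison functions of the form $\tfrac{1}{2r^2}(1 \pm C/r)$ pins down $\eta = \tfrac{1}{2r^2} + O(1/r^3)$, i.e. $1 - |V_1| = \tfrac{1}{2r^2} + O(1/r^3)$; differentiating the resulting relation then gives $\rho_1'(r) = O(1/r^3)$ and $\rho_1'', \rho_1''' = O(1/r^4)$. Finally I would assemble the gradient statements from the Cartesian identity $\nabla V_1 = \tfrac{\rho_1'(r)}{r}\,\ex^{i\theta}\, x + i V_1 \tfrac{x^\perp}{r^2}$, obtained by direct differentiation: the angular term has modulus $\rho_1/r \ls 1/r$ and the radial term has modulus $|\rho_1'(r)| = O(1/r^3)$, whence $|\nabla V_1| \ls K/(1+r)$ and the claimed asymptotic expansion of $\nabla V_1$; one further differentiation, using the bounds on $\rho_1''$ and $\rho_1'''$, gives $|\nabla^2 V_1| \ls K/(1+r)^2$. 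Infiniteness of $E(V_1)$ is then immediate, since $\int_{|x|\gs 1}|\nabla V_1|^2 \gs c\int_1^{\infty}\tfrac{\rho_1^2}{r^2}\,r\,dr = +\infty$. The identity $V_{-1} = \overline{V_1}$ holds because complex conjugation maps solutions of the stationary equation to solutions and $\overline{\rho_1(r)\ex^{i\theta}} = \rho_1(r)\ex^{-i\theta}$ is precisely the degree $-1$ radial profile.

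The main obstacle is the sharp behavior at infinity: turning the formal dominant balance $2(1-\rho_1) \approx \rho_1/r^2$ into a rigorous two-sided bound with the stated $O(1/r^3)$ remainder, and propagating it to the first three derivatives, which requires a careful choice of barriers (or, equivalently, a Gronwall-type ODE asymptotics argument). The monotonicity $\rho_1'>0$ on all of $(0,\infty)$ is the other slightly delicate point; everything else is bookkeeping with the ODE and with polar coordinates.
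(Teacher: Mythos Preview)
The paper does not give its own proof of this lemma; it is stated as a citation of \cite{ChenElliottQi} and \cite{HerHer_94} with no argument in the text. Your sketch is a correct outline of the standard arguments found in those references, and there is nothing to compare against in the paper itself.

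Two small points of looseness in your sketch, for the record. For the bound $\rho_1 < 1$, the clean argument is to look at an interior \emph{maximum} of $\rho_1$ (where $\rho_1' = 0$ and $\rho_1'' \ls 0$) rather than ``the first point where $\rho_1$ reaches $1$'', since at such a first crossing one need not have $\rho_1' = 0$. For the strict monotonicity $\rho_1' > 0$ on all of $(0,\infty)$, the argument in \cite{HerHer_94} is a Sturm-type comparison on the linear second-order ODE satisfied by $\rho_1'$, which is a bit more than ``at a first zero the linearized equation forces the wrong sign''. Neither of these affects the overall correctness of your plan; they are just the places where the cited papers do a little more work than your one-line summaries suggest.
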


%%%%%
\subsubsection{Toolbox}

We list in this section some results useful for the analysis of travelling waves for not necessarily small speeds.

\begin{theorem}[Uniform $ L^\ii $ bound - \cite{Farina}]
\label{Linf}
Assume that $ U \in L^3_{\rm loc} (\R^d) $ solves
\[
 \Delta U + i c \p_2 U + U ( 1 - \lvert U \rvert^2 ) = 0 .
\]
Then,
\[
 \| U \|_{L^\ii(\R^d)} \ls 1+ \frac{c^2}{4} .
\]
\end{theorem}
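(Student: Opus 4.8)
The plan is to turn the $\C$-valued equation into a scalar differential inequality for $\sigma \assign |U|^2$ and then to conclude with a Keller--Osserman / Osserman comparison on large balls. First I would record the regularity: from $U\in L^3_{\rm loc}$ the nonlinearity $U(1-|U|^2)$ lies in $L^1_{\rm loc}$, so $U$ solves $(\Delta + i c\,\p_2)U = g$ with $g\in L^1_{\rm loc}$, and local elliptic estimates for this constant-coefficient operator together with a Sobolev bootstrap give $U\in C^\infty_{\rm loc}$ (the bootstrap is slightly delicate at the critical exponent, but standard); in particular the pointwise computations below are licit.

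Next, differentiate: $\Delta \sigma = 2|\nabla U|^2 + 2\,\mathfrak{Re}(\bar U\,\Delta U)$. Substituting $\Delta U = -i c\,\p_2 U - U(1-|U|^2)$ and using $\mathfrak{Re}(-i c\,\bar U\,\p_2 U) = c\,\mathfrak{Im}(\bar U\,\p_2 U)$ gives
\[
 \Delta \sigma = 2|\nabla U|^2 + 2c\,\mathfrak{Im}(\bar U\,\p_2 U) - 2\sigma(1-\sigma).
\]
Since $|\mathfrak{Im}(\bar U\,\p_2 U)| \ls |U|\,|\nabla U|$, Young's inequality yields $2c\,\mathfrak{Im}(\bar U\,\p_2 U) \gs - c^2\sigma - |\nabla U|^2$, which absorbs one copy of $|\nabla U|^2$ and leaves the \emph{closed, scalar} inequality, valid on all of $\R^d$ with $\sigma\gs 0$:
\[
 \Delta \sigma \gs |\nabla U|^2 + 2\sigma^2 - (2+c^2)\sigma \gs 2\sigma\Big(\sigma - 1 - \tfrac{c^2}{2}\Big) \assign f(\sigma).
\]

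To conclude, set $a \assign 1 + c^2/2$. On $[a,+\ii)$ the function $f$ is nonnegative, nondecreasing, vanishes only at $a$, is superlinear, and has a cubically growing primitive, so the Keller--Osserman integrability condition $\int^{\ii}(F(t))^{-1/2}\,dt<\ii$ holds. Comparing $\sigma$ on each ball $B_R(x_0)$ with the explicit radial supersolution of $\Delta V = f(V)$ that stays $\gs a$ and blows up on $\p B_R$ (Osserman's theorem), one gets $\sigma(x_0)\ls \Psi(R)$ with $\Psi(R)\downarrow a$ as $R\to+\ii$; letting $R\to+\ii$ for each fixed $x_0$ yields $\sigma\ls a$ on $\R^d$. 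Hence $\|U\|_{L^\ii}^2 \ls 1 + \tfrac{c^2}{2} \ls \big(1+\tfrac{c^2}{4}\big)^2$, which is exactly the claimed bound.

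The algebra is routine; the genuine obstacle is the last step — justifying the Keller--Osserman comparison at the level of generality needed (merely continuous $f$) and, crucially, extracting the \emph{sharp} threshold $a = 1+c^2/2$ rather than a cruder one. If the sharp constant were not needed, one could instead note that $\Delta\sigma \gs \sigma^2$ on $\{\sigma\gs 2+c^2\}$ and apply the classical Liouville theorem for $\Delta u \gs u^2$ to the function $(\sigma - (2+c^2))_+$, which is a distributional subsolution by a Kato-type argument; this only gives $\|U\|_{L^\ii}\ls\sqrt{2+c^2}$, so some form of the sharp Osserman statement (or a ``maximum principle at infinity'' quoted off the shelf) is what does the real work.
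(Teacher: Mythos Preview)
The paper does not give its own proof of this statement; it is quoted as a tool from \cite{Farina}. Your argument is precisely Farina's: derive the scalar differential inequality $\Delta\sigma \gs 2\sigma\big(\sigma - 1 - c^2/2\big)$ for $\sigma=|U|^2$ by absorbing the transport term via Young's inequality, then invoke the Keller--Osserman/Osserman comparison to force $\sigma\ls 1+c^2/2$, and finish with $1+c^2/2\ls(1+c^2/4)^2$. The computations are correct and the Osserman step (radial blow-up supersolutions on $B_R$ with center value $\to a$ as $R\to\ii$) is exactly the right tool for the sharp threshold; there is nothing missing.
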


\begin{corollary} 
\label{Gradinf}
There exists $ K > 0 $ such that for any $ c \in [ - \sqrt{2} , + \sqrt{2} ] $ and 
any $ U \in L^3_{\rm loc} (\R^d) $ satisfying $ (\textrm{TW}_c)(U)=0  $, we have
\begin{equation}
 \label{basik}
 \| \nabla U \|_{L^\ii(\R^d)} + \| \nabla^2 U \|_{L^\ii(\R^d)} \ls K .
\end{equation}
\end{corollary}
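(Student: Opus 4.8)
The plan is to combine the $L^\infty$ bound of Theorem \ref{Linf} with interior elliptic regularity applied to the elliptic equation $ \Delta U = - i c \p_2 U - U ( 1 - \lvert U \rvert^2 ) $, and then bootstrap. First I would observe that, for $ c \in [ -\sqrt 2 , +\sqrt 2 ] $, Theorem \ref{Linf} gives $ \| U \|_{L^\ii(\R^d)} \ls 1 + \tfrac{1}{2} = \tfrac{3}{2} $, so the zeroth-order term $ U ( 1 - \lvert U \rvert^2 ) $ is bounded in $ L^\ii $ by a universal constant $ K_0 $. The difficulty is that the right-hand side still involves $ \p_2 U $, so the equation is not immediately of the form $ \Delta U = F $ with $ F \in L^\ii $; this coupling is the one genuine obstacle, and it is handled by a standard interior Schauder/$L^p$ iteration on balls of fixed radius $1$.

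More precisely, I would fix any ball $ B_2(x_0) $ of radius $2$ and work on the concentric balls $ B_{3/2}(x_0) \subset B_1(x_0) $. Since $ U \in L^3_{\rm loc} $ solves the equation with $ U ( 1 - \lvert U \rvert^2 ) \in L^\ii_{\rm loc} $, interior $ W^{2,p} $ estimates for the Laplacian (treating $ - i c \p_2 U $ as a lower-order term absorbed by interpolation) give $ U \in W^{2,p}_{\rm loc} $ for every $ p < \ii $, with $ \| U \|_{W^{2,p}(B_{7/4}(x_0))} \ls C(p) \big( \| U \|_{L^\ii(B_2(x_0))} + \| U(1 - \lvert U \rvert^2) \|_{L^\ii(B_2(x_0))} \big) \ls C(p) K_0 $, the constant being independent of $ x_0 $ by translation invariance of the equation and the domain. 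Taking $ p > d $ and using Sobolev embedding, $ U \in C^{1,\alpha}_{\rm loc} $ with a uniform bound on $ \| \nabla U \|_{L^\ii(B_{3/2}(x_0))} $; in particular $ \nabla (1 - \lvert U \rvert^2) = - 2 \, \mathfrak{Re}( \overline U \nabla U ) $ is now uniformly bounded, so the full right-hand side $ - i c \p_2 U - U(1 - \lvert U \rvert^2) $ lies in $ C^{0,\alpha} $ uniformly. Interior Schauder estimates then yield $ U \in C^{2,\alpha}_{\rm loc} $ with $ \| \nabla^2 U \|_{L^\ii(B_1(x_0))} \ls K $ uniformly in $ x_0 $ and in $ c \in [-\sqrt 2, +\sqrt 2] $. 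Since $ x_0 $ is arbitrary, this gives the claimed global bound \eqref{basik}.

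The only point requiring a little care is the very first regularity step, where one starts merely from $ U \in L^3_{\rm loc} $: here one uses that $ U \in L^\ii $ by Theorem \ref{Linf}, hence $ U \in L^p_{\rm loc} $ for all $ p $, so the Calderón–Zygmund machinery applies with no obstruction, and the lower-order term $ c \p_2 U $ is controlled at each stage by the interpolation inequality $ \| \nabla U \|_{L^p(B')} \ls \delta \| \nabla^2 U \|_{L^p(B)} + C_\delta \| U \|_{L^p(B)} $ with $ \delta $ small. I expect this interpolation/absorption of the first-order term to be the main (mild) technical obstacle; everything else is standard interior elliptic regularity with the crucial input that all constants are uniform because the equation and the geometry are translation invariant.
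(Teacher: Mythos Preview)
Your argument is correct: the paper does not give a proof of this corollary, stating it as an immediate consequence of Farina's uniform $L^\infty$ bound (Theorem~\ref{Linf}) and standard interior elliptic regularity, which is precisely the bootstrap you carry out. One minor simplification you could mention is that the change of unknown $V(x) = e^{icx_2/2} U(x)$ transforms the equation into $\Delta V = e^{icx_2/2}\big(-U(1-|U|^2) - \tfrac{c^2}{4}U\big)$ with bounded right-hand side, which removes the first-order term outright and avoids the interpolation step, but your absorption argument is equally valid.
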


The following Pohozaev identity (see \cite{Bet_Gra_Saut} for instance) will be useful in our analysis. If 
$ c \in \R $ and $ U \in \BE $ satisfies $ (\textrm{TW}_c) $, then
\begin{equation}
\label{Pohoz}
 \frac{1}{2} \int_{\R^2} (1 - \lvert U \rvert^2 )^2 \, dx 
 = c P_2 (U) .
\end{equation}

We shall also make use of the algebraic decay of the travelling waves conjectured in 
\cite{Jon_Rob} and shown in \cite{Gra}.

\begin{theorem}[Algebraic decay of the travelling waves - \cite{Gra}]
\label{decay}
Let $ c \in [0, \sqrt{2} [ $. Assume that $ U \in \BE $ is a solution of $ (\textrm{TW}_c)(U)=0  $. Up to a phase shift, 
we may assume $ U( x) \to 1 $ for $|x| \to +\ii $. Then, there exists 
$ M $, depending on $ U$ and $c$ such that, for $ x \in \R^2 $,
\[
  \lvert U (x) - 1 \rvert \ls \frac{M}{ 1+ |x|} , 
 \quad \quad 
 \lvert \nabla U (x) \rvert \ls \frac{M}{( 1+ |x|)^2} , 
 \quad \quad 
 \big\lvert \lvert U (x) \rvert - 1 \big\rvert \ls \frac{M}{( 1+ |x|)^2} .
\] 
\end{theorem}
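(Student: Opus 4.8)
\noindent\textbf{Proof sketch (following \cite{Gra}).}
The strategy is: (i) upgrade the finite-energy information to strong integrability and interior regularity; (ii) rewrite $(\tmop{TW}_c)(U)=0$ in hydrodynamic variables, turning it into a pair of convolution equations; (iii) bootstrap the spatial decay in these equations. For (i): by Theorem \ref{Linf} and Corollary \ref{Gradinf}, $U$, $\nabla U$ and $\nabla^2 U$ are bounded on $\R^2$, and $U\in\BE$ gives $\nabla U\in L^2(\R^2)$ and $\eta\assign1-\lvert U\rvert^2\in L^2(\R^2)$. Differentiating the equation and using interior elliptic estimates for the elliptic operator $\Delta+ic\p_2$, one bootstraps to $\nabla U\in W^{k,p}(\R^2)$ for every $k\gs0$ and $p\in[2,+\ii]$; in particular $\nabla U(x)\to0$ and $\eta(x)\to0$ as $\lvert x\rvert\to+\ii$. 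Since $U\to1$ at infinity, $U$ has no zero outside some disc $D(0,R_0)$, so there we may write $U=\sqrt{1-\eta}\,\ex^{i\vp}$ with $\nabla\vp\in L^2$, whence $\lvert U-1\rvert\approx\lvert\eta\rvert+\lvert\vp\rvert$ and $\lvert\nabla U\rvert\approx\lvert\nabla\eta\rvert+\lvert\nabla\vp\rvert$; on $\overline{D(0,R_0)}$ everything is smooth and bounded, so the stated estimates hold there trivially, and we henceforth work on $\R^2\setminus D(0,R_0)$.

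For (ii): taking the real and imaginary parts of $\bar U\,(\tmop{TW}_c)(U)=0$ produces the Gross--Pitaevskii hydrodynamic system
\[
 (-\Delta+2)\,\eta = 2\lvert\nabla U\rvert^2 + 2c\,(1-\eta)\,\p_2\vp + 2\eta^2 ,
 \qquad
 \Delta\vp = \nabla\cdot\Big(\tfrac{c}{2}\,\eta\,\vec{e}_2 + \eta\,\nabla\vp\Big) .
\]
The first equation carries the coercive operator $-\Delta+2$, whose fundamental solution $\mathcal G_2$ decays exponentially, so after convolution $\eta=\mathcal G_2*\big(2\lvert\nabla U\rvert^2+2c(1-\eta)\p_2\vp+2\eta^2\big)$ the function $\eta$ inherits (and at borderline rates improves) the decay of its right-hand side, which consists of terms quadratic in $(\nabla U,\eta)$ together with the current contribution $\p_2\vp$. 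The second equation expresses $\nabla\vp$ as $\nabla\mathcal N$ (the gradient of the Newtonian potential of $\R^2$, of size $O(\lvert x\rvert^{-1})$) convolved with the divergence of the current $\tfrac{c}{2}\eta\,\vec{e}_2+\eta\,\nabla\vp$; it is this logarithmic kernel that produces --- and limits to --- the \emph{algebraic} (as opposed to exponential) decay of $U-1$.

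For (iii): one first feeds the $L^p$ information of (i) into the two convolution identities to obtain an initial algebraic rate for $\eta,\vp$ and their gradients, and then iterates. At each pass: the $\eta$-equation, whose source is quadratic apart from the current term, yields a decay rate for $\eta$ one power better than that of $U-1$ (the $\mathcal G_2$-convolution is harmless and $\lvert\nabla U\rvert^2$ decays twice as fast); plugging the improved $\eta,\nabla\vp$ back into the $\vp$-equation then improves the rate of $U-1$, with ceiling the $O(\lvert x\rvert^{-1})$ of $\nabla\mathcal N$. After finitely many passes the rates saturate at $\lvert U-1\rvert\ls M/(1+\lvert x\rvert)$, $\lvert\eta\rvert=\big\lvert\,\lvert U\rvert^2-1\big\rvert\ls M/(1+\lvert x\rvert)^2$ (hence $\big\lvert\,\lvert U\rvert-1\big\rvert\ls M/(1+\lvert x\rvert)^2$), and --- differentiating the identities once more and carefully absorbing the borderline (logarithmic) terms generated by the $\lvert x\rvert^{-2}$-decay of $\eta$ --- $\lvert\nabla U\rvert\ls M/(1+\lvert x\rvert)^2$. (The exact leading-order far-field profile, not needed here, is obtained in \cite{Gra_asy}.)

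\emph{Main obstacle.} The difficulty is the anisotropic, non-coercive structure at infinity: the linearisation of $(\tmop{TW}_c)$ about the constant $1$ has symbol with determinant $\lvert\xi\rvert^2(\lvert\xi\rvert^2+2)-c^2\xi_2^2$, which for $c\in[0,\sqrt2[$ is comparable near $\xi=0$ to the positive-definite form $2\xi_1^2+(2-c^2)\xi_2^2$ but vanishes at $\xi=0$; its inverse is therefore a superposition of a bounded multiplier and Riesz-type ones ($\sim\xi_2/\lvert\xi\rvert^2$), so the slow $1/\lvert x\rvert$ rate is genuine and no mere $L^p$ elliptic estimate sees it. Every step of the bootstrap takes place at \emph{borderline} decay/integrability exponents --- logarithmic divergences in the Newtonian convolutions, the failure of $\eta$ to lie in $L^1$, etc. --- so one must work with sharp pointwise kernel bounds and exploit the divergence form of the current terms precisely enough that the quadratic gain in the nonlinearity, together with the extra decay of $\eta$, lets the iteration close without shedding a logarithm at each step. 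This careful bookkeeping is the substance of \cite{Gra}.
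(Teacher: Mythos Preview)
The paper does not give its own proof of this statement: Theorem~\ref{decay} is quoted in the ``Toolbox'' subsection as a result of Gravejat \cite{Gra} and is used as a black box throughout. Your sketch is therefore not to be compared against anything in the paper; it is, however, a faithful outline of the method in \cite{Gra} (regularity bootstrap, Madelung/hydrodynamic variables, convolution representation via the kernels of $-\Delta+2$ and $-\Delta$, and an iterative pointwise-decay bootstrap limited by the $\lvert x\rvert^{-1}$ behaviour of the Riesz-type pieces of the linearised symbol), so as a summary of the cited reference it is correct in spirit and structure.
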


\subsubsection{Symmetries of the travelling waves from Theorem
\ref{th1}}\label{CP3sym}

We recall from {\cite{Chi_Pac_1}} that the travelling wave $Q_c$ constructed in
Theorem \ref{th1} satisfies for all $x = (x_1, x_2) \in \mathbb{R}^2$,
\[ Q_c (x_1, x_2) = Q_c (- x_1, x_2) = \overline{Q_c (x_1, - x_2)} . \]
This implies that for all $x = (x_1, x_2) \in \mathbb{R}^2$,
\[ \partial_c Q_c (x_1, x_2) = \partial_c Q_c (- x_1, x_2) =
   \overline{\partial_c Q_c (x_1, - x_2)}, \]
\[ \partial_{x_1} Q_c (x_1, x_2) = - \partial_{x_1} Q_c (- x_1, x_2) =
   \overline{\partial_{x_1} Q_c (x_1, - x_2)}, \]
\[ \partial_{x_2} Q_c (x_1, x_2) = \partial_{x_2} Q_c (- x_1, x_2) = -
   \overline{\partial_{x_2} Q_c (x_1, - x_2)} \]
and
\[ \partial_{c^{\bot}} Q_c (x_1, x_2) = - \partial_{c^{\bot}} Q_c (- x_1, x_2)
   = - \overline{\partial_{c^{\bot}} Q_c (x_1, - x_2)}, \]
where $\partial_{c^{\bot}} Q_c \assign x^{\bot} . \nabla Q_c$, see subsection
2.2 of {\cite{Chi_Pac_2}}. Remark that these quantities all have different symmetries.

\subsubsection{A coercivity result}

From Proposition 1.2 of {\cite{Chi_Pac_2}}, we recall that $Q_c$ defined in Theorem
\ref{th1} has two zeros, at $\pm \tilde{d}_c \vec{e}_1$, with
\begin{equation}
  d_c - \tilde{d}_c = o_{c \rightarrow 0} (1) . \label{31gne}
\end{equation}
We define (as in {\cite{Chi_Pac_2}}) the symmetric expended energy space by
\[ H_{Q_c}^{\exp, s} \assign \left\{ \varphi \in H^1_{\tmop{loc}}
   (\mathbb{R}^2, \mathbb{C}), \| \varphi \|_{H_{Q_c}^{\exp}} < + \infty,
   \forall (x_1, x_2) \in \mathbb{R}^2, \varphi (- x_1, x_2) = \varphi (x_1,
   x_2) \right\}, \]
where, with $\varphi = Q_c \psi$, $\tilde{r} = \tilde{r}_{\tilde{d}_c} = \min
(\tilde{r}_1, \tilde{r}_{- 1})$, $\tilde{r}_{\pm 1}$ being the distances to
the zeros of $Q_c$ (we use $\tilde{r}$ instead of $\tilde{r}_{\tilde{d}_c}$ to
simplify the notations here), we define
\[ \| \varphi \|^2_{H_{Q_c}^{\exp}} \assign \| \varphi \|^2_{H^1 (\{ \tilde{r}
   \leqslant 10 \})} + \int_{\{ \tilde{r} \geqslant 5 \}} | \nabla \psi |^2
   +\mathfrak{Re}^2 (\psi) + \frac{| \psi |^2}{\tilde{r}^2 \ln^2 \tilde{r} } . \]
By using \eqref{basik}, we deduce, for any $R > 0$, $\| \varphi \|_{H^1 (\{ \tilde{r} \leqslant R \})}
\leqslant K (R) \| \varphi \|_{H_{Q_c}^{\exp}}$. The linearized operator
around $Q_c$ is
\[ L_{Q_c} (\varphi) \assign - \Delta \varphi - i c \partial_{x_2} \varphi -
   (1 - | Q_c |^2) \varphi + 2\mathfrak{Re} (\overline{Q_c}
   \varphi) Q_c . \]
We take a smooth cutoff function $\tilde{\eta}$ such that $\tilde{\eta} (x) =
0$ on $B (\pm \widetilde{d_c} \overrightarrow{e_1}, 2 R)$, $\tilde{\eta} (x) =
1$ on $\mathbb{R}^2 \backslash B (\pm \widetilde{d_c} \overrightarrow{e_1}, 2
R + 1)$, where $\pm \widetilde{d_c} \overrightarrow{e_1}$ are the zeros of
$Q_c$ and $R > 0$ will be defined later on (it will be a universal constant,
independent of any parameters of the problem). We define the quadratic form
(as in {\cite{Chi_Pac_2}})
\begin{eqnarray}
  B^{\exp}_{Q_c} (\varphi) & \assign & \int_{\mathbb{R}^2} (1 - \tilde{\eta})
  (| \nabla \varphi |^2 -\mathfrak{Re} (i c \partial_{x_2} \varphi
  \bar{\varphi}) - (1 - | Q_c |^2) | \varphi |^2 + 2\mathfrak{Re}^2
  (\overline{Q_c} \varphi)) \nonumber\\
  & - & \int_{\mathbb{R}^2} \nabla \tilde{\eta} . (\mathfrak{Re}
  (\nabla Q_c \overline{Q_c}) | \psi |^2 - 2\mathfrak{I}\mathfrak{m} (\nabla
  Q_c \overline{Q_c}) \mathfrak{Re} (\psi) \mathfrak{I}\mathfrak{m}
  (\psi)) \nonumber\\
  & + & \int_{\mathbb{R}^2} c \partial_{x_2} \tilde{\eta}
  \mathfrak{Re} (\psi) \mathfrak{I}\mathfrak{m} (\psi) | Q_c |^2
  \nonumber\\
  & + & \int_{\mathbb{R}^2} \tilde{\eta} (| \nabla \psi |^2 | Q_c |^2 +
  2\mathfrak{Re}^2 (\psi) | Q_c |^4) \nonumber\\
  & + & \int_{\mathbb{R}^2} \tilde{\eta} (4\mathfrak{I}\mathfrak{m} (\nabla
  Q_c \overline{Q_c}) \mathfrak{I}\mathfrak{m} (\nabla \psi)
  \mathfrak{Re} (\psi) + 2 c | Q_c |^2 \mathfrak{I}\mathfrak{m}
  (\partial_{x_2} \psi) \mathfrak{Re} (\psi)) .  \label{CP2Btilda}
\end{eqnarray}

We recall from {\cite{Chi_Pac_2}} (or by integration by parts) that for $\varphi \in
C^{\infty}_c (\mathbb{R}^2, \mathbb{C})$, we have $B^{\exp}_{Q_c} (\varphi) = \langle
L_{Q_c} (\varphi), \varphi \rangle$, and that $B^{\exp}_{Q_c} (\varphi)$ is
well defined for $\varphi \in H_{Q_c}^{\exp, s}$. This last point is the
reason why we write the quadratic form as (\ref{CP2Btilda}), which is equal,
up to some integration by parts, to the more natural definition
\[ \int_{\mathbb{R}^2} | \nabla \varphi |^2 - (1 - | Q_c |^2) | \varphi |^2 +
   2\mathfrak{Re}^2 (\overline{Q_c} \varphi)
   -\mathfrak{Re} (i c \partial_{x_2} \varphi \bar{\varphi}), \]
but this integral is not well defined for $\varphi \in H_{Q_c}^{\exp, s}$. See
{\cite{Chi_Pac_2}} for more details on this point. We now quote a coercivity
result from {\cite{Chi_Pac_2}}.

\begin{theorem}[{\cite{Chi_Pac_2}}, Theorem 1.13]
  \label{CP3th22}There exists $R, K, c_0 > 0$ such that, for $0 < c \leqslant
  c_0$, $Q_c$ defined in Theorem \ref{th1}, if a function $\varphi \in
  H_{Q_c}^{\exp, s}$ satisfies the three orthogonality conditions:
  \[ \mathfrak{Re} \int_{B (\tilde{d}_c \vec{e}_1, R) \cup B (-
     \tilde{d}_c \vec{e}_1, R)} \partial_c Q_c \bar{\varphi}
     =\mathfrak{Re} \int_{B (\tilde{d}_c \vec{e}_1, R) \cup B (-
     \tilde{d}_c \vec{e}_1, R)} \partial_{x_2} Q_c \bar{\varphi} = 0, \]
  \[ \mathfrak{Re} \int_{B (\tilde{d}_c \vec{e}_1, R) \cup B (-
     \tilde{d}_c \vec{e}_1, R)} i Q_c \bar{\varphi} = 0, \]
  then
  \[ \frac{1}{K} \| \varphi \|^2_{H_{Q_c}^{\exp}} \geqslant B_{Q_c}^{\exp}
     (\varphi) \geqslant K \| \varphi \|_{H_{Q_c}^{\exp}}^2 . \]
\end{theorem}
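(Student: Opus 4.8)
The plan is to establish the two-sided bound by a three-region localization argument, using that the quadratic form $B^{\exp}_{Q_c}$ has been written precisely so that near the two zeros $\pm\tilde d_c\vec e_1$ of $Q_c$ (where $\tilde\eta=0$) it coincides with the genuine linearized form $\langle L_{Q_c}(\varphi),\varphi\rangle$, while far from them (where $\tilde\eta=1$) it is expressed through $\psi$, with $\varphi=Q_c\psi$, only in terms of the quantities entering $\|\cdot\|_{H^{\exp}_{Q_c}}$. The upper bound $B^{\exp}_{Q_c}(\varphi)\ls \frac{1}{K}\|\varphi\|^2_{H^{\exp}_{Q_c}}$ is routine: it follows from Cauchy--Schwarz, the uniform bounds \eqref{basik} on $Q_c,\nabla Q_c,\nabla^2 Q_c$, the decay $|\nabla Q_c\,\overline{Q_c}|=O(\tilde r^{-2})$ far from the vortices (Lemma \ref{lemme3new}), and a two-dimensional Hardy inequality with logarithmic weight to handle $\int \frac{|\psi|^2}{\tilde r^2\ln^2\tilde r}$.

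For the lower bound I would fix a partition of unity $\chi_{-1}^2+\chi_1^2+\chi_{\rm ext}^2=1$ refining the supports relative to $\tilde\eta$, with $\chi_{\pm1}$ supported in $B(\pm\tilde d_c\vec e_1,2R)$ and $|\nabla\chi_j|$ concentrated in fixed annuli, and write an IMS-type localization identity $B^{\exp}_{Q_c}(\varphi)=\sum_j B^{\exp}_{Q_c}(\chi_j\varphi)+\mathcal R(\varphi)$, where the remainder $\mathcal R$ is a bounded quadratic form supported in the annuli, hence controlled by $\|\varphi\|_{H^1(\{\tilde r\ls 2R+1\})}\ls K(R)\|\varphi\|_{H^{\exp}_{Q_c}}$. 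In the exterior piece, $|Q_c|$ is bounded below by a fixed constant on $\{\tilde r\gs R\}$, so the positive terms $\int |Q_c|^2|\nabla\psi|^2+2|Q_c|^4\mathfrak{Re}^2(\psi)$ already control $\int|\nabla\psi|^2+\mathfrak{Re}^2(\psi)$ there, the Hardy inequality recovers the weighted $L^2$ term, and the contributions carrying $ic\partial_{x_2}$, $\nabla\tilde\eta$ or $\partial_{x_2}\tilde\eta$ are lower order (using $c\ls c_0$ small and the decay of $\nabla Q_c$) and get absorbed into the positive part, at the cost of taking $R$ large.

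In each near-vortex piece, $\|\cdot\|_{H^{\exp}_{Q_c}}$ is equivalent to the $H^1$ norm, and by Theorem \ref{th1} and \eqref{31gne} the function $Q_c$ is, up to an error $o_{c\to 0}(1)$ in $C^1$, the single vortex $V_{\pm1}(\cdot\mp\tilde d_c\vec e_1)$; hence $B^{\exp}_{Q_c}(\chi_j\varphi)$ differs by $o_{c\to 0}(1)\|\chi_j\varphi\|^2_{H^1}$ from the single-vortex quadratic form, whose coercivity up to the three directions $\partial_{x_1}V_1,\partial_{x_2}V_1,iV_1$ is known from \cite{delP_Fel_Kow}. The decisive point is the symmetry: for $\varphi$ even in $x_1$, $\chi_{-1}\varphi$ is the mirror image of $\chi_1\varphi$, so a single triple of orthogonality conditions on $B(\tilde d_c\vec e_1,R)\cup B(-\tilde d_c\vec e_1,R)$ controls both near-vortex pieces simultaneously, and evenness locks the two $x_1$-translation modes together, so that three conditions replace the naive six. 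A short perturbation argument — using that $\partial_c Q_c$ concentrates near the vortices with the translation/phase structure given in Theorem \ref{th1} — lets one pass from orthogonality against the localized $\partial_c Q_c,\partial_{x_2}Q_c,iQ_c$ to orthogonality against $\partial_{x_1}V_1,\partial_{x_2}V_1,iV_1$ up to negligible errors. Summing the three pieces, subtracting $\mathcal R$, and choosing first $R$ large and then $c_0$ small makes every error term dominated by the positive contributions, yielding $B^{\exp}_{Q_c}(\varphi)\gs K\|\varphi\|^2_{H^{\exp}_{Q_c}}$.

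The main obstacle is making this last patching step quantitative and uniform as $c\to 0$: the vortex--vortex interaction errors decay only polynomially in $d_c\sim 1/c$, so one must check that they are beaten by a coercivity constant bounded away from $0$, and that three orthogonality conditions genuinely suffice. This is exactly where evenness in $x_1$ is indispensable — without it, the remaining antisymmetric mode (a translation or phase mode) is controlled only by a constant that degenerates as $c\to 0$, as noted in the introduction.
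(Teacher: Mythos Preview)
This theorem is not proved in the present paper: it is quoted verbatim as Theorem~1.13 of the companion work \cite{Chi_Pac_2}, and here it is only \emph{used} (as a black box in the proof of Corollary~\ref{CP3coerc}) without being reproved. There is therefore no proof in this paper to compare your proposal against.

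That said, your sketch is a faithful outline of the strategy that \cite{Chi_Pac_2} carries out: an IMS-type localization into two near-vortex regions (where one perturbs to the single-vortex quadratic form and invokes the coercivity of \cite{delP_Fel_Kow}) and an exterior region (where the $\psi$-representation of $B^{\exp}_{Q_c}$ directly controls $\int|\nabla\psi|^2+\mathfrak{Re}^2(\psi)$, and a logarithmic Hardy inequality recovers the remaining weighted term), with the $x_1$-evenness used to identify the two near-vortex pieces and collapse six orthogonality conditions to three. The obstacle you flag --- uniformity of the coercivity constant as $c\to 0$, and the indispensability of the symmetry for this uniformity --- is exactly the one emphasized in the introduction here and is the technical heart of the proof in \cite{Chi_Pac_2}. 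Nothing in your outline is wrong at the level of a sketch; the actual work lies in the bookkeeping of the many cross-terms in \eqref{CP2Btilda} and in making the perturbation from $Q_c$ to $V_{\pm 1}$ quantitative in $C^1$, for which the estimates of Theorem~\ref{th1} and Lemma~\ref{lemme3new} are designed.
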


We will use a slight variation of this result, given in the next corollary.

\begin{corollary}
  \label{CP3coerc}There exists $R, K, c_0 > 0$ such that, for $0 < c \leqslant
  c_0$, $Q_c$ defined in Theorem \ref{th1}, if a function $\varphi \in
  H_{Q_c}^{\exp, s}$ satisfies the three orthogonality conditions:
  \[ \mathfrak{Re} \int_{B (d_c \vec{e}_1, R) \cup B (- d_c
     \vec{e}_1, R)} \partial_d (V_1 (. - d \vec{e}_1) V_{- 1} (. + d
     \vec{e}_1))_{| d = d_c \nobracket} \bar{\varphi}
     =\mathfrak{Re} \int_{B (d_c \vec{e}_1, R) \cup B (- d_c
     \vec{e}_1, R)} \partial_{x_2} Q_c \bar{\varphi} = 0, \]
  \[ \mathfrak{Re} \int_{B (d_c \vec{e}_1, R) \cup B (- d_c
     \vec{e}_1, R)} i Q_c \bar{\varphi} = 0, \]
  then
  \[ \frac{1}{K} \| \varphi \|^2_{H_{Q_c}^{\exp}} \geqslant B_{Q_c}^{\exp}
     (\varphi) \geqslant K \| \varphi \|_{H_{Q_c}^{\exp}}^2 . \]
\end{corollary}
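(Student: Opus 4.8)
The plan is to deduce this corollary directly from Theorem \ref{CP3th22} by showing that the new triple of orthogonality conditions differs from the old one only by a perturbation that vanishes as $c \to 0$, so that coercivity is preserved (with a possibly smaller constant $K$). The first observation is that the second and third orthogonality conditions are \emph{almost} the same: in Theorem \ref{CP3th22} the balls are centred at $\pm \tilde{d}_c \vec{e}_1$ (the true zeros of $Q_c$) while here they are centred at $\pm d_c \vec{e}_1$, and by \eqref{31gne} we have $d_c - \tilde{d}_c = o_{c\to 0}(1)$. Since $\| \nabla Q_c \|_{L^\infty} + \|\nabla^2 Q_c\|_{L^\infty} \leqslant K$ by \eqref{basik}, the indicator functions $\mathbbm 1_{B(d_c \vec e_1, R)}$ and $\mathbbm 1_{B(\tilde d_c \vec e_1, R)}$ differ on a set of measure $o_{c\to 0}(1)$ on which $|\partial_{x_2} Q_c|$ and $|i Q_c| = |Q_c| \leqslant 1 + c^2/4$ are bounded; hence for $\varphi \in H^{\exp,s}_{Q_c}$ with $\|\varphi\|_{H^{\exp}_{Q_c}} \leqslant 1$,
\[
 \Big| \mathfrak{Re}\!\int_{B(d_c\vec e_1, R)\cup B(-d_c\vec e_1,R)} \partial_{x_2} Q_c \,\bar\varphi - \mathfrak{Re}\!\int_{B(\tilde d_c\vec e_1, R)\cup B(-\tilde d_c\vec e_1,R)} \partial_{x_2} Q_c \,\bar\varphi \Big| = o_{c\to 0}(1),
\]
using the control $\|\varphi\|_{H^1(\{\tilde r \leqslant R'\})} \leqslant K(R')\|\varphi\|_{H^{\exp}_{Q_c}}$ recalled after the definition of $H^{\exp,s}_{Q_c}$, and similarly for the $iQ_c$ condition.

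The only genuinely new ingredient is the replacement of $\partial_c Q_c$ by $\partial_d \big(V_1(\cdot - d\vec e_1) V_{-1}(\cdot + d \vec e_1)\big)_{|d=d_c}$ in the first condition. Here I would invoke the estimate from Theorem \ref{th1},
\[
 \Big\| \partial_c Q_c + \Big(\tfrac{1 + o_{c\to 0}(1)}{c^2}\Big)\, \partial_d\big(V_1(\cdot - d\vec e_1) V_{-1}(\cdot + d\vec e_1)\big)_{|d=d_c} \Big\|_p = o_{c\to 0}\big(\tfrac{1}{c^2}\big),
\]
which says that on any fixed ball $\partial_c Q_c$ and $-\tfrac{1}{c^2}\partial_d(\cdots)$ agree up to a relative error $o_{c\to 0}(1)$ in the $\|\cdot\|_p$ norm (hence in $L^p$ on $B(d_c\vec e_1,R)$, after using that the local $X_p$ norm controls the local $L^p$ norm). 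Since an orthogonality condition is unchanged by multiplication of the test direction by a nonzero scalar, the condition $\mathfrak{Re}\int \partial_d(\cdots)\bar\varphi = 0$ is, up to an $o_{c\to0}(1)\|\varphi\|_{H^{\exp}_{Q_c}}$ error (by Hölder on the fixed ball, with the conjugate exponent controlled through $\|\varphi\|_{H^1(\{\tilde r \le R\})} \le K(R)\|\varphi\|_{H^{\exp}_{Q_c}}$ and Sobolev embedding in 2d), equivalent to $\mathfrak{Re}\int \partial_c Q_c\,\bar\varphi = 0$. Collecting the three estimates: any $\varphi$ satisfying the new orthogonality conditions satisfies the old ones up to errors of size $o_{c\to0}(1)\|\varphi\|_{H^{\exp}_{Q_c}}$.

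To finish, I would use the standard "coercivity is stable under small perturbations of the orthogonality conditions" argument. Let $\Pi_c$ be the $L^2$-type projection (computed against the three fixed directions on the balls, suitably Gram–Schmidt orthonormalised) onto the span of the old directions, and $\tilde\Pi_c$ the analogous projection for the new directions; the above shows $\|\Pi_c - \tilde\Pi_c\|_{\mathcal L(H^{\exp,s}_{Q_c})} = o_{c\to 0}(1)$, provided the relevant Gram matrices are uniformly invertible — which holds because, for small $c$, the three model directions are quantitatively linearly independent (one sees this from the explicit vortex asymptotics in Lemma \ref{lemme3new} and the symmetry structure recalled in subsection \ref{CP3sym}, the phase direction $iQ_c$ being essentially orthogonal to the two real translation-type directions). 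Given $\varphi$ in the kernel of $\tilde\Pi_c$, write $\varphi = \varphi' + \Pi_c\varphi$ with $\varphi' := \varphi - \Pi_c\varphi \in \ker\Pi_c$ and $\|\Pi_c\varphi\|_{H^{\exp}_{Q_c}} = \|(\Pi_c-\tilde\Pi_c)\varphi\|_{H^{\exp}_{Q_c}} = o_{c\to0}(1)\|\varphi\|_{H^{\exp}_{Q_c}}$; apply Theorem \ref{CP3th22} to $\varphi'$ and use the boundedness of $B^{\exp}_{Q_c}$ (the upper bound in Theorem \ref{CP3th22}) together with Cauchy–Schwarz on the cross term $B^{\exp}_{Q_c}(\varphi',\Pi_c\varphi)$ to absorb the error into $K\|\varphi\|^2_{H^{\exp}_{Q_c}}$, shrinking $K$ and $c_0$ if necessary. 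The upper bound $\tfrac1K\|\varphi\|^2_{H^{\exp}_{Q_c}} \geqslant B^{\exp}_{Q_c}(\varphi)$ needs no orthogonality and is already in Theorem \ref{CP3th22}.

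The main obstacle is the quantitative, $c$-uniform linear independence of the three model directions on the balls — i.e.\ showing the Gram matrix of $\{\partial_d(V_1 V_{-1}), \partial_{x_2}Q_c, iQ_c\}$ (restricted to $B(\pm d_c\vec e_1, R)$) stays invertible with an inverse bounded independently of $c$. This is where one must be careful: naively the directions could degenerate, but the differing symmetries in $x_1$ and $x_2$ (subsection \ref{CP3sym}) plus the leading-order vortex profile (Lemma \ref{lemme3new}) give the needed nondegeneracy; everything else is routine Hölder/Cauchy–Schwarz bookkeeping on a fixed compact region.
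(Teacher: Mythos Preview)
Your proposal is correct and follows the same strategy as the paper: show the new orthogonality conditions differ from those of Theorem \ref{CP3th22} by $o_{c\to0}(1)\|\varphi\|_{H^{\exp}_{Q_c}}$, then deduce coercivity by stability. The execution differs in two instructive ways. First, rather than setting up a full three-dimensional projection and appealing to Gram-matrix invertibility, the paper modulates in a single direction: it writes $\varphi^* = \varphi + c^2\mu\,\partial_c Q_c$ and observes via the symmetries of subsection \ref{CP3sym} that this leaves the $\partial_{x_2}Q_c$ and $iQ_c$ conditions untouched, so only one scalar $\mu$ (with $|\mu|=o_{c\to0}(1)\|\varphi\|_{H^{\exp}_{Q_c}}$) needs to be chosen. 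Second, for the cross-terms the paper does not use a generic Cauchy--Schwarz bound on $B^{\exp}_{Q_c}$ but rather the explicit identity $L_{Q_c}(\partial_c Q_c)=i\partial_{x_2}Q_c$ together with the quantitative estimates $|\langle i\partial_{x_2}Q_c,\varphi\rangle|\le K\ln(1/c)\|\varphi\|_{H^{\exp}_{Q_c}}$ and $B^{\exp}_{Q_c}(\partial_c Q_c)=(2\pi+o(1))/c^2$ from \cite{Chi_Pac_2}. One small correction: the unconditional upper bound $|B^{\exp}_{Q_c}(\varphi)|\le C\|\varphi\|^2_{H^{\exp}_{Q_c}}$ that you invoke is true, but it is not what Theorem \ref{CP3th22} states (there the upper bound is given only under the orthogonality conditions); it should instead be read off directly from the explicit formula \eqref{CP2Btilda}. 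The paper also separates the two changes (Step 1: integrand $\partial_c Q_c\leadsto\partial_d V$ with balls at $\tilde d_c$; Step 2: ball centers $\tilde d_c\leadsto d_c$), which keeps the estimates cleaner.
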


Remark, with Theorem \ref{th1} (for $p = + \infty$), that $- \frac{1}{c^2}
\partial_d (V_1 (. - d \vec{e}_1) V_{- 1} (. + d \vec{e}_1))_{| d = d_c
\nobracket}$ is the first order of $\partial_c Q_c$ when $c \rightarrow 0$ in
$L^{\infty} (\mathbb{R}^2, \mathbb{C})$, and that (with Lemma \ref{lemme3new})
they both have the same symmetries. We need to change the quantity
$\mathfrak{Re} \int_{B (\tilde{d}_c \vec{e}_1, R) \cup B (-
\tilde{d}_c \vec{e}_1, R)} \partial_c Q_c \bar{\varphi}$ in the orthogonality
conditions because we will differentiate it with respect to $c$, and
\[ c \mapsto \partial_d (V_1 (. - d \vec{e}_1) V_{- 1} (. + d \vec{e}_1))_{| d
   = d_c \nobracket} = - \partial_{x_1} V_1 (. - d_c \vec{e}_1) V_{- 1} (. +
   d_c \vec{e}_1) + \partial_{x_1} V_{- 1} (. + d_c \vec{e}_1) V_1 (. - d_c
   \vec{e}_1) \]
is a $C^1$ function ($c \mapsto d_c \in C^1 (] 0, c_0 [, \mathbb{R})$ for $c_0
> 0$ a small constant, see subsection 4.6 of {\cite{Chi_Pac_1}}), but it is not
clear that $c \mapsto \partial_c Q_c$ can be differentiated with respect to
$c$. Precise estimates on $\partial_d (V_1 (. - d \vec{e}_1) V_{- 1} (. + d
\vec{e}_1))_{| d = d_c \nobracket}$ can be found in Lemma 2.6 of {\cite{Chi_Pac_1}}.
Furthermore, we changed, in the area of the integrals, $\tilde{d}_c$ by $d_c$
(they are close when $c \rightarrow 0$, see (\ref{31gne})).

\begin{proof}
  {\tmem{Step 1: changing the integrand but not the integration domain.}}
  
  \bigbreak
  
  Take a function $\varphi \in H_{Q_c}^{\exp, s}$ satisfying
  \[ \mathfrak{Re} \int_{B (\tilde{d}_c \vec{e}_1, R) \cup B (-
     \tilde{d}_c \vec{e}_1, R)} \partial_d (V_1 (. - d \vec{e}_1) V_{- 1} (. +
     d \vec{e}_1))_{| d = d_c \nobracket} \bar{\varphi}
     =\mathfrak{Re} \int_{B (\tilde{d}_c \vec{e}_1, R) \cup B (-
     \tilde{d}_c \vec{e}_1, R)} \partial_{x_2} Q_c \bar{\varphi} = 0, \]
  \[ \mathfrak{Re} \int_{B (\tilde{d}_c \vec{e}_1, R) \cup B (-
     \tilde{d}_c \vec{e}_1, R)} i Q_c \bar{\varphi} = 0. \]
  Let us show that it satisfies $\frac{1}{K} \| \varphi \|^2_{H_{Q_c}^{\exp}}
  \geqslant B_{Q_c}^{\exp} (\varphi) \geqslant K \| \varphi
  \|_{H_{Q_c}^{\exp}}$. For $\mu \in \mathbb{R}$, we define
  \[ \varphi^{\ast} = \varphi + c^2 \mu \partial_c Q_c . \]
  We have that $\partial_c Q_c \in H_{Q_c}^{\exp, s}$. We want to choose $\mu
  \in \mathbb{R}$ such that $\varphi^{\ast}$ satisfied the hypothesis of Theorem \ref{CP3th22}.
  By the symmetries of subsection \ref{CP3sym} and the hypotheses on
  $\varphi$, we have that
  \[ \mathfrak{Re} \int_{B (\tilde{d}_c \vec{e}_1, R) \cup B (-
     \tilde{d}_c \vec{e}_1, R)} i Q_c \overline{\varphi^{\ast}}
     =\mathfrak{Re} \int_{B (\tilde{d}_c \vec{e}_1, R) \cup B (-
     \tilde{d}_c \vec{e}_1, R)} \partial_{x_2} Q_c \overline{\varphi^{\ast}} =
     0, \]
  and we compute, using $\mathfrak{Re} \int_{B (\tilde{d}_c
  \vec{e}_1, R) \cup B (- \tilde{d}_c \vec{e}_1, R)} \partial_d (V_1 (. - d
  \vec{e}_1) V_{- 1} (. + d \vec{e}_1))_{| d = d_c \nobracket} \bar{\varphi} =
  0$, that
  \begin{eqnarray*}
    &  & \mathfrak{Re} \int_{B (\tilde{d}_c \vec{e}_1, R) \cup B
    (- \tilde{d}_c \vec{e}_1, R)} c^2 \partial_c Q_c
    \overline{\varphi^{\ast}}\\
    & = & \mathfrak{Re} \int_{B (\tilde{d}_c \vec{e}_1, R) \cup B
    (- \tilde{d}_c \vec{e}_1, R)} c^2 \partial_c Q_c \bar{\varphi} + \mu
    \mathfrak{Re} \int_{B (\tilde{d}_c \vec{e}_1, R) \cup B (-
    \tilde{d}_c \vec{e}_1, R)} c^4 | \partial_c Q_c |^2\\
    & = & \mathfrak{Re} \int_{B (\tilde{d}_c \vec{e}_1, R) \cup B
    (- \tilde{d}_c \vec{e}_1, R)} (c^2 \partial_c Q_c - \partial_d (V_1 (. - d
    \vec{e}_1) V_{- 1} (. + d \vec{e}_1))_{| d = d_c \nobracket})
    \bar{\varphi}\\
    & + & \mu \mathfrak{Re} \int_{B (\tilde{d}_c \vec{e}_1, R)
    \cup B (- \tilde{d}_c \vec{e}_1, R)} c^4 | \partial_c Q_c |^2 .
  \end{eqnarray*}
  By Theorem \ref{th1} (for $p = + \infty$) and Lemma 2.6 of {\cite{Chi_Pac_1}}, we
  have
  \[ \| c^2 \partial_c Q_c - \partial_d (V_1 (. - d \vec{e}_1) V_{- 1} (. + d
     \vec{e}_1))_{| d = d_c \nobracket} \|_{L^{\infty} (\mathbb{R}^2)} = o_{c
     \rightarrow 0} (1), \]
  and also that there exists a universal constant $K > 0$ (we recall that $R >
  0$ is a universal constant) such that
  \[ \frac{1}{K} \leqslant \mathfrak{Re} \int_{B (\tilde{d}_c
     \vec{e}_1, R) \cup B (- \tilde{d}_c \vec{e}_1, R)} c^4 | \partial_c Q_c
     |^2 \leqslant K. \]
  Now, taking
  \[ \mu = \frac{-\mathfrak{Re} \int_{B (\tilde{d}_c \vec{e}_1, R)
     \cup B (- \tilde{d}_c \vec{e}_1, R)} (c^2 \partial_c Q_c - \partial_d
     (V_1 (. - d \vec{e}_1) V_{- 1} (. + d \vec{e}_1))_{| d = d_c \nobracket})
     \bar{\varphi}}{\mathfrak{Re} \int_{B (\tilde{d}_c \vec{e}_1,
     R) \cup B (- \tilde{d}_c \vec{e}_1, R)} c^4 | \partial_c Q_c |^2}, \]
  we have
  \[ \mathfrak{Re} \int_{B (\tilde{d}_c \vec{e}_1, R) \cup B (-
     \tilde{d}_c \vec{e}_1, R)} c^2 \partial_c Q_c \overline{\varphi^{\ast}} =
     0, \]
  with
  \[ | \mu | \leqslant o_{c \rightarrow 0} (1) \| \varphi \|_{L^2 (B
     (\tilde{d}_c \vec{e}_1, R) \cup B (- \tilde{d}_c \vec{e}_1, R))}
     \leqslant o_{c \rightarrow 0} (1) \| \varphi \|_{H_{Q_c}^{\exp}} . \]

  Since $\partial_c Q_c \in H_{Q_c}^{\exp, s}$ by Lemma 2.8 of {\cite{Chi_Pac_2}},
  we deduce that $\varphi^{\ast}$ satisfies all the hypotheses of Theorem
  \ref{CP3th22}, therefore
  \[ \frac{1}{K} \| \varphi^{\ast} \|^2_{H_{Q_c}^{\exp}} \geqslant
     B_{Q_c}^{\exp} (\varphi^{\ast}) \geqslant K \| \varphi^{\ast}
     \|^2_{H_{Q_c}^{\exp}} . \]
  Now, from Lemma 6.3 of {\cite{Chi_Pac_2}}, we have $\frac{1}{K} \leqslant \| c^2
  \partial_c Q_c \|_{H^{\exp}_{Q_c}} \leqslant K$ for a universal constant $K
  > 0$. With $| \mu | \leqslant o_{c \rightarrow 0} (1) \| \varphi
  \|_{H_{Q_c}^{\exp}}$, we deduce that, taking $c > 0$ small enough,
  \[ \frac{1}{K} \| \varphi \|^2_{H_{Q_c}^{\exp}} \geqslant B_{Q_c}^{\exp}
     (\varphi^{\ast}) \geqslant K \| \varphi \|^2_{H_{Q_c}^{\exp}} \]
  for some universal constant $K > 0$. Now, we decompose (using Lemmas 6.2 and
  6.3 of {\cite{Chi_Pac_2}})
  \begin{eqnarray*}
    B_{Q_c}^{\exp} (\varphi^{\ast}) & = & B_{Q_c}^{\exp} (\varphi + c^2 \mu
    \partial_c Q_c)\\
    & = & B_{Q_c}^{\exp} (\varphi) + 2 c^2 \mu \langle L_{Q_c} (\partial_c
    Q_c), \varphi \rangle + c^4 \mu^2 B_{Q_c}^{\exp} (\partial_c Q_c),
  \end{eqnarray*}
  and by Lemmas 2.8, 5.4 and 6.1 of {\cite{Chi_Pac_2}},
  \[ | \langle L_{Q_c} (\partial_c Q_c), \varphi \rangle | = | \langle i
     \partial_{x_2} Q_c, \varphi \rangle | \leqslant K \ln \left( \frac{1}{c}
     \right) \| \varphi \|_{H_{Q_c}^{\exp}}, \]
  hence
  \[ | 2 c^2 \mu \langle L_{Q_c} (\partial_c Q_c), \varphi \rangle | \leqslant
     K c^2 \ln \left( \frac{1}{c} \right) | \mu | \| \varphi
     \|_{H_{Q_c}^{\exp}} \leqslant o_{c \rightarrow 0} (1) \| \varphi
     \|_{H_{Q_c}^{\exp}}^2 . \]
  By Proposition 1.2 of {\cite{Chi_Pac_2}}, $B_{Q_c}^{\exp} (\partial_c Q_c) =
  \frac{2 \pi + o_{c \rightarrow 0} (1)}{c^2}$, thus
  \[ | c^4 \mu^2 B_{Q_c}^{\exp} (\partial_c Q_c) | \leqslant o_{c \rightarrow
     0} (1) \| \varphi \|_{H_{Q_c}^{\exp}}^2, \]
  which concludes the proof of $\frac{1}{K} \| \varphi \|^2_{H_{Q_c}^{\exp}}
  \geqslant B_{Q_c}^{\exp} (\varphi) \geqslant K \| \varphi
  \|_{H_{Q_c}^{\exp}}^2 $ by taking $c > 0$ small enough.
  
  \
  
  {\tmem{Step 2: changing the integration domain.}}
  
  \
  
  To change the conditions
  \[ \mathfrak{Re} \int_{B (\tilde{d}_c \vec{e}_1, R) \cup B (-
     \tilde{d}_c \vec{e}_1, R)} \partial_d (V_1 (. - d \vec{e}_1) V_{- 1} (. +
     d \vec{e}_1))_{| d = d_c \nobracket} \bar{\varphi}
     =\mathfrak{Re} \int_{B (\tilde{d}_c \vec{e}_1, R) \cup B (-
     \tilde{d}_c \vec{e}_1, R)} \partial_{x_2} Q_c \bar{\varphi} = 0, \]
  \[ \mathfrak{Re} \int_{B (\tilde{d}_c \vec{e}_1, R) \cup B (-
     \tilde{d}_c \vec{e}_1, R)} i Q_c \bar{\varphi} = 0 \]
  to
  \[ \mathfrak{Re} \int_{B (d_c \vec{e}_1, R) \cup B (- d_c
     \vec{e}_1, R)} \partial_d (V_1 (. - d \vec{e}_1) V_{- 1} (. + d
     \vec{e}_1))_{| d = d_c \nobracket} \bar{\varphi}
     =\mathfrak{Re} \int_{B (d_c \vec{e}_1, R) \cup B (- d_c
     \vec{e}_1, R)} \partial_{x_2} Q_c \bar{\varphi} = 0, \]
  \[ \mathfrak{Re} \int_{B (d_c \vec{e}_1, R) \cup B (- d_c
     \vec{e}_1, R)} i Q_c \bar{\varphi} = 0, \]
  we use similar arguments, using $| d_c - \tilde{d}_c | = o_{c \rightarrow 0} (1)$ by (\ref{31gne}). We
  check for instance that
  \[ \left| \mathfrak{Re} \int_{B (\tilde{d}_c \vec{e}_1, R) \cup B
     (- \tilde{d}_c \vec{e}_1, R)} \partial_{x_2} Q_c \bar{\varphi}
     -\mathfrak{Re} \int_{B (d_c \vec{e}_1, R) \cup B (- d_c
     \vec{e}_1, R)} \partial_{x_2} Q_c \bar{\varphi} \right| \leqslant K (R) |
     d_c - \tilde{d}_c |  \| \varphi \|_{H_{Q_c}^{\exp}}, \]
  and $| d_c - \tilde{d}_c | = o_{c \rightarrow 0} (1)$. 
  
Notice that the integration domain remains symmetric with respect to the $x_2$-axis.
\end{proof}

\subsection{Proof of Proposition \ref{locC1}}

In this subsection, we take $\nu \in ] 0, 1 [$ a small but universal constant, 
that will be fixed at the end of the proof. We take \[ \lambda_* = \text{max}( 3 R + 1, \frac{1}{\nu^2}) \] in the statement of 
Proposition \ref{locC1} (where $R > 0$ is defined in
 Corollary \ref{CP3coerc}). Then, for any $\lambda \geqslant \lambda_*$, we take
 \[ \varepsilon (\lambda) = \min \Big(\nu , \frac{1}{10 \lambda^2 + 100} \Big)  \] 
 in the statement of Proposition \ref{locC1}. The condition $ \varepsilon (\lambda) \leqslant \frac{1}{10 \lambda^2 + 100} $ is required only to make sure that the two balls $ B ( d \overrightarrow{e_1} , 2 \lambda ) $ and $ B ( -d \overrightarrow{e_1} , 2 \lambda ) $ are disjoint and at a distance at least $1$ from one another. This will be used only in the proof of Lemma \ref{CP3L27}.

We take $u$ a function satisfying the hypotheses of
Proposition \ref{locC1} for these values of $\lambda_* ,\lambda$ and $\varepsilon(\lambda)$. In
the rest of the subsection, $K, K' > 0$ denote universal constants,
independent of any parameters of the problem (in particular, $\lambda, \lambda_*, \varepsilon(\lambda)$ and $\nu$).

\subsubsection{Modulation on the parameters of the branch}

From Theorem 1.1 and the end of section 4.6 of {\cite{Chi_Pac_1}}, we have that $Q_c
= V_1 (. - d_c \vec{e}_1) V_{- 1} (. + d_c \vec{e}_1) + \Gamma_c$, with $d_c =
\frac{1 + o_{c \rightarrow 0} (1)}{c}$, $ \| \Gamma_c \|_{L^\ii} \to 0 $, and
\[ c \mapsto d_c \in C^1 (] 0, c_0 [, \mathbb{R}) , \]
with $ \p_c d_c \sim - 1/c^2 $ for $c \to 0 $ (see section 4.6 of \cite{Chi_Pac_1}). 
In particular, $ c \mapsto d_c $ is a smooth decreasing diffeomorphism from 
$ ]0,c_0 ] $ onto $ [ d_0, +\infty [ $, and thus, given $d > \frac{1}{\nu} > d_0 $ (for $\nu$ small enough), there exists a 
unique $c' > 0$ such that $ d_{c'} = d $. In addition, $ c' \sim_{d\to \ii} 1/d \leqslant K \nu $.  Furthermore,
\begin{eqnarray*}
  u (x) - Q_{c'} (x) & = & V_1 (x - d \vec{e}_1) V_{- 1} (x + d \vec{e}_1) +
  \Gamma (x) - V_1 (x - d_{c'} \vec{e}_1) V_{- 1} (x + d_{c'} \vec{e}_1) -
  \Gamma_{c'} (x)\\
  & = & \Gamma (x) - \Gamma_{c'} (x) .
\end{eqnarray*}
From the hypotheses on $\Gamma$, and the fact that $\| \Gamma_{c'}
\|_{L^{\infty} (\mathbb{R}^2)} \leqslant 2 \nu$ (since $c' \leqslant \frac{2}{d}\leqslant 2 \nu$,
we deduce that (we denote $\tilde{r} =
\tilde{r}_d = \tilde{r}_{d_{c'}}$ to simplify the notations)
\[ 
 \| u - Q_{c'} \|_{L^{\infty} (\{ \tilde{r} \leqslant 2 \lambda \})} 
 \leqslant K \nu. \]
Since $\frac{1 + o_{c' \rightarrow 0} (1)}{c'} = d_{c'} = d$ by Theorem
\ref{th1}, and $| d c - 1 | \leqslant \nu $, we have
\begin{equation}
  d | c - c' | \leqslant K \nu .
  \label{CP3ccp}
\end{equation}
We now claim that, for a universal constant $K > 0$,
\begin{equation}
  \| u - Q_{c'} \|_{C^1 (\{ \tilde{r} \leqslant \lambda \})} 
  \leqslant K \nu
  \label{22CP3} .
\end{equation}
That is, $u$ is close to $Q_{c'}$ near the vortices (in the region $\{
\tilde{r} \leqslant \lambda \}$) in the $C^1$ norm and not only in $L^\infty$. In order to show this, we use the 
elliptic equation satisfied by $u - Q_{c'}$, that is
\[ \Delta (u - Q_{c'}) = - i c \partial_{x_2} (u - Q_{c'}) - (u - Q_{c'}) (1 -
   | u |^2) + (| u |^2 - | Q_{c'} |^2) Q_{c'}. \]
Let us fix $x \in  \{ \tilde{r} \leqslant \lambda \}$. 
We have $ \| u - Q_{c'} \|_{L^{\infty} (\{ \tilde{r} \leqslant 2 \lambda \})}\leqslant K' \nu $
by hypothesis, thus the right-hand side of the equation is small in $ H^{-1} ( B(x, 4) ) $. 
By a standard $H^1-H^{-1}$ estimate, we deduce
\[
 \| u - Q_{c'} \|_{H^1 ( B( x, 3 ))} \leqslant K' \nu . \]
Then, the right-hand side is small in $ L^2 $, and standard $L^2 $ elliptic regularity yields first
\[
 \| u - Q_{c'} \|_{H^2 ( B( x, 2 ))} \leqslant K' \nu 
\]
and then
\[
 \| u - Q_{c'} \|_{H^3 ( B( x, 1 ) )}\leqslant K' \nu ,
\]
and we conclude by Sobolev imbedding.

Outside of this domain, $u$ and $Q_{c'}$ are close only
in modulus. Indeed, by equation (2.6) of {\cite{Chi_Pac_2}} (for $\sigma = 1 / 2$)
and the hypotheses on $u$, we have for a universal constant $K > 0 $ that on $\{ \tilde{r} \geqslant \lambda \}$,
\[ | | u | - | Q_{c'} | | \leqslant | | u | - 1 | + | | Q_{c'} | - 1 |
   \leqslant \nu + \frac{K}{\lambda^{3 / 2}} 
   \leqslant K' \nu    . \]
Now, we modulate on the parameters of the family of travelling waves to get
the orthogonality conditions of Corollary \ref{CP3coerc}. For $c'' > 0$ close
enough to $c'$ and $X, \gamma \in \mathbb{R}$, we define
\begin{equation}
  Q \assign Q_{c''} (. - X \vec{e}_2) \ex^{i \gamma} . \label{Qdef}
\end{equation}
\begin{lemma}
  \label{CP3L26} 
 There exists $K  > 0, \nu_0  > 0$ universal constants such that, for $u$ satisfying the hypotheses of Proposition \ref{locC1} for values of $ \lambda_*,  \lambda, \varepsilon( \lambda), \nu$ described above, if $\nu \leqslant \nu_0$, then there exists
 $c'' > 0$, $X, \gamma \in \mathbb{R}$ such that, for $R > 0$ defined
in Corollary \ref{CP3coerc}, and $\vec{d}_{\pm} \assign \pm d_{c''}
\vec{e}_1 + X \vec{e}_2$;
\begin{eqnarray*}
  &  & \mathfrak{Re} \int_{B (\vec{d}_+, R) \cup B (\vec{d}_-,
   R)} \partial_d (V_1 (. - d \vec{e}_1 - X \vec{e}_2) V_{- 1} (. + d
    \vec{e}_1 - X \vec{e}_2) \ex^{i \gamma})_{| d = d_{c''} \nobracket}
    \overline{(u - Q)}\\
    & = & \mathfrak{Re} \int_{B (\vec{d}_+, R) \cup B (\vec{d}_-,
    R)} \partial_{x_2} Q \overline{(u - Q)}\\
    & = & \mathfrak{Re} \int_{B (\vec{d}_+, R) \cup B (\vec{d}_-,
    R)} i Q \overline{(u - Q)}\\
    & = & 0.
  \end{eqnarray*}
  Furthermore,
  \[ \frac{| c'' - c' |}{c^{\prime 2}} + | X | + | \gamma | \leqslant K \nu .
    \]
\end{lemma}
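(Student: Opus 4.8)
To prove Lemma \ref{CP3L26} the plan is to solve for $(c'', X, \gamma)$ by a quantitative inverse function theorem applied to the map sending these modulation parameters to the triple of real numbers appearing in the three orthogonality conditions. Working with the rescaled unknown $a \assign (c'' - c')/c^{\prime 2}$ — so that the three parameters act at comparable scales, since $\partial_c Q_c$ has $L^2$-norm of order $1/c^2$ on the balls $B(\pm d_c\vec e_1, R)$ — set, for $(a, X, \gamma)$ near the origin, $c'' \assign c' + a c^{\prime 2}$, $Q \assign Q_{c''}(\cdot - X\vec e_2)\ex^{i\gamma}$, $\vec d_\pm \assign \pm d_{c''}\vec e_1 + X\vec e_2$, $Z_1 \assign \partial_d(V_1(\cdot - d\vec e_1 - X\vec e_2)V_{-1}(\cdot + d\vec e_1 - X\vec e_2)\ex^{i\gamma})_{|d = d_{c''}}$, $Z_2 \assign \partial_{x_2}Q$, $Z_3 \assign iQ$, and let $F = (F_1, F_2, F_3)$ be defined near the origin by $F_j(a, X, \gamma) \assign \mathfrak{Re}\int_{B(\vec d_+, R)\cup B(\vec d_-, R)} Z_j\,\overline{(u - Q)}$. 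Since $\lambda > 3R + 1$ and $|a|, |X|$ are small, the balls $B(\vec d_\pm, R)$ lie in $\{\tilde r \le \lambda\}$, where by \eqref{22CP3} one has $\|u - Q\|_{C^1} \le K\nu$ (at the base point $Q = Q_{c'}$, and throughout the ball by Lipschitz dependence of $Q$ on $(a,X,\gamma)$); as $Z_1, Z_2, Z_3$ are bounded uniformly near the vortices (Lemma \ref{lemme3new}, Corollary \ref{Gradinf}, Theorem \ref{Linf}, Lemma 2.6 of \cite{Chi_Pac_1}), this gives $|F(0,0,0)| \le K\nu$.

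The core step is to show $DF$ is invertible on a small universal ball around the origin with $\|DF\| + \|DF^{-1}\| \le K$. Using that $u$ is fixed, so $\partial(u - Q) = -\partial Q$, together with $\partial_\gamma Q = iQ = Z_3$, $\partial_X Q = -\partial_{x_2}Q = -Z_2$ and $\partial_{c''}Q = (\partial_c Q_c)_{|c=c''}(\cdot - X\vec e_2)\ex^{i\gamma} = -\frac{1}{(c'')^2}\big(Z_1 + o_{c\to0}(1)\big)$ in $L^\infty$ near the vortices (Theorem \ref{th1}), one checks that only the terms in which the derivative falls on $\overline{u-Q}$ contribute at leading order: every other term — derivative of a factor $Z_j$, or of the integration domain through $d_{c''}$ (bringing a factor $O(1/(c'')^2)$) or through $X$ (a factor $O(1)$) — comes multiplied by $\overline{u-Q} = O(\nu)$, hence is $O(\nu)$ after the $c^{\prime 2}$-rescaling in the $a$-direction and directly $O(\nu)$ in the $X$- and $\gamma$-directions. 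The surviving diagonal entries equal, up to $o_{c\to0}(1) + O(\nu)$, the quantities $\int_{B(\vec d_+, R)\cup B(\vec d_-, R)}|Z_1|^2$, $\int_{B(\vec d_+, R)\cup B(\vec d_-, R)}|Z_2|^2$ and $-\int_{B(\vec d_+, R)\cup B(\vec d_-, R)}|Z_3|^2$, each bounded above and below by a universal positive constant (on each radius-$R$ ball about a vortex centre, $|Q|$, $|\partial_{x_2}Q|$ and $|Z_1|$ are bounded below on a set of positive measure, by Lemma \ref{lemme3new} and Lemma 2.6 of \cite{Chi_Pac_1}). The off-diagonal leading entries pair test functions with opposite behaviour under $x_2 \mapsto -x_2$ combined with complex conjugation — recall from subsection \ref{CP3sym} that $Z_1$ transforms like $\partial_c Q_c$, namely $f(x_1, -x_2) = \overline{f(x_1, x_2)}$, whereas $Z_2$ and $Z_3$ transform with an extra minus sign — so over the $x_1$-axis-symmetric domain obtained at $X = 0$ they vanish to leading order and are $o_{c\to0}(1) + O(\nu) + O(|X|)$ in general. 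Hence $DF$ is a universally controlled perturbation of an invertible, essentially diagonal matrix with universally bounded inverse, so it is itself boundedly invertible once $\varepsilon$ (hence $\nu$) and the ball radius are small; this is the same mechanism as in the proof of Theorem 1.14 of \cite{Chi_Pac_2}.

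Given $|F(0,0,0)| \le K\nu$ and $DF$ boundedly invertible near the origin, a standard quantitative inverse function theorem — equivalently a Newton/contraction iteration started at the origin — produces a zero $(a, X, \gamma)$ of $F$ with $|a| + |X| + |\gamma| \le K\,|F(0,0,0)| \le K\nu$, that is precisely $\frac{|c'' - c'|}{c^{\prime 2}} + |X| + |\gamma| \le K\nu$; the corresponding $c'' = c' + a c^{\prime 2}$ lies in $]0, c_0[$ since $c' \le 2\varepsilon$ is small, so $Q_{c''}$ is well defined. I expect the main obstacle to be the Jacobian bookkeeping: keeping track of the two distinct sources of $1/(c'')^2$ (from $\partial_c Q_c$ and from $\frac{d d_{c''}}{d c''}$), verifying that after the $c^{\prime 2}$-rescaling of the $a$-direction every term other than those in which the derivative hits $\overline{u-Q}$ carries a compensating factor of $u-Q$ and is thus $O(\nu)$, and checking the symmetry cancellation of the off-diagonal entries; the lower bounds on the diagonal entries are then comparatively routine given the vortex estimates.
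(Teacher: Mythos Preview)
Your approach is essentially the same as the paper's: define the three orthogonality functionals, bound them by $K\nu$ at the base point, show the Jacobian is a small perturbation of an invertible diagonal matrix, and apply the inverse function theorem. The rescaling $a = (c''-c')/c'^2$ is exactly equivalent to the paper's device of looking at $c'^2\partial_{c''}G$ rather than $\partial_{c''}G$.

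There is, however, a genuine gap in your treatment of the off-diagonal entries. You claim that all three off-diagonal pairings $\mathfrak{Re}\int Z_i\overline{Z_j}$ vanish because the test functions have ``opposite behaviour under $x_2\mapsto -x_2$ combined with complex conjugation''. This is correct for the $(Z_1,Z_2)$ and $(Z_1,Z_3)$ pairings, since $Z_1$ satisfies $f(x_1,-x_2)=\overline{f(x_1,x_2)}$ while $Z_2$ and $Z_3$ satisfy $f(x_1,-x_2)=-\overline{f(x_1,x_2)}$. But $Z_2=\partial_{x_2}Q$ and $Z_3=iQ$ have the \emph{same} symmetry, so $\mathfrak{Re}(Z_2\overline{Z_3})$ is \emph{even} in $x_2$ and your argument gives nothing for this entry.

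The paper handles $\mathfrak{Re}\int_{B(\vec d_+,R)\cup B(\vec d_-,R)} iQ\,\overline{\partial_{x_2}Q}$ differently: it approximates $Q$ near each vortex by a single vortex $V_{\pm 1}$ (up to $o_{c\to0}(1)$), reducing to $\mathfrak{Re}\int_{B(0,R)} iV_1\,\overline{\partial_{x_2}V_1}$, and then computes this explicitly. Writing $V_1=\rho(r)\ex^{i\theta}$ one finds $\mathfrak{Re}(iV_1\overline{\partial_{x_2}V_1})=\rho^2 x_1/r^2$, which is odd in $x_1$ about the vortex centre and hence integrates to zero on $B(0,R)$. So the entry does vanish, but by a local computation (or ``decomposition in harmonics'') at the vortex, not by the global $x_2$-conjugation symmetry you invoked. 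Once you replace your symmetry claim for this one entry with this computation, the rest of your argument goes through.
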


\begin{proof} 
To simplify the notations, in this proof, we define
\[ \partial_d V \assign \partial_d (V_1 (. - d \vec{e}_1 - X \vec{e}_2) V_{-
   1} (. + d \vec{e}_1 + X \vec{e}_2) \ex^{i \gamma})_{| d = d_{c''} \nobracket}
   . \]
We will keep the notation $\tilde{r}$ for the minimum of the distance to the
zeros of $Q$.

  First, from equation (7.5) of {\cite{Chi_Pac_2}}, there exists a universal
  constant $K > 0$ such that, for $c'' < c_0$, $c' / 2 \leqslant c'' \leqslant
  2 c'$,
  \begin{equation}
    \| Q - Q_{c'} \|_{L^{\infty} (\mathbb{R}^2)} \leqslant K \left( | X | +
    \frac{| c'' - c' |}{c^{\prime 2}} + | \gamma | \right) . \label{CP3eq21}
  \end{equation}
  Now, we follow closely the proof of Lemma 7.6 of {\cite{Chi_Pac_2}}, which is done
  in Appendix C.3 there. We define
  \[ G \left(\begin{array}{c}
       X\\
       c''\\
       \gamma
     \end{array}\right) \assign \left(\begin{array}{c}
       \mathfrak{Re} \int_{B (\vec{d}_+, R) \cup B (\vec{d}_-, R)}
       \partial_{x_2} Q \overline{(u - Q)}\\
       \mathfrak{Re} \int_{B (\vec{d}_+, R) \cup B (\vec{d}_-, R)}
       \partial_d V \overline{(u - Q)}\\
       \mathfrak{Re} \int_{B (\vec{d}_+, R) \cup B (\vec{d}_-, R)}
       i Q \overline{(u - Q)}
     \end{array}\right) . \]
  Remark that $Q, \partial_d V$ and $\vec{d}_{\pm}$ all depend on $X$ and
  $c''$, and $Q$ depends also on $\gamma$. From equation (\ref{22CP3}) and the 
  fact that $\lambda \geqslant \lambda_* > 2 R$, we have $\| u - Q_{c'} \|_{L^{\infty} (\{ \tilde{r}
  \leqslant R \})} \leqslant K \nu$, 
  and from Theorem \ref{th1} with $p = +
  \infty$ as well as Lemma 2.6 of {\cite{Chi_Pac_1}},
  \begin{equation}
    \| \partial_{x_2} Q_{c'} \|_{L^{\infty} (\mathbb{R}^2)} + \| \partial_d V
    \|_{L^{\infty} (\mathbb{R}^2)} + \| i Q_{c'} \|_{L^{\infty}
    (\mathbb{R}^2)} \leqslant K \label{CP3A}
  \end{equation}
  for some universal constant $K > 0$. Therefore, since $Q = Q_{c'}$ for $X =
  \gamma = 0$, $c'' = c'$, we obtain
  \[ \left| G \left(\begin{array}{c}
       0\\
       c'\\
       0
     \end{array}\right) \right| 
     \leqslant K \| u - Q_{c'} \|_{L^{\infty} (\{ \tilde{r} \leqslant \lambda \})} 
     \leqslant K \nu
     . \]
  We want to show that $G$ is invertible in a vicinity of
  $\left(\begin{array}{c}
    0\\
    c'\\
    0
  \end{array}\right)$. With equations (\ref{22CP3}) and (\ref{CP3eq21}), we
  check that (we recall that $\tilde{r} = \min (| x - \vec{d}_+ |, | x -
  \vec{d}_- |)$)
  \begin{eqnarray*}
    \| u - Q \|_{L^{\infty} (\{ \tilde{r} \leqslant 2 R \})} & \leqslant & \|
    u - Q_{c'} \|_{L^{\infty} (\{ \tilde{r} \leqslant 2 R \})} + \| Q - Q_{c'}
    \|_{L^{\infty} (\mathbb{R}^2)}\\
    & \leqslant &   K \nu + K \left( | X | + \frac{| c'' - c' |}{c^{\prime 2}}
    + | \gamma | \right),
  \end{eqnarray*}
  and as in Lemma 7.1 of {\cite{Chi_Pac_2}}, this implies
  \begin{equation}
    \| u - Q \|_{C^1 (\{ \tilde{r} \leqslant R \})} \leqslant K \nu 
    + K \left(
    | X | + \frac{| c'' - c' |}{c^{\prime 2}} + | \gamma | \right) .
    \label{CP3eq22}
  \end{equation}
  Now, we compute
  \begin{eqnarray*}
    &  & \left| \partial_X \left( \mathfrak{Re} \int_{B
    (\vec{d}_+, R) \cup B (\vec{d}_-, R)} \partial_{x_2} Q \overline{(u - Q)}
    \right) - \int_{B (\vec{d}_+, R) \cup B (\vec{d}_-, R)} | \partial_{x_2} Q
    |^2 \right|\\
    & \leqslant & \int_{\partial B (\vec{d}_+, R) \cup \partial B (\vec{d}_-,
    R)} | \partial_{x_2} Q \overline{(u - Q)} | + \int_{B (\vec{d}_+, R) \cup
    B (\vec{d}_-, R)} | \partial_{x_2}^2 Q \overline{(u - Q)} |,
  \end{eqnarray*}
  therefore, with (\ref{basik}) and (\ref{CP3eq22}), we check that
  \begin{eqnarray*}
    &  & \int_{\partial B (\vec{d}_+, R) \cup \partial B (\vec{d}_-, R)} |
    \partial_{x_2} Q \overline{(u - Q)} | + \int_{B (\vec{d}_+, R) \cup B
    (\vec{d}_-, R)} | \partial_{x_2}^2 Q \overline{(u - Q)} |\\
    & \leqslant & K \nu 
    + K \left( | X | + \frac{| c'' - c' |}{c^{\prime 2}}
    + | \gamma | \right),
  \end{eqnarray*}
  hence
  \begin{eqnarray*}
    &  & \left| \partial_X \left( \mathfrak{Re} \int_{B
    (\vec{d}_+, R) \cup B (\vec{d}_-, R)} \partial_{x_2} Q \overline{(u - Q)}
    \right) - \int_{B (\vec{d}_+, R) \cup B (\vec{d}_-, R)} | \partial_{x_2} Q
    |^2 \right|\\
    & \leqslant & K \nu 
    + K \left( | X | + \frac{| c'' - c' |}{c^{\prime 2}}
    + | \gamma | \right) .
  \end{eqnarray*}
  With similar computations, using Lemma 2.6 of {\cite{Chi_Pac_1}}, equations
  (\ref{basik}) and (\ref{CP3eq22}), we infer that
  \[ \left| \partial_X G - \left(\begin{array}{c}
       \int_{B (\vec{d}_+, R) \cup B (\vec{d}_-, R)} | \partial_{x_2} Q |^2\\
       \mathfrak{Re} \int_{B (\vec{d}_+, R) \cup B (\vec{d}_-, R)}
       \partial_d V \overline{\partial_{x_2} Q}\\
       \mathfrak{Re} \int_{B (\vec{d}_+, R) \cup B (\vec{d}_-, R)}
       i Q \overline{\partial_{x_2} Q}
     \end{array}\right) \right| \leqslant K \nu 
     + K \left( | X | + \frac{| c''
     - c' |}{c^{\prime 2}} + | \gamma | \right) . \]
  By the symmetries of $Q (. + X \vec{e}_2) \ex^{- i \gamma}$ and $\partial_d V
  (. + X \vec{e}_2) \ex^{- i \gamma}$, we have that
  \[ \mathfrak{Re} \int_{B (\vec{d}_+, R) \cup B (\vec{d}_-, R)}
     \partial_d V \overline{\partial_{x_2} Q} = 0, \]
  and from Theorem \ref{th1} (with $p = + \infty$), with the symmetries of
  $Q_c$ and $V_1$ (see subsections \ref{CP3vor} and \ref{CP3sym}), we have
  \[ \left| \mathfrak{Re} \int_{B (\vec{d}_+, R) \cup B (\vec{d}_-,
     R)} i Q \overline{\partial_{x_2} Q} - 2\mathfrak{Re} \int_{B
     (0, R)} i V_1 \overline{\partial_{x_2} V_1} \right| 
     \leqslant K \left( | X | + \frac{| c'' - c' |}{c^{\prime 2}} \right) . \]
  By decomposition in harmonics and Lemma \ref{lemme3new}, we check easily
  that $\mathfrak{Re} \int_{B (0, R)} i V_1
  \overline{\partial_{x_2} V_1} = 0$, thus

  \[ \left| \partial_X G - \left(\begin{array}{c}
        \int_{B (\vec{d}_+, R) \cup B (\vec{d}_-, R)} | \partial_{x_2} Q
       |^2\\
       0\\
       0
     \end{array}\right) \right| \leqslant K \nu 
     + K \left( | X | + \frac{| c''
     - c' |}{c^{\prime 2}} + | \gamma | \right) . \]
  Similarly, we check that (using $\partial_c (d_c) = \frac{- 1 + o_{c
  \rightarrow 0} (1)}{c^2}$ from section 4.6 of {\cite{Chi_Pac_1}}, and Lemma 2.6 of
  {\cite{Chi_Pac_1}})
  \[ \left| c^{\prime 2} \partial_{c''} G - \left(\begin{array}{c}
       0\\
     \int_{B (\vec{d}_+, R) \cup B (\vec{d}_-, R)}
       | \partial_d V |^2\\
       0
     \end{array}\right) \right| \leqslant K \nu 
     + K \left( | X | + \frac{| c''
     - c' |}{c^{\prime 2}} + | \gamma | \right) \]
  (we use here the fact that $c \mapsto \partial_d V$ and $c \mapsto
  \vec{d}_{\pm}$ are differentiable) and
  \[ \left| \partial_{\gamma} G - \left(\begin{array}{c}
       0\\
       0\\
       - \int_{B (\vec{d}_+, R) \cup B (\vec{d}_-, R)} | Q |^2
     \end{array}\right) \right| \leqslant K \nu 
     + K \left( | X | + \frac{| c''
     - c' |}{c^{\prime 2}} + | \gamma | \right) . \]
  From (\ref{basik}) and Theorem \ref{th1} (for $p = + \infty$) as well as
  Lemma 2.6 of {\cite{Chi_Pac_1}}, there exists a universal constant $K > 0$ such
  that
  \[ \frac{1}{K} \leqslant \int_{B (\vec{d}_+, R) \cup B (\vec{d}_-, R)} |
     \partial_{x_2} Q |^2 \leqslant K, \]
  \[ \frac{1}{K} \leqslant \int_{B (\vec{d}_+, R)
     \cup B (\vec{d}_-, R)} | \partial_d V |^2 \leqslant K \]
  and
  \[ \frac{1}{K} \leqslant \int_{B (\vec{d}_+, R) \cup B (\vec{d}_-, R)} | Q
     |^2 \leqslant K , \]
 provided $ |X| + c'' $ is small enough. We deduce that there exists $K_1,K_2, \nu_0 > 0 $ 
 such that, for $ 0 < \nu \ls \nu_0 $ and $u$ satisfying the hypotheses of Proposition 
  \ref{locC1} with the parameters $ \lambda $, $ \nu $, 
  $d G$ is invertible in the ball $ \{ ( X, c'', \gamma ) \in \mathbb{R}^3 \, {\rm s.t. } 
   | X | + \frac{| c'' - c' |}{c^{\prime 2}} + | \gamma | \ls K_1 \nu \} $, 
  and that there exists $X $, $c''$, $ \gamma \in \mathbb{R}$ such that
  \[ G \left(\begin{array}{c}
       X\\
       c''\\
       \gamma
     \end{array}\right) = 0, \]
  with
  \[ \frac{| c'' - c' |}{c^{\prime 2}} + | X | + | \gamma | \leqslant K_2 \nu .
  \]
\end{proof}

\subsubsection{Construction and properties of the perturbation term}

We define $\eta$ a smooth cutoff function with $\eta (x) = 0$ for $x \in B
(\vec{d}_{\pm}, 2 R)$ and $\eta (x) = 1$ for $x \in \mathbb{R}^2 \backslash B
(\pm \vec{d}_{\pm}, 2 R + 1)$ even in $x_1$. We infer the following result, where 
the space $H_Q^{\exp,s}$ is simply defined by
\[ H_Q^{\exp, s} \assign \left\{ \varphi \in H^1_{\tmop{loc}} (\mathbb{R}^2,
   \mathbb{C}), \| \varphi \|_{H_{Q_{}}^{\exp}} < + \infty, \forall (x_1, x_2)
   \in \mathbb{R}^2, \varphi (- x_1, x_2) = \varphi (x_1, x_2) \right\}, \]
with, for $\tilde{r}$ the minimum of the distances to the zeros of $Q$,
$\varphi = Q \psi$,
\[ \| \varphi \|^2_{H_Q^{\exp}} \assign \| \varphi \|^2_{H^1 (\{ \tilde{r}
   \leqslant 10 \})} + \int_{\{ \tilde{r} \geqslant 5 \}} | \nabla \psi |^2
   +\mathfrak{Re}^2 (\psi) + \frac{| \psi |^2}{\tilde{r}^2 \ln^2 \tilde{r} }, \]
and $B_Q^{\exp}$ has the same definition than $B_{Q_c}^{\exp}$, replacing
$\tilde{\eta}$ by $\eta$ and $Q_c$ by $Q$.

\begin{lemma}
  \label{CP3L27} 
  There exists $K_1,K_2  > 0, \nu_0 > \nu_1  > 0$ universal constants such that, for $u$ 
  satisfying the hypotheses of Proposition \ref{locC1} for values of $ \lambda_*,  \lambda, \varepsilon( \lambda), \nu$ 
  described above, if $\nu \leqslant \nu_1$, then there exists a function $\varphi = Q \psi \in H_Q^{\exp, s} \cap C^1
  (\mathbb{R}^2, \mathbb{C})$ such that, for $Q$ defined in (\ref{Qdef}) with
  the values of $c'', X, \gamma \in \mathbb{R}$ from Lemma \ref{CP3L26},
  \[ u - Q = (1 - \eta) \varphi + \eta Q (\ex^{\psi} - 1) . \]
  Furthermore,
  \[ B_Q^{\exp} (\varphi) \geqslant K_1 \| \varphi \|_{H_Q^{\exp}}^2 \]
  and
  \[ \| \varphi \|_{C^1 (\{ \tilde{r} \leqslant \lambda \})} +
  \| \mathfrak{Re} (\psi) \|_{L^{\infty} (\{ \tilde{r} \geqslant \lambda \})} \leqslant K_2 \nu
  . \]
\end{lemma}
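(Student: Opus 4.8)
The statement to prove is Lemma~\ref{CP3L27}: given $u$ satisfying the hypotheses of Proposition~\ref{locC1}, and given $Q = Q_{c''}(\cdot - X\vec{e}_2)\ex^{i\gamma}$ with the modulation parameters supplied by Lemma~\ref{CP3L26}, we must construct the error $\varphi = Q\psi \in H_Q^{\exp,s}\cap C^1$ realizing the decomposition $u - Q = (1-\eta)\varphi + \eta Q(\ex^\psi - 1)$, and establish both coercivity $B_Q^{\exp}(\varphi)\gs K\|\varphi\|_{H_Q^{\exp}}^2$ and the smallness estimate $\|\varphi\|_{C^1(\{\tilde r\ls\lambda\})} + \|\mathfrak{Re}(\psi)\|_{L^\infty(\{\tilde r\gs\lambda\})}\ls K'\nu$. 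I would proceed in three stages: first build $\psi$ (hence $\varphi$) by a gluing construction; then read off the $C^1$ and $\mathfrak{Re}(\psi)$ smallness from the estimates already obtained in the proof of Lemma~\ref{CP3L26}; then invoke Corollary~\ref{CP3coerc} for the coercivity.

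\textbf{Construction of $\psi$ and $\varphi$.} Away from the zeros of $Q$, say on $\{\tilde r \gs 2R\}$ where $\eta \equiv 1$ on $\{\tilde r\gs 2R+1\}$, one has $|Q|$ bounded below (using Lemma~\ref{lemme3new}, Theorem~\ref{th1} and $|d_{c''}c''-1|$ small), so we may write $u = Q\ex^\psi$ with $\psi\assign\log(u/Q)$ well defined and single-valued: single-valuedness holds because both $u$ and $Q$ have exactly the two vortices $\pm d_{c''}\vec e_1 + X\vec e_2$ with the same degrees $\pm1$, so $u/Q$ has trivial winding around any loop in $\{\tilde r\gs 2R\}$, and because far out both tend to $1$ (Theorem~\ref{decay}, Theorem~\ref{th1}) the branch of $\log$ near $0$ is the correct global choice. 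Near each vortex, in $\{\tilde r \ls 2R+1\}$, I instead set $\varphi\assign u - Q$ directly and define $\psi$ there by $\varphi = Q\psi$ wherever $Q\neq 0$ (this is only needed to make sense of the $\psi$-dependent terms in $B_Q^{\exp}$, which after the integrations by parts in \eqref{CP2Btilda} only involve $\psi$ on the annular region $\{2R\ls\tilde r\ls 2R+1\}$ where $\eta$ transitions and $Q$ is bounded below). The identity $u-Q = (1-\eta)\varphi + \eta Q(\ex^\psi-1)$ then holds by construction: on $\{\tilde r\ls 2R\}$, $\eta=0$ and $\varphi = u-Q$; on $\{\tilde r\gs 2R+1\}$, $\eta=1$ and $Q(\ex^\psi-1) = Q(u/Q - 1) = u-Q$; on the overlap annulus both descriptions agree since $u = Q\ex^\psi$ and $\varphi = u-Q = Q(\ex^\psi-1)$ there. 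Regularity $\varphi\in C^1$ follows from elliptic regularity for $u$ (Corollary~\ref{Gradinf}) and smoothness of $Q$.

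\textbf{Smallness and membership in $H_Q^{\exp,s}$.} The $C^1$ bound on $\{\tilde r\ls\lambda\}$ is essentially \eqref{CP3eq22} combined with the estimate $\tfrac{|c''-c'|}{c'^2} + |X| + |\gamma|\ls K\nu$ from Lemma~\ref{CP3L26}: indeed $\|u-Q\|_{C^1(\{\tilde r\ls R\})}\ls K\nu$ from there, and extending from $\{\tilde r\ls R\}$ to $\{\tilde r\ls\lambda\}$ uses \eqref{22CP3}, \eqref{CP3eq21} and an interior elliptic estimate (exactly as in the step leading to \eqref{22CP3}), all of which is already in place in the preceding pages. For $\|\mathfrak{Re}(\psi)\|_{L^\infty(\{\tilde r\gs\lambda\})}$: on that region $\mathfrak{Re}(\psi) = \log|u| - \log|Q|$, and $\bigl||u|-|Q|\bigr|\ls K\nu$ (shown just above Lemma~\ref{CP3L26} from hypothesis~3 on $u$, equation~(2.6) of~\cite{Chi_Pac_2} and $\lambda^{-1/2}\ls\nu$), while $|Q|$ is bounded below away from $1-K\nu$ there, so $\log$ is Lipschitz and the bound transfers. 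Finiteness of $\|\varphi\|_{H_Q^{\exp}}$ requires checking each term in the norm: the $H^1(\{\tilde r\ls 10\})$ part is finite by the $C^1$ bound; for the outer integral one uses $u, Q\in\mathcal E$ so $\nabla u - \nabla Q\in L^2$, together with the algebraic decay of both $u$ (Theorem~\ref{decay}) and $Q$ (Theorem~\ref{th1}, $p=\infty$) to control $\int \mathfrak{Re}^2(\psi)$ and the weighted $\int |\psi|^2/(\tilde r^2\ln^2\tilde r)$ term — the logarithmic weight is exactly what accommodates the slowly-decaying phase. Evenness in $x_1$ of $\varphi$ is inherited from that of $u$ (hypothesis~1) and of $Q$ (which is even in $x_1$ since $Q_{c''}$ is and $X\vec e_2$-translation preserves this), so $\varphi\in H_Q^{\exp,s}$.

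\textbf{Coercivity.} Finally, $B_Q^{\exp}(\varphi)\gs K\|\varphi\|_{H_Q^{\exp}}^2$ is obtained by applying Corollary~\ref{CP3coerc} (with $Q_c$ replaced by the translated/phase-shifted $Q$, which has the same structure — two vortices, the same symmetry class — so the corollary applies verbatim after conjugating by the translation and phase), since Lemma~\ref{CP3L26} guarantees precisely the three orthogonality conditions of that corollary for $\varphi = u-Q$ on $B(\vec d_+,R)\cup B(\vec d_-,R)$. \textbf{The main obstacle} I expect is the careful verification that $\psi$ is globally well defined as a single-valued function on $\{\tilde r\gs 2R\}$ and that the resulting $\varphi$ lies in $H_Q^{\exp}$ — in particular controlling the $\int\mathfrak{Re}^2(\psi)$ and weighted $|\psi|^2$ contributions from the region far from the vortices, where the only hypothesis is the weak one that $|u|\to1$ and $\nabla u\in L^2$, with no direct control on the phase; this is the point the introduction flags as requiring the coercivity machinery of~\cite{Chi_Pac_2} and the logarithmically-weighted norm, and it is where the bulk of the technical work sits.
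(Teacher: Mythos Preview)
Your overall strategy --- lift $u/Q$ logarithmically away from the vortices, take $\varphi=u-Q$ near them, then invoke Corollary~\ref{CP3coerc} for coercivity --- matches the paper's. But there is a genuine gap in the gluing step.

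The lemma requires a single function $\psi$ with $\varphi=Q\psi$ \emph{everywhere}, satisfying
\[
u-Q=(1-\eta)\,Q\psi+\eta\,Q(\ex^\psi-1).
\]
On the transition annulus $\{2R\le\tilde r\le 2R+1\}$ you are using two incompatible definitions: the outer $\psi=\log(u/Q)$ and the inner $\psi=(u-Q)/Q$. Your claim that ``both descriptions agree since $u=Q\ex^\psi$ and $\varphi=u-Q=Q(\ex^\psi-1)$'' actually shows that $\varphi=Q(\ex^\psi-1)$ there (with outer $\psi$), whereas the lemma demands $\varphi=Q\psi$; these coincide only when $\psi=0$. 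Equivalently, plugging inner $\varphi=u-Q$ and inner $\psi=(u-Q)/Q$ into the decomposition gives $(1-\eta)(u-Q)+\eta Q(\ex^{(u-Q)/Q}-1)$, which equals $u-Q$ only if $\ex^{(u-Q)/Q}-1=(u-Q)/Q$. So neither piece, nor their naive gluing, satisfies the required identity on the annulus.

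The paper resolves this by observing that on $\{\tilde r\le\lambda\}$ the equation
\[
(1-\eta)\psi+\eta(\ex^\psi-1)=\frac{u-Q}{Q}
\]
is a $C^1$-small perturbation of the identity in $\psi$ (since $\|u-Q\|_{C^1(\{\tilde r\le\lambda\})}\le K\nu$, cf.\ \eqref{CP3eq22}, and $\ex^\psi-1\approx\psi$ for small $\psi$), and solves it by a fixed-point/implicit-function argument (this is the reference to Lemma~7.2 of \cite{Chi_Pac_2}). In the region $\{\tilde r\ge\lambda\}\subset\{\eta=1\}$ the equation reduces exactly to $\ex^\psi=u/Q$, and the paper lifts the globally defined $\mathbb S^1$-close map $\eta\,u/Q+(1-\eta)$ on $\mathbb R^2$ (simply connected) rather than $u/Q$ on a multiply connected domain; this sidesteps the degree discussion you give. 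The two constructions then agree on the nonempty overlap $\{\eta=1\}\cap\{\tilde r\le\lambda\}$ because both satisfy $\ex^\psi=u/Q$ there, after adjusting by the appropriate $2\pi i\mathbb Z$ constant.

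Once this is fixed, the rest of your outline (smallness of $\mathfrak{Re}(\psi)$ via $\log|u|-\log|Q|$, membership in $H_Q^{\exp,s}$ via Gravejat's decay estimates from \cite{Gra} and boundedness of $\psi$ at infinity, coercivity via Corollary~\ref{CP3coerc} after translating back by $X\vec e_2$ and $\ex^{-i\gamma}$) is correct and essentially what the paper does.
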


The goal of this lemma is to decompose the error $u - Q$ in a particular form.
In the area $\{ \eta = 1 \}$, that is far from the zeros of $Q$, the error is
written in an exponential form: $u = Q \ex^{\psi}$. This form was already used
in {\cite{Chi_Pac_2}}, {\cite{Chi_Pac_1}}, and is useful to have a particular form on the
cubic error terms. Furthermore, we fix the parameters of $Q$ such that
$\varphi$ satisfies the orthogonality conditions of Corollary \ref{CP3coerc},
yielding the coercivity.

Remark that we have 
no smallness on $\mathfrak{I}\mathfrak{m} (\psi)$
in $\{ \tilde{r} \geqslant \lambda \}$, where $\varphi = Q \psi$. We will
simply be able to show that it is bounded (see equation (\ref{CP3dumdum}) below),
with no a priori bound on it. This lack of smallness is one of the main
difficulties in the proof of Proposition \ref{locC1}. Analogously, we show 
that $\varphi \in H_Q^{\exp, s}$, but we have no good control on $\| \varphi \|_{H_Q^{\exp}}$: 
this quantity might be a priori very large at this point.

\begin{proof}
  This proof follows some ideas of the proofs of Lemmas 7.2 and 7.3 of
  {\cite{Chi_Pac_2}}. First, in the area $\{ \tilde{r} \leqslant \lambda \}$, the
  proof is identical to that of Lemma 7.2 of {\cite{Chi_Pac_2}} for the existence of
  $\varphi = Q \psi \in C^1 (\{ \tilde{r} \leqslant \lambda \}, \mathbb{C})$
  such that $u - Q = (1 - \eta) \varphi + \eta Q (\ex^{\psi} - 1)$ in $\{
  \tilde{r} \leqslant \lambda \}$, with $\| \varphi \|_{C^1 (\{ \tilde{r}
  \leqslant \lambda \})} \leqslant K \nu $ (this is a consequence of the estimate 
  $\| u - Q \|_{C^1 (\{ \tilde{r} \leqslant \lambda \})} \leqslant K \nu $, obtained using Lemma \ref{CP3L26}). 
  The main idea is that $ u - Q $ is
  small there (in $ C^1 (\{ \tilde{r} \leqslant \lambda \}, \mathbb{C})$), and
  the equation on $\varphi$ is a perturbation of the identity for functions
  $\varphi$ that are small in $C^1 (\{ \tilde{r} \leqslant \lambda \}, \mathbb{C})$.  
  In particular, since $u $ and $Q$ are symmetric with respect to the 
  $x_2$-axis, $ \varphi $ and $ \psi $ are also symmetric with respect to the 
  $x_2$-axis.
  
  We then focus our attention in the area $\{ \tilde{r} \geqslant \lambda \}$, 
  where $ \eta \equiv 1$, so that the problem reduces to the equation
  \[
  u = Q \ex^\psi .
  \]
 By Theorem \ref{th1} and the hypotheses of Proposition  \ref{locC1}, there exists $\nu_1 > 0$ such that, if $\nu \leqslant \nu_1$, then, as a consequence of
 \[ \varepsilon(\lambda) \leqslant \text{min}(\nu_1, \frac{1}{10 \lambda^2 + 100}), \]
the domain $ \{ \tilde{r} \geqslant \lambda \} $ 
  consists in the complement of the two disjointed disks $ B ( \vec{d}_\pm , \lambda) $, with
  \[
  | Q| \gs 1/2 , \quad \quad |u| \gs 1/2  \quad \quad {\rm in \, } \{ \tilde{r} \gs \lambda \} 
  \]
and
\[
 {\rm deg} ( Q , \p B ( \vec{d}_\pm , \lambda ) ) = {\rm deg} ( u , \p B ( \vec{d}_\pm , \lambda ) ) = \pm 1 ,
\]
so that $ u / Q $ is smooth in $ \{ \tilde{r} \gs \lambda \} = \R^2 \setminus ( B ( \vec{d}_+ , \lambda ) \cup B ( \vec{d}_- , \lambda ) ) $, 
does not vanish and has zero degree on the two circles $\p B ( \vec{d}_\pm , \lambda ) $. 
It then follows from standard lifting theorems (even though $ \{ \tilde{r} \gs \lambda \} $ is 
not simply connected), that there exists $ \psi^\dag \in C^1 ( \{ \tilde{r} \geqslant \lambda \} ) $ 
such that $ \ex^{\psi^\dag} = u / Q $, as wished. We then notice that $ u $ and $Q $ are 
symmetric with respect to the $ x_2 $-axis, thus $ x \mapsto \psi^\dag ( -x_1 , x_2 ) $ is 
also a lifting of $u / Q $ in the connected set $ \{ \tilde{r} \geqslant \lambda \} $, which 
implies that there exists $ q \in \Z $ such that 
$ \psi^\dag ( -x_1 , x_2 ) = \psi^\dag ( x_1 , x_2 ) + 2 i q \pi $ in $ \{ \tilde{r} \geqslant \lambda \} $. 
Letting $ x_1 = 0 $, we obtain $ q= 0 $: $ \psi^\dag $ is also symmetric with 
respect to the $ x_2 $-axis. 

Recalling that $ \psi \assign \varphi / Q $ in 
the set $ \{ \lambda \ls \tilde{r} \ls 2 \lambda \} $ (where $ Q$ does not vanish), we see that, 
since $\eta \equiv 1 $ there, the equation $u - Q = (1 - \eta) \varphi + \eta Q (\ex^{\psi} - 1)$ 
becomes $ u = Q \ex^\psi $. We then infer that there exists $ m \in \Z $ such that 
$ \psi = \psi^\dag + 2im \pi $ in the connected annulus 
$ B ( \vec{d}_+ , 2 \lambda ) \setminus B ( \vec{d}_+ , \lambda ) $. By symmetry in $x_1$, 
this is also true in the annulus 
$ B ( \vec{d}_-, 2\lambda ) \setminus B ( \vec{d}_- , \lambda ) $. It then suffices to extend 
$ \psi $ by the formula $ \psi = \psi^\dag + 2im \pi $ in $ \{ \tilde{r} \geqslant \lambda \} $ 
to obtain the formula $ u - Q = (1 - \eta) \varphi + \eta Q (\ex^{\psi} - 1) $. In the 
region $ \{ \tilde{r} \geqslant \lambda \} $, the relation $ u = Q \ex^{\psi } $ yields
 \[ \ex^{\mathfrak{Re} (\psi)} = \left| \frac{u}{Q} \right| , \]
 thus, decomposing $ | \frac{u}{Q} | = 1 + |u|-1 + \frac{(|u|-1)-(|Q|-1)}{|Q|}$, since there exists a universal constant $K'>0 $ such that in this region, $ \left| |u|-1 + \frac{(|u|-1)-(|Q|-1)}{|Q|} \right| \leqslant K' \nu $, we deduce that, for $\nu \leqslant \nu_1$ with $\nu_1$ small enough,
  \[ \| \mathfrak{Re} (\psi) \|_{L^{\infty} (\{ \tilde{r} \geqslant \lambda \})} 
 \leqslant K \nu 
  . \]

Since $u$ is a travelling wave and $E (u) < + \infty$, 
  $u$ converges to a constant at infinity (uniformly in all directions) by
  {\cite{Gra}}. Therefore, $\frac{u}{Q}$ converges to a constant at
  infinity, and the function $\psi$ converges to a constant, and thus it is
  bounded near infinity, that is
  \begin{equation}
    \| \psi \|_{L^{\infty} (\{ \tilde{r} \geqslant \lambda \})} < + \infty .
    \label{CP3dumdum}
  \end{equation}

Now, we want to show that $\varphi \in H_Q^{\exp, s}$. We already know that 
$\varphi$ satisfies the symmetry 
  \[ \forall (x_1, x_2) \in \mathbb{R}^2, \varphi (- x_1, x_2) = \varphi (x_1,
     x_2) . \]
  Furthermore, to check that $\| \varphi \|_{H_Q^{\exp}} < + \infty$, since
  $\varphi \in C^1 (\mathbb{R}^2, \mathbb{C})$, we only have to check the
  integrability in $\{ \tilde{r} \geqslant \lambda \}$, where $\ex^{\psi} =
  \frac{u}{Q}$. We check that there, with (\ref{CP3dumdum}),
  \[ \int_{\{ \tilde{r} \geqslant \lambda \}} \frac{| \psi |^2}{\tilde{r}^2
     \ln^2 (\tilde{r})} < + \infty . \]
  Now, using Theorem 11 of {\cite{Gra}} (we recall that $E (u) < +
  \infty, E (Q) < + \infty$),
  \[ | \ex^{\mathfrak{Re} (\psi)} - 1 | = \frac{| | u | - | Q | |}{|
     Q |} \leqslant 2 (| | u | - 1 | + | | Q | - 1 |) \leqslant \frac{K (u, c , Q, c'')}{(1 + r)^2}, \]
  where $K (u, c , Q, c'') > 0$ is a constant depending on $u$, $c$, $c'' $ and $Q$, 
  hence $| \mathfrak{Re} (\psi) | \leqslant \frac{K (u, c , Q, c'')}{(1 + r)^2}$ and
  \[ \int_{\{ \tilde{r} \geqslant \lambda \}} \mathfrak{Re}^2
     (\psi) \leqslant \int_{\{ \tilde{r} \geqslant \lambda \}} \frac{K (u, c , Q, c'' )}{(1 + r)^4} < + \infty . \]
  We finally compute
  \[ \nabla \psi =  \frac{\nabla u}{u} - \frac{\nabla Q}{Q}, \]
  and with Theorem 11 of {\cite{Gra}}, in $\{ \tilde{r} \geqslant
  \lambda \}$, we deduce that
  \[ (1 + r)^2 | \nabla \psi | \leqslant (1 + r)^2 \left| \frac{\nabla u}{u}
     \right| + (1 + r)^2 \left| \frac{\nabla Q}{Q} \right| \leqslant K (u, c , Q, c''),
  \]
  therefore
  \[ \int_{\{ \tilde{r} \geqslant \lambda \}} | \nabla \psi |^2 < + \infty .
  \]
  This concludes the proof that $\varphi = Q \psi \in H_Q^{\exp, s}$. The fact
  that $B_Q^{\exp} (\varphi) \geqslant K \| \varphi \|_{H_Q^{\exp}}^2$ is a
  consequence of Corollary \ref{CP3coerc} and Lemma \ref{CP3L26}, using in
  particular that $B_Q^{\exp} (\varphi) = B_{Q_{c''}}^{\exp} (\varphi (. + X
  \vec{e}_2) \ex^{- i \gamma})$ and $\| \varphi \|_{H_Q^{\exp}} = \| \varphi (.
  + X \vec{e}_2) \ex^{- i \gamma} \|_{H_{Q_{c''}}^{\exp}}$.
\end{proof}

We now compute the equation satisfied by $\varphi$.
By Lemma \ref{CP3L27}, in $ \{ 0 < \eta < 1 \} = \{ 2R < \tilde{r} < 2R+1 \} $, 
we have $ | \mathfrak{Re} (\psi) | = |\mathfrak{Re} (\varphi/ Q )| \leqslant  K \nu $ 
uniformly, thus $ | \ex^{ \mathfrak{Re} (\psi) } -1 | \leqslant K \nu $ 
uniformly in this region and then $ \lvert (1 - \eta) + \eta \ex^{\psi} \rvert \geqslant 1/2 $ 
for $ \nu \leqslant \nu_1 $, possibly diminishing 
$ \nu_1 $ of Lemma \ref{CP3L27}.

\begin{lemma}
  \label{CP3LL26} For $u$ satisfying the hypotheses of Proposition \ref{locC1} for values of $ \lambda_*,  \lambda, \varepsilon( \lambda), \nu$ described above, if $\nu \leqslant \nu_1$ (where $\nu_1$ is defined in Lemma \ref{CP3L27}) 
  , then the function $\varphi = Q \psi$ defined in Lemma \ref{CP3L27}
  satisfies the equation
  \[ L_Q (\varphi) - i (c - c'') \vec{e}_2 .H (\psi) + \tmop{NL}_{\tmop{loc}}
     (\psi) + F (\psi) = 0, \]
  with $L_Q$ the linearized operator around $Q$: $L_Q (\varphi) = - \Delta
  \varphi - i c'' \partial_{x_2} \varphi - (1 - | Q |^2) \varphi +
  2\mathfrak{Re} (\bar{Q} \varphi) Q$,
  \[ S (\psi) \assign \ex^{2\mathfrak{Re} (\psi)} - 1 -
     2\mathfrak{Re} (\psi), \]
  \[ F (\psi) \assign Q \eta (- \nabla \psi . \nabla \psi + | Q |^2 S (\psi)),
  \]
  \[ H (\psi) \assign \nabla Q + \frac{\nabla (Q \psi) (1 - \eta) + Q \nabla \psi
     \eta \ex^{\psi}}{(1 - \eta) + \eta \ex^{\psi}} \]
  and $\tmop{NL}_{\tmop{loc}} (\psi)$ is a sum of terms at least quadratic in
  $\psi$, localized in the area where $\eta \neq 1$. Furthermore,
  \[ | \langle \tmop{NL}_{\tmop{loc}} (\psi), Q \psi \rangle |
     \leqslant 
      K \| \tmop{NL}_{\tmop{loc}} (\psi) \|_{L^2( \{ \eta < 1 \} ) } 
       \| \varphi \|_{L^\ii( \{ \eta < 1 \} ) }    \leqslant K \nu \| \varphi \|_{H^1 (\{ \eta \neq 1 \})}^2 . \]
\end{lemma}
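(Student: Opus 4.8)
The plan is to insert the decomposition $u - Q = (1-\eta)\varphi + \eta Q(\ex^{\psi}-1)$ of Lemma \ref{CP3L27} into $(\tmop{TW}_c)(u) = 0$ and reorganise the terms, using that $Q$ itself solves an equation of the same type. Since $Q_{c''}$ solves $(\tmop{TW}_{c''})(Q_{c''}) = 0$ and $(\tmop{TW}_{c''})$ is invariant under translations in $x_2$ and under multiplication by a unimodular constant, the function $Q = Q_{c''}(\cdot - X\vec{e}_2)\ex^{i\gamma}$ satisfies $(\tmop{TW}_{c''})(Q) = 0$, i.e. $-\Delta Q = ic''\partial_{x_2}Q + (1 - |Q|^2)Q$; I will use this repeatedly to eliminate $\Delta Q$. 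Also recall $(\tmop{TW}_c)(u) = (\tmop{TW}_{c''})(u) - i(c-c'')\partial_{x_2}u$.

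To derive the equation I would work region by region. In $\{\eta = 1\}$ one has $u = Q\ex^{\psi}$, $\varphi = Q\psi$; writing out $(\tmop{TW}_c)(u)=0$, dividing by $\ex^{\psi}$, expanding $\Delta(Q\ex^\psi)$ and $|u|^2 = |Q|^2\ex^{2\mathfrak{Re}(\psi)}$, and substituting the expression for $-\Delta Q$, an elementary rearrangement groups the linear-in-$\psi$ terms exactly into $L_Q(\varphi) - i(c-c'')\vec{e}_2\cdot H(\psi)$ (using $H(\psi)=\nabla Q + Q\nabla\psi$ there) and the remaining (purely nonlinear) terms into $Q(-\nabla\psi\cdot\nabla\psi + |Q|^2 S(\psi)) = F(\psi)$, with no further term; so $\tmop{NL}_{\tmop{loc}}(\psi)\equiv 0$ on $\{\eta=1\}$. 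In $\{\eta = 0\}$ one has $u = Q + \varphi$, and Taylor expanding $(\tmop{TW}_{c''})(Q+\varphi)$ around $Q$ (again eliminating $\Delta Q$) yields $L_Q(\varphi) - i(c-c'')\vec{e}_2\cdot H(\psi) + \tmop{NL}(\varphi) = 0$, where $H(\psi)=\nabla(Q+\varphi)$ and $\tmop{NL}(\varphi)$ is the explicit polynomial nonlinearity of $(\tmop{TW}_{c''})$ (quadratic and cubic in $\varphi$, with bounded coefficients, and \emph{no} division by $Q$); one sets $\tmop{NL}_{\tmop{loc}}(\psi) := \tmop{NL}(\varphi)$ there. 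Finally, in the transition annulus $\{0 < \eta < 1\}$ — which lies at distance $\gtrsim R$ from the zeros of $Q$, so that $|Q|$ is bounded below and $\psi\sim\varphi$ there — one expands $(\tmop{TW}_c)(u)=0$ in the mixed form $u = Q[(1-\eta)(1+\psi) + \eta\ex^\psi]$, computes $\nabla u$, $\Delta u$, $|u|^2$ (generating the cut-off derivatives $\nabla\eta$, $\Delta\eta$), divides by the nowhere-vanishing factor $(1-\eta) + \eta\ex^\psi$ (which is $1 + \mathcal{O}(\psi)\neq 0$ since $\psi$ is small), and checks that what is left after subtracting $L_Q(\varphi) - i(c-c'')\vec{e}_2\cdot H(\psi) + F(\psi)$ is a sum of terms each at least quadratic in $(\psi,\nabla\psi)$ and each carrying a factor $1-\eta$, $\nabla\eta$ or $\Delta\eta$; this sum is $\tmop{NL}_{\tmop{loc}}(\psi)$, and it vanishes on $\{\eta = 1\}$ and equals $\tmop{NL}(\varphi)$ on $\{\eta = 0\}$. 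This is the claimed equation.

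For the estimate, $\tmop{NL}_{\tmop{loc}}(\psi)$ is supported in $\{\eta \neq 1\}\subset\{\tilde r \leqslant 2R+1\}\subset\{\tilde r\leqslant\lambda\}$ (a set of fixed finite measure, since $\lambda > 3R+1$); on it Lemma \ref{CP3L27} gives $\|\varphi\|_{C^1}\leqslant K'\nu$, away from the two vortex points $|Q|$ is bounded below (Lemma \ref{lemme3new}) and $|\nabla Q|+|\nabla^2 Q|\leqslant K$ by \eqref{basik}, so there $|\psi|+|\nabla\psi|+|\nabla^2\psi|\leqslant K(|\varphi|+|\nabla\varphi|+|\nabla^2\varphi|)$; moreover $|\gamma|\leqslant K\nu$ by Lemma \ref{CP3L26} and $\|Q\|_{L^\infty}\leqslant K$. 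Writing $Q(\psi + i\gamma) = \varphi + i\gamma Q$: for each term of $\tmop{NL}_{\tmop{loc}}(\psi)$ not involving second derivatives one has the pointwise bound $|\tmop{NL}_{\tmop{loc}}(\psi)|\leqslant K(|\varphi|+|\nabla\varphi|)^2$ on $\{\eta\neq 1\}$, so its pairing with $\varphi$ is, using one factor $|\varphi|+|\nabla\varphi|\leqslant K'\nu$, bounded by $K\nu\int_{\{\eta\neq1\}}(|\varphi|+|\nabla\varphi|)|\varphi|\leqslant K\nu\|\varphi\|^2_{H^1(\{\eta\neq1\})}$, while its pairing with $i\gamma Q$ is, using $|\gamma|\leqslant K\nu$, bounded by $K|\gamma|\int_{\{\eta\neq1\}}(|\varphi|+|\nabla\varphi|)^2\leqslant K\nu\|\varphi\|^2_{H^1(\{\eta\neq1\})}$. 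The single term of $\tmop{NL}_{\tmop{loc}}(\psi)$ carrying two derivatives — absent near the vortex points (where the additive picture produces only polynomial nonlinearities), and elsewhere of the schematic form $(1-\eta)\varphi\,\Delta\psi$ times a bounded factor — is handled by integrating by parts, which moves a derivative off $\Delta\psi$ and produces terms dominated on $\{\eta\neq1\}$ by $K\nu(|\nabla\psi|^2 + |\nabla\psi||\psi| + \dots)$, hence again by $K\nu\|\varphi\|^2_{H^1(\{\eta\neq1\})}$; any boundary contribution is controlled the same way since $\varphi$ is $C^1$-small on the supporting annulus. Summing the finitely many terms gives the claim.

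The main obstacle is the bookkeeping in the derivation of the equation: checking that, after dividing by the appropriate nonvanishing factor, the cut-off derivative terms and the mismatch between the additive and multiplicative forms of $u$ in the transition annulus really do reorganise \emph{exactly} into $L_Q(\varphi) - i(c-c'')\vec{e}_2\cdot H(\psi) + F(\psi) + \tmop{NL}_{\tmop{loc}}(\psi)$, with $\tmop{NL}_{\tmop{loc}}$ localised in $\{\eta\neq1\}$ and at least quadratic. It is precisely the presence of the lone two-derivative term in $\tmop{NL}_{\tmop{loc}}$ (which must be removed by integration by parts in the estimate) that dictates the particular choice of the decomposition $u - Q = (1-\eta)\varphi + \eta Q(\ex^\psi - 1)$ and of the cut-off $\eta$.
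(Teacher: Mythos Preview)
Your approach is essentially the same as the paper's: the paper's proof consists of one sentence saying that this is identical to Lemma~7.5 of \cite{Chi_Pac_2} and amounts to decomposing $0=(\tmop{TW}_c)(Q+(1-\eta)\varphi+\eta Q(\ex^\psi-1))$ into the stated terms, which is exactly what you outline region by region. Your discussion of a possible two-derivative term in $\tmop{NL}_{\tmop{loc}}$ and the need for one integration by parts is more detail than the paper provides here (it is absorbed in the reference to \cite{Chi_Pac_2}), but is in the right spirit; on the bounded support $\{\eta\neq1\}$ the $C^1$-smallness of $\varphi$ from Lemma~\ref{CP3L27} makes the estimate straightforward either way.
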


Notice that $F (\psi)$ (the notation $ X.Y $ for complex vector fields stands for $ X_1 Y_1 + X_2 Y_2 $) 
contains all the nonlinear terms far from the zeros of
$^{} Q$, and its structure relies on the fact that the error is written in an exponential form far from the vortices. 
Close to the zeros of $Q$, this particular form does not hold,
but it will not be necessary, since there the error $\varphi$ is small in the
$C^1$ norm whereas, at infinity, it is small only in a weaker norm.

\begin{proof}
  The proof is identical to the proof of Lemma 7.5 of {\cite{Chi_Pac_2}}, and it is
  in the particular case where all the speeds are along $\vec{e}_2$. The proof
  consists simply in decomposing the equation
  \[ 0 = (\tmop{TW}_c) (u) = \tmop{TW}_c (Q + (1 - \eta) \varphi + \eta Q
     (\ex^{\psi} - 1)) \]
  in the different terms.
  
  The last estimate uses Lemma \ref{CP3L27} and Lemma \ref{CP3L26}.
\end{proof}

This result shows in particular that $\psi \in C^2 (\{ \eta \neq 0 \},
\mathbb{C})$, and we can check with it, as in Lemma 7.3 of {\cite{Chi_Pac_2}}, that
$\| \Delta \psi (1 + r)^2 \|_{L^{\infty} (\{ \tilde{r} \geqslant \lambda \})}
\leqslant K (u, Q , c , c'' )$.

\

We now infer a critical estimate on the differences of the speeds of the
problem, namely $c$ (the speed of $u$) and $c''$ (the speed of $Q$). The
method for the estimate has been used in {\cite{Chi_Pac_2}} (we take the scalar
product of the equation of Lemma \ref{CP3LL26} with $\partial_c Q$), but since
we have worse estimates on the error term, we need to be more careful ($\| \varphi \|_{H_Q^{\exp}} $ is not a priori small at this point).

\begin{lemma}
  \label{CP3neurones} 
  There exists universal constants $K > 0, \nu_1 \geqslant  \nu_2 > 0$ (where $\nu_1$ is defined in Lemma \ref{CP3L27}), such that, for $u$ satisfying the hypotheses of Proposition \ref{locC1} for values of $ \lambda_*,  \lambda, \varepsilon( \lambda), \nu$ described above, if $\nu \leqslant \nu_2$, then, with $\varphi = Q \psi$ defined in Lemma \ref{CP3L27}, we have
  \[ | c'' - c | \leqslant K \sqrt{c''} \| \varphi \|_{H_Q^{\exp}} . \]
\end{lemma}

\begin{proof}
  First, from equation (\ref{CP3ccp}) and Lemma \ref{CP3L26}, taking
  $\nu > 0$ small enough, we have
  \begin{equation}
    | c'' - c | \leqslant | c'' - c' | + | c' - c | \leqslant K c'' .
    \label{CP3cece}
  \end{equation}
  We will show the following estimate:
  \begin{equation}
    | c'' - c | \leqslant K \left( c^{\prime\prime 2} \ln \left( \frac{1}{c''}
    \right) \| \varphi \|_{H_Q^{\exp}} + \| \varphi \|_{H_Q^{\exp}}^2 \right)
    + K | c'' - c | \| \varphi \|_{H_Q^{\exp}} . \label{CP3olala}
  \end{equation}
  This is related to equation (7.13) of {\cite{Chi_Pac_2}} (its proof is in step 1
  in subsection 7.3.1 of {\cite{Chi_Pac_2}}). With both estimates, we can conclude
  the proof of this lemma. Indeed, either $\| \varphi \|_{H_Q^{\exp}}
  \geqslant \sqrt{c''}$, and in that case
  \[ | c'' - c | \leqslant K c'' \leqslant K \sqrt{c''} \| \varphi
     \|_{H_Q^{\exp}}, \]
  or $\| \varphi \|_{H_Q^{\exp}} \leqslant \sqrt{c''}$, and then with
  (\ref{CP3olala}),
  \begin{eqnarray*}
    | c'' - c | & \leqslant & K \left( c^{\prime\prime 2} \ln \left( \frac{1}{c''}
    \right) \| \varphi \|_{H_Q^{\exp}} + \| \varphi \|_{H_Q^{\exp}}^2 \right)
    + K | c'' - c | \| \varphi \|_{H_Q^{\exp}}\\
    & \leqslant & K \sqrt{c''} \| \varphi \|_{H_Q^{\exp}} + C_2 \sqrt{c''} |
    c'' - c |,
  \end{eqnarray*}
  therefore, for $c'' > 0$ small enough such that $C_2 \sqrt{c''} < 1 / 2$
  (which is implied by taking $\nu > 0$ small enough, independently of $ \lambda $), 
  we have $| c'' - c | \leqslant K \sqrt{c''} \| \varphi \|_{H_Q^{\exp}}$.
  
  \
  
  We now focus on the proof of (\ref{CP3olala}). We take the scalar product
  of the equation
  \[ L_Q (\varphi) - i (c - c'') \vec{e}_2 .H (\psi) + \tmop{NL}_{\tmop{loc}}
     (\psi) + F (\psi) = 0 \]
  with $c^{\prime\prime 2} \partial_{c''} Q$. We estimate, as in subsection
  7.3.1 of {\cite{Chi_Pac_2}}, that
  \[ | \langle L_Q (\varphi), c^{\prime\prime 2} \partial_{c''} Q \rangle | =
     c^{\prime\prime 2} | \langle \varphi, L_Q (\partial_{c''} Q) \rangle | =
     c^{\prime\prime 2} | \langle \varphi, i \partial_{x_2} Q \rangle |
     \leqslant K c^{\prime\prime 2} \ln \left( \frac{1}{c''} \right) \|
     \varphi \|_{H_Q^{\exp}} . \]
  We recall that
  \[ i \vec{e}_2 .H (\psi) = i \partial_{x_2} Q + i \frac{\partial_{x_2} (Q
     \psi) (1 - \eta) + Q \partial_{x_2} \psi \eta \ex^{\psi}}{(1 - \eta) + \eta
     \ex^{\psi}}, \]
  and we check (estimating the local terms in the area where $\eta \neq 1$ by
  Cauchy-Schwarz and $\| c^{\prime\prime 2} \partial_{c''} Q \|_{L^{\infty}
  (\mathbb{R}^2)} \leqslant K$ from Theorem \ref{th1} for $p = + \infty$ and
  Lemma 2.6 of {\cite{Chi_Pac_1}})
  \begin{eqnarray*}
    &  & | (c - c'') \langle i \vec{e}_2 .H (\psi), c^{\prime\prime 2}
    \partial_{c''} Q \rangle - (c - c'') \langle i \partial_{x_2} Q,
    c^{\prime\prime 2} \partial_{c''} Q \rangle |\\
    & \leqslant & K (| c - c'' | \| \varphi \|_{H^1 (\{ \eta \neq 1 \})} + |
    (c - c'') \langle \eta Q i \partial_{x_2} \psi, c^{\prime\prime 2}
    \partial_{c''} Q \rangle |)\\
    & \leqslant & K (| c - c'' | \| \varphi \|_{H_Q^{\exp}} + | (c - c'')
    \langle \eta Q i \partial_{x_2} \psi, c^{\prime\prime 2} \partial_{c''} Q
    \rangle |) .
  \end{eqnarray*}
  We recall from subsection 7.3.1 of {\cite{Chi_Pac_2}} (using decay estimates on
  $c^{\prime\prime 2} \partial_{c''} Q \bar{Q}$ and integrations by parts),
  that
  \[ | (c - c'') \langle \eta Q i \partial_{x_2} \psi, c^{\prime\prime 2}
     \partial_{c''} Q \rangle | \leqslant K | c - c'' | \| \varphi
     \|_{H_Q^{\exp}} \]
  and, from Proposition 1.2 of {\cite{Chi_Pac_2}} (we check easily that the
  translation and phase on $Q$ instead of $Q_{c''}$ do not change the
  computation),
  \[ (c - c'') \langle i \partial_{x_2} Q, c^{\prime\prime 2} \partial_{c''} Q
     \rangle = (2 \pi + o_{c'' \rightarrow 0} (1)) (c - c'') = (2 \pi +
     o_{\nu \rightarrow 0} (1)) (c - c'') . \]
  We deduce that, taking $\nu > 0$ small enough (independently 
  of $ \lambda $), that
  \[ | c - c'' | \leqslant K c^{\prime\prime 2} \ln \left( \frac{1}{c''}
     \right) \| \varphi \|_{H_Q^{\exp}} + K | c - c'' | \| \varphi
     \|_{H_Q^{\exp}} + K | \langle \tmop{NL}_{\tmop{loc}} (\psi) + F (\psi),
     c^{\prime\prime 2} \partial_{c''} Q \rangle | . \]
We take $\nu_2 > 0$ with $\nu_2 \leqslant \nu_1$ such that all the above condition on the smallness of $\nu$ are satisfied if $\nu \leqslant \nu_2$.  Since $\tmop{NL}_{\tmop{loc}} (\psi)$ contains terms at least quadratic in
  $\varphi$, $\| \varphi \|_{C^1 (\{ \eta \neq 1 \})} \leqslant C_3 \nu$ 
  from Lemma \ref{CP3L27} and $\| c^{\prime\prime 2} \partial_{c''} Q
  \|_{L^{\infty} (\mathbb{R}^2)} \leqslant K$, we obtain that for $\nu \leqslant \nu_2$, diminishing 
  $ \nu_2 $ if necessary so that 
 $ \| \varphi \|_{C^1 (\{ \eta \neq 1 \})} \leqslant K \nu \leqslant 1 $,
  \[ | \langle \tmop{NL}_{\tmop{loc}} (\psi), c^{\prime\prime 2}
     \partial_{c''} Q \rangle | \leqslant K \| \varphi \|_{H^1 (\{ \eta \neq 1
     \})}^2 \leqslant K \| \varphi \|_{H_Q^{\exp}}^2 . \]
  Finally, we estimate, using $\| c^{\prime\prime 2} \partial_{c''} Q
  \|_{L^{\infty} (\mathbb{R}^2)} \leqslant K$,
  \[ | \langle Q \eta \nabla \psi . \nabla \psi, c^{\prime\prime 2}
     \partial_{c''} Q \rangle | \leqslant K \int_{\mathbb{R}^2} \eta | \nabla
     \psi |^2 \| c^{\prime\prime 2} \partial_{c''} Q \|_{L^{\infty}
     (\mathbb{R}^2)} \leqslant K \| \varphi \|_{H_Q^{\exp}}^2 . \]
Similarly, since $ \| \eta \mathfrak{Re} (\psi) \|_{L^{\infty} (\{ \tilde{r} \geqslant \lambda \})} 
  \leqslant K \nu $ by 
  Lemma \ref{CP3L27}, diminishing 
  $ \nu_2 $ if necessary, for $\nu \leqslant \nu_2$,
  then $ \| \eta \mathfrak{Re} (\psi) \|_{L^{\infty} (\{ \tilde{r} \geqslant \lambda \})} \leqslant 1 $, hence
  \[ | Q \eta | Q |^2 S (\psi) | = | Q \eta | Q |^2
     (\ex^{2\mathfrak{Re} (\psi)} - 1 - 2\mathfrak{Re}
     (\psi)) | \leqslant K \eta \mathfrak{Re}^2 (\psi), \]
  therefore
  \[ | \langle Q \eta | Q |^2 S (\psi), c^{\prime\prime 2} \partial_{c''} Q
     \rangle | \leqslant K \int_{\mathbb{R}^2} \eta \mathfrak{Re}^2
     (\psi) \| c^{\prime\prime 2} \partial_{c''} Q \|_{L^{\infty}
     (\mathbb{R}^2)} \leqslant K \| \varphi \|_{H_Q^{\exp}}^2 . \]
  This concludes the proof of (\ref{CP3olala}), and therefore of the lemma.
\end{proof}

\subsubsection{Proof of Proposition \ref{locC1} completed}
  
  We take $u$ satisfying the hypotheses of Proposition \ref{locC1} for values of $ \lambda_*,  \lambda, \varepsilon( \lambda), \nu$ described above, with $\nu \leqslant \nu_2$, where $\nu_2$ is defined in Lemma \ref{CP3neurones}.
  We want to take the scalar product of the equation of Lemma \ref{CP3LL26}
  with $\varphi$. It is however not clear at this point that every term is
  integrable. In subsection 7.3 of {\cite{Chi_Pac_2}}, we took the scalar product of
  the equation with $\varphi + i \gamma Q$ for some $\gamma \in \mathbb{R}$,
  using a decay estimate $\| \mathfrak{I}\mathfrak{m} (\psi + i \gamma) (1 +
  r) \|_{L^{\infty} (\{ \tilde{r} \leqslant \lambda \})} \leqslant K (u, Q, c, c'' ) $
  to justify that some terms are well defined, and to do some integration by
  parts. Here, we need to change a little our approach. We first require 
  better decay estimates on $\psi$. At this stage, we know (see Theorem 11 of {\cite{Gra}} 
  and the proof of Lemma \ref{CP3L27}) that
  \begin{eqnarray*}
    &  & \| \Delta \psi (1 + r)^2 \|_{L^{\infty} (\{ \tilde{r} \geqslant
    \lambda \})} + \| (1 + r)^2 \nabla \psi \|_{L^{\infty} (\{ \tilde{r}
    \geqslant \lambda \})}\\
    & + & \| \psi \|_{L^{\infty} (\{ \tilde{r} \geqslant \lambda \})} + \| (1
    + r)^2 \mathfrak{Re} (\psi) \|_{L^{\infty} (\{ \tilde{r}
    \geqslant \lambda \})}\\
    & \leqslant & K (u, Q, c , c'' ) .
  \end{eqnarray*}

  Now, let us show the following improvements:
  \begin{equation}
    \| \mathfrak{I}\mathfrak{m} (\Delta \psi) (1 + r)^3 \|_{L^{\infty} (\{
    \tilde{r} \geqslant \lambda \})} + \| (1 + r)^3 \mathfrak{Re}
    (\nabla \psi) \|_{L^{\infty} (\{ \tilde{r} \geqslant \lambda \})}
    \leqslant K (u, Q,c, c'') . \label{piz}
  \end{equation}
  The proof of $\| (1 + r)^3 | \mathfrak{Re} (\nabla \psi) |
  \|_{L^{\infty} (\{ \tilde{r} \geqslant \lambda \})} \leqslant K (u, Q , c , c'' )$ is
  identical to the one for the same result in Lemma 7.3 of {\cite{Chi_Pac_2}} (see the penultimate estimate of its proof). We
  focus on the estimate on $\mathfrak{I}\mathfrak{m} (\Delta \psi)$. In $\{
  \tilde{r} \geqslant \lambda \}$, we have $u = Q \ex^{\psi}$, therefore,
  \[ \Delta \psi = - \frac{\Delta Q}{Q} + \frac{\Delta u}{u} - 2 \frac{\nabla
     Q}{Q} . \nabla \psi - \nabla \psi . \nabla \psi . \]
  With the previous estimates and Theorem 11 of {\cite{Gra}}, we have
  \[ \left\| \left( - 2 \frac{\nabla Q}{Q} . \nabla \psi - \nabla \psi .
     \nabla \psi \right) (1 + r)^4 \right\|_{L^{\infty} (\{ \tilde{r}
     \geqslant \lambda \})} \leqslant K (u, Q, c, c''), \]
  and since $(\tmop{TW}_{c''}) (Q) = 0$,
  \[ \frac{\Delta Q}{Q} = i c''  \frac{\partial_{x_2} Q}{Q} - (1 - | Q |^2),
  \]
  therefore, with {\cite{Gra}} ($E (Q) < + \infty$),
  \[ \left| \mathfrak{I}\mathfrak{m} \left( \frac{\Delta Q}{Q} \right) \right|
     \leqslant c'' \left| \mathfrak{Re} \left( \frac{\partial_{x_2}
     Q}{Q} \right) \right| \leqslant \frac{K (Q, c'')}{(1 + r)^3} . \]
  Similarly, since $(\tmop{TW}_c) (u) = 0$ and $E (u) < + \infty$,
  \[ \left| \mathfrak{I}\mathfrak{m} \left( \frac{\Delta u}{u} \right) \right|
     \leqslant c \left| \mathfrak{Re} \left( \frac{\partial_{x_2}
     u}{u} \right) \right| \leqslant \frac{K (u,c )}{(1 + r)^3}, \]
  thus
  \[ \| \mathfrak{I}\mathfrak{m} (\Delta \psi) (1 + r)^3 \|_{L^{\infty}
     (\{ \tilde{r} \geqslant \lambda \})} \leqslant K (u, Q, c, c'') . \]

  We infer, with these two additional estimates on $\psi$, that we can do the
  same computations as in the proof of Lemma 7.4 of {\cite{Chi_Pac_2}}, with
  $\gamma = 0$. The only difference is that, when we used $\|
  \mathfrak{I}\mathfrak{m} (\psi + i \gamma) (1 + r) \|_{L^{\infty} (\{
  \tilde{r} \geqslant \lambda \})} \leqslant K (u, Q)$, we can use (\ref{piz})
  instead to get the same decay for these terms, with $\|
  \mathfrak{I}\mathfrak{m} (\psi) \|_{L^{\infty} (\{ \tilde{r} \leqslant
  \lambda \})} \leqslant K (u , Q)$.
  The only two terms where this change is needed are
  \begin{eqnarray*}
  &  & \left| \int_{\mathbb{R}} \eta | Q |^2 \mathfrak{R}\mathfrak{e} (\Delta
  \psi \bar{\psi})  \right|\\
  & \leqslant & \left| \int_{\mathbb{R}} \eta | Q |^2
  \mathfrak{R}\mathfrak{e} (\Delta \psi) \mathfrak{R}\mathfrak{e} (\psi) 
  \right| + \left| \int_{\mathbb{R}} \eta | Q |^2 \mathfrak{I}\mathfrak{m}
  (\Delta \psi) \mathfrak{I}\mathfrak{m} (\psi)  \right|\\
  & \leqslant & K (\| \mathfrak{R}\mathfrak{e} (\Delta \psi) (1 + r)^2
  \|_{L^{\infty} (\{ \tilde{r} \geqslant \lambda \})} \|
  \mathfrak{R}\mathfrak{e} (\psi) (1 + r)^2 \|_{L^{\infty} (\{ \tilde{r}
  \geqslant \lambda \})})\\
  & + & K (\| \mathfrak{I}\mathfrak{m} (\Delta \psi) (1 + r)^3 \|_{L^{\infty}
  (\{ \tilde{r} \geqslant \lambda \})} \| \mathfrak{I}\mathfrak{m} (\psi)
  \|_{L^{\infty} (\{ \tilde{r} \geqslant \lambda \})})
\end{eqnarray*}
and
\begin{eqnarray*}
  &  & \left| \int_{\mathbb{R}} \eta | Q |^2 \mathfrak{R}\mathfrak{e} (i
  \partial_{x_2} \psi \bar{\psi})  \right|\\
  & \leqslant & \left| \int_{\mathbb{R}} \eta | Q |^2
  \mathfrak{R}\mathfrak{e} (\partial_{x_2} \psi) \mathfrak{I}\mathfrak{m}
  (\psi)  \right| + \left| \int_{\mathbb{R}} \eta | Q |^2
  \mathfrak{I}\mathfrak{m} (\partial_{x_2} \psi) \mathfrak{R}\mathfrak{e}
  (\psi)  \right|\\
  & \leqslant & K (\| \mathfrak{R}\mathfrak{e} (\partial_{x_2} \psi) (1 +
  r)^3 \|_{L^{\infty} (\{ \tilde{r} \geqslant \lambda \})} \|
  \mathfrak{I}\mathfrak{m} (\psi) \|_{L^{\infty} (\{ \tilde{r} \geqslant
  \lambda \})})\\
  & + & K (\| \mathfrak{I}\mathfrak{m} (\partial_{x_2} \psi) (1 + r)^2
  \|_{L^{\infty} (\{ \tilde{r} \geqslant \lambda \})} \|
  \mathfrak{R}\mathfrak{e} (\psi) (1 + r)^2 \|_{L^{\infty} (\{ \tilde{r}
  \geqslant \lambda \})}) .
\end{eqnarray*}

  We deduce, taking the scalar product of
  the equation of Lemma \ref{CP3LL26} with $ \varphi $, that
  
  \begin{equation}
    B_Q^{\exp} (\varphi) - \langle i (c - c'') \vec{e}_2 .H (\psi), \varphi
    \rangle + \langle \tmop{NL}_{\tmop{loc}} (\psi), \varphi \rangle + \langle
    F (\psi), \varphi \rangle = 0. \label{CP3eqeq}
  \end{equation}
  From Lemma \ref{CP3L27},
  \begin{equation}
    B_Q^{\exp} (\varphi) \geqslant K \| \varphi \|_{H_{Q_c}^{\exp}}^2,
    \label{CP3eq210}
  \end{equation}
  and from Lemma \ref{CP3LL26},
  \begin{equation}
    | \langle \tmop{NL}_{\tmop{loc}} (\psi), \varphi \rangle | \leqslant K \nu 
     \| \varphi \|^2_{H^1 (\{ \eta \neq 1 \})} 
    \leqslant K \nu 
     \| \varphi \|^2_{H_{Q_c}^{\exp}} . \label{CP3eq211}
  \end{equation}
  Let us now show that
  \begin{equation}
    | \langle i (c - c'') \vec{e}_2 .H (\psi), \varphi \rangle | \leqslant K \nu 
    \| \varphi \|^2_{H_{Q_c}^{\exp}} . \label{CP3eq212}
  \end{equation}
  We recall that
  \[ i \vec{e}_2 .H (\psi) = i \partial_{x_2} Q + i \frac{\partial_{x_2} (Q
     \psi) (1 - \eta) + Q \partial_{x_2} \psi \eta \ex^{\psi}}{(1 - \eta) + \eta
     \ex^{\psi}} . \]
  We compute, with Lemma \ref{CP3neurones} and Lemma 5.4 of {\cite{Chi_Pac_2}}, 
  \[ | (c - c'') \langle i \partial_{x_2} Q, \varphi \rangle | \leqslant K
     \sqrt{c''} \| \varphi \|_{H_Q^{\exp}} | \langle i \partial_{x_2} Q,
     \varphi \rangle | \leqslant K \sqrt{c''} \ln \left( \frac{1}{c''} \right)
     \| \varphi \|_{H_Q^{\exp}}^2 
     \leqslant K \nu \| \varphi \|^2_{H_Q^{\exp}}
     . \]
    Indeed, although $Q = Q_{c''} (. - X \vec{e}_2) \ex^{i \gamma}$ has a phase that is not 
    present in Lemma 5.4 of {\cite{Chi_Pac_2}}, since $ \varphi = Q \psi $, we have 
    $ \partial_{x_2} Q \overline{\varphi} = \partial_{x_2} Q \overline{Q} \overline{\psi}$ that no longer depends on $\gamma$.
    
  Now, with $\| \varphi \|_{H^1 (\{ \eta \neq 1 \})} \leqslant K \nu$ 
  from Lemmas \ref{CP3L26} and \ref{CP3L27}, we compute easily that
  \[ \left| \left\langle i \frac{\partial_{x_2} (Q \psi) (1 - \eta) + Q
     \partial_{x_2} \psi \eta \ex^{\psi}}{(1 - \eta) + \eta \ex^{\psi}}, \varphi
     \right\rangle - \langle i Q \partial_{x_2} \psi \eta, \varphi \rangle
     \right| \leqslant K \nu 
     \| \varphi \|_{H_Q^{\exp}} \]
  since the left hand side is supported in $\{ \eta \neq 1 \}$, therefore
  \[ | \langle i (c - c'') \vec{e}_2 .H (\psi), \varphi \rangle | \leqslant 
  K \nu 
   \| \varphi \|^2_{H_{Q_c}^{\exp}} + | (c - c'') \langle i Q
     \partial_{x_2} \psi \eta, \varphi \rangle | . \]
  With the same computations as in subsection 7.3.2 of {\cite{Chi_Pac_2}} (taking
  $\gamma' = 0$), we check that
  \[ | \langle i Q \partial_{x_2} \psi \eta, \varphi \rangle | \leqslant K \|
     \varphi \|_{H_Q^{\exp}}^2, \]
  therefore, using Lemma \ref{CP3L26} and equation (\ref{CP3cece}), for
  $\nu > 0$ small enough,
  \begin{eqnarray*}
    &  & | (c - c'') \langle i Q \partial_{x_2} \psi \eta, \varphi \rangle
    |\\
    & \leqslant & K | c - c'' | \| \varphi \|_{H_Q^{\exp}}^2\\
    & \leqslant & K \nu \| \varphi \|_{H_Q^{\exp}}^2 .
  \end{eqnarray*}
  This completes the proof of equation (\ref{CP3eq212}). We focus now on the
  proof of
  \begin{equation}
    | \langle F (\psi), \varphi \rangle | \leqslant K \nu \| \varphi
    \|^2_{H_Q^{\exp}} . \label{CP3eq213}
  \end{equation}
  We compute
  \[ \int_{\mathbb{R}^2} \mathfrak{Re} (Q \eta (| Q |^2 S (\psi))
     \bar{\varphi}) = \int_{\mathbb{R}^2} | Q |^4 \eta
     (\ex^{2\mathfrak{Re} (\psi)} - 1 - 2\mathfrak{Re}
     (\psi)) \mathfrak{Re} (\psi), \]
  and since, as already seen at the end of the proof of Lemma \ref{CP3neurones}, 
  we have 
  $ \| \mathfrak{Re} (\psi) \|_{L^\ii ( \{ \tilde{r} \geqslant \lambda \} ) } 
  \leqslant 1$ if $ \nu \leqslant \nu_2 $, we deduce
  \[ | \ex^{2\mathfrak{Re} (\psi)} - 1 - 2\mathfrak{Re}
     (\psi) | \leqslant 
     K \mathfrak{Re}^2 (\psi) \]
  and
  \begin{align*} 
  \left| \int_{\mathbb{R}^2} \mathfrak{Re} (Q \eta (| Q |^2 S
     (\psi)) \bar{\varphi}) \right| \leqslant K \int_{\mathbb{R}^2} \eta
     \mathfrak{Re}^3 (\psi) 
     & \leqslant K \nu 
      \int_{\mathbb{R}^2}
     \eta \mathfrak{Re}^2 (\psi) 
     \\ & \leqslant K \nu
     \| \varphi \|^2_{H_Q^{\exp}} . 
   \end{align*} 
  We are left with the estimation of $\int_{\mathbb{R}^2}
  \mathfrak{Re} (Q \eta (- \nabla \psi . \nabla \psi)
  \bar{\varphi})$, which will be slightly more delicate. First, we compute,
  using $\varphi = Q \psi$
  \begin{eqnarray*}
    &  & \int_{\mathbb{R}^2} \mathfrak{Re} (Q \eta (- \nabla \psi
    . \nabla \psi) \bar{\varphi})\\
    & = & - \int_{\mathbb{R}^2} | Q |^2 \eta \mathfrak{Re} (\nabla
    \psi . \nabla \psi \bar{\psi})\\
    & = & - \int_{\mathbb{R}^2} | Q |^2 \eta \mathfrak{Re} (\nabla
    \psi . \nabla \psi) \mathfrak{Re} (\psi)\\
    & - & \int_{\mathbb{R}^2} | Q |^2 \eta \mathfrak{I}\mathfrak{m} (\nabla
    \psi . \nabla \psi) \mathfrak{I}\mathfrak{m} (\psi)\\
    & = & - \int_{\mathbb{R}^2} | Q |^2 \eta \mathfrak{Re} (\nabla
    \psi . \nabla \psi) \mathfrak{Re} (\psi)\\
    & - & 2 \int_{\mathbb{R}^2} | Q |^2 \eta \mathfrak{Re} (\nabla
    \psi) .\mathfrak{I}\mathfrak{m} (\nabla \psi) \mathfrak{I}\mathfrak{m}
    (\psi) .
  \end{eqnarray*}
  Remark that there exists a universal constant $K >0 $ such that $ \| \mathfrak{Re} (\psi) \|_{L^\ii ( \{ \tilde{r} \geqslant R \} ) } \leqslant K \nu $ by Lemma \ref{CP3L27} (considering the regions $\{ \tilde{r} \geqslant \lambda \} $ with $ \psi $ 
  and $ \{ \tilde{r} \leqslant \lambda \} $ with $\varphi $). Then, we estimate 
  \[ \left| - \int_{\mathbb{R}^2} | Q |^2 \eta \mathfrak{Re}
     (\nabla \psi . \nabla \psi) \mathfrak{Re} (\psi) \right|
     \leqslant K \nu \int_{\mathbb{R}^2} \eta | \nabla \psi |^2 \leqslant K
     \nu \| \varphi \|^2_{H_Q^{\exp}} . \]
  Now, by integration by parts (that can be justified as in {\cite{Chi_Pac_2}}), we
  have
  \begin{eqnarray*}
    &  & \int_{\mathbb{R}^2} | Q |^2 \eta \mathfrak{Re} (\nabla
    \psi) .\mathfrak{I}\mathfrak{m} (\nabla \psi) \mathfrak{I}\mathfrak{m}
    (\psi)\\
    & = & - \int_{\mathbb{R}^2} \nabla (| Q |^2) \eta
    \mathfrak{Re} (\psi) .\mathfrak{I}\mathfrak{m} (\nabla \psi)
    \mathfrak{I}\mathfrak{m} (\psi)\\
    & - & \int_{\mathbb{R}^2} | Q |^2 \nabla \eta \mathfrak{Re}
    (\psi) .\mathfrak{I}\mathfrak{m} (\nabla \psi) \mathfrak{I}\mathfrak{m}
    (\psi)\\
    & - & \int_{\mathbb{R}^2} | Q |^2 \eta \mathfrak{Re} (\psi)
    \mathfrak{I}\mathfrak{m} (\Delta \psi) \mathfrak{I}\mathfrak{m} (\psi)\\
    & - & \int_{\mathbb{R}^2} | Q |^2 \eta \mathfrak{Re} (\psi)
    \mathfrak{I}\mathfrak{m} (\nabla \psi) .\mathfrak{I}\mathfrak{m} (\nabla
    \psi),
  \end{eqnarray*}
  and with $| \nabla (| Q |^2) | \leqslant \frac{K}{(1 + \tilde{r})^{5 / 2}}$ from 
  equation (2.9) of {\cite{Chi_Pac_2}} (for $\sigma = 1 / 2$) with $K > 0$ a 
  universal constant, we have by Cauchy-Schwarz
  \begin{eqnarray*}
    &  & \left| \int_{\mathbb{R}^2} \nabla (| Q |^2) \eta
    \mathfrak{Re} (\psi) .\mathfrak{I}\mathfrak{m} (\nabla \psi)
    \mathfrak{I}\mathfrak{m} (\psi) \right|\\
    & \leqslant & K \nu \sqrt{\int_{\mathbb{R}^2} \eta | \nabla \psi |^2
    \int_{\mathbb{R}^2} \eta \frac{| \psi |^2}{(1 + \tilde{r})^5}}\\
    & \leqslant & K \nu \| \varphi \|^2_{H_Q^{\exp}}
  \end{eqnarray*}
  and
  \[ \left| \int_{\mathbb{R}^2} | Q |^2 \eta \mathfrak{Re} (\psi)
     \mathfrak{I}\mathfrak{m} (\nabla \psi) .\mathfrak{I}\mathfrak{m} (\nabla
     \psi) \right| \leqslant K \nu \int_{\mathbb{R}^2} \eta | \nabla \psi |^2
     \leqslant K \nu \| \varphi \|^2_{H_{Q_c}^{\exp}} . \]
  Since $\nabla \eta$ is supported in $\{ 0 < \eta < 1 \}$, 
  we check easily that
  \[ \left| \int_{\mathbb{R}^2} | Q |^2 \nabla \eta \mathfrak{Re}
     (\psi) .\mathfrak{I}\mathfrak{m} (\nabla \psi) \mathfrak{I}\mathfrak{m}
     (\psi) \right| \leqslant K \nu \| \varphi \|^2_{H_Q^{\exp}} . \]
  We focus now on the estimation of the last remaining term,
  $\int_{\mathbb{R}^2} | Q |^2 \eta \mathfrak{Re} (\psi)
  \mathfrak{I}\mathfrak{m} (\Delta \psi) \mathfrak{I}\mathfrak{m} (\psi)$. For
  that purpose, we define more generally for $n \geqslant 1$
  \[ A_n \assign \int_{\mathbb{R}^2} | Q |^2 \eta^n \mathfrak{Re}^n
     (\psi) \mathfrak{I}\mathfrak{m} (\Delta \psi) \mathfrak{I}\mathfrak{m}
     (\psi) . \]
  Remark that we want to estimate $A_1$.
  
  \bigbreak
  
  We compute, using that $(\tmop{TW}_{c''}) (Q) = 0$, that
  \[ L_Q (\varphi) = Q \left( - \Delta \psi - i c'' \partial_{x_2} \psi - 2
     \frac{\nabla Q}{Q} . \nabla \psi + 2\mathfrak{Re} (\psi) | Q
     |^2 \right), \]
  therefore, by Lemma \ref{CP3LL26}, in $\{ \eta \neq 0 \}$,
  \begin{eqnarray*}
    \mathfrak{I}\mathfrak{m} (\Delta \psi) & = & \mathfrak{I}\mathfrak{m}
    \left( - i c'' \partial_{x_2} \psi - 2 \frac{\nabla Q}{Q} . \nabla \psi +
    2\mathfrak{Re} (\psi) | Q |^2 + \frac{- i (c - c'') \vec{e}_2
    .H (\psi) + \tmop{NL}_{\tmop{loc}} (\psi) + F (\psi)}{Q} \right)\\
    & = & - c'' \mathfrak{Re} (\partial_{x_2} \psi) -
    2\mathfrak{I}\mathfrak{m} \left( \frac{\nabla Q}{Q} . \nabla \psi \right)
    +\mathfrak{I}\mathfrak{m} \left( \frac{- i (c - c'') \vec{e}_2 .H (\psi) +
    \tmop{NL}_{\tmop{loc}} (\psi) + F (\psi)}{Q} \right) .
  \end{eqnarray*}
  We compute, by integration by parts, with $\mathfrak{Re}^n (\psi)
  \mathfrak{Re} (\partial_{x_2} \psi) = \frac{1}{n + 1}
  \partial_{x_2} (\mathfrak{Re}^{n + 1} (\psi))$, that
  \begin{eqnarray*}
    &  & \int_{\mathbb{R}^2} | Q |^2 \eta^n \mathfrak{Re}^n (\psi)
    c'' \mathfrak{Re} (\partial_{x_2} \psi)
    \mathfrak{I}\mathfrak{m} (\psi)\\
    & = & \frac{- 1}{n + 1} \int_{\mathbb{R}^2} (\partial_{x_2} | Q |^2)
    \eta^n \mathfrak{Re}^{n + 1} (\psi) c''
    \mathfrak{I}\mathfrak{m} (\psi)\\
    & - & \frac{n}{n + 1} \int_{\mathbb{R}^2} | Q |^2 \partial_{x_2} \eta
    \eta^{n - 1} \mathfrak{Re}^{n + 1} (\psi) c''
    \mathfrak{I}\mathfrak{m} (\psi)\\
    & - & \frac{1}{n + 1} \int_{\mathbb{R}^2} | Q |^2 \eta^n
    \mathfrak{Re}^{n + 1} (\psi) c'' \mathfrak{I}\mathfrak{m}
    (\partial_{x_2} \psi) .
  \end{eqnarray*}
  Since $| c'' | \leqslant \nu$ by equation (\ref{CP3ccp}) (diminishing 
  $ \nu_2 $ if necessary), Lemma \ref{CP3L26} and the hypotheses of
  Proposition \ref{locC1}, $\| \varphi \|_{C^1 (\{ \tilde{r} \leqslant \lambda
  \})} + \| \mathfrak{Re} (\psi) \|_{L^{\infty} (\{ \tilde{r}
  \geqslant \lambda \})} \leqslant K \nu$ by Lemma \ref{CP3L27} and $| \nabla
  (| Q |^2) | \leqslant \frac{K}{(1 + \tilde{r})^{5 / 2}}$ from equation (2.9)
  of {\cite{Chi_Pac_2}}, we infer by Cauchy-Schwarz that
  \begin{eqnarray}
    &  & \left| \int_{\mathbb{R}^2} (\partial_{x_2} | Q |^2) \eta^n
    \mathfrak{Re}^{n + 1} (\psi) c'' \mathfrak{I}\mathfrak{m}
    (\psi) \right| \nonumber\\
    & \leqslant & 
    K c'' \nu^n \sqrt{\int_{\mathbb{R}^2} \eta
    \mathfrak{I}\mathfrak{m}^2 (\psi) (\partial_{x_2} | Q |^2)^2
    \int_{\mathbb{R}^2} \eta \mathfrak{Re}^2 (\psi)} 
    \nonumber\\
    & \leqslant & K \nu^n \| \varphi \|^2_{H_Q^{\exp}},  \label{320ziz}
  \end{eqnarray}
  \begin{equation}
    \left| \int_{\mathbb{R}^2} | Q |^2 \partial_{x_2} \eta \eta^{n - 1}
    \mathfrak{Re}^{n + 1} (\psi) c'' \mathfrak{I}\mathfrak{m}
    (\psi) \right| \leqslant K \nu^n \| \varphi \|^2_{H_Q^{\exp}}
  \end{equation}
  and
  \begin{equation}
    \left| \int_{\mathbb{R}^2} | Q |^2 \eta^n \mathfrak{Re}^{n + 1}
    (\psi) c'' \mathfrak{I}\mathfrak{m} (\partial_{x_2} \psi) \right|
    \leqslant K \nu^n \sqrt{\int_{\mathbb{R}^2} \eta | \nabla \psi |^2
    \int_{\mathbb{R}^2} \eta \mathfrak{Re}^2 (\psi)} \leqslant K
    \nu^n \| \varphi \|^2_{H_Q^{\exp}} .
  \end{equation}
  We deduce that 
  \begin{equation}
    \left| \int_{\mathbb{R}^2} | Q |^2 \eta^n \mathfrak{Re}^n
    (\psi) c'' \mathfrak{Re} (\partial_{x_2} \psi)
    \mathfrak{I}\mathfrak{m} (\psi) \right| \leqslant (K \nu)^n \| \varphi
    \|^2_{H_Q^{\exp}} . \label{CP32177}
  \end{equation}
  For $\int_{\mathbb{R}^2} | Q |^2 \eta^n \mathfrak{Re}^n (\psi)
  \mathfrak{I}\mathfrak{m} \left( \frac{\nabla Q}{Q} . \nabla \psi \right)
  \mathfrak{I}\mathfrak{m} (\psi)$, we compute
  \[ \mathfrak{I}\mathfrak{m} \left( \frac{\nabla Q}{Q} . \nabla \psi \right)
     =\mathfrak{Re} \left( \frac{\nabla Q}{Q} \right)
     .\mathfrak{I}\mathfrak{m} (\nabla \psi) +\mathfrak{Re} (\nabla
     \psi) .\mathfrak{I}\mathfrak{m} \left( \frac{\nabla Q}{Q} \right), \]
  and with previous estimates, we check easily that
  \begin{eqnarray}
    &  & \left| \int_{\mathbb{R}^2} | Q |^2 \eta^n \mathfrak{Re}^n
    (\psi) \mathfrak{Re} \left( \frac{\nabla Q}{Q} \right)
    .\mathfrak{I}\mathfrak{m} (\nabla \psi) \mathfrak{I}\mathfrak{m} (\psi)
    \right| \nonumber\\
    & \leqslant & (K \nu)^n \sqrt{\int_{\mathbb{R}^2} \eta | \nabla \psi |^2
    \int_{\mathbb{R}^2} \eta \mathfrak{I}\mathfrak{m}^2 (\psi)
    \mathfrak{Re}^2 \left( \frac{\nabla Q}{Q} \right)} \leqslant (K
    \nu)^n \| \varphi \|^2_{H_Q^{\exp}}, 
  \end{eqnarray}
  and by integration by parts, with computations similar to those for the proof of
  (\ref{CP32177}), using
  \[ \left| \nabla .\mathfrak{I}\mathfrak{m} \left( \frac{\nabla Q}{Q} \right)
     \right| \leqslant \frac{K}{(1 + \tilde{r})^{3 / 2}} \]
  from (2.9) to (2.11) of {\cite{Chi_Pac_2}} (for $\sigma = 1 / 2$) for a universal constant $K > 0$ and Lemma
  \ref{lemme3new}, we infer that
  \begin{equation}
    \left| \int_{\mathbb{R}^2} | Q |^2 \eta^n \mathfrak{Re}^n
    (\psi) \mathfrak{Re} (\nabla \psi) .\mathfrak{I}\mathfrak{m}
    \left( \frac{\nabla Q}{Q} \right) \mathfrak{I}\mathfrak{m} (\psi) \right|
    \leqslant (K \nu)^n \| \varphi \|^2_{H_Q^{\exp}},
  \end{equation}
  and we check easily that
  \begin{equation}
    \left| \int_{\mathbb{R}^2} | Q |^2 \eta^n \mathfrak{Re}^n
    (\psi) \mathfrak{I}\mathfrak{m} \left( \frac{\tmop{NL}_{\tmop{loc}}
    (\psi)}{Q} \right) \mathfrak{I}\mathfrak{m} (\psi) \right| \leqslant (K
    \nu)^n \| \varphi \|^2_{H_Q^{\exp}} .
  \end{equation}

  Now, we look at $\int_{\mathbb{R}^2} | Q |^2 \eta^n
  \mathfrak{Re}^n (\psi) \mathfrak{I}\mathfrak{m} \left( \frac{- i
  (c - c'') \vec{e}_2 .H (\psi)}{Q} \right) \mathfrak{I}\mathfrak{m} (\psi)$,
  for the part of $\vec{e}_2 .H (\psi)$ related to the cutoff, the estimation
  can be done as previously, and we are left with the estimation of
  \begin{eqnarray*}
    &  & (c - c'') \int_{\mathbb{R}^2} | Q |^2 \eta^n
    \mathfrak{Re}^n (\psi) \mathfrak{I}\mathfrak{m} \left( - i
    \frac{\partial_{x_2} Q}{Q} - i \partial_{x_2} \psi \right)
    \mathfrak{I}\mathfrak{m} (\psi)\\
    & = & (c - c'') \int_{\mathbb{R}^2} | Q |^2 \eta^n
    \mathfrak{Re}^n (\psi) \mathfrak{Re} \left( 
    \frac{\partial_{x_2} Q}{Q} + \partial_{x_2} \psi \right)
    \mathfrak{I}\mathfrak{m} (\psi) .
  \end{eqnarray*}
  From equation (\ref{CP3ccp}) and Lemma \ref{CP3L26}, we have $| c - c'' |
  \leqslant \nu$ (diminishing 
  $ \nu_2 $ if necessary), and from equation (2.9)
  of {\cite{Chi_Pac_2}}, $\left| \mathfrak{Re} \left( 
  \frac{\partial_{x_2} Q}{Q} \right) \right| \leqslant \frac{K}{(1 +
  \tilde{r})^{5 / 2}}$, therefore
  \begin{eqnarray}
    &  & \left| (c - c'') \int_{\mathbb{R}^2} | Q |^2 \eta^n
    \mathfrak{Re}^n (\psi) \mathfrak{Re} \left( 
    \frac{\partial_{x_2} Q}{Q} \right) \mathfrak{I}\mathfrak{m} (\psi) \right|
    \nonumber\\
    & \leqslant & (K \nu)^n \sqrt{\int_{\mathbb{R}^2} \eta
    \mathfrak{Re}^2 (\psi) \int_{\mathbb{R}^2} \eta
    \mathfrak{Re}^2 \left(  \frac{\partial_{x_2} Q}{Q} \right)
    \mathfrak{I}\mathfrak{m}^2 (\psi)} \nonumber\\
    & \leqslant & (K \nu)^n \| \varphi \|^2_{H_Q^{\exp}}, 
  \end{eqnarray}
  and we estimate
  \begin{equation}
    \left| (c - c'') \int_{\mathbb{R}^2} | Q |^2 \eta^n
    \mathfrak{Re}^n (\psi) \mathfrak{Re} (\partial_{x_2}
    \psi) \mathfrak{I}\mathfrak{m} (\psi) \right| \leqslant (K \nu)^n \|
    \varphi \|^2_{H_Q^{\exp}}
  \end{equation}
  by (\ref{CP32177}). For the last remaining term, since
  \[ \mathfrak{I}\mathfrak{m} \left( \frac{F (\psi)}{Q} \right)
     =\mathfrak{I}\mathfrak{m} (- \eta \nabla \psi . \nabla \psi), \]
  we have $\int_{\mathbb{R}^2} | Q |^2 \eta^n \mathfrak{Re}^n
  (\psi) \mathfrak{I}\mathfrak{m} \left( \frac{F (\psi)}{Q} \right)
  \mathfrak{I}\mathfrak{m} (\psi) = - 2 \int_{\mathbb{R}^2} | Q |^2 \eta^{n +
  1} \mathfrak{Re}^n (\psi) \mathfrak{I}\mathfrak{m} (\nabla \psi)
  .\mathfrak{Re} (\nabla \psi) \mathfrak{I}\mathfrak{m} (\psi)$. In
  particular,
  \begin{eqnarray}
    &  & \left| \int_{\mathbb{R}^2} | Q |^2 \eta^n \mathfrak{Re}^n
    (\psi) \mathfrak{I}\mathfrak{m} \left( \frac{F (\psi)}{Q} \right)
    \mathfrak{I}\mathfrak{m} (\psi) \right| \nonumber\\
    & \leqslant & (K \nu)^n \| \eta \mathfrak{I}\mathfrak{m} (\psi)
    \|_{L^{\infty} (\mathbb{R}^2)} \int_{\mathbb{R}^2} \eta | \nabla \psi |^2
    \nonumber\\
    & \leqslant & (K \nu)^n \| \eta \mathfrak{I}\mathfrak{m} (\psi)
    \|_{L^{\infty} (\mathbb{R}^2)} \| \varphi \|^2_{H_Q^{\exp}} . 
    \label{329ziz}
  \end{eqnarray}
  Combining this result with the previous estimates, this implies that
  \begin{equation}
    | A_n | \leqslant (C_6 \nu)^n (1 + \| \eta \mathfrak{I}\mathfrak{m} (\psi)
    \|_{L^{\infty} (\mathbb{R}^2)}) \| \varphi \|^2_{H_Q^{\exp}}
    \label{CP32188}
  \end{equation}
  for some universal constant $C_6 > 0$, but that is not enough to show that
  we have $\left| \int_{\mathbb{R}^2} | Q |^2 \eta^n
  \mathfrak{Re}^n (\psi) \mathfrak{I}\mathfrak{m} \left( \frac{F
  (\psi)}{Q} \right) \mathfrak{I}\mathfrak{m} (\psi) \right| \leqslant (K
  \nu)^n \| \varphi \|^2_{H_Q^{\exp}}$, since we have no control on $\| \eta
  \mathfrak{I}\mathfrak{m} (\psi) \|_{L^{\infty} (\mathbb{R}^2)}$ other than
  the fact that it is a finite quantity. By integration by parts (integrating
  $\mathfrak{Re} (\nabla \psi)$), with computations similar as for
  the proof of (\ref{CP32177}), we infer that
  \begin{eqnarray*}
    &  & \left| 2 \int_{\mathbb{R}^2} | Q |^2 \eta^{n + 1}
    \mathfrak{Re}^n (\psi) \mathfrak{I}\mathfrak{m} (\nabla \psi)
    .\mathfrak{Re} (\nabla \psi) \mathfrak{I}\mathfrak{m} (\psi)
    \right|\\
    & \leqslant & \left| 2 \int_{\mathbb{R}^2} | Q |^2 \eta^{n + 1}
    \mathfrak{Re}^n (\psi) \mathfrak{I}\mathfrak{m} (\Delta \psi)
    \mathfrak{Re} (\psi) \mathfrak{I}\mathfrak{m} (\psi) \right| +
    (K \nu)^n \| \varphi \|^2_{H_Q^{\exp}}\\
    & \leqslant & 2 | A_{n + 1} | + (K \nu)^n \| \varphi \|^2_{H_Q^{\exp}} .
  \end{eqnarray*}
  Combining this result with estimates (\ref{320ziz}) to (\ref{329ziz}), we
  deduce that for some universal constant $C_7 > 0$,
  \[ | A_n | \leqslant 2 | A_{n + 1} | + (C_7 \nu)^n \| \varphi
     \|^2_{H_Q^{\exp}}, \]
  therefore, by induction,
  \[ | A_1 | \leqslant 2^n | A_n | + \sum_{k = 1}^{n - 1} (2 C_7 \nu)^k \|
     \varphi \|^2_{H_Q^{\exp}}, \]
  hence, with (\ref{CP32188}),
  \[ | A_1 | \leqslant \left( (2 C_6 \nu)^n (1 + \| \eta
     \mathfrak{I}\mathfrak{m} (\psi) \|_{L^{\infty} (\mathbb{R}^2)}) + \sum_{k
     = 1}^{n - 1} \nobracket (2 C_7 \nu \nobracket)^k \right) \| \varphi
     \|^2_{H_Q^{\exp}} . \]
  Taking $\nu > 0$ such that $\nu \leqslant \nu_2$ and $2 C_6 \nu < 1 / 2$ and $2 C_7 \nu < 1 / 2$, then
  $n \geqslant 1$ large enough (depending on $\| \eta \mathfrak{I}\mathfrak{m}
  (\psi) \|_{L^{\infty} (\mathbb{R}^2)}$) such that
  \[ \frac{1}{2^{n - 1}} (1 + \| \eta \mathfrak{I}\mathfrak{m} (\psi)
     \|_{L^{\infty} (\mathbb{R}^2)}) \leqslant 1, \]
  we conclude that
  \[ | A_1 | \leqslant \left( 2 C_6 + 2 C_7 \sum_{k = 0}^{n - 2} \frac{1}{2^k}
     \right) \nu \| \varphi \|^2_{H_Q^{\exp}} \leqslant 2 (C_6 + 2 C_7) \nu \|
     \varphi \|^2_{H_Q^{\exp}} . \]
  This concludes the proof of equation (\ref{CP3eq213}).
  
  \
  
  Combining estimates (\ref{CP3eq210}) to (\ref{CP3eq213}) in equation
  (\ref{CP3eqeq}), we deduce that
  \[ (1 - C_8 \nu) \| \varphi \|_{H_Q^{\exp}}^2 \leqslant 0 \]
  for some universal constant $C_8 > 0$, therefore, taking $\nu > 0$ small enough such that 
  the previous constraints are satisfied and
  $C_8 \nu < 1 / 2$ , we have $\| \varphi \|_{H_Q^{\exp}} = 0$. From Lemma
  \ref{CP3neurones}, we deduce $c'' = c$. The proof is complete.

\subsection{Proof of Corollary \ref{CP3cor}}

  Take a function $u$ satisfying the hypotheses of Corollary \ref{CP3cor}.
  Then, $u$ is even in $x_1$ and it has finite energy. Furthermore, by Theorem
  \ref{th1} (for $p = + \infty$),
  \begin{eqnarray*}
    &  & \| u - V_1 ( \cdot - d_c \vec{e}_1) V_{- 1} ( \cdot + d_c \vec{e}_1)
    \|_{L^{\infty} (\mathbb{R}^2)}\\
    & \leqslant & \| u - Q_c \|_{L^{\infty} (\mathbb{R}^2)} + \| Q_c - V_1 (\cdot 
    - d_c \vec{e}_1) V_{- 1} (\cdot  + d_c \vec{e}_1) \|_{L^{\infty}
    (\mathbb{R}^2)}\\
    & \leqslant & \varepsilon + o_{c \rightarrow 0} (1) .
  \end{eqnarray*}
 Next,
  \[ \| | u | - 1 \|_{L^{\infty} (\{ \tilde{r}_d \geqslant \lambda \})}
     \leqslant \| u - Q_c \|_{L^{\infty} (\{ \tilde{r}_d \geqslant \lambda
     \})} + \| | Q_c | - 1 \|_{L^{\infty} (\{ \tilde{r}_d \geqslant \lambda
     \})} \leqslant \varepsilon + \frac{K}{\lambda} \]
  by equation (2.6) of {\cite{Chi_Pac_2}}. We now fix the parameters. 
  We first choose $ \lambda \geqslant \lambda_* $ large enough so that 
  $ K / \lambda \leqslant 1/(2 \lambda_* ) $. Then, we fix $ c_0 > 0 $ and 
  $\varepsilon > 0 $ so small that $ \varepsilon \leqslant 1/(2 \lambda_* ) $, 
  $ \lvert c d_c - 1 \rvert \ls \varepsilon (\lambda ) $, $ d_c \geqslant 1/ \varepsilon (\lambda ) $ 
  and $ \varepsilon + o_{c \rightarrow 0} (1) \leqslant \varepsilon (\lambda ) $ 
  for $ c < c_0 $. Therefore, $u$ satisfies the hypotheses of Proposition \ref{locC1} 
  with $ d = d_c $, and this concludes.

%%%%%%%%%%%%%%%%%%%%%%%%%%%%%%%%%%%%%%%%%%%%%%%%%%%%%%%
%%%%%%%%%%%%%%%%%%%%%%%%%%%%%%%%%%%%%%%%%%%%%%%%%%%%%%%
%%%%%%%%%%%%%%%%%%%%%%%%%%%%%%%%%%%%%%%%%%%%%%%%%%%%%%%
\section{Properties of quasi-minimizers of the energy and proof of Theorem \ref{th8} \label{sec3}}
\label{sec:quasimin}

%%%%%
\subsection{Tools for the vortex analysis}

We list in this section some results useful for the analysis of travelling waves 
for small speeds or, equivalently, large momentum, with vorticity. We shall denote 
$ \langle u| v \rangle = \textrm{Re} ( u \bar{v} ) $ the real scalar product of the complex 
numbers $u$, $v $. The jacobian (or vorticity)
\[
 J v \assign \langle i \p_1 v | \p_2 v \rangle 
 = \frac{1}{2} \p_1 \langle i v | \p_2 v \rangle - \frac{1}{2} \p_2 \langle i v | \p_1 v \rangle
\]
is then relevant, and we shall use the following concentration property of the jacobian. We denote
\[
 E_\e ( u , \Omega ) 
 \assign \frac{1}{2} \int_{ \Omega } \lvert \nabla u \rvert^2 + \frac{1}{2\e^2} ( 1 - |u|^2 )^2 \, dx .
\]

\begin{theorem}[Concentration of the Jacobian - \cite{Alb_Bal_Orl}, \cite{Jer_Son}]
\label{Jacob}
Let $ M_0 > 0 $, $ R > 0 $ and $ \beta \in ] 0,1 ] $. Then, for every $ \delta >0 $, there exists $ \e_0 > 0 $ 
(depending only on $ \beta $, $ \delta $, $ R $ and $ M_0 $) such that for any $ 0 < \e < \e_0 $, and for 
any $ u \in H^1( B(0,4R) ) $ such that $ E_\e ( u , B(0, 4R) ) \ls M_0 \lvert \ln \e \rvert $ 
and $ |u| \gs 1/2 $ in $B (0, 4R) \setminus B(0, R) $, 
there exist $ N \in \N $, $ y_1, \dotsc , y_N \in \bar{B}(0, R ) $, $ d_1, \dotsc , d_N \in \Z $ such that
\[
 \Big\| Ju - \pi \sum_{k=1}^N d_k \delta_{y_k} \Big\|_{ [ \BC^{0, \beta}_c( B(0, 4R) ) ]^*} \ls \delta
\]
and
\[
 \pi \sum_{k=1}^N \lvert d_k \rvert \ls \frac{E_\e (u, B(0,4R )) }{ \lvert \ln \e \rvert } + \delta .
\]
Finally, we may choose the points $ y_k $, $1 \ls k \ls N $, in $ \{ |u | \ls 1/2 \} $. 
\end{theorem}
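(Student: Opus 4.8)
The plan is to produce the atoms $y_k$ and the degrees $d_k$ via the classical Ginzburg--Landau \emph{vortex ball construction} of Jerrard and Sandier, and to control $Ju$ near the resulting measure $\mu:=\pi\sum_k d_k\delta_{y_k}$ through the \emph{Jacobian estimate} of \cite{Jer_Son}, \cite{Alb_Bal_Orl}; the only extra (soft) ingredient is an interpolation step passing from the dual of $C^{0,1}_c$ to that of $C^{0,\beta}_c$. By homogeneity in $\zeta$ it suffices to treat $\zeta\in C^{0,\beta}_c(B(0,R))$ with $\|\zeta\|_{C^{0,\beta}}\ls 1$, and to prove the two required bounds with $\delta$ split into halves.

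\textbf{Step 1: vortex balls and the sharp degree bound.} Fix $a=a(M_0,\delta)>0$ small and apply the ball construction to $u$ on $B(0,R)$ with final total radius $\rho_0:=\varepsilon^{a}$. Under $E_\varepsilon(u,B(0,R))\ls M_0|\ln\varepsilon|$ this yields, for $\varepsilon$ small, a finite disjoint family $B(a_i,r_i)$, $1\ls i\ls N$, with $\sum_i r_i\ls\rho_0$, $\{|u|\ls 1/2\}\subset\bigcup_i B(a_i,r_i)$, $|u|\gs 1/2$ on each $\partial B(a_i,r_i)$ so that $d_i:=\deg(u/|u|,\partial B(a_i,r_i))\in\Z$ is defined, and
\[
 E_\varepsilon\Big(u,\bigcup_i B(a_i,r_i)\Big)\ \gs\ \pi\sum_i|d_i|\Big(\ln\frac{\rho_0}{\varepsilon}-C\Big)
 \ =\ \pi\sum_i|d_i|\big((1-a)|\ln\varepsilon|-C\big),
\]
with $C$ universal. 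Since the left side is $\ls M_0|\ln\varepsilon|$, this forces $\pi\sum_i|d_i|\ls E_\varepsilon(u,B(0,R))/((1-a)|\ln\varepsilon|-C)$, and an elementary manipulation (using $E_\varepsilon\ls M_0|\ln\varepsilon|$) bounds the difference with $E_\varepsilon(u,B(0,R))/|\ln\varepsilon|$ by $M_0\,a/(1-a)+o_{\varepsilon\to 0}(1)$; choosing $a$ small and then $\varepsilon$ small (depending on $M_0,\delta$) yields $\pi\sum_i|d_i|\ls E_\varepsilon(u,B(0,R))/|\ln\varepsilon|+\delta/2$, the second asserted inequality. Finally pick $y_k\in\{|u|\ls 1/2\}\cap B(a_k,r_k)$, nonempty by construction; replacing $a_k$ by $y_k$ in the pairing costs at most $\pi\sum_k|d_k|\,r_k^{\beta}\ls C(M_0)\rho_0^{\beta}$ since $\|\zeta\|_{C^{0,\beta}}\ls1$.

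\textbf{Step 2: the Jacobian estimate and the H\"older upgrade.} With $j(u):=\langle iu\,|\,\nabla u\rangle$ and $Ju=\frac12\,\mathrm{curl}\,j(u)$, the Jacobian estimate applied to the above configuration gives, with $\mu_0:=\pi\sum_i d_i\delta_{a_i}$,
\[
 \|Ju-\mu_0\|_{(C^{0,1}_c(B(0,R)))^*}\ \ls\ C(M_0)\big(\rho_0+\varepsilon(M_0|\ln\varepsilon|)^{1/2}\big)\ \ls\ C(M_0)\,\varepsilon^{a}.
\]
Its proof runs on the complement $\omega$ of the balls, where $|u|\gs 1/2$: writing $j(u)=\rho^2\nabla\phi$ locally ($\rho=|u|$), using $\|\,|u|^2-1\|_{L^2(B(0,R))}^2\ls 2\varepsilon^2 E_\varepsilon\to 0$ to replace $j(u)$ by the closed form $\nabla\phi$ up to an $L^2$-small error, and integrating by parts, the curl of $\nabla\phi$ vanishes on $\omega$ while the boundary integrals on the $\partial B(a_i,r_i)$ produce exactly the weights $\pi d_i$ (the term on $\partial B(0,R)$ being killed by the compact support of $\zeta$), the residual modulus defects on those circles being absorbed by a good-radius selection. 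To pass to $(C^{0,\beta}_c)^*$, set $T:=Ju-\mu_0$ and mollify $\zeta$ at scale $\lambda$: from $\|\zeta-\zeta_\lambda\|_{C^0}\ls C\lambda^{\beta}$ and $\|\zeta_\lambda\|_{C^{0,1}}\ls C\lambda^{\beta-1}$,
\[
 |\langle T,\zeta\rangle|\ \ls\ C\lambda^{\beta}\|T\|_{\mathcal M}+C\lambda^{\beta-1}\|T\|_{(C^{0,1}_c)^*},
\]
and since $\|T\|_{\mathcal M}\ls\|Ju\|_{L^1(B(0,R))}+|\mu_0|(B(0,R))\ls E_\varepsilon+C(M_0)\ls C(M_0)|\ln\varepsilon|$, optimizing in $\lambda$ gives $\|T\|_{(C^{0,\beta}_c)^*}\ls C(M_0)\,\varepsilon^{a\beta}|\ln\varepsilon|^{1-\beta}\to 0$ as $\varepsilon\to 0$. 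Adding the atom-shift error $C(M_0)\rho_0^{\beta}$ from Step 1 and taking $\varepsilon$ small enough (depending on $\beta,\delta,R,M_0$) yields $\|Ju-\pi\sum_k d_k\delta_{y_k}\|_{(C^{0,\beta}_c(B(0,R)))^*}\ls\delta$, which with Step 1 proves the theorem.

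\textbf{Expected main obstacle.} The genuinely non-elementary point is the energy lower bound of Step 1 \emph{with the sharp leading constant $\pi$} (rather than $\pi/4$, or $\pi|d_i|^2$ per vortex), uniformly in the level $M_0$: this is exactly what forces the constant $\pi$ in the degree bound with only a $\delta$-slack, and it is the heart of the Jerrard--Sandier ball-growth machinery. By comparison, the $C^{0,1}\to C^{0,\beta}$ passage is a soft mollification argument, and extracting the weights $\pi d_i$ in the Jacobian estimate is routine once the potential term is used to place $|u|$ close to $1$ away from the balls.
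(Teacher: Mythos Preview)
The paper does not prove this theorem: it is quoted verbatim from \cite{Alb_Bal_Orl}, \cite{Jer_Son} and used as a black box in the subsequent vortex analysis, so there is no ``paper's own proof'' to compare your proposal against.

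That said, your sketch is a faithful outline of how the result is established in those references: the Jerrard--Sandier ball growth gives the covering of $\{|u|\ls 1/2\}$ by balls of total radius $\rho_0=\varepsilon^a$ together with the sharp lower bound $E_\varepsilon\gs\pi\sum_i|d_i|((1-a)|\ln\varepsilon|-C)$, which is exactly what produces the second inequality with the correct constant $\pi$; the Jerrard--Soner Jacobian estimate then controls $\|Ju-\mu_0\|_{(C^{0,1}_c)^*}$ by a power of $\varepsilon$; and the passage to $(C^{0,\beta}_c)^*$ by mollification/interpolation against the crude mass bound $\|Ju\|_{L^1}\ls E_\varepsilon$ is the standard soft step. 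Your identification of the main obstacle (the sharp $\pi$ in the ball-construction lower bound) is accurate. Two minor points worth tightening in a full write-up: when mollifying $\zeta\in C^{0,\beta}_c(B(0,R))$ at scale $\lambda$ you must keep the support inside $B(0,R)$ (shrink the support slightly or use a cutoff), and the choice of $y_k\in\{|u|\ls 1/2\}\cap B(a_k,r_k)$ is only guaranteed when $d_k\neq 0$, so you should explicitly discard the degree-zero balls before selecting the atoms.
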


Here, we recall that the space $ [ \BC^{0, \beta}_c( B(0, R) ) ]^* $ is endowed with the dual norm 
associated with $ \| \zeta \|_{ \BC^{0, \beta}_c( B(0, R) )} = \sup_{x \neq y \in B(0,R) } \frac{ \lvert \zeta( x) - \zeta( y) \rvert}{|x-y|^\beta} $, 
for $ \zeta \in \BC^{0, \beta} ( B(0, R) ) $ compactly supported.

%%%%
\begin{remark} 
The above mentioned theorem is actually Lemma 3.3 in \cite{BOS}. It is related to the works 
\cite{Alb_Bal_Orl}, \cite{Jer_Son}, which both correspond to the limit $ \e \to 0 $, whereas 
we have here a statement (obtained by compactness) at fixed $ \e $. The hypothesis 
"$ |u| \gs 1/2 $ in $B (0, 4R) \setminus B(0, R) $" ensures that the vortices do not approach the 
boundary $ \p B (0,4R) $.

\end{remark}
%%%%

\begin{theorem}[Clearing-out Theorem - \cite{BOS}] 
\label{clearingout}
Let $ M_0 > 0 $ and $ \sigma > 0 $ be given. Then there exist $ \ep_0 > 0 $ and $ \eta > 0 $, depending 
only on $ M_0 $ and $ \sigma $, such that, if $ R_0 = 1/ (1 + M_0 ) $, if $ U : B (0, R_0) \to \C $ solves
\begin{equation}
\label{GL_transpo}
 \Delta U + i \mathfrak{c} \p_2 U + \frac{1}{\ep^2} U ( 1 - |U|^2 ) = 0 
\end{equation}
in $ B (0, R_0) \subset \R^2 $, with $ \ep < \ep_0 $, $ \lvert \mathfrak{c} \rvert \ls M_0 \lvert \ln \ep \rvert $, and 
\[
 E_\ep (U, B (0, R_0) ) \ls \eta \lvert \ln \ep \rvert ,
\]
then
\[
 \lvert U(0) \rvert \gs 1 - \sigma .
\]
\end{theorem}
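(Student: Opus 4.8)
The statement is an $\ep$-regularity (``clearing-out'') estimate, and the plan is to argue by contradiction, combining a genuinely local $\ep$-regularity lemma with the Jacobian concentration of Theorem \ref{Jacob}. First I would record a priori bounds by normalising to unit scale: setting $V(y)\assign U(\ep y)$, the function $V$ solves $\Delta V+i(\ep\mathfrak{c})\partial_2 V+V(1-|V|^2)=0$ on $B(0,R_0/\ep)$, and since $|\mathfrak{c}|\ls M_0|\ln\ep|$ we have $|\ep\mathfrak{c}|\ls M_0\ep|\ln\ep|\to 0$ as $\ep\to 0$. Hence Theorem \ref{Linf} and interior elliptic estimates give $\|V\|_{L^\ii}+\|\nabla V\|_{L^\ii}+\|\nabla^2 V\|_{L^\ii}\ls C(M_0)$ on $B(0,R_0/(2\ep))$, i.e.\ $\|U\|_{L^\ii(B(0,3R_0/4))}\ls C(M_0)$ and $\ep\|\nabla U\|_{L^\ii}+\ep^2\|\nabla^2 U\|_{L^\ii}\ls C(M_0)$ on the same ball.

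Second, I would establish a local clearing-out lemma: there is $\eta_1=\eta_1(M_0,\sigma)>0$ such that if $U$ solves \eqref{GL_transpo} on $B(x_0,r)$ with $\ep\ls r$, $|\mathfrak{c}|\ls M_0|\ln\ep|$ and $E_\ep(U,B(x_0,r))\ls\eta_1$, then $|U|\gs 1-\sigma$ on $B(x_0,r/2)$. For solutions this follows from the identity $-\Delta(1-|U|^2)+\tfrac{2}{\ep^2}(1-|U|^2)=2|\nabla U|^2+\tfrac{2}{\ep^2}(1-|U|^2)^2+2\mathfrak{c}\langle iU|\partial_2 U\rangle$ together with the a priori bounds above and a Pohozaev plus covering argument in the spirit of the standard Ginzburg--Landau $\ep$-regularity; on an $\ep$-scale ball the drift contribution is of size $O(\ep|\ln\ep|)$ and is harmless.

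Now suppose, for contradiction, that $\eta<\min(\pi,\eta_1(M_0,\sigma))$ and $|U(0)|<1-\sigma$. Applying Theorem \ref{Jacob} on $B(0,R_0)$ with constant $M_0$ (legitimate since $E_\ep(U,B(0,R_0))\ls\eta|\ln\ep|\ls M_0|\ln\ep|$) and a small $\delta>0$: for $\ep<\ep_0(M_0,\delta)$ there are $y_k\in\{|U|\ls1/2\}$ and $d_k\in\Z$ with $\|JU-\pi\sum_k d_k\delta_{y_k}\|_{[\BC^{0,\beta}_c(B(0,R_0))]^*}\ls\delta$ and $\pi\sum_k|d_k|\ls\eta+\delta$. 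Choosing $\delta$ with $\eta+\delta<\pi$ forces every $d_k=0$, hence $\|JU\|_{[\BC^{0,\beta}_c(B(0,R_0))]^*}\ls\delta$: there is no vorticity. The final step is the ``clearing-out of the energy'': a weakly vanishing Jacobian together with $E_\ep(U,B(0,R_0))\ls\eta|\ln\ep|$ must force $E_\ep(U,B(0,C(M_0)\ep))\ls\eta_1(M_0,\sigma)$, after which the local lemma gives $|U(0)|\gs1-\sigma$, a contradiction. Unwinding the argument fixes $\eta$ and $\ep_0$ in terms of $M_0$ and $\sigma$ only.

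I expect this last step --- upgrading ``no vorticity $+$ logarithmically bounded global energy'' to ``pointwise small energy density'' --- to be the main obstacle. The natural route is a dyadic decomposition of $B(0,R_0)\setminus B(0,C\ep)$ into $\sim|\ln\ep|$ annuli: since the total energy is $\ls\eta|\ln\ep|$, pigeonhole yields a favourable annulus, and one then propagates smallness inward by an ``energy transport'' inequality valid on annuli where $JU$ is small --- if $\|JU\|_*$ is small on an annulus $A$, then $E_\ep(U)$ on the inner disk of $A$ is bounded by a universal multiple of $E_\ep(U,A)$ plus a $\delta$-error. Carrying the drift term $\mathfrak{c}\int x_1 JU$ through the Pohozaev identities underlying this transport inequality is the technical heart: here one bounds $\int x_1 JU$ by $C(M_0)R_0(\eta+\delta)$ using Theorem \ref{Jacob}, exploits $|\mathfrak{c}|\ls M_0|\ln\ep|$, and it is precisely this balance that dictates how small $\eta$ must be chosen in terms of $M_0$ and $\sigma$.
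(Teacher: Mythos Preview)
The paper does not prove this theorem at all: Theorem \ref{clearingout} is quoted from \cite{BOS} as a tool in the ``Toolbox'' subsection, with only a brief remark on its history and on the change of unknown relating \eqref{GL_transpo} to the Ginzburg--Landau equation without drift. There is therefore nothing in the paper to compare your argument against.

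On the merits of your sketch: the overall architecture (a priori bounds by rescaling, a local $\ep$-regularity lemma under \emph{absolute} energy smallness, then a contradiction argument using Theorem \ref{Jacob} to kill the vorticity) is a plausible route, and your identification of the last step as the crux is correct. However, that last step is genuinely incomplete as written. Having $\|JU\|_{[\BC^{0,\beta}_c]^*}\ls\delta$ and $E_\ep(U,B(0,R_0))\ls\eta|\ln\ep|$ does not, by pigeonhole alone, yield $E_\ep(U,B(0,C\ep))\ls\eta_1$: the dyadic argument only produces \emph{one} annulus with energy $\ls C\eta$, and your ``energy transport inequality'' --- that smallness of $JU$ on an annulus controls the energy of the inner disk by the energy of the annulus --- is not a standard statement and would itself require a proof (for instance via a Pohozaev-type monotonicity formula adapted to the drift, which is essentially what \cite{BOS} does). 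In other words, the heavy lifting you defer to that inequality is close in difficulty to the theorem itself. A second, more minor point: invoking Theorem \ref{Jacob} to prove Theorem \ref{clearingout} is legitimate in principle (the Jerrard--Soner and Alberti--Baldo--Orlandi estimates do not rely on clearing-out), but you should be explicit that this dependence is non-circular.
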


For the elliptic PDE
\begin{equation}
\label{GL}
 \Delta \BU + \frac{1}{\e^2} \BU ( 1 - |\BU|^2 ) = 0 ,
\end{equation}
that is without the transport term $  i \p_2 U $, this result has been shown in 2d in \cite{BBH} for 
minimizing maps, then in \cite{BethRiv} for the Ginzburg-Landau equation with magnetic field. 
In higher dimension, see \cite{Lin_Riviere} and \cite{BBO} for \eqref{GL} and \cite{BOS} for an 
equation including the Ginzburg-Landau equation with magnetic field and \eqref{GL_transpo}. 
One may use the change of unknown
\[
 \BU(x) \assign ( 1+ \mathfrak{c}^2 \ep^2 /4 )^{-1/2} \ex^{i \mathfrak{c} x_2/2} U(x) ,
 \quad \quad \quad 
 \e = \ep ( 1+ \mathfrak{c}^2 \ep^2/4 )^{-1/2} 
\]
to transform the equation \eqref{GL} without the transport term into the equation 
\eqref{GL_transpo} with the transport term. However, the assumptions 
$ E_\ep (U, B(0, R_0) ) \ls \eta \lvert \ln \ep \rvert $ and $ E_\e ( \BU, B (0, R_0) ) \ls \eta \lvert \ln \e \rvert $ 
are not equivalent (due to the extra phase term).

%%%%%
\subsection{Vortex structure for quasi-minimizers of $E$ at fixed $P$}

In this section, some $ \Lambda_0 > 0 $ is fixed and we consider a large momentum 
$ \mathfrak{p} $ and $ u_\mathfrak{p} $ such that
\begin{equation}
\label{upper_bound}
 E( u_\mathfrak{p} ) \ls 2\pi \ln \mathfrak{p} + \Lambda_0 
\end{equation}
and such that there exists $ c_\mathfrak{p} > 0 $ (depending on $ u_\mathfrak{p} $) such that
\[
 0 = (\tmop{TW}_{c_\mathfrak{p}} ) (u_\mathfrak{p}) 
 = - i c_\mathfrak{p} \partial_{x_2} u_\mathfrak{p} - \Delta u_\mathfrak{p} - (1 - | u_\mathfrak{p} |^2) u_\mathfrak{p} .
\]
It then follows from \cite{Gra} (see Theorem \ref{decay}) that we may assume, using the phase shift invariance, 
that $ u_\mathfrak{p} \to 1 $ at spatial infinity. In particular, we have
\[
 \mathfrak{p} 
 = P_2 ( u_\mathfrak{p} ) 
 = \frac{1}{2} \int_{\R^2} \langle i \p_2 u_\mathfrak{p} | u_\mathfrak{p} - 1 \rangle \, dx .
\]

Our goal is to show that $ u_\mathfrak{p}$ satisfies the hypothesis of Proposition \ref{locC1}.
We shall follow \cite{Bet_Saut} and \cite{BOS} in order to analyze the vortex structure of $ u_\mathfrak{p} $.

%%%%%%%%%%%%%%%%%%%%%%%%%%%%%%%%%%%%%%%%%%%%%%%%%%%%%%%
\subsubsection{Localizing the vorticity set at scale $ x / \mathfrak{p} $}
\label{sec:scalezchapeau}

We define the following rescaling $ \hat{u}_\mathfrak{p} $ of $ u_\mathfrak{p} $:
\begin{equation}
\label{defuhat}
 \hat{u}_\mathfrak{p} ( \hat{x} ) = u_\mathfrak{p} ( \mathfrak{p} \hat{x} ) .
\end{equation}
Therefore, $ \hat{u}_\mathfrak{p} $ solves
\begin{equation}
\label{TW_resc}
 \Delta \hat{u}_\mathfrak{p} + i c_\mathfrak{p} \mathfrak{p} \p_2 \hat{u}_\mathfrak{p} 
 + \mathfrak{p}^2 \hat{u}_\mathfrak{p} ( 1 - \lvert \hat{u}_\mathfrak{p} \rvert^2 ) = 0 
\end{equation}
which is a particular case of \eqref{GL_transpo} with
\[
 \ep = 1 / \mathfrak{p}, 
 \quad \quad \quad 
 \mathfrak{c} = c_\mathfrak{p} \mathfrak{p} .
\]
The universal $ L^\ii $ bound on the gradient of Corollary \ref{Gradinf} reads now
\begin{equation}
\label{LinfGradResc}
 \| \nabla \hat{u}_\mathfrak{p} \|_{L^\ii (\R^2) } \ls K \mathfrak{p} .
\end{equation}
We shall have, in the end, $ c_\mathfrak{p} \sim 1/ \mathfrak{p} $. 
The first step provides a rough upper bound for the speed $ c_\mathfrak{p} $ (the Lagrange 
multiplier for the minimisation problem $ E_{\rm min} (\mathfrak{p} ) $).

\bigbreak
\noindent {\it Step 1: there exists $ \mathfrak{p}_1 = \mathfrak{p}_1 ( \Lambda_0) $ such that, for 
$ \mathfrak{p} \gs \mathfrak{p}_1 $, we have
\[
 0 < c_\mathfrak{p} \ls \frac{2E ( u_\mathfrak{p} ) }{ \mathfrak{p} } 
 \ls 13 \frac{ \ln \mathfrak{p} }{ \mathfrak{p} } .
\]
In particular, $ c_\mathfrak{p} \ls 1/2 $ and $ \ln \mathfrak{p} \ls 2 \lvert \ln c_\mathfrak{p} \rvert $. }

\bigbreak

We shall use the Pohozaev identity \eqref{Pohoz}, that is:
\[
 \frac{1}{2} \int_{\R^2} (1 - \lvert u_\mathfrak{p} \rvert^2 )^2 \, dx 
 = c_\mathfrak{p} \mathfrak{p} .
\]
At this stage, we only have the rough upper bound $ 0 \ls \frac{1}{4} \int_{\R^2} (1 - \lvert u_\mathfrak{p} \rvert^2 )^2 \, dx 
\ls E ( u_\mathfrak{p} ) \ls  2\pi \ln \mathfrak{p} + \Lambda_0 $, which concludes. 

Another argument we could use for minimizers is that we know from \cite{Bet_Gra_Saut} (see also \cite{Chi_Mar}) 
that $ 0 \ls c_\mathfrak{p} \ls d^+ E_{\rm min} (\mathfrak{p} ) \ls E_{\rm min} (\mathfrak{p} ) / \mathfrak{p} $.

\bigbreak

\noindent {\it Step 2: there exists $ \mathfrak{p}_2 > \mathfrak{p}_1 $, $ R_* \gs 1/ 8 $ and $ n_{*} \in \N $, 
depending only on $ \Lambda_0 $, such that, if $ \mathfrak{p} > \mathfrak{p}_2 $, there exist $ n_\mathfrak{p} $ points 
$ \hat{z}_{ \mathfrak{p} , j } $, $ 1 \ls j \ls n_\mathfrak{p} $ with $ n_\mathfrak{p} \ls n_* $ such that 
$ \{ \lvert\hat{ u}_\mathfrak{p}( \hat{x} ) \rvert \ls 1/2 \} \subset \cup_{j=1}^{n_\mathfrak{p}} B( \hat{z}_{\mathfrak{p} ,j} , R_* ) $ 
and the disks $ \bar{B} ( \hat{z}_{\mathfrak{p}, j} , 4 R_* ) $, $ 1 \ls j \ls n_\mathfrak{p} $, are mutually disjoint.}

We apply Theorem \ref{clearingout} with $ \ep = 1/ \mathfrak{p} $, $ \mathfrak{c} = c_\mathfrak{p} \mathfrak{p} $ 
and $ \sigma = 1/2 $ to $ \hat{u}_\mathfrak{p} $. 
This is possible in view of the upper bound on $ 0 \ls c_\mathfrak{p} \mathfrak{p} \ls 13 \ln \mathfrak{p} $ of Step 1 
(that is $ M_0 = 13 $). We then let $ R_0 \assign 1 / (1+ 13 ) = 1/14 $ for $ \mathfrak{p} \gs \mathfrak{p}_1 $ 
and denote $ \eta_{1/2} $ the positive constant $ \eta $ given by Theorem \ref{clearingout}.

We now proceed in this way: we choose (if it exists) some $ \hat{z}_{\mathfrak{p},1} \in \R^2 $ such that 
$ \lvert \hat{u}_\mathfrak{p} ( \hat{z}_{\mathfrak{p} ,1} ) \rvert < 1/2 $. If $ \{ \lvert \hat{u}_\mathfrak{p} \rvert \ls 1/2 \} \subset 
\bar{B} ( \hat{z}_{ \mathfrak{p} ,1}, 2 R_0 ) $, then we stop. If not, we choose 
$ \hat{z}_{\mathfrak{p},2} \in \R^2 \setminus \bar{B} ( \hat{z}_{\mathfrak{p}, 1} , 2 R_0 )  $ 
such that $ \lvert \hat{u}_\mathfrak{p} ( \hat{z}^{\mathfrak{p}, 2 } ) \rvert < 1/2 $. 
If $ \{ \lvert \hat{u}_\mathfrak{p} \rvert \ls 1/2 \} \subset \cup_{j=1}^{2} \bar{B} ( \hat{z}_{\mathfrak{p}, j} , 2 R_0 ) $, 
then we stop, if not, we continue. This process ends in a finite number of steps (depending only on $K_0 $) since, 
by construction, the disks $ \bar{B} ( \hat{z}_{\mathfrak{p}, j} , R_0 ) $, $ 1 \ls j \ls n $, are 
pairwise disjoint, hence, by Theorem \ref{clearingout}, we have
\begin{align*}
 2 \pi \ln \mathfrak{p} + K_0 
 \gs E( u_\mathfrak{p} ) 
 = E_{1/ \mathfrak{p} } ( \hat{u}_\mathfrak{p} ) 
 & 
 \gs \sum_{j=1}^n E_{1/ \mathfrak{p} } ( \hat{u}_\mathfrak{p} , B(\hat{z}_{\mathfrak{p}, j} , R_0 ) ) 
 \gs n \times \eta_{1/2} \ln \mathfrak{p} ,
\end{align*}
which implies
\[
 n \ls \frac{ 2 \pi \ln \mathfrak{p} + K_0 }{ \eta_{1/2} \ln \mathfrak{p} } 
 \ls \frac{ 7 }{ \eta_{1/2} } 
\]
for $ \mathfrak{p} $ large enough, say $ \mathfrak{p} \gs \mathfrak{p}_2 $. 

At this stage, the disks $ B (\hat{z}_{\mathfrak{p}, j} , 2 R_0 ) $, $1 \ls j \ls n_\mathfrak{p} $, cover the vorticity set 
$ \{ \lvert \hat{u}_\mathfrak{p} \rvert \ls 1/2 \} $, but the disks $ \bar{B} (\hat{z}_{\mathfrak{p} ,j} , 8 R_0 ) $ may not 
be pairwise disjoint. To get this property, we argue as in \cite{BBH} (Theorem IV.1). Let us recall the 
idea: if the disks $ \bar{B} (\hat{z}_{\mathfrak{p}, j} , 8 R_0 ) $, $1 \ls j \ls n_\mathfrak{p} $ are pairwise disjoint, then 
we are done with $ R_* = 2 R_0 $. If not, then we have, for instance, 
$ \lvert \hat{z}_{\mathfrak{p},1} - \hat{z}_{\mathfrak{p},2} \rvert \ls 16 R_0 $. We then remove the disk 
$ B (\hat{z}_{\mathfrak{p},1} , 8 R_0 ) $ from the list and set $ R_1 \ddef 17 R_0 $. The disks 
$ B (\hat{z}_{\mathfrak{p},j} , R_1 ) $, $2 \ls j \ls n_\mathfrak{p} $ cover 
$ \cup_{1 \ls j \ls n_\mathfrak{p} } B (\hat{z}_{\mathfrak{p}, j} , 2 R_0 ) $, hence the vorticity set 
$ \{ \lvert \hat{u}_\mathfrak{p} \rvert \ls 1/2 \} $, and their number has decreased. In a finite number of steps 
(depending only on $K_0 $), we obtain the conclusion. The radius $ R_* $ is necessarily 
$ \ls R_0 \times 17^{n_\mathfrak{p}} \ls R_0 \times17^{n_* } $.

Similar arguments are given in \cite{BOS}, whereas in \cite{Bet_Saut} the vorticity 
set is included in some disks of radii of order $ c_\mathfrak{p}^\gamma $, which requires some extra work.

\bigbreak

\noindent {\it Step 3: we have}
\[
  \mathfrak{p}^2 \int_{\R^2 } ( 1 - \lvert \hat{u}_\mathfrak{p} \rvert^2 )^2 \, d\hat{x} 
 = o_{ \mathfrak{p} \to+\ii } (\ln \mathfrak{p} ) .
\]
This follows exactly as in \cite{BOS} (see Proposition A.1 in the Appendix there). Notice that 
the result in \cite{BOS} is stated for the potential on a compact set in a domain $ \Omega $, 
but it holds as well in the entire plane.

\bigbreak

We then define, as in \cite{BOS}, the function $ \hat{u}'_\mathfrak{p} : \R^2 \to \C $ by
\[
 \hat{u}'_\mathfrak{p} (\hat{x} ) \ddef 
 \left\{\begin{array}{ll}
 \hat{u}_\mathfrak{p} ( \hat{x} ) & \textrm{if } \hat{x} \in \cup_{j=1}^{n_\mathfrak{p}} \bar{B} ( \hat{z}_{\mathfrak{p}, j} , 2 R_* )
 \\ 
 \frac{ \hat{u}_\mathfrak{p} ( \hat{x} ) }{ \lvert \hat{u}_\mathfrak{p} (\hat{x}) \rvert } 
 & \textrm{if } \hat{x} \not\in \cup_{j=1}^{n_\mathfrak{p}} \bar{B} ( \hat{z}_{\mathfrak{p}, j} , 3 R_*)
 \\ 
 ( 3 - \lvert \hat{x} - \hat{z}_{\mathfrak{p}, j} \rvert / R_* ) \hat{u}_\mathfrak{p} ( \hat{x} ) 
 + ( -2 + \lvert \hat{x} - \hat{z}_{\mathfrak{p},j} \rvert / R_* ) \frac{ \hat{u}_\mathfrak{p} ( \hat{x} ) }{ \lvert \hat{u}_\mathfrak{p} (\hat{x} )\rvert} 
 & \textrm{if } \hat{x} \in \bar{B} ( \hat{z}_{\mathfrak{p}, j} , 3 R_* ) \setminus \bar{B} ( \hat{z}_{\mathfrak{p}, j} , 2 R_* )
 \end{array}
 \right.
\]
for some $1 \ls j \ls n_\mathfrak{p} $ (this last formula is valid since the disks $ \bar{B} ( \hat{z}_{\mathfrak{p}, j} , 4 R_* ) $, 
$ 1 \ls j \ls n_\mathfrak{p} $, are mutually disjoint).

\bigbreak

\noindent {\it Step 4: we have, as $ \mathfrak{p} \to +\ii $,
\[
 E_{1/ \mathfrak{p} } ( \hat{u}'_\mathfrak{p} ) 
 \ls 2 \pi \ln \mathfrak{p} + o ( \ln \mathfrak{p} ) .
\]
}

Letting $ \Omega_{R} \ddef \cup_{j=1}^{n_\mathfrak{p}} \bar{B} ( \hat{z}_{\mathfrak{p}, j} , R ) $, we have
\begin{align*}
  \int_{\R^2} (1 - \lvert \hat{u}'_\mathfrak{p} \rvert^2 )^2 \, d\hat{x} 
 & = \int_{\Omega_{2R_*} } (1 - \lvert \hat{u}_\mathfrak{p} \rvert^2 )^2 \, d\hat{x} 
  + \int_{\Omega_{3R_*} \setminus \Omega_{2R_*} } (1 - \lvert \hat{u}'_\mathfrak{p} \rvert^2 )^2 \, d\hat{x} 
\end{align*}
We notice that in $ \Omega_{3 R_*} \setminus \Omega_{2R_*} $, say for 
$ \hat{x} \in \bar{B} ( \hat{z}_{\mathfrak{p},j} , 3 R_* ) \setminus \bar{B} ( \hat{z}_{\mathfrak{p}, j} , 2 R_* ) $, we have 
\[
 \lvert \hat{u}'_\mathfrak{p} ( \hat{x} ) \rvert 
 = ( 3 - \lvert \hat{x} - \hat{z}_{\mathfrak{p}, j} \rvert / R_* ) \lvert \hat{u}_\mathfrak{p} (\hat{x} ) \rvert
 + ( -2 + \lvert \hat{x} - \hat{z}_{ \mathfrak{p}, j} \rvert / R_* )  
 \in [ \lvert \hat{u}_\mathfrak{p} (\hat{x} ) \rvert , 1 ] 
 ,
\]
hence 
$ \big\lvert 1 - \lvert \hat{u}'_\mathfrak{p} ( \hat{x} ) \rvert^2 \big\rvert \ls \big\lvert 1 - \lvert \hat{u}_\mathfrak{p} ( \hat{x} ) \rvert^2 \big\rvert $ and thus
\begin{align}
\label{potard_1}
  \int_{\R^2} (1 - \lvert \hat{u}'_\mathfrak{p} \rvert^2 )^2 \, d\hat{x} 
 & \ls \int_{\Omega_{2R_*} } (1 - \lvert \hat{u}_\mathfrak{p} \rvert^2 )^2 \, d\hat{x} 
  + \int_{\Omega_{3R_*} \setminus \Omega_{2R_*} } (1 - \lvert \hat{u}_\mathfrak{p} \rvert^2 )^2 \, d\hat{x} 
  \nonumber \\ & 
  = \int_{\Omega_{ 3R_*} } (1 - \lvert \hat{u}_\mathfrak{p} \rvert^2 )^2 \, d\hat{x} .
\end{align}
For the kinetic term, we have
\[
  \lvert \nabla \hat{u}'_\mathfrak{p} (\hat{x} ) \rvert^2 
 =
 \lvert \nabla \hat{u}_\mathfrak{p} ( \hat{x} ) \rvert^2 
\]
if $ \hat{x} \in \Omega_{2R_*} $. Outside $ \cup_{j=1}^{n_\mathfrak{p}} \bar{B} ( \hat{z}_{\mathfrak{p}, j} , R_* ) $, then 
$ \lvert \hat{u}_\mathfrak{p} \rvert \gs 1/2 $ and we may then lift, at least locally, 
$ \hat{u}_\mathfrak{p} = A \ex^{i \phi } $ and get
\[
 \lvert \nabla \hat{u}_\mathfrak{p} \rvert^2 
 = A^2 \lvert \nabla \phi \rvert^2 + \lvert \nabla A \rvert^2 .
\]
If $ \hat{x} \not\in \Omega_{3 R_*} $, then, by \eqref{LinfGradResc},
\begin{align*}
 \lvert \nabla \hat{u}'_\mathfrak{p} \rvert^2 
 = \lvert \nabla \phi \rvert^2 
 = A^2 \lvert \nabla \phi \rvert^2 + \frac{1 - A^2}{A^2} \times A^2 \lvert \nabla \phi \rvert^2 
 \ls \lvert \nabla \hat{u}_\mathfrak{p} \rvert^2 + 4 K \mathfrak{p} \lvert 1 - A^2 \rvert \times \lvert \nabla \hat{u}_\mathfrak{p} \rvert
\end{align*}
since $ A = \lvert \hat{u}_\mathfrak{p} \rvert \gs 1/2 $ outside $ \Omega_{R_*} $. Finally, in 
$ \bar{B} ( \hat{z}_{\mathfrak{p}, j} , 3 R_* ) \setminus \bar{B} ( \hat{z}_{\mathfrak{p}, j} , 2 R_* ) $ (for some 
unique $1 \ls j \ls n_{\mathfrak{p}} $), we have
\begin{align*}
 \lvert \nabla \hat{u}'_\mathfrak{p} \rvert^2 
 & = \lvert \nabla \phi \rvert^2 \Big( ( 3 - \lvert \hat{x} - \hat{z}_{\mathfrak{p}, j} \rvert / R_* ) A 
 + ( -2 + \lvert \hat{x} - \hat{z}_{\mathfrak{p}, j} \rvert / R_* )  \Big)^2 
 \\& \quad 
 + \Big\lvert \nabla \Big[ ( 3 - \lvert \hat{x} - \hat{z}_{\mathfrak{p},j} \rvert / R_* ) A
 + ( -2 + | \hat{x} - \hat{z}_{\mathfrak{p},j} | / R_* ) \Big] \Big\rvert^2 .
\end{align*}
We then use that, since $ \lvert \hat{u}_\mathfrak{p} (\hat{x} ) \rvert \gs 1/2 $ and letting 
$ \theta = 3 - | \hat{x} - \hat{z}_{\mathfrak{p}, j} | / R_* \in [ 0 ,1] $,
\begin{align*}
 \lvert \nabla \phi \rvert^2 
 \Big[ & ( 3 - | \hat{x} - \hat{z}_{\mathfrak{p}, j} | / R_* ) A + ( -2 + | \hat{x} - \hat{z}_{\mathfrak{p}, j} | / R_* ) \Big]^2
 \\ & 
 = A^2 \lvert \nabla \phi \rvert^2 \times \frac{1}{A^2} [ 1 + \theta (A-1) ]^2 
 \ls A^2 \lvert \nabla \phi \rvert^2 \times ( 1 + K \lvert A^2 -1 \rvert )
 \\ & 
 \ls A^2 \lvert \nabla \phi \rvert^2 + K \mathfrak{p} \lvert \nabla \hat{u}_\mathfrak{p} \rvert \times \lvert A^2 -1 \rvert ,
\end{align*}
by Corollary \ref{Gradinf}. On the other hand, since $ \lvert \cdot \rvert $ is $1$-Lipschitz continuous,
\begin{align*}
\Big\lvert & \nabla \Big[ ( 3 - | \hat{x} - \hat{z}_{\mathfrak{p},j} | / R_* ) A 
 + ( -2 + | \hat{x} - \hat{z}_{\mathfrak{p}, j} | / R_* ) \Big] \Big\rvert^2
 \\ & 
 \ls \frac{1}{R_*^2 } \lvert 1- A \rvert^2 
 + \lvert \nabla A \rvert^2 
 + \frac{2}{R_* } \lvert 1- A \rvert \times \lvert \nabla A \rvert 
 \\ & 
 \ls 
 \lvert \nabla A \rvert^2 
 + K  ( A^2 -1 )^2 
 + K \lvert \nabla A \rvert \times \lvert A^2 -1 \rvert .
\end{align*}
Therefore, by Cauchy-Schwarz inequality, for some absolute constant $K > 0$,
\begin{align*}
 \int_{\R^2} \lvert \nabla \hat{u}'_\mathfrak{p} \rvert^2 \, d\hat{x} 
 & \ls 
 \int_{\R^2 } \lvert \nabla \hat{u}_\mathfrak{p} \rvert^2 \, d\hat{x} 
 + K \Big( \int_{\R^2} \mathfrak{p}^2 ( 1- \lvert \hat{u}_\mathfrak{p} \rvert^2 )^2  \, d\hat{x} \Big)^{1/2} 
 	\Big( \int_{\R^2} \lvert \nabla \hat{u}_\mathfrak{p} \rvert^2 \, d\hat{x}  \Big)^{1/2} 
	+ K \int_{\R^2} ( 1- \lvert \hat{u}_\mathfrak{p} \rvert^2 )^2  \, d\hat{x} .
\end{align*}
Combining this with \eqref{potard_1} yields 
\begin{align*}
 E_{ 1/ \mathfrak{p}} (\hat{u}'_\mathfrak{p} ) 
 \ls E_{ \mathfrak{p}} (\hat{u}_\mathfrak{p} ) + K \sqrt{ E_{ \mathfrak{p}} (\hat{u}_\mathfrak{p} )} 
 \Big( \int_{\R^2} \mathfrak{p}^2 ( 1- \lvert \hat{u}_\mathfrak{p} \rvert^2 )^2  \, d\hat{x}  \Big)^{1/2} 
 + K \frac{E_{ \mathfrak{p}} (\hat{u}_\mathfrak{p} ) }{\mathfrak{p}^2}
 \ls 2 \pi \ln \mathfrak{p} + o (\ln \mathfrak{p} ) ,
\end{align*}
by the upper bound \eqref{upper_bound} and the estimate for the potential term of Step 3. 

\bigbreak

\noindent {\it Step 5: we claim that for any $ \delta \in ] 0 , \pi /2 [ $, there exist $ \mathfrak{p}^\dag_\delta > \mathfrak{p}_2 $ 
such that for all $ \mathfrak{p} \geqslant \mathfrak{p}^\dag_\delta $, we are in one of the following cases:

\noindent 
case (I) for any $ 1 \ls j \ls n_\mathfrak{p} $,
\[
 \| J \hat{u}'_\mathfrak{p} \|_{ [ \BC^{0,1}_c( B( \hat{z}_{\mathfrak{p}, j} , 4R_*) ) ]^* } 
 \ls \delta
\]

\noindent 
case (II) there exists (up to a relabelling) two points $ \hat{y}_{\mathfrak{p}, \pm } \in \R^2 $, 
depending on $ \hat{u}_\mathfrak{p} $, such that 
\[
 \max_{1 \ls j \ls n_\mathfrak{p} } 
 \Big\| J \hat{u}'_\mathfrak{p} - \pi ( \delta_{\hat{y}_{\mathfrak{p}, + } } - \delta_{\hat{y}_{\mathfrak{p}, - } }) 
 \Big\|_{ [ \BC^{0,1}_c( B( \hat{z}_{\mathfrak{p}, j} , 4R_*) ) ]^* } 
 \ls \delta
\]
}

We apply Theorem \ref{Jacob} to $ \hat{u}_\mathfrak{p}' $ on each disk 
$ B ( \hat{z}_{\mathfrak{p}, j} , 4 R_* ) $, $ 1 \ls j \ls n_\mathfrak{p} $. This yields points 
$ \hat{y}_{\mathfrak{p}, j,k} \in \{ \lvert \hat{u}_\mathfrak{p} \rvert \ls 1/2 \} \subset B ( \hat{z}_{\mathfrak{p}, j} , R_* ) \subset B ( \hat{z}_{\mathfrak{p}, j} , 4R_* ) $ 
and integers 
$ d_{\mathfrak{p}, j,k} \in \Z $, $ 1 \ls k \ls N_{\mathfrak{p}, j} $ such that
\begin{equation}
\label{step4-1}
 \Big\| J \hat{u}'_\mathfrak{p} - \pi \sum_{k=1}^{N_{\mathfrak{p}, j}} 
 d_{\mathfrak{p}, j,k} \delta_{\hat{y}_{\mathfrak{p}, j, k}} \Big\|_{ [ \BC^{0,1}_c( B ( \hat{z}_{\mathfrak{p}, j} , 4R_* ) ) ]^*} 
 \ls \delta 
\end{equation}
and
\begin{equation}
\label{step4-2}
 \pi \sum_{k=1}^{N_{ \mathfrak{p} , j }} 
 \lvert d_{\mathfrak{p}, j,k} \rvert 
 \ls \frac{E_{1/\mathfrak{p}} (\hat{u}'_\mathfrak{p}, B (\hat{z}_{\mathfrak{p},j} , 4R_* )) }{ \ln \mathfrak{p} } + \delta .
\end{equation}
By summing over $ 1 \ls j \ls n_\mathfrak{p} $ the inequalities \eqref{step4-2}, we infer
\[
 \pi \sum_{j=1}^{n_\mathfrak{p}} \sum_{k=1}^{N_{ \mathfrak{p} , j}} 
 \lvert d_{\mathfrak{p}, j,k} \rvert 
 \ls \frac{E_{1/\mathfrak{p}} ( \hat{u}'_\mathfrak{p} , \Omega_{4R_*} ) }{ \ln \mathfrak{p} } + \delta 
 \ls 2.5 \pi 
\]
by using $ \delta < \pi /2 $ and Step 3, and for $ \mathfrak{p} $ large enough. 
Therefore, 
\begin{equation}
\label{Step4-0}
 \sum_{j=1}^{n_\mathfrak{p}} \sum_{k=1}^{N_{ \mathfrak{p} , j}} 
 \lvert d_{\mathfrak{p}, j,k} \rvert 
 \ls 2
\end{equation}
and two cases may occur: all the integers $ d_{\mathfrak{p}, j,k} $ are zero (this is Case (I)) 
or at least one of the integers $ d_{\mathfrak{p}, j,k} $ is not zero. 

In addition, we have, for $ 1 \ls j \ls n_\mathfrak{p} $,
\begin{equation}
\label{degreoudeforce}
 \sum_{k=1}^{N_{ \mathfrak{p}, j } } d_{\mathfrak{p}, j, k} 
= {\rm deg} ( \hat{u}_\mathfrak{p} , \p B ( \hat{z}_{\mathfrak{p} , j} , 3 R_*) )  .
\end{equation}
Indeed, since $ \lvert \hat{u}_\mathfrak{p}' \rvert = 1 $ on $ B ( \hat{z}_{\mathfrak{p} , j} , 4 R_*) 
\setminus B ( \hat{z}_{\mathfrak{p} , j} , 3 R_*) $, we have $ J\hat{u}_\mathfrak{p}' = 0 $ there. 
Therefore, by fixing $ \chi \in \BC^\ii_c ( B (0, 4 R_*) ) $ such that $ \chi \equiv 1 $ on $ \bar{B} (0,3 R_*) $, 
we deduce
\begin{align*}
 \Big\lvert 
 \sum_{k=1}^{N_{ \mathfrak{p}, j } } d_{\mathfrak{p}, j, k} 
 - {\rm deg} ( \hat{u}_\mathfrak{p} , \p B ( \hat{z}_{\mathfrak{p} , j} , 3 R_*) ) \Big\rvert 
 & = \Big\lvert 
 \int_{ B (\hat{z}_{ \mathfrak{p}, j}, 3 R_* )} \sum_{k=1}^{N_{ \mathfrak{p}, j } } 
 d_{\mathfrak{p}, j, k} \delta_{\hat{y}_{\mathfrak{p}, j, k} } \, d \hat{x} 
 - \frac{1}{\pi} \int_{ B (\hat{z}_{\mathfrak{p}, j}, 4 R_* )} J \hat{u}'_\mathfrak{p} \, d \hat{x} 
 \Big\rvert 
 \\ &
 = \frac{1}{\pi} \Big\lvert 
 \int_{ B (\hat{z}_\mathfrak{p}^j, 4 R_* )} \chi ( \hat{x} - \hat{z}_{\mathfrak{p}, j} ) 
 \Big( \sum_{k=1}^{N_{ \mathfrak{p}, j } } d_{\mathfrak{p}, j, k} \delta_{\hat{y}_{\mathfrak{p}, j, k} } - J \hat{u}'_\mathfrak{p} 
 \Big) \, d \hat{x} \Big\rvert
 \\ & 
 \ls \frac{1}{\pi} \| \chi \| \times 
 \Big\| J \hat{u}'_\mathfrak{p} - \pi \sum_{k=1}^{N_{\mathfrak{p}, j}} 
 d_{\mathfrak{p}, j,k} \delta_{\hat{y}_{\mathfrak{p}, j, k}} \Big\|_{ [ \BC^{0,1}_c( \bar{D} ( \hat{z}_{\mathfrak{p}, j} , 4R_* ) ) ]^*} 
 \end{align*}
by \eqref{step4-1}.  
Since the left-hand side is an integer and the right-hand side is $ \ls 1/2 $ provided 
$ \mathfrak{p} \gs \mathfrak{p}_{2,1}(\delta, \Lambda_0) $, \eqref{degreoudeforce} follows. 

We finally notice that the degree of $ \hat{u}'_\mathfrak{p} $ on some large circle $ \p B (0, R ) $ 
(with $ R \gg \max_{1 \ls j \ls n_\mathfrak{p} } \lvert \hat{z}_{\mathfrak{p}, j} \rvert $) is zero, for otherwise 
$ \hat{u}'_\mathfrak{p} $ (and $ \hat{u}_\mathfrak{p} $) would have infinite kinetic energy. Therefore,
\[
 0 = 
 \sum_{ j= 1}^{ n_\mathfrak{p}} {\rm deg} ( \hat{u}_\mathfrak{p} , \p B ( \hat{z}_{\mathfrak{p} , j} , 3 R_*) ) 
 = \sum_{ j= 1}^{ n_\mathfrak{p}} \sum_{k=1}^{N_{ \mathfrak{p}, j } } d_{\mathfrak{p}, j, k} .
\]
Combining this with \eqref{Step4-0}, we deduce that if we are not in Case (I), then one of the 
$ d_{\mathfrak{p}, j,k} $ must be equal to $+1 $ and another one must be equal to $ -1$, which is Case (II). 

Notice that for Case (II), if $ B ( \hat{z}_{\mathfrak{p},j} , 4 R_* ) $ contains neither $ y_{\mathfrak{p}, + } $ 
nor $ y_{\mathfrak{p}, - } $, then 
$ \| J \hat{u}'_\mathfrak{p} \|_{ [ \BC^{0,1}_c( B( \hat{z}_{\mathfrak{p},j} , 4R_*) ) ]^* } \ls \delta $.

\bigbreak

As in \cite{Bet_Saut}, we now relate the location of the points $ \hat{y}_{\mathfrak{p}, \pm } $ to the 
momentum $ P( \hat{u}_\mathfrak{p} ) $. 

\bigbreak

\noindent {\it Step 6: Case (I) does not occur for $ \mathfrak{p} $ sufficiently large, say $ \mathfrak{p} \gs \mathfrak{p}_3 $. 
In addition, we have
\[
 1 = P ( \hat{u}_\mathfrak{p} ) 
 = \pi \big( ( \hat{y}_{\mathfrak{p}, + } )_1 - ( \hat{y}_{\mathfrak{p}, -} )_1 \big) + o(1) .
\]
}

First, we have, by computations similar to those of Step 3, $ \hat{u}_\mathfrak{p} = A \ex^{i\vp} $ 
locally outside $ \Omega_{R_*} $, hence 
$ \langle i \hat{u}_\mathfrak{p} | \nabla \hat{u}_\mathfrak{p} \rangle = A^2 \nabla \vp $ and then, 
outside $ \Omega_{3 R_*} $,
\[
 \langle i \hat{u}_\mathfrak{p} | \nabla \hat{u}_\mathfrak{p} \rangle 
 - \langle i \hat{u}'_\mathfrak{p} | \nabla \hat{u}'_\mathfrak{p} \rangle 
 = A^2 \nabla \vp - \nabla \vp 
 = \frac{A^2 -1}{A} \times A \nabla \vp .
\]
In $ B (\hat{z}_{\mathfrak{p}, j} , 3R_* ) \setminus B (\hat{z}_{\mathfrak{p}, j} , 2R_* ) $, we obtain
\[
 \lvert \langle i \hat{u}_\mathfrak{p} | \nabla \hat{u}_\mathfrak{p} \rangle 
 - \langle i \hat{u}'_\mathfrak{p} | \nabla \hat{u}'_\mathfrak{p} \rangle \rvert
 = \lvert A^2 \nabla \vp - \lvert \hat{u}'_\mathfrak{p} \rvert^2 \nabla \vp \rvert
 \ls \frac{\lvert A^2 -1 \rvert}{A} \times \lvert A \nabla \vp \rvert ,
\]
since $ \lvert \hat{u}'_\mathfrak{p} \rvert \in [ \lvert \hat{u}_\mathfrak{p} \rvert , 1 ] $. Therefore,
\begin{equation}
\label{pre-jaco} 
\| \langle i \hat{u}_\mathfrak{p} | \nabla \hat{u}_\mathfrak{p} \rangle 
 - \langle i \hat{u}'_\mathfrak{p} | \nabla \hat{u}'_\mathfrak{p} \rangle \|_{L^1(\R^2)} 
 \ls K \int_{\R^2 \setminus \Omega_{2 R_*} } \big\lvert 1 - \lvert \hat{u}_\mathfrak{p} \rvert^2 \big\rvert 
 \times \lvert \nabla \hat{u}_\mathfrak{p} \rvert \, d \hat{x}
 \ls \frac{K}{ \mathfrak{p} } E_{1/\mathfrak{p}} ( \hat{u}_\mathfrak{p} ) 
 \ls K \frac{\ln \mathfrak{p} }{ \mathfrak{p} } .
\end{equation}
Following \cite{Bet_Saut}, \cite{BOS}, we write
\begin{align*}
 1 = \frac{ P ( u_\mathfrak{p} ) }{ \mathfrak{p} }
 = P ( \hat{u}_\mathfrak{p} ) 
 & = \frac{1}{2} \int_{\R^2} \langle i \p_2 \hat{u}_\mathfrak{p} | \hat{u}_\mathfrak{p} - 1 \rangle \, d \hat{x} 
 \\ 
 & = \frac{1}{2} \int_{\R^2} \langle i \p_2 \hat{u}'_\mathfrak{p} | \hat{u}'_\mathfrak{p} - 1 \rangle \, d \hat{x} 
 + \frac{1}{2} \int_{\R^2} ( \langle i \p_2 \hat{u}_\mathfrak{p} | \hat{u}_\mathfrak{p} - 1 \rangle 
 - \langle i \p_2 \hat{u}'_\mathfrak{p} | \hat{u}'_\mathfrak{p} - 1 \rangle ) \, d \hat{x} 
 .
\end{align*}
For the second integral, we write that, on the one hand,
\begin{align*}
 \Big\lvert \int_{\R^2} ( \langle i \hat{u}_\mathfrak{p} | \p_2 \hat{u}_\mathfrak{p} \rangle 
 - \langle i \hat{u}'_\mathfrak{p} | \p_2 \hat{u}'_\mathfrak{p} \rangle ) \, d \hat{x} \Big\rvert 
 \ls \| \langle i \hat{u}_\mathfrak{p} | \nabla \hat{u}_\mathfrak{p} \rangle 
 - \langle i \hat{u}'_\mathfrak{p} | \nabla \hat{u}'_\mathfrak{p} \rangle \|_{L^1(\R^2)} 
 \ls K \frac{\ln \mathfrak{p} }{ \mathfrak{p} } \to 0 
\end{align*}
when $ \mathfrak{p} \to +\ii $; on the other hand, by the decays given in Theorem \ref{decay},
\begin{align*}
 \Big\lvert \int_{\R^2} ( \langle i \p_2 \hat{u}_\mathfrak{p} | 1 \rangle 
 - \langle i \p_2 \hat{u}'_\mathfrak{p} | 1 \rangle ) \, d \hat{x} \Big\rvert 
 & = \lim_{ r \to +\ii} \Big\lvert \int_{\p B(0,r)} \nu_2 \mathfrak{Im} ( \hat{u}_\mathfrak{p} - \hat{u}'_\mathfrak{p}) \, d \ell \Big\rvert 
 \\ & 
 \ls \lim_{ r \to +\ii} \int_{\p B(0,r)} \lvert A -1 \rvert \, d \ell 
 = \lim_{ r \to +\ii} \mathcal{O}(1/r) 
 = 0 .
\end{align*}
We then integrate by parts to get
\begin{align*}
  \frac{1}{2} \int_{\R^2} \langle i \p_2 \hat{u}'_\mathfrak{p} | \hat{u}'_\mathfrak{p} - 1 \rangle \, d \hat{x} 
  & 
 = \frac{1}{2} \int_{\R^2} \p_1 \hat{x}_1 \langle i \p_2 \hat{u}'_\mathfrak{p} | \hat{u}'_\mathfrak{p} - 1 \rangle 
  - \p_2 \hat{x}_1 \langle i \p_1 \hat{u}'_\mathfrak{p} | \hat{u}'_\mathfrak{p} - 1 \rangle \, d \hat{x} 
  \\ & 
  = \int_{\R^2} J \hat{u}'_\mathfrak{p} \hat{x}_1 \, d \hat{x} 
  .
\end{align*}
The integration by parts is justified by the algebraic decay at infinity given in Theorem \ref{decay}: 
$ \hat{x}_1 \langle i \p_2 \hat{u}'_\mathfrak{p} | \hat{u}'_\mathfrak{p} - 1 \rangle 
 = \mathcal{O} ( 1 / |x|^{2} ) $. 
 
Then, since $ J \hat{u}'_\mathfrak{p} $ is supported in $ \Omega_{R_*} $, we obtain
\begin{align*}
 \int_{\R^2} \hat{x}_1 J \hat{u}'_\mathfrak{p} \, d \hat{x}
 & = \sum_{j=1}^{n_\mathfrak{p}} \int_{ B (\hat{z}_{\mathfrak{p}, j} , 3 R_* )} \hat{x}_1 J \hat{u}'_\mathfrak{p} \, d \hat{x} 
 \\ & 
 = \sum_{j=1}^{n_\mathfrak{p}} \int_{ B (\hat{z}_{\mathfrak{p} , j }, 3 R_* )} ( \hat{x}_1 - (\hat{z}_{\mathfrak{p}, j})_1 ) J \hat{u}'_\mathfrak{p} \, d \hat{x} 
 + \sum_{j=1}^{n_\mathfrak{p}} \hat{z}_{\mathfrak{p}, j,1} \int_{ B (\hat{z}_{\mathfrak{p},j} , 3 R_* )} J \hat{u}'_\mathfrak{p} \, d \hat{x} .
\end{align*}
We then fix $ \chi \in \BC^\ii_c ( B( 0, 4R_*) ) $ such that $ \chi \equiv 1 $ on $ \bar{B} (0,3 R_*) $. 
Next, for any $ 1 \ls j \ls n_\mathfrak{p} $, we write,
\begin{align*}
 \int_{ B (\hat{z}_{\mathfrak{p} , j }, 3 R_* )} ( \hat{x}_1 - ( \hat{z}_{\mathfrak{p},j} )_1 ) J \hat{u}'_\mathfrak{p} \, d \hat{x} & 
 = \int_{ B (\hat{z}_{\mathfrak{p} ,j} , 4 R_* )} ( \hat{x}_1 - (\hat{z}_{\mathfrak{p},j})_1 ) \chi( \hat{x} - \hat{z}_{\mathfrak{p}, j} ) 
 J \hat{u}'_\mathfrak{p} \, d \hat{x} 
  \\ & 
 = \int_{ B (\hat{z}_{\mathfrak{p}, j }, 4 R_* )} ( \hat{x}_1 - ( \hat{z}_{\mathfrak{p},j} )_1 ) \chi( \hat{x} - \hat{z}_{\mathfrak{p}, j} ) 
 \Big( J \hat{u}'_\mathfrak{p} - \pi \sum_{k=1}^{N_{ \mathfrak{p} , j }} d_{\mathfrak{p}, j, k } \delta_{y_{\mathfrak{p}, j,k}} \Big) \, d \hat{x} 
 \\ & \quad
 + \pi \sum_{k=1}^{N_{ \mathfrak{p} , j }} d_{\mathfrak{p}, j, k } \big( ( y_{\mathfrak{p}, j,k} )_1 - ( \hat{z}_{\mathfrak{p},j} )_1 \big) .
\end{align*}
We now estimate the first integral (actually, a duality bracket) by using Step 5:
\begin{align*}
 & \Big\lvert \int_{ B (\hat{z}_{\mathfrak{p}, j} , 2 R_* )} ( \hat{x}_1 - ( \hat{z}_{\mathfrak{p},j} )_1 ) \chi( \cdot - \hat{z}_{\mathfrak{p}, j} ) 
 \Big( J \hat{u}'_\mathfrak{p} - \pi \sum_{k=1}^{N_{ \mathfrak{p} , j }} d_{\mathfrak{p}, j, k} \delta_{y_{\mathfrak{p}, j,k}} \Big) \, d \hat{x} \Big\rvert 
 \\ & 
 \ls \| ( \hat{x}_1 - ( \hat{z}_{\mathfrak{p}, j} )_1 ) \chi( \cdot - \hat{z}_{\mathfrak{p}, j} ) \|_{\BC^{0,1}_c( B ( \hat{z}_{\mathfrak{p}, j} , 2R_* ) )} 
 \Big\| J \hat{u}'_\mathfrak{p} - \pi \sum_{k=1}^{N_{ \mathfrak{p} ,j}} 
 d_{\mathfrak{p}, j,k} \delta_{y_{\mathfrak{p}, j,k}} \Big\|_{ [ \BC^{0,1}_c( B ( \hat{z}_{\mathfrak{p},j} , 2R_* ) ) ]^*} 
 \\ & 
 \ls K o(1) .
\end{align*}
As a consequence of \eqref{degreoudeforce}, which implies, for each $1 \ls j \ls n_\mathfrak{p} $,
\[
 \sum_{k=1}^{N_{\mathfrak{p},j}} d_{\mathfrak{p},j,k} 
 = {\rm deg} ( \hat{u}_\mathfrak{p} , \p B ( \hat{z}_{\mathfrak{p},j}, 3 R_* ) )
 = {\rm deg} ( \hat{u}'_\mathfrak{p} , \p B ( \hat{z}_{\mathfrak{p},j}, 3 R_* ) )
 = \int_{ B ( \hat{z}_{\mathfrak{p},j}, 3 R_* ) } J \hat{u}'_\mathfrak{p} \, d \hat{x} ,
\]
we infer, after some cancellation,
\begin{align}
\label{Step5-1}
 \Big\lvert 
 P( \hat{u}_\mathfrak{p} ) - \pi \sum_{j=1}^{n_\mathfrak{p}} \sum_{k=1}^{N_{ \mathfrak{p} , j }} d_{\mathfrak{p}, j, k } ( y_{\mathfrak{p}, j,k} )_1  \Big\rvert 
  \ls K \frac{\ln \mathfrak{p} }{\mathfrak{p}} 
  + n_* K o(1) .
\end{align}
Since $ P( \hat{u}_\mathfrak{p} ) = 1 $, it follows that for $ \mathfrak{p} $ large enough, we can not 
be in Case (I), and the conclusion is a recasting of \eqref{Step5-1}.

\bigbreak

\noindent {\it Step 7: there exists $ \mathfrak{p}_4 $ large such that, for $ \mathfrak{p} \gs \mathfrak{p}_4 $, we have 
$ \{ \lvert \hat{u}_\mathfrak{p} \rvert \ls 1/2 \} \subset B ( \hat{y}_{\mathfrak{p} ,+} , 3/20 ) \cup B ( \hat{y}_{\mathfrak{p} ,-} , 3/20 ) $ 
and $ \textrm{deg} ( u , \p B ( \hat{y}_{\mathfrak{p} , \pm} , 3/20 ) ) = \pm 1 $.
}

From Step 6, we know that $ 1 = P ( \hat{u}_\mathfrak{p} ) 
= \pi ( ( \hat{y}_{\mathfrak{p}, + } )_1 - ( \hat{y}_{\mathfrak{p}, -} )_1 ) + o(1) $, hence the two points 
$ \hat{y}_{\mathfrak{p}, \pm } $ are far away from each other : 
\[
 \lvert \hat{y}_{\mathfrak{p}, + } - \hat{y}_{\mathfrak{p}, -} \rvert \gs 4/ 10 
\]
(since $ 1/ \pi \approx 0.318 < 4 /10 $) for $\mathfrak{p}$ large enough (but they may be, at this stage, 
very far away from each other). By applying Theorem 1.1 $(i)$ of \cite{Alb_Bal_Orl} or 
Theorem 3.1 of \cite{Jer_Son} (this is not very far from Theorem \ref{Jacob}), since 
$ J \hat{u}_\mathfrak{p} (  \hat{y}_{\mathfrak{p} ,\pm} + \cdot ) \to \pm \pi \delta_0 $ weakly, we deduce
\[
 E_{1/\mathfrak{p}} ( \hat{u}_\mathfrak{p} , B ( \hat{y}_{\mathfrak{p} , \pm} , 1/10 ) ) 
 \gs ( \pi + o(1)) \ln \mathfrak{p} ,
\]
hence, by the upper bound \eqref{upper_bound},
\[
 E_{1/\mathfrak{p}} ( \hat{u}_\mathfrak{p} , \R^2 \setminus ( B ( \hat{y}_{\mathfrak{p} ,+} , 1/10 ) 
 \cup B ( \hat{y}_{\mathfrak{p} , -} , 1/10 ) ) ) 
 \ls o( \ln \mathfrak{p} ) ,
\]
and this in turn implies, by the clearing-out theorem (Theorem \ref{clearingout}), that if 
$ \mathfrak{p} $ is large enough, say $ \mathfrak{p} \gs \mathfrak{p}_4 $, then
\[
 \forall \hat{x} \in \R^2 \setminus ( B ( \hat{y}_{\mathfrak{p},+} , 3/20 ) \cup B ( \hat{y}_{\mathfrak{p},-} , 3/20 ) ) , 
 \quad \quad \quad
 \lvert \hat{u}_\mathfrak{p} ( \hat{x}) \rvert \gs 3/4 ,
\]
as wished. In particular, 
$ \hat{z}_{ \mathfrak{p}, \pm} \in B ( \hat{y}_{\mathfrak{p},+} , 3/20 ) \cup B ( \hat{y}_{\mathfrak{p},-} , 3/20 ) $.

\bigbreak

We emphasize that at this stage, we have $ \lvert \hat{y}_{\mathfrak{p}, + } - \hat{y}_{\mathfrak{p}, -} \rvert \gtrsim 1 $, 
but we do not know whether $ \lvert \hat{y}_{\mathfrak{p}, + } - \hat{y}_{\mathfrak{p}, -} \rvert \lesssim 1 $ 
or $ \lvert \hat{y}_{\mathfrak{p}, + } - \hat{y}_{\mathfrak{p}, -} \rvert \gg 1 $. We may now take advantage 
of the fact that $ \hat{u}_{\mathfrak{p} } $ is by hypothesis symmetric with respect to the $ x_2 $ axis 
({\it i.e.} $ \hat{u}_\mathfrak{p} ( - \hat{x}_1 , \hat{x}_2) = \hat{u}_\mathfrak{p} ( \hat{x}_1 , \hat{x}_2) $), so that, 
possibly translating along the $x_2$-axis, we may assume
\begin{equation}
\label{posi_vortex}
 ( \hat{y}_{\mathfrak{p}, -} )_2 = ( \hat{y}_{\mathfrak{p}, +} )_2 = 0 
 \quad \quad \quad {\rm and} \quad \quad \quad 
 - ( \hat{y}_{\mathfrak{p}, -} )_1 = ( \hat{y}_{\mathfrak{p}, +} )_1 \to \frac{1}{2\pi} .
\end{equation}

If we do not assume {\it a priori} the symmetry in $ x_1 $, then we may remove the translation invariance 
by imposing $ \hat{y}_{\mathfrak{p}, +} + \hat{y}_{\mathfrak{p}, -} = 0 $, and then we may still show that 
$ \hat{y}_{\mathfrak{p}, +} = - \hat{y}_{\mathfrak{p}, -}  \to ( 1 / (2\pi) , 0 ) $ by using the Hopf differential 
as in \cite{BBH} (chapter VII).

%%%%%%%%%%%%%%%%%%%%%%%%%%%%%%%%%%%%%%%%%%%%%%%%%%%%%%%
\subsubsection{Strong convergence outside the vorticity set at scale $ x / \mathfrak{p} $}
\label{sec:cvhatxoutside}

We start with a $ W^{1,p}_{\rm loc} $ bound at scale $ \hat{x} $, for $ 1 \ls p < 2 $.

\bigbreak

\noindent {\it Step 1: for any $ 1 \ls p < 2 $, there exists $ C_p $ such that, for any $ \hat{X} \in \R^2 $, 
we have
\[
 \int_{ B(\hat{X}, 1 ) } \lvert \nabla \hat{u}_\mathfrak{p} \rvert^p \, d \hat{x} \ls C_p .
\]
}
We shall adapt the proof of \cite{BOS} (see proof of Theorem 4, Step 3, p. 83) to the two-dimensional case. 
Actually, the only modification to make in the estimate is to replace (C.26) there by the standard convolution
\[
 \psi_{0,i} ( \hat{x} ) 
 = - \frac{\ln r}{2 \pi} \star \omega_{0,i} (\hat{x})  
 = - \frac{1}{2\pi} \int_{\textrm{Supp} (\omega_{0,i}) } \omega_{0,i} (\hat{y}) \ln | \hat{x} - \hat{y} | \, d\hat{y} ,
\]
and then use, for $ | \hat{x} - \hat{y}_{\mathfrak{p} ,\pm} | \gs 3 R_* $, that
\begin{align*}
 \lvert \nabla \psi_{0,\pm} ( \hat{x} ) \rvert 
 & = \Big\lvert \frac{1}{2\pi} \int_{\textrm{Supp} (\omega_{0,\pm}) } \omega_{0,i} (\hat{y}) \nabla_{\hat{x}} \ln | \hat{x} - \hat{y} | \, d\hat{y} \Big\rvert 
 \\ & 
 \ls \frac{1}{2\pi} \| \omega_{0,\pm} \|_{[ \BC^{0,1}_c ( B(\hat{y}_{\mathfrak{p} ,\pm} ,2R_*) ) ]^* } 
 \| (\hat{x} - \hat{y}) / | \hat{x} - \hat{y} |^2 \|_{\BC^{0,1}( B(\hat{y}_{\mathfrak{p} ,\pm} ,3R_*) )} 
 \\ &
 \ls K 
\end{align*}
(the estimate $ \| \psi_{0,\pm} \|_{\BC^k(\R^2 \setminus B( \hat{y}_{\mathfrak{p} ,\pm} , 3 R_*) ) } \ls C_k $ does 
not hold since the two dimensional fundamental solution $ ( \ln r )/(2\pi ) $ goes to $ +\ii $ at spatial infinity, 
but $ \| \nabla \psi_{0,\pm} \|_{\BC^k(\R^2 \setminus B( \hat{y}_{\mathfrak{p} ,\pm} , 3 R_*) ) } \ls C_k $ is true). 
The rest of the proof remains unchanged.

\bigbreak

\noindent {\it Step 2: for any 
$ \hat{X } \in \R^2 \setminus ( B ( \hat{y}_{\mathfrak{p},+} , 2/10 ) \cup B ( \hat{y}_{\mathfrak{p},-} , 2/10 ) ) $, 
we may write $ \hat{u}_\mathfrak{p} = A \ex^{i \phi } $ in $ B ( \hat{X} , 1/20 ) $, with, for any $k \in \N $,
\begin{equation}
\label{cv_out_hatx}
 \Big\| 2( 1 - A ) - \frac{c_\mathfrak{p} }{\mathfrak{p}} \p_2 \phi \Big\|_{\BC^k ( B ( \hat{X} , 1/20 ) ) } 
 \ls \frac{C_k}{ \mathfrak{p}^2 } ,
 \quad \quad \quad
 \| \nabla \phi \|_{\BC^k ( B ( \hat{X} , 1/20 ) ) } \ls C_k ,
\end{equation}
for some constant $ C_k $ independent of $ \hat{X} $.
}

The proof (relying on Step 1) follows the lines of the proof of Step 7 (p. 48) of Theorem 1 in \cite{BOS} and is omitted. 

\bigbreak
In view of the upper bound of Step 1 of subsection \ref{sec:scalezchapeau}, we infer the uniform estimate
\begin{equation}
\label{unif_1}
 \| 1 - \lvert \hat{u}_\mathfrak{p} \rvert \|_{ \BC^k ( B ( \hat{X} , 1/20 ) )} 
 \ls C_k \frac{\ln \mathfrak{p} }{ \mathfrak{p}^2 } ,
\end{equation}
for $ \hat{X } \in \R^2 \setminus ( B ( \hat{y}_{\mathfrak{p},+} , 2/10 ) \cup B ( \hat{y}_{\mathfrak{p},-} , 2/10 ) ) $.

%%%%%%%%%%%%%%%%%%%%%%%%%%%%%%%%%%%%%%%%%%%%%%%%%%%%%%%
\subsubsection{Lower bound for the energy and upper bound for the potential energy}
\label{sec:lower}

\noindent {\it Step 1: upper bound for the potential.} We claim that
\[
 \int_{\R^2} \big\lvert \nabla \lvert \hat{u}_\mathfrak{p} \rvert \big\rvert^2 
 + \frac{ \mathfrak{p}^2}{2} ( 1 - \lvert \hat{u}_\mathfrak{p} \rvert^2 )^2 \, d \hat{x} \ls C( \Lambda_0) 
\]
and that
\[
 \int_{\R^2 \setminus ( B ( \hat{y}_{\mathfrak{p},+} , 2/10 ) \cup B ( \hat{y}_{\mathfrak{p},-} , 2/10 ) ) } 
 \lvert \nabla \hat{u}_\mathfrak{p} \rvert^2 
 + \frac{ \mathfrak{p}^2}{2} ( 1 - \lvert \hat{u}_\mathfrak{p} \rvert^2 )^2 \, d \hat{x} \ls C( \Lambda_0) .
\]

The proof of this upper bound will be a direct consequence of the lower bounds established in \cite{Sandier_lower} 
(see Theorems 2  and 3 there).

\begin{theorem}[\cite{Sandier_lower}]
\label{Th_Sandier_lower}
Let $ \Omega \subset \R^2 $ be a bounded smooth domain. Assume that $ u \in H^1 (\Omega , \C ) $ and that 
$ u_{| \p \Omega} \in \BC^1 ( \p \Omega, \mathcal{S}^1 ) $. Let $ \delta \in ] 0 , 1 [ $.

$ (i) $ There exists a constant $ \Lambda_1 $, depending on $ \Omega $ and $ \| u_{| \p \Omega} \|_{\BC^1} $, 
such that
\[
 \frac{1}{2} \int_{\Omega } \lvert \nabla u \rvert^2 + \frac{ 1}{2 \delta^2 } ( 1 - \lvert u \rvert^2 )^2 
\gs \pi \lvert {\rm deg} ( u_{| \p \Omega} , \p \Omega ) \rvert \ln ( 1 / \delta) - \Lambda_1 .
\]
$ (ii) $ If, moreover, for some constant $ \Lambda_2 $, we have
\[
 \frac{1}{2} \int_{\Omega } \lvert \nabla u \rvert^2 + \frac{ 1}{2 \delta^2 } ( 1 - \lvert u \rvert^2 )^2 
\ls \pi \lvert {\rm deg} ( u_{| \p \Omega} , \p \Omega ) \rvert \ln ( 1 / \delta) + \Lambda_2 ,
\]
then
\[
 \frac{1}{2} \int_{\Omega } 
 \big\lvert \nabla \lvert u \lvert \big\rvert^2 
 + \frac{ 1}{2 \delta^2 } ( 1 - \lvert u \rvert^2 )^2 \ls C( \Omega , \Lambda_2 , \| u_{| \p \Omega} \|_{\BC^1} ) .
\]
\end{theorem}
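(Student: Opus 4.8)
The plan is to recognise Theorem~\ref{Th_Sandier_lower} as the standard Ginzburg--Landau vortex lower bound and to prove part $(i)$ by the vortex--ball construction of Sandier (equivalently Jerrard's vortex balls), then to read off part $(ii)$ from the same construction. For part $(i)$ I would first normalise: replacing $u$ by $u/\max(1,|u|)$ decreases (weakly) both terms of $E_\delta$ and leaves the boundary trace unchanged, so one may assume $|u|\ls 1$. Next I would localise the vorticity: from $\frac{1}{2\delta^2}\int_\Omega(1-|u|^2)^2\ls 2E_\delta(u)$ and a standard covering argument (or an application of Theorem~\ref{Jacob}), one produces a finite family of pairwise disjoint closed balls $\overline B(a_i,\rho_i^0)\subset\Omega'$, with $\Omega'\Subset\Omega$, that cover $\{|u|\ls 1/2\}$, with $\sum_i\rho_i^0$ of the order of the core scale $\delta$ and with $|u|\gs 1/2$ off $\bigcup_i B(a_i,\rho_i^0)$. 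The $\BC^1$ bound on $u_{|\p\Omega}$ enters here (and again below) to keep the balls a fixed distance inside $\Omega$ and to give meaning to, and control on, the boundary degree $d\assign\mathrm{deg}(u_{|\p\Omega},\p\Omega)$.

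The core of the argument is the annular lower bound together with the ball-growth bookkeeping. On any annulus $A=B(a,R)\setminus\overline B(a,r)$ on which $|u|\gs1/2$, writing $u=Ae^{i\phi}$ locally and applying Cauchy--Schwarz to $\int_{C_\tau}\p_s\phi\,ds=2\pi\,\mathrm{deg}(u,C_\tau)$ on each circle $C_\tau=\p B(a,\tau)$ gives $\int_{C_\tau}|u|^2|\p_s\phi|^2\,ds\gs (2\pi d)^2\big/\int_{C_\tau}|u|^{-2}\,ds$; integrating in $\tau$ and combining with $\int_A\big|\nabla|u|\big|^2$ and the potential (so as to absorb the deviation of $|u|$ from $1$ on $A$) yields $\tfrac12\int_A|\nabla u|^2+\tfrac1{2\delta^2}\int_A(1-|u|^2)^2\gs \pi|d|\ln(R/r)-C$ with the sharp constant $\pi$. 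I would then run the continuous ball growth: inflate the balls, parametrised so that $\sum_i\rho_i(t)=t$, merging two balls on contact into a ball of radius equal to the sum of the radii (so that the degree of $u/|u|$ around the merged ball is the sum of the degrees, by additivity). Running from $t$ of order $\delta$ (below that scale one discards only $O(1)$, the energy inside a single core ball being nonnegative) up to a fixed $t=r_0=r_0(\Omega)$, and using that $|u|\gs1/2$ off the balls forces the total degree carried by the final family to equal $d$, integration of the annular estimate over the swept region gives $E_\delta(u)\gs\pi|d|\ln(r_0/\delta)-C\gs\pi|d|\ln(1/\delta)-\Lambda_1$ with $\Lambda_1=\Lambda_1(\Omega,\|u_{|\p\Omega}\|_{\BC^1})$.

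For part $(ii)$ I would rerun the construction keeping, in the lower bound, only the phase part of the Dirichlet energy: on each good annulus $|u|\gs1/2$, so $|\nabla u|^2\ge|u|^2|\p_s\phi|^2$ there, and these annuli carry the whole logarithm, whence $\tfrac12\int_\Omega|\nabla u|^2-\tfrac12\int_\Omega\big|\nabla|u|\big|^2\ge\int_{\{|u|\ge1/2\}}\!\big(|\nabla u|^2-|\nabla|u||^2\big)\gs 2\pi|d|\ln(1/\delta)-C$. Rearranging and adding the potential, $\tfrac12\int_\Omega\big|\nabla|u|\big|^2+\tfrac1{2\delta^2}\int_\Omega(1-|u|^2)^2\ls E_\delta(u)-\pi|d|\ln(1/\delta)+C\ls\Lambda_2+C=C(\Omega,\Lambda_2,\|u_{|\p\Omega}\|_{\BC^1})$, which is the claim. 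The main obstacle I expect is obtaining the \emph{sharp} leading constant $\pi|d|$ in part $(i)$: the crude annular bound only gives $\pi|d|/4$ from $|u|\gs1/2$, and the sharp constant requires, on each good annulus, a Fubini/one-dimensional optimisation trading the defect $\big|1-|u|\big|$ against the potential term and $\big|\nabla|u|\big|^2$. Everything else — disjointness being preserved, degree additivity under merging, and the passage through the boundary layer near $\p\Omega$ — is the now-routine ball-growth bookkeeping, and the deduction of $(ii)$ is then immediate.
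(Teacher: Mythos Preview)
The paper does not give its own proof of this theorem; it is quoted verbatim from \cite{Sandier_lower} (Theorems~2 and~3 there) and used as a black box in subsection~\ref{sec:lower}. Your outline is essentially Sandier's original argument (ball growth plus the sharp annular lower bound), so there is nothing in the paper to compare against.

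One remark on your deduction of $(ii)$: the inequality you write for the phase part, namely that $\tfrac12\int(|\nabla u|^2-|\nabla|u||^2)=\tfrac12\int|u|^2|\nabla\arg u|^2\gs\pi|d|\ln(1/\delta)-C$, is \emph{itself} the sharp-constant statement you correctly flag as the main obstacle in $(i)$, because on the good annuli you only know $|u|\gs 1/2$, which a priori loses a factor $4$. The fix is the same Fubini/Modica--Mortola trick you mention: on each circle $C_\tau$, the \emph{tangential} energy $\tfrac12\int_{C_\tau}|\p_s u|^2+\tfrac{1}{2\delta^2}(1-|u|^2)^2$ already dominates $\pi d^2/\tau$ up to an integrable correction, while the radial part $|\p_r u|^2\gs|\p_r|u||^2$ is untouched; moreover only a fixed fraction of the tangential modulus term $|\p_s|u||^2$ and of the potential is consumed in the balancing. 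Integrating in $\tau$ then gives $E_\delta(u)\gs\pi|d|\ln(1/\delta)+\alpha\big(\tfrac12\int|\nabla|u||^2+\tfrac{1}{2\delta^2}\int(1-|u|^2)^2\big)-C$ for some $\alpha>0$, and $(ii)$ follows under the matching upper bound. So your plan is sound once you recognise that the same refinement is needed in both parts.
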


We shall apply this result with $ \delta = 1 / \mathfrak{p} \ll 1 $, $ \Omega = B ( \hat{y}_{\mathfrak{p}, \pm} , 2/10 ) $ 
and $ u = \hat{u}_\mathfrak{p} $. Since $ {\rm deg} ( \hat{u}_\mathfrak{p} , \p B ( \hat{y}_{\mathfrak{p}, \pm} , 2/10 ) ) = \pm 1 $, 
and in view of the upper bound \eqref{upper_bound} on the energy of $ \hat{u}_\mathfrak{p} $, this yields
\[
 \int_{ B( \hat{y}_{\mathfrak{p}, \pm} , 2/10 ) } 
 \lvert \nabla \hat{u}_\mathfrak{p} \rvert^2 
 + \frac{ \mathfrak{p}^2}{2} ( 1 - \lvert \hat{u}_\mathfrak{p} \rvert^2 )^2 \, d \hat{x} 
 \gs \pi \ln \mathfrak{p} - \Lambda_1  
\]
and
\[
 \int_{ B( \hat{y}_{\mathfrak{p}, \pm} , 2/10 )} \big\lvert \nabla \lvert \hat{u}_\mathfrak{p} \rvert \big\rvert^2 
 + \frac{ \mathfrak{p}^2}{2} ( 1 - \lvert \hat{u}_\mathfrak{p} \rvert^2 )^2 \, d \hat{x} \ls C( \Lambda_0) .
\]
We conclude by using once again the upper bound \eqref{upper_bound}. Actually, 
$\hat{u}_\mathfrak{p} $ does not belong to $ \BC^1 ( \p B ( \hat{y}_{\mathfrak{p}, \pm} , 2/10 ) ) $, 
but it is easy, using \eqref{cv_out_hatx}, to construct an extension of $ \hat{u}_\mathfrak{p} $ on 
$ B ( \hat{y}_{\mathfrak{p}, \pm} , 3/10 ) $ with the required properties by linear interpolation 
(see, for instance the Lemma on p. 395-396 in \cite{Sandier_lower}).

\bigbreak

\noindent {\it Step 2: there exists $ \sigma_0 > 0 $ such that we have, for $ R \gs 1 $,
\[
 \int_{ \R^2 \setminus B( 0 , R )} \lvert \nabla \hat{u}_\mathfrak{p} \rvert^2 
 + \frac{ \mathfrak{p}^2}{2} ( 1 - \lvert \hat{u}_\mathfrak{p} \rvert^2 )^2 \, d \hat{x} 
 \ls \frac{ C( \Lambda_0) }{ R^{\sigma_0} } 
 .
\]
}

The proof is similar to that of Lemma 5.1 (p. 50) in \cite{BOS}, and relies on the fact that 
$ \lvert \hat{u}_\mathfrak{p} \rvert \gs 1/2 $ in $ \R^2 \setminus B( 0, 1) $ (hence we may write the PDE 
in terms of modulus and phase), and the upper bound in $ \R^2 \setminus ( B(\hat{y}_{\mathfrak{p}, +} , 2/10 ) 
\cup B( \hat{y}_{\mathfrak{p}, -} , 2/10 ) ) \supset \R^2 \setminus B( 0, 1) $ of the energy of $ \hat{u}_\mathfrak{p} $ 
(in \cite{BOS}, this last upper bound was derived differently).

%%%%%%%%%%%%%%%%%%%%%%%%%%%%%%%%%%%%%%%%%%%%%%%%%%%%%%%
\subsubsection{Convergence on the scale $ x / \mathfrak{p} $}
\label{sec:cvhatx}

By Step 1 of subsection \ref{sec:lower} and \eqref{posi_vortex}, we have, as $ \mathfrak{p} \to +\ii $,
\begin{equation}
\label{vortexCV}
 \hat{y}_{\mathfrak{p}, \pm} \to \hat{y}_{\ii, \pm} \assign \pm ( 1/ (2\pi ), 0 ) \in \R^2 .   
\end{equation}
We then define (identifying $ \R^2 $ and $ \C $)
\[
 \hat{u}_\ii (\hat{x} ) \assign \frac{ \hat{x} - \hat{y}_{\ii, +} }{ \lvert \hat{x} - \hat{y}_{\ii, +} \rvert } 
 \times \overline{ \frac{ \hat{x} + \hat{y}_{\ii, -} }{ \lvert \hat{x} + \hat{y}_{\ii, -} \rvert } } .
\]

\noindent {\it Step 1: for any $ p \in [1, 2 [ $, there holds, in $ W^{1, p}_{\rm loc} (\R^2) $,}
\[
 \hat{u}_\mathfrak{p} \rightharpoonup \hat{u}_\ii . 
\]

From the $ W^{1,p}_{\rm loc} $ upper bound of Step 1 in subsection \ref{sec:cvhatxoutside} and by weak compactness, 
there exists $ \hat{U} \in W^{1, p}_{\rm loc} (\R^2) $ such that $ \hat{u}_\mathfrak{p} \rightharpoonup \hat{U} $ 
in $ W^{1, p}_{\rm loc} ( \R^2) $. Moreover, $ \hat{U} \in \BC^{\ii}_{\rm loc} ( \R^2 \setminus \{ \hat{y}_{\ii, +} , \hat{y}_{\ii, -} \} ) $ 
and the convergence holds in $\BC^k_{\rm loc} ( \R^2 \setminus \{ \hat{y}_{\ii, +} , \hat{y}_{\ii, -} \} ) $ 
by Step 2 of subsection \ref{sec:cvhatxoutside} (for any $k \in \N $). In order to determine $ \hat{U} $, we shall pass 
to the limit in the system
\[
 \left\{\begin{array}{l}
 \nabla \cdot ( \hat{u}_\mathfrak{p} \wedge \nabla \hat{u}_\mathfrak{p} ) 
 = - \frac{1}{2} c_\mathfrak{p} \mathfrak{p} \p_2 ( \lvert \hat{u}_\mathfrak{p} \rvert^2 -1 )
 \\ 
 \nabla^\perp \cdot ( \hat{u}_\mathfrak{p} \wedge \nabla \hat{u}_\mathfrak{p} ) = 2 J \hat{u}_\mathfrak{p} 
 \end{array}
 \right.
\]
obtained from \eqref{TW_resc} and the definition of the Jacobian. From \eqref{upper_bound} 
(implying $ c_\mathfrak{p} \mathfrak{p} \p_2 ( \lvert \hat{u}_\mathfrak{p} \rvert^2 -1 ) \to 0 $ in the 
distributional or the $ H^{-1}$ sense) and Step 5 of subsection \ref{sec:scalezchapeau}, we then infer
\[
 \left\{\begin{array}{l}
 \nabla \cdot ( \hat{U} \wedge \nabla \hat{U} ) = 0
 \\ 
 \nabla^\perp \cdot ( \hat{U} \wedge \nabla \hat{U} ) = 2 \pi ( \delta_{\hat{y}_{\ii,+}} - \delta_{\hat{y}_{\ii,-}} ) .
 \end{array}
 \right.
\]
It then follows that $ \hat{U} \wedge \nabla \hat{U} = \hat{u}_\ii \wedge \nabla \hat{u}_\ii $, hence 
the existence of $ \Theta \in \R $ such that $ \hat{U} = \ex^{i \Theta } \hat{u}_\ii $. 
We finally use the $x_1$-symmetry to infer $ \Theta = 0 $.

\bigbreak

\noindent {\it Step 2: as $ \mathfrak{p} \to +\ii $, we have}
\[
 \mathfrak{p} c _\mathfrak{p} = \frac{\mathfrak{p}^2}{2} \int_{\R^2} ( 1 - \lvert \hat{u}_\mathfrak{p} \rvert^2 )^2 \, d \hat{x} 
 \to 2 \pi .
\]
This is claimed in \cite{Bet_Saut} (Proposition VI.7 there), but the proof is not clearly given. 

One way to prove this point is to use the Hopf differential as in \cite{BBH} (chapter VII). We shall follow the 
alternative proof of Theorem VII.2 given in section VII.1 there. The first equality is the Pohozaev 
identity \eqref{Pohoz}.

First, notice that
\[
 W_\mathfrak{p} \assign \frac{ \mathfrak{p}^2 }{2} ( 1 - \lvert \hat{u}_\mathfrak{p} \rvert^2 )^2 
\]
is a nonnegative function which is bounded in $ L^1 (\R^2) $ by Step 1 of subsection \ref{sec:lower} 
and enjoys the decay estimate of Step 2 of subsection \ref{sec:lower}. In addition, by \eqref{unif_1} 
(see Step 2 of subsection \ref{sec:cvhatxoutside}), we have $ W_\mathfrak{p} \to 0 $ locally 
uniformly in $ \R^2 \setminus \{ \pm ( 1/ (2\pi) , 0 ) \} $. Up to a subsequence, we may then assume that 
\[
 W_\mathfrak{p} \rightharpoonup \mu_+ \delta_{\hat{y}_{\ii,+}} + \mu_- \delta_{\hat{y}_{\ii,-}}
\]
in the weak $*$ topology of $ \BC_b( \R^2 ) $, for some two reals $ \mu_\pm \gs 0 $, with 
$ \mu_+ + \mu_- = \lim_{\mathfrak{p} \to +\ii } \int_{\R^2} W_\mathfrak{p} $.

We shall now compute $ \mu_+ $ (the case of $\mu_- $ is similar). First, we write, for some $ R_5 \ls 2/10 $, 
the Pohozaev identity for $ \hat{u}_\mathfrak{p} $ on $ B( \hat{y}_{\ii,+} , R_5) $ 
(obtained by multiplying the equation by the conjugate of $ (\hat{x} - \hat{y}_{\ii,+}) \cdot \nabla \hat{u}_\mathfrak{p} $ 
and integrating the real part over $ B( \hat{y}_{\ii,+} , R_5) $), which yields
\begin{align*}
 \int_{ B ( \hat{y}_{\ii,+} , R_5) } \frac{ \mathfrak{p}^2 }{2} ( 1 - \lvert \hat{u}_\mathfrak{p} \rvert^2 )^2 
 & + c_\mathfrak{p} \mathfrak{p} \int_{ B ( \hat{y}_{\ii,+} , R_5) } (\hat{x}_1 - \hat{y}_{\ii,+,1}) \langle i \p_2 \hat{u}_\mathfrak{p} | \p_1 \hat{u}_\mathfrak{p} \rangle 
 \\ & = 
 \frac{R_5}{2} \int_{\p B ( \hat{y}_{\ii,+} , R_5) } \lvert \p_\tau \hat{u}_\mathfrak{p} \rvert^2 - \lvert \p_\nu \hat{u}_\mathfrak{p} \rvert^2 
 + \frac{ \mathfrak{p}^2 }{4} ( 1 - \lvert \hat{u}_\mathfrak{p} \rvert^2 )^2 .
\end{align*}
We then pass to the limit $ \mathfrak{p} \to +\ii $. For the boundary term, we use the strong convergences outside 
the vorticity set; for the second term of the first line, we prove that it tends to zero by following the arguments given 
for Step 6 in subsection \ref{sec:scalezchapeau}. 
We then get
\[
 \mu_+ 
 = 
 \frac{R_5}{2} \int_{\p B ( \hat{y}_{\ii,+} , R_5) } \lvert \p_\tau \hat{u}_\ii \rvert^2 - \lvert \p_\nu \hat{u}_\ii \rvert^2 
 .
\]
By Step 1, we know that $ \hat{u}_\ii = \exp( i {\rm Arg} ( \hat{x} - \hat{y}_{\ii,+} ) - i {\rm Arg} ( \hat{x} - \hat{y}_{\ii,-} ) ) $ 
on $\p B ( \hat{y}_{\ii,+} , R_5) $, and the second term $ {\rm Arg} ( \hat{x} - \hat{y}_{\ii,-} ) $ is smooth and 
harmonic in $ \bar{ D} ( \hat{y}_{\ii,+} , R_5) $. 
As a consequence, we have the Pohozaev identity for $ {\rm Arg} ( \cdot - \hat{y}_{\ii,-} ) $ 
\[
 0 = 
 \frac{R_5}{2} \int_{\p B ( \hat{y}_{\ii,+} , R_5) } \lvert \p_\tau {\rm Arg} ( \hat{x} - \hat{y}_{\ii,-} ) \rvert^2 
 - \lvert \p_\nu {\rm Arg} ( \hat{x} - \hat{y}_{\ii,-} ) \rvert^2 
 ,
\]
$ \p_\tau {\rm Arg} ( \hat{x} - \hat{y}_{\ii,+} = 1 / R_5 $, $ \p_\nu {\rm Arg} ( \hat{x} - \hat{y}_{\ii,+} ) = 0 $, 
and thus by expansion
\begin{align*}
 \mu_+ 
 & = 
 \frac{R_5}{2} \int_{\p B ( \hat{y}_{\ii,+} , R_5) } \lvert \p_\tau \hat{u}_\ii \rvert^2 - \lvert \p_\nu \hat{u}_\ii \rvert^2 
 = 
 \frac{R_5}{2} \int_{\p B ( \hat{y}_{\ii,+} , R_5) } 1/R_5^2 + 2 \p_\tau {\rm Arg} ( \hat{x} - \hat{y}_{\ii,-} ) / R_5 
 = \pi 
  .
\end{align*}
This concludes the proof.

%%%%%%%%%%%%%%%%%%%%%%%%%%%%%%%%%%%%%%%%%%%%%%%%%%%%%%%
\subsubsection{Convergence on the scale $ x $}
\label{sec:cxlocale}

 We shall now focus on the verification of hypothesis 2 of Proposition \ref{locC1}. The main tool is the following 
result. We now work on the scale $ x $.

\begin{proposition}
\label{classif_zoom}
Assume that $ \hat{z}_\mathfrak{p} \in \R^2 $ is such that
\[
 \limsup_{\mathfrak{p} \to +\ii } \lvert \hat{u}_\mathfrak{p} (  \hat{z}_\mathfrak{p} ) \rvert < 1 
\]
and consider the rescaled mapping
\[
 U_\mathfrak{p}  (y) \ddef \hat{u}_\mathfrak{p} ( \hat{z}_\mathfrak{p} + y / \mathfrak{p} ) .
\]
Then, there exists a sign $ \pm $ and $ \beta \in \R $ (depending on the choice of the family $( \hat{z}_\mathfrak{p} )$) 
such that, up to a subsequence, we have, in $ \BC^k_{\rm loc} (\R^2 ) $ for any $ k \in \N $,
\[
 U_\mathfrak{p} \to \ex^{i\beta} V_\pm .
\]
\end{proposition}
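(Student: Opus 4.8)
The plan is a blow-up argument at the scale $x$, followed by a Liouville-type classification of entire Ginzburg-Landau solutions. First I would observe that $U_\mathfrak{p} = u_\mathfrak{p}(\cdot + \mathfrak{p}\hat{z}_\mathfrak{p})$ is a translate of $u_\mathfrak{p}$ (recalling $\hat{u}_\mathfrak{p}(\hat{x}) = u_\mathfrak{p}(\mathfrak{p}\hat{x})$), hence solves $(\tmop{TW}_{c_\mathfrak{p}})(U_\mathfrak{p}) = 0$. Since $c_\mathfrak{p} \to 0$ (indeed $\mathfrak{p}c_\mathfrak{p} \to 2\pi$ by Step 2 of subsection \ref{sec:cvhatx}), Theorem \ref{Linf} and Corollary \ref{Gradinf} give $\| U_\mathfrak{p} \|_{L^\ii} + \| \nabla U_\mathfrak{p} \|_{L^\ii} + \| \nabla^2 U_\mathfrak{p} \|_{L^\ii} \ls K$ uniformly in $\mathfrak{p}$. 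By Arzel\`a-Ascoli and a diagonal extraction, some subsequence of $U_\mathfrak{p}$ converges in $\BC^1_{\rm loc}(\R^2)$; bootstrapping in $\Delta U_\mathfrak{p} = -i c_\mathfrak{p}\p_2 U_\mathfrak{p} - U_\mathfrak{p}(1 - |U_\mathfrak{p}|^2)$ upgrades the convergence to $\BC^k_{\rm loc}(\R^2)$ for every $k$, and the limit $U_\ii$ solves $\Delta U_\ii + U_\ii(1 - |U_\ii|^2) = 0$ in $\R^2$ with $\| \nabla U_\ii \|_{L^\ii} \ls K$. By the hypothesis $\limsup_{\mathfrak{p} \to +\ii} |\hat{u}_\mathfrak{p}(\hat{z}_\mathfrak{p})| < 1$ one has $|U_\ii(0)| < 1$, so $U_\ii$ is not a constant of modulus one.

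Next I would prove that $\int_{\R^2}(1 - |U_\ii|^2)^2 < +\ii$. By \eqref{unif_1}, $|\hat{u}_\mathfrak{p}| \to 1$ uniformly on compact subsets of $\R^2 \setminus \{ \hat{y}_{\ii,+}, \hat{y}_{\ii,-} \}$, so for $\mathfrak{p}$ large $\hat{z}_\mathfrak{p}$ sits near one of the two vortex locations; up to relabelling, $\hat{z}_\mathfrak{p} \to \hat{y}_{\ii,+}$. Rescaling to $y = \mathfrak{p}(\hat{x} - \hat{z}_\mathfrak{p})$ the concentration of the potential established in Step 2 of subsection \ref{sec:cvhatx}, namely $\frac{\mathfrak{p}^2}{2}(1 - |\hat{u}_\mathfrak{p}|^2)^2 \rightharpoonup \pi\delta_{\hat{y}_{\ii,+}} + \pi\delta_{\hat{y}_{\ii,-}}$, one obtains, for each fixed $R$ and $\mathfrak{p}$ large, $\int_{B(0,R)}(1 - |U_\mathfrak{p}|^2)^2 \, dy \ls 2\pi + o(1)$; letting $\mathfrak{p} \to +\ii$ and then $R \to +\ii$ gives $\int_{\R^2}(1 - |U_\ii|^2)^2 \ls 2\pi < +\ii$.

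With $U_\ii$ a non-constant entire Ginzburg-Landau solution with $\int_{\R^2}(1 - |U_\ii|^2)^2 < +\ii$, I would invoke the classification of such solutions (Brezis-Merle-Rivi\`ere: $U_\ii$ has a well-defined degree $d \in \Z$ at infinity, one has the quantization $\int_{\R^2}(1 - |U_\ii|^2)^2 = 2\pi|d|$, and $d = 0$ implies $U_\ii$ is a constant of modulus one; Mironescu: a degree $\pm 1$ solution with finite potential defect equals $\ex^{i\beta}V_{\pm 1}(\cdot - a)$; see also \cite{BBH}). Since $U_\ii$ is not a constant of modulus one, $d \neq 0$; the bound $\int_{\R^2}(1 - |U_\ii|^2)^2 \ls 2\pi$ then forces $|d| = 1$, so $U_\ii = \ex^{i\beta}V_{\pm 1}(\cdot - a_\ii)$ for some $\beta \in \R$ and $a_\ii \in \R^2$. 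The sign is fixed by which vortex $\hat{z}_\mathfrak{p}$ approaches: for $\mathfrak{p}$ large, the degree of $U_\ii$ at infinity equals the degree of $\hat{u}_\mathfrak{p}$ on a fixed circle enclosing that vortex and separating it from the other, which is $+1$ (resp. $-1$) by Step 7 of subsection \ref{sec:scalezchapeau}. Finally $a_\ii = 0$, by the normalization of the base points $\hat{z}_\mathfrak{p}$ (so that $U_\mathfrak{p}(0) = 0$, whence $U_\ii(0) = 0$), since $V_{\pm 1}$ vanishes only at the origin.

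The hard part will be the classification step: one must import and invoke correctly the Liouville-type rigidity for entire Ginzburg-Landau solutions of finite potential defect (existence of the limiting degree, the quantization identity, and Mironescu's uniqueness in degree one), and match the upper bound $\int_{\R^2}(1 - |U_\ii|^2)^2 \ls 2\pi$ (which comes, through Step 2 of subsection \ref{sec:cvhatx}, from \eqref{upper_bound}) against the quantization so as to pin $|d| = 1$ rather than just $|d| \ls C$. A secondary technical point is the bookkeeping of the radii appearing in subsection \ref{sec:scalezchapeau} (the $R_*$, $3/20$, $2/10$) so that the circle on which the degree of $\hat{u}_\mathfrak{p}$ is read off cleanly encloses one vortex and excludes the other, and so that the rescaled balls $B(0,R)$ remain inside the region where the upper bounds of subsection \ref{sec:lower} are available.
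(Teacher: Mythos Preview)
Your approach is essentially the paper's: uniform bounds (Theorem \ref{Linf}, Corollary \ref{Gradinf}) give compactness, the limit solves the stationary Ginzburg--Landau equation, a potential bound combined with the Brezis--Merle--Rivi\`ere quantization pins the degree to $\pm 1$, and Mironescu's classification finishes. Two small points are worth correcting.

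First, the paper obtains the potential bound more directly: by the Pohozaev identity and Step~2 of subsection~\ref{sec:cvhatx}, the \emph{global} integral satisfies $\int_{\R^2}(1-|U_\mathfrak{p}|^2)^2\,dy = 4\pi + o(1)$, so Fatou gives $\int_{\R^2}(1-|U_\ii|^2)^2 \ls 4\pi$. Your route through the weak-$*$ concentration of the potential to get the tighter local bound $\ls 2\pi$ is correct but unnecessary. Relatedly, the BMR quantization is $\int_{\R^2}(1-|U_\ii|^2)^2 = 2\pi d^2$, not $2\pi|d|$; with the paper's bound $4\pi$ this gives $d^2\ls 2$, hence $|d|\ls 1$. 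Your version happens to survive because of your sharper bound, but the formula should be stated correctly.

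Second, your justification that $a_\ii=0$ is wrong: the hypothesis is only $\limsup_{\mathfrak{p}}|\hat u_\mathfrak{p}(\hat z_\mathfrak{p})|<1$, so $U_\mathfrak{p}(0)=\hat u_\mathfrak{p}(\hat z_\mathfrak{p})$ need not vanish, and you cannot conclude $U_\ii(0)=0$. In fact, as stated the proposition should carry a translation parameter: Mironescu's classification yields $U_\ii = \ex^{i\beta}V_{\pm 1}(\cdot - a)$ for some $a\in\R^2$, and nothing in the hypotheses forces $a=0$ (e.g.\ take $\hat z_\mathfrak{p}$ shifted by a fixed $a_0/\mathfrak{p}$ from the actual zero). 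The paper's own proof glosses over this, and its subsequent applications (Step~1 of subsection~\ref{sec:cxlocale}) specialize to $\hat z_\mathfrak{p}$ an exact zero of $\hat u_\mathfrak{p}$, where $U_\mathfrak{p}(0)=0$ does hold and $a=0$ follows legitimately.
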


\begin{proof} The rescaling $ U_\mathfrak{p} $ solves
\[
  \Delta U_\mathfrak{p} + i c_\mathfrak{p} \p_2 U_\mathfrak{p} 
  + U_\mathfrak{p} ( 1 - \lvert U_\mathfrak{p} \rvert^2 ) = 0 
\]
and satisfies $  \limsup_{\mathfrak{p} \to +\ii } \lvert U_\mathfrak{p} ( 0) \rvert < 1 $ and, by Step 2 of 
subsection \ref{sec:cvhatx},
\[
 \int_{\R^2} ( 1 - \lvert U_\mathfrak{p} \rvert^2 )^2 \, dy 
 = 4 \pi + o_{\mathfrak{p} \to + \ii} (1) .
\]
Then, from the uniform bounds of Theorem \ref{Linf} and Corollary \ref{Gradinf}, we may assume, 
up to a subsequence, that
\begin{equation}
\label{cvBlowup} 
U_\mathfrak{p} \to U_\ii 
\end{equation}
in $ \BC^k_{\rm loc} (\R^2 ) $ with $ \lvert U_\ii ( 0) \rvert < 1 $,
\[
 \Delta U_\ii + U_\ii ( 1 - \lvert U_\ii \rvert^2 ) = 0 
\]
and, by Fatou's lemma,
\[
 \int_{\R^2} ( 1 - \lvert U_\ii \rvert^2 )^2 \, dy 
 \ls 4 \pi  .
\]
By \cite{BMR}, we know that $ \int_{ \R^2} ( 1 - \lvert U_\ii \rvert^2 )^2 \, dy = 2 \pi d^2 $, where 
$ d \in \Z $ is the degree of $ U_\ii $ at infinity. It follows that $ \lvert d \rvert \ls 1 $, and that the 
case $ d = 0 $ is excluded since $ \lvert U_\ii ( 0) \rvert < 1 $, hence $ \lvert U_\ii \rvert \not\equiv 1 $. 
Therefore $ d = \pm 1 $. It then follows from \cite{Miro_radial} that $ U_\ii = \ex^{i\beta} V_d $ 
for some $ \beta \in \R $.
\end{proof}

\bigbreak

We may now localize the set $ \{ \lvert \hat{u}_\mathfrak{p} \rvert \ls 1 - \frac{1}{\lambda_*} \} $, where 
$ \lambda_* $ is as in Proposition \ref{locC1}, rather precisely.

\bigbreak
\noindent {\it Step 1: there exists $ \mathfrak{p}_6 $ large such that, for $ \mathfrak{p} \gs \mathfrak{p}_6 $, 
$ \hat{u}_\mathfrak{p} $ has exactly two zeros $ \hat{z}_{\mathfrak{p}, \pm} $. 
Up to a translation in the $ x_2 $ direction, we may assume
\[
 \R \times \{ 0 \} \ni \hat{z}_{\mathfrak{p}, \pm} \to ( \pm 1/( 2\pi), 0) \in \R^2 .
\]
Moroever, there exists $ R_0 > 0 $ such that $ \{ \lvert \hat{u}_\mathfrak{p} \rvert \ls 1 - \frac{1}{\lambda_*} \} \subset 
B ( \hat{z}_{\mathfrak{p}, +} , R_0 / \mathfrak{p} ) \cup B ( \hat{z}_{\mathfrak{p}, -} , R_0 / \mathfrak{p} ) $. 
Here, $ \lambda_* > 0 $ is the large universal constant appearing in Proposition \ref{locC1}.
}

By Step 8 of subsection \ref{sec:scalezchapeau}, we know (due to the nonzero degree) that 
$ \hat{u}_\mathfrak{p} $ has at least two zeroes, one in each disk $ B ( \hat{y}_{\mathfrak{p}, \pm } , 3/20 ) $. 

Now, if $ \hat{z}_\mathfrak{p} $ is a zero of $ \hat{u}_\mathfrak{p} $, we know by Proposition \ref{classif_zoom} 
that, for some $ \beta \in \R $ (depending on the sequence $ ( \hat{z}_\mathfrak{p})_{\mathfrak{p}} $) and 
$ d_0 = \pm 1 $, we have
\begin{equation} 
\label{Convergence_loc}
 \hat{u}_\mathfrak{p} ( \hat{z}_\mathfrak{p} + \mathfrak{p} y ) 
 \to \ex^{i\beta} V_{d_0} ( y)
\end{equation}
in $ \BC^k_{\rm loc} (\R^2 ) $. As noticed in \cite{Qing}, since $ V_{\pm} : \R^2 \to \C \approx \R^2 $ 
has nonzero jacobian at the origin, we deduce that for any $ R > 0 $, and for $ \mathfrak{p} \gs \mathfrak{p}_R $ 
large enough $ 0 $ is the only zero of $ U_\mathfrak{p} $ in $ B (0, R ) $. Roughly speaking, 
there does not exist zeroes $ \hat{z} $, $ \hat{z}' $ of $ \hat{u}_\mathfrak{p} $ such that 
$ 0 < \lvert \hat{z} - \hat{z}' \rvert = \mathcal{O}( 1/ \mathfrak{p} ) $.

We now fix $ R_0 > 0 $ sufficiently large so that
\[
 \int_{\{ \lvert y \rvert \ls R_0 /2 \} } ( 1- \lvert V_1 ( y) \rvert^2 )^2 \, dy \gs \frac{3\pi}{2} .
\]
and we assume that (for any large $ \mathfrak{p} $), $ \{ \lvert \hat{u}_\mathfrak{p} \rvert \ls 1 - \frac{1}{\lambda_*} \} $ 
(where $ \lambda_* > 0 $ is the one appearing in Proposition \ref{locC1}) 
is not included in $ B ( \hat{z}_{\mathfrak{p}, +} , R_0 / \mathfrak{p} ) \cup B ( \hat{z}_{\mathfrak{p}, -} , R_0 / \mathfrak{p} ) $. 
This means that there exists $ \hat{Z}_{\mathfrak{p}} \in B ( \hat{z}_{\mathfrak{p}, +} , 3 / 20 ) 
\setminus B ( \hat{z}_{\mathfrak{p}, +} , R_0 / \mathfrak{p} ) $ (say) 
with $ \lvert \hat{u}_\mathfrak{p} ( \hat{Z}_\mathfrak{p}) \rvert \ls 1 - \frac{1}{\lambda_*} $. 
By Proposition \ref{classif_zoom}, the rescaled mapping 
$ U_\mathfrak{p} (y) \ddef \hat{u}_\mathfrak{p} ( \hat{Z}_\mathfrak{p} + \mathfrak{p} y ) $ 
converges (up to a subsequence) in $ \BC^k_{\rm loc} (\R^2 ) $ to $ U_\ii \in \mathbb{S}^1 V_\pm $ and 
we know (from \cite{BMR}) that $ \int_{ \R^2} ( 1 - \lvert U_\ii \rvert^2 )^2 \, dy = 2 \pi $. 
As a consequence, 
since $ \lvert \hat{z}_{\mathfrak{p}, +} - \hat{Z}_{\mathfrak{p}} \rvert \gs R_0 / \mathfrak{p} $,
\begin{align*}
 2 \pi + o(1) 
 & = \mathfrak{p}^2 \int_{B (\hat{y}_{\mathfrak{p},+}, 3/ 20 )} ( 1 - \lvert \hat{u}_{\mathfrak{p}} \rvert^2 )^2 \, d\hat{x} 
 \\ & 
 \gs \mathfrak{p}^2 \int_{B (\hat{z}_{\mathfrak{p},+}, R_0 / (2\mathfrak{p} ) )} ( 1 - \lvert \hat{u}_{\mathfrak{p}} \rvert^2 )^2 \, d\hat{x} 
 + \mathfrak{p}^2 \int_{B (\hat{Z}_{\mathfrak{p}}, R_0 / (2\mathfrak{p}) )} ( 1 - \lvert \hat{u}_{\mathfrak{p}} \rvert^2 )^2 \, d\hat{x} 
 \\ & 
 \gs \int_{\{ \lvert y \rvert \ls R_0 /2 \} } ( 1- \lvert V_1 \rvert^2 )^2 \, dy 
 + \int_{\{ \lvert y \rvert \ls R_0 /2 \} } ( 1 - \lvert U_\ii \rvert^2 )^2 \, dy + o(1)
 \\ & 
 \gs \frac{3\pi}{2} + \frac{3\pi}{2} + o(1) ,
\end{align*}
which is absurd. We then conclude $ \| \lvert u_\mathfrak{p} \rvert - 1 \|_{L^\ii (\{ \tilde{r}_d \gs R_0 \} ) } \ls \frac{1}{\lambda_*} $ 
for $ \mathfrak{p} $ sufficiently large, then proving hypothesis 3 of Proposition \ref{locC1} 
with $ \lambda = \max ( R_0, \lambda_* ) $. 
Another consequence of this fact is that $ \hat{u}_\mathfrak{p} $ possesses at most two (simple) zeroes 
$ \hat{z}_{\mathfrak{p}, \pm} $.

We then define $ d = d_\mathfrak{p} $ such that the unique zero $ \hat{z}_{\mathfrak{p}, + } $ 
of $ \hat{u}_\mathfrak{p} $ in the right half-plane is
\[
 \hat{z}_{\mathfrak{p}, + } =  \frac{d_\mathfrak{p}}{\mathfrak{p}} \overrightarrow{e_1} 
 \to ( 1/( 2\pi), 0) \in \R^2 .
\]
We deduce from Step 2 of subsection \ref{sec:cvhatx} that
\[
 d_\mathfrak{p} \sim \frac{\mathfrak{p}}{ 2\pi  } \sim \frac{ 1 }{ c_\mathfrak{p} } ,
\]
so that hypothesis 4 of Proposition \ref{locC1} is satisfied for $ \mathfrak{p} $ large enough 
(still for $ \lambda = \max ( R_0, \lambda_* ) $). 
Furthermore, hypothesis 2 of Proposition \ref{locC1} is satisfied by taking 
$ \mathfrak{p} $ large enough, associated with the choice $ \lambda = \max ( R_0, \lambda_* ) $.

\bigbreak

\noindent {\it Step 2: conclusion.} Applying Proposition \ref{locC1} to $ \ex^{- i \beta } u_\mathfrak{p} $, 
we infer that there exists $ \gamma_\mathfrak{p} \in \R $ such that (for large $ \mathfrak{p} $)
\[
 u_\mathfrak{p} = \ex^{i \gamma_\mathfrak{p} } Q_{c_\mathfrak{p}}  
\]
(no translation is needed in the $x_2 $ direction at this stage since the zeros of $\hat{u}_{\mathfrak{p}} $ are 
on the $x_1$-axis).

%%%%%%%%%%%%%%%%%%%%%%%%%%%%%%%%%%%%%%%%%%%%%%%%%%%%%%%%%%
\subsection{Decay slightly away from the vortices}
\label{subsec:decaylightly}

In this section, we provide some estimates for $ \hat{u}_\mathfrak{p} $ in the region 
$ B( \hat{z}_{\mathfrak{p},+} , 2 R_0 ) \cup B( \hat{z}_{\mathfrak{p},-} , 2 R_0 ) $. 
For the Ginzburg-Landau (stationary) model, such estimates have been first given in \cite{Miro_explicit} 
for minimizing solutions and later generalized in \cite{Com_Mir} to non-minimizing solutions. 
However, the paper \cite{Miro_explicit} being difficult to find, we give here 
a proof of these estimates that includes the transport term. 
They improve some estimates in \cite{Chi_Pac_1} 
and are not specific to the way we construct the solutions.

\begin{proposition}
\label{decay_slightly}
We have, for $ \lvert \hat{y} \lvert \ls \frac{3}{20} $,
\[
 \big\lvert \lvert \hat{u}_\mathfrak{p} ( \hat{z}_{\mathfrak{p}, \pm} + \hat{y} ) \rvert - 1 \big\rvert 
 \ls \frac{C}{ \mathfrak{p}^2 \lvert \hat{ y} \rvert^2 } , 
 \quad \quad 
 \big\lvert \nabla \lvert \hat{u}_\mathfrak{p} \rvert ( \hat{z}_{\mathfrak{p}, \pm} + \hat{y} ) \big\rvert 
 \ls \frac{C}{ \mathfrak{p}^2 \lvert \hat{ y} \rvert^3 } , 
 \quad \quad 
 \big\lvert \nabla \hat{u}_\mathfrak{p} ( \hat{z}_{\mathfrak{p}, \pm} + \hat{y} ) \big\rvert  
 \ls \frac{C}{ \lvert \hat{ y} \rvert } 
 .
\]
\end{proposition}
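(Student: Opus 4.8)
The plan is to exploit the rescaling $U_\mathfrak{p}(y) = \hat{u}_\mathfrak{p}(\hat{z}_{\mathfrak{p},\pm} + \mathfrak{p} y)$ together with the already-established local convergence $U_\mathfrak{p} \to \ex^{i\beta} V_{\pm 1}$ in $\BC^k_{\rm loc}$ (Proposition \ref{classif_zoom}), but the convergence alone only gives control on a bounded ball in $y$, i.e. on $|\hat y| \lesssim 1/\mathfrak{p}$, whereas we need estimates up to $|\hat y| \le 3/20$, a scale independent of $\mathfrak{p}$. So the core of the argument is an \emph{ODE/comparison} argument for the modulus, in the spirit of the vortex modulus estimates of Lemma \ref{lemme3new}, carried out uniformly in $\mathfrak{p}$ in an annular region $R_0/\mathfrak{p} \le |\hat y| \le 3/20$.

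First I would fix the right vortex (say $\hat z_{\mathfrak p,+}$; the left one is symmetric) and write, in the region $\{R_0/\mathfrak p \le |\hat y| \le 3/20\}$ where $|\hat u_\mathfrak{p}| \ge 1-\kappa$ by Step 1 of subsection \ref{sec:cxlocale}, the polar decomposition $\hat u_\mathfrak{p}(\hat z_{\mathfrak p,+} + \hat y) = \varrho_\mathfrak{p}(\hat y)\ex^{i\vartheta_\mathfrak{p}(\hat y)}$. Plugging into \eqref{TW_resc} and separating real and imaginary parts gives an elliptic equation for $\varrho_\mathfrak{p}$ of the form
\[
 \Delta \varrho_\mathfrak{p} - \varrho_\mathfrak{p}|\nabla\vartheta_\mathfrak{p}|^2 + \mathfrak{p}^2 \varrho_\mathfrak{p}(1-\varrho_\mathfrak{p}^2) = (\text{transport term involving } c_\mathfrak{p}\mathfrak{p}),
\]
and a divergence-form equation $\nabla\cdot(\varrho_\mathfrak{p}^2\nabla\vartheta_\mathfrak{p}) = -\tfrac12 c_\mathfrak{p}\mathfrak{p}\,\partial_2(\varrho_\mathfrak{p}^2-1)$ for the phase. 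Since $c_\mathfrak{p}\mathfrak{p} \to 2\pi$ (Step 2 of subsection \ref{sec:cvhatx}) is bounded, and since $\nabla\vartheta_\mathfrak{p}$ is $\mathcal{O}(1/|\hat y|)$ in this annulus by the $W^{1,p}_{\rm loc}$ bounds and the $\BC^k_{\rm loc}$ convergence $\hat u_\mathfrak{p} \to \hat u_\infty$ outside the vorticity set (subsection \ref{sec:cvhatxoutside}), one gets that $w_\mathfrak{p} := 1 - \varrho_\mathfrak{p}^2 \ge 0$ satisfies a differential inequality $-\Delta w_\mathfrak{p} + \mathfrak{p}^2 w_\mathfrak{p} \le C/|\hat y|^2$ on the annulus, with $w_\mathfrak{p} \le \kappa'$ on the inner boundary $|\hat y| = R_0/\mathfrak{p}$ (from the blow-up convergence to $V_{\pm1}$, where $1-|V_1(y)|^2 \sim 1/|y|^2$ for large $|y|$, hence $1-\varrho_\mathfrak{p}^2 \lesssim 1/(\mathfrak{p}^2|\hat y|^2)$ there) and $w_\mathfrak{p} \lesssim \ln\mathfrak{p}/\mathfrak{p}^2$ on the outer boundary $|\hat y| = 3/20$ (from \eqref{unif_1}). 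A barrier of the form $K/(\mathfrak{p}^2|\hat y|^2)$ — which satisfies $-\Delta(\cdot) + \mathfrak{p}^2(\cdot) \ge \mathfrak{p}^2 \cdot K/(\mathfrak{p}^2|\hat y|^2) = K/|\hat y|^2$ — then dominates, and the maximum principle yields $|1-\varrho_\mathfrak{p}^2| \le C/(\mathfrak{p}^2|\hat y|^2)$, which is the first estimate. The gradient estimates on $|\hat u_\mathfrak{p}|$ and on $\hat u_\mathfrak{p}$ then follow by standard interior elliptic (Schauder) estimates on balls of radius $\sim|\hat y|$, rescaled appropriately, exactly as in the proof of Lemma \ref{lemme3new} and as in \cite{Com_Mir}, \cite{Miro_explicit}.

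The main obstacle I anticipate is making the comparison argument genuinely uniform in $\mathfrak{p}$ across the full annulus $R_0/\mathfrak{p} \le |\hat y| \le 3/20$: one must control the phase-gradient term $\varrho_\mathfrak{p}|\nabla\vartheta_\mathfrak{p}|^2$ and the transport term simultaneously on \emph{all} intermediate scales, not just at the two extreme scales where we already have information. The clean way around this is a dyadic decomposition of the annulus into rings $\{2^{-k-1} \le \mathfrak{p}|\hat y| \le 2^{-k}\}$... rather, rings where $|\hat y|$ ranges over $[\rho, 2\rho]$ with $R_0/\mathfrak{p} \le \rho \le 3/20$, rescaling each ring to unit size, where \eqref{LinfGradResc}/Corollary \ref{Gradinf} give uniform gradient bounds and the rescaled potential becomes subcritical, then running the comparison ring by ring and chaining the resulting bounds. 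The construction and extension issues are routine given the tools already assembled (Theorem \ref{Linf}, Corollary \ref{Gradinf}, \eqref{unif_1}, and the blow-up limit $V_{\pm1}$), so the write-up should be relatively short, essentially a transcription of the argument of \cite{Com_Mir} with the extra term $i c_\mathfrak{p}\mathfrak{p}\,\partial_2 \hat u_\mathfrak{p}$ carried along and absorbed using $c_\mathfrak{p}\mathfrak{p} = \mathcal{O}(1)$.
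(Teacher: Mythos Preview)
Your approach differs from the paper's, and there is a genuine gap in it. The barrier argument for $w_\mathfrak{p}=1-\varrho_\mathfrak{p}^2$ requires the right-hand side $2\varrho_\mathfrak{p}^2|\nabla\vartheta_\mathfrak{p}|^2 + 2c_\mathfrak{p}\mathfrak{p}\,\varrho_\mathfrak{p}^2\partial_2\vartheta_\mathfrak{p}$ to be bounded by $C/|\hat y|^2$ pointwise in the full annulus $R_0/\mathfrak{p}\le |\hat y|\le 3/20$, i.e.\ essentially $|\nabla\vartheta_\mathfrak{p}|\lesssim 1/|\hat y|$ there. But this is precisely the third estimate of the proposition; the tools you invoke do not give it a priori. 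The estimates \eqref{cv_out_hatx} hold only for $\hat X$ at distance $\ge 2/10$ from the vortex (i.e.\ \emph{outside} the region of interest), the $W^{1,p}_{\rm loc}$ bound is for $p<2$ only and gives no pointwise control, and after rescaling a ring $[\rho,2\rho]$ to unit size the gradient bound from Corollary \ref{Gradinf} becomes $\le K\mathfrak{p}\rho$, which is unbounded for $\rho$ of order one. So the dyadic bootstrap has no uniform input to start from at intermediate scales.

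The paper closes this gap differently. Writing $\hat u_\mathfrak{p}(\hat z_{\mathfrak{p},+}+\hat y)=\hat A_\mathfrak{p}\,\ex^{i\theta+i\hat\varphi_\mathfrak{p}}$ with the vortex phase $\theta$ factored out, one multiplies the phase equation $\nabla\cdot(\hat A_\mathfrak{p}^2\nabla\hat\varphi_\mathfrak{p}) + r^{-2}\partial_\theta\hat A_\mathfrak{p}^2 = -\tfrac{c_\mathfrak{p}\mathfrak{p}}{2}\partial_2(\hat A_\mathfrak{p}^2-1)$ by $\hat\varphi_\mathfrak{p}$ and integrates over the annulus. Using the \emph{global} potential bound $\mathfrak{p}^2\!\int(1-|\hat u_\mathfrak{p}|^2)^2\le C$ from Step 1 of subsection \ref{sec:lower} (and \eqref{cv_out_hatx} on the outer boundary), this yields directly
\[
\|\nabla\hat\varphi_\mathfrak{p}\|_{L^2(D(0,3/20)\setminus D(0,R_0/\mathfrak{p}))}\le C.
\]
Combined again with the potential bound, this gives the uniform \emph{energy} estimate $E_{1/\mathfrak{p}}(\hat u_\mathfrak{p},D(\hat y,|\hat y|/2))\le C$ for every $\hat y$ in the annulus. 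Only then does one rescale each such disk by $|\hat y|/2$ and apply the elliptic regularity from \cite{BOS} (Step 7 of Theorem 1 there) to the rescaled equation with small parameter $\delta=2/(\mathfrak{p}|\hat y|)$, obtaining the pointwise bounds. The missing ingredient in your outline is exactly this $L^2$ phase-gradient estimate; once you have it your dyadic rescaling becomes the paper's argument, but the maximum-principle/barrier formulation is then superfluous.
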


\begin{proof} 
We work near $ \hat{z}_{\mathfrak{p}, +}  $ (the minus sign is similar), say in the annulus 
$ B( \hat{z}_{\mathfrak{p}, +} , 1/10 ) \setminus B( \hat{z}_{\mathfrak{p}, +} , 1/\mathfrak{p} ) $ and set
\[
 \hat{u}_\mathfrak{p} ( \hat{z}_{\mathfrak{p}, +} + \hat{y} ) 
 = \hat{A}_\mathfrak{p} ( \hat{y} ) \ex^{i \theta + i \hat{\varphi}_\mathfrak{p} ( \hat{y} ) } 
\]
with $ \hat{A}_\mathfrak{p} $ and $ \hat{\varphi}_\mathfrak{p} $ real-valued and smooth in the annulus ($ \theta $ 
is the polar angle centered at $ \hat{z}_{\mathfrak{p}, +} $). Then, we obtain the system
\begin{equation*}
\left\{ 
 \begin{array}{l}
 \Delta \hat{A}_\mathfrak{p} 
 - \hat{A}_\mathfrak{p} \lvert \nabla \hat{\varphi}_\mathfrak{p} \rvert^2 
 + \mathfrak{p}^2 \hat{A}_\mathfrak{p} \lvert V_1 \rvert^2 ( 1 - \hat{A}_\mathfrak{p}^2 ) 
 - 2 \hat{A}_\mathfrak{p} \frac{\p_\theta \varphi}{ r^2 }
 - c_\mathfrak{p} \mathfrak{p} \hat{A}_\mathfrak{p} \p_2 \hat{\varphi}_\mathfrak{p} 
 - c_\mathfrak{p} \mathfrak{p} \frac{ \cos \theta }{r} \hat{A}_\mathfrak{p} 
 = 0 \\
 \hat{A}_\mathfrak{p} \Delta \hat{\varphi}_\mathfrak{p} 
 + 2 \nabla \hat{A}_\mathfrak{p} \cdot \nabla \hat{\varphi}_\mathfrak{p} 
 + 2 \frac{\p_\theta \hat{A}_\mathfrak{p} }{r^2}
 + c_\mathfrak{p} \mathfrak{p} \p_2 \hat{A}_\mathfrak{p} 
= 0 .
 \end{array} \right.
\end{equation*}
The second equation may be recast as
\begin{equation}
\label{accuphase}
 \nabla ( \hat{A}_\mathfrak{p}^2 \nabla \hat{\varphi}_\mathfrak{p} ) 
 + \frac{\p_\theta \hat{A}_\mathfrak{p}^2 }{r^2}
 = - \frac{ c_\mathfrak{p} \mathfrak{p} }{2}  \p_2 ( \hat{A}_\mathfrak{p}^2 - 1 ) .
\end{equation}
Multiplying by $ \hat{\varphi}_\mathfrak{p} $ and integrating over $ DB( 0, 3/20 ) \setminus B( 0, R_0/\mathfrak{p} ) $, we obtain
\begin{align*}
 \int_{B( 0, 3/20 ) \setminus B( 0, R_0/\mathfrak{p} )} \hat{A}_\mathfrak{p}^2 \lvert \nabla \hat{\varphi}_\mathfrak{p} \rvert^2 \, d \hat{y} 
 & = 
 \int_{B( 0, 3/20 ) \setminus B( 0, R_0/\mathfrak{p} )} ( 1 - \hat{A}_\mathfrak{p}^2 ) \frac{\p_\theta \hat{\varphi}_\mathfrak{p} }{r^2}  
 + \frac{ c_\mathfrak{p} \mathfrak{p} }{2} ( 1 - \hat{A}_\mathfrak{p}^2 ) \p_2 \hat{\varphi}_\mathfrak{p} \, d \hat{y}
 \\ & 
 \quad + \int_{\p B( 0, 3/20 )}  \hat{A}_\mathfrak{p}^2 \frac{ \p \hat{\varphi}_\mathfrak{p} }{\p \nu } 
 + \frac{ c_\mathfrak{p} \mathfrak{p} }{2} ( \hat{A}_\mathfrak{p}^2 - 1 ) \hat{\varphi}_\mathfrak{p} \nu_2 \, d \ell .
\end{align*}
By Cauchy-Schwarz inequality, \eqref{upper_bound} and Step 1 of subsection \ref{sec:lower}, we infer
\begin{align*}
 \| \nabla \hat{\varphi}_\mathfrak{p} \|_{L^2( B( 0, 3/20 ) \setminus B( 0, R_0/\mathfrak{p} ) )}^2 
 \ls C ( 1+ c_\mathfrak{p} ) \| \nabla \hat{\varphi}_\mathfrak{p} \|_{L^2( B( 0, 3/20 ) \setminus B( 0, R_0/\mathfrak{p} ) )} 
 + C 
\end{align*}
where, for the contribution of the integral over $ \p B( 0, 3/20 ) $, we have used \eqref{unif_1} and 
\eqref{cv_out_hatx} (see Step 2 of subsection \ref{sec:cvhatxoutside}). This implies
\begin{equation}
\label{enphase}
 \| \nabla \hat{\varphi}_\mathfrak{p} \|_{L^2( B( 0, 3/20 ) \setminus B( 0, R_0/\mathfrak{p} ) )}
 \ls C .
\end{equation}

We fix $ \hat{y} \in \R^2 $ such that $ 2 R_0 / \mathfrak{p} \ls \lvert \hat{y} \lvert \ls \frac{3}{20} $. 
Then, since $ \lvert \hat{u}_{\mathfrak{p}} \rvert \gs 1/2 $ in the annulus 
$ B( 0, 3/20 ) \setminus B( 0, R_0/\mathfrak{p} ) \supset B( \hat{y}, \lvert \hat{y} \rvert/ 2 ) $, 
we deduce
\begin{align*}
 \int_{B( \hat{y}, \lvert \hat{y} \rvert/ 2 ) } 
  \hat{A}_\mathfrak{p}^2 \lvert \nabla \hat{\varphi}_\mathfrak{p} + \vec{e}_\theta / r \rvert^2 \, d \hat{x} 
 & \ls C \int_{B( \hat{y}, \lvert \hat{y} \rvert/ 2 ) } 
  \lvert \nabla \hat{\varphi}_\mathfrak{p} \rvert^2 + \frac{1}{ r^2 } \, d \hat{x} 
  \ls C 
\end{align*}
by \eqref{enphase} and the fact that $ r = \lvert \hat{x} \rvert \gs \lvert \hat{y} \rvert/ 2 $. 
By Step 1 of subsection \ref{sec:lower}, we then infer the upper bound (also shown in \cite{Miro_explicit})
\begin{equation}
\label{supperbound}
 E_{1/\mathfrak{p}} ( \hat{u}_\mathfrak{p} , B( \hat{y}, \lvert \hat{y} \rvert/ 2 ) ) \ls C . 
\end{equation}
We now make some rescaling and consider
\[
 v (X) \ddef \hat{u}_\mathfrak{p} \Big( \hat{y} + \frac{\lvert \hat{y} \rvert}{ 2} X \Big) 
\]
in $ B( 0,1 ) $ ($v$ depends on $ \hat{y} $ and $ \mathfrak{p} $), which solves
\[
 \Delta v + i \frac{c_\mathfrak{p}}{\delta} \p_2 v + \frac{1}{\delta^2} v ( 1 - \lvert v \rvert^2 ) = 0 
\]
in $ B( 0, 1 ) $, with $ \delta \assign 2 / ( \mathfrak{p} \lvert \hat{y} \lvert ) $. This equation is of the type 
\eqref{GL_transpo} with "$ \ep = \delta $" and "$ \mathfrak{c} = c_\mathfrak{p} /\delta $". Let us check that the 
assumption $ \lvert \mathfrak{c} \rvert \ls M_0 \lvert \ln \ep \rvert $ is satisfied with $ M_0 = 1$. As a matter of 
fact, we have $ \delta = 2 / ( \mathfrak{p} \lvert \hat{y} \lvert ) \in ] 40/ ( 3 \mathfrak{p}) , 1/2 ] $, thus
\[
 M_0 \delta \lvert \ln \delta \rvert 
 \gs \frac{40}{ 3 \mathfrak{p} } \ln 2
 \gs c_\mathfrak{p} 
 = \frac{ 2\pi }{ \mathfrak{p} } + o(1) 
\]
by Step 2 of subsection \ref{sec:cvhatx} (note $ 40 (\ln 2) /3 \approx 9.24(1) > 2 \pi $). 
Furthermore, the upper bound \eqref{supperbound} reads now
\begin{equation*}
 E_{ \delta } ( v , B( 0, 1 ) ) \ls C . 
\end{equation*}
It then follows from the proof of Step 7 (p. 48) of Theorem 1 in \cite{BOS} that, for 
$ \delta $ sufficiently small,
\[
 \| 2 \delta^{-2} ( 1 - \lvert v \rvert ) - c_\mathfrak{p} \delta^{-1} \p_2 \arg(v) \|_{\BC^1 ( B ( 0 , 1/2 ) ) } 
 \ls C ,
 \quad \quad \quad
 \| \nabla \arg(v) \|_{\BC^1 ( B ( 0, 1/2 ) ) } \ls C .
\]
Therefore, by Step 2 of subsection \ref{sec:lower},
\[
 \big\lvert 1 - \lvert v (0) \rvert \big\rvert + \big\lvert \nabla \lvert v \rvert (0) \big\rvert 
 \ls C c_\mathfrak{p} \delta + C \delta^2 
 \ls \frac{C}{ \mathfrak{p}^2 \lvert \hat{y} \lvert^2 } , 
 \quad \quad 
 \big\lvert \nabla \arg(v) (0) \big\rvert 
 \ls C ,
\]
and scaling back this yields the conclusion, at least for $ \delta = 2 / ( \mathfrak{p} \lvert \hat{y} \lvert ) $ 
sufficiently small, say $ \mathfrak{p} \lvert \hat{y} \lvert \gs \delta_0 / 2 $, but the estimate 
is easy to show if $ \mathfrak{p} \lvert \hat{y} \lvert \ls \delta_0 / 2 $.
\end{proof}

%%%%%%%%%%%%%%%%%%%%%%%%%%%%%%%%%%%%%%%%%%%%%%%%%%%%%%%%%%
\subsection{Some remarks on the non symmetrical case}
\label{subsec:nonsym}
In the case where we do not assume the $x_1$ symmetry for $ u_\mathfrak{p} $, the location of 
the vortices $ \hat{y}_{\mathfrak{p},\pm} $ is more delicate. Indeed, we can no longer assume 
\eqref{posi_vortex}, that is
\[
  ( \hat{y}_{\mathfrak{p}, -} )_2 = ( \hat{y}_{\mathfrak{p}, +} )_2 = 0 
 \quad \quad \quad {\rm and} \quad \quad \quad 
 - ( \hat{y}_{\mathfrak{p}, -} )_1 = ( \hat{y}_{\mathfrak{p}, +} )_1 \to \frac{1}{2\pi} .
\]
Up to a translation, we may assume $ \hat{y}_{\mathfrak{p}, +} + \hat{y}_{\mathfrak{p}, -} = 0 $, 
and it remains true that $ \hat{y}_{\mathfrak{p}, + , 1} - \hat{y}_{\mathfrak{p}, -, 1 } \to 1 / \pi $, 
but we may have $ \lvert \hat{y}_{\mathfrak{p}, +} - \hat{y}_{\mathfrak{p}, -} \rvert \gg 1 $. 
By following carefully the proof in \cite{Sandier_lower}, one could show that 
\[
 \lvert \hat{y}_{\mathfrak{p}, +} - \hat{y}_{\mathfrak{p}, -} \rvert \ls C . 
\]
Then, the location of the limiting vortices $ \hat{y}_{\ii, \pm } = \lim_{ \mathfrak{p} \to +\ii} \hat{y}_{\mathfrak{p}, \pm } $ 
can be obtained through the use of the Hopf differential as in \cite{BBH} (chapter VII), and would lead as before 
to $ \hat{y}_{\ii, \pm } = ( \pm 1 / (2\pi ) , 0) $. This is of course related to the fact that the only critical point 
of the action functional
\[
 \mathcal{F} ( \hat{y}_{\ii, +} , \hat{y}_{\ii, -} ) 
 \assign 2\pi \Big( 2 \ln \lvert \hat{y}_{\ii, +} - \hat{y}_{\ii, -} \rvert - 2\pi \big[ ( \hat{y}_{\ii, +})_1 - ( \hat{y}_{\ii, -} )_1 \big] \Big)
\]
associated with the action of the Kirchhoff energy is $ ( \hat{y}_{\ii, +} , \hat{y}_{\ii, -} ) = ( 1 /(2\pi) , -1/(2\pi) ) \in \C^2 $ 
(up to translation).

Next, Step 1 of subsection \ref{sec:cvhatx} becomes, for any $ p \in [ 1, 2 [ $, and in $W^{1,p}_{\rm loc} (\R^2) $,
\[
 \hat{u}_\mathfrak{p} \rightharpoonup \ex^{i \Theta} \hat{u}_\ii .
\]
The term $ \Theta $ is somewhat the phase at infinity, even though we do not claim 
some uniformity at infinity in space. Next, for the local convergences, there are two 
phases $ \beta_{\pm } \in \R $ such that 
\begin{equation}
\label{zoozoom} 
\hat{u}_\mathfrak{p} ( \hat{z}_{\mathfrak{p},\pm} + \mathfrak{p} \cdot ) \to \ex^{i \beta_\pm } V_\pm 
\end{equation}
in $ C^k_{\rm loc} (\R^2 ) $ for any $k \in \N $. We are then simply able to show that $ \beta_\pm = \Theta $, 
but this is not enough for the uniqueness result. This follows from the arguments given 
in \cite{ShafrirLinfini}, as we explain.

We work for the $+ $ sign. Integrating \eqref{accuphase} over the disk $ B( 0, R ) $ yields
\[
 \int_{ \p B ( 0, R ) } \hat{A}_\mathfrak{p}^2 \frac{ \p \hat{\varphi}_\mathfrak{p}}{\p \nu } \, d \ell 
 + c_\mathfrak{p} \mathfrak{p} \int_{ \p B( 0, R) } \nu_2 ( \hat{A}_\mathfrak{p}^2 - 1 ) \, d \ell 
 = 0 .
\]
We now consider the average
\[
 \beta_\mathfrak{p} (r) 
 \assign \frac{1}{2 \pi r} \int_{\p B (0,r ) } \hat{\varphi}_\mathfrak{p} \, d \ell 
\]
which satisfies, for $ 1/ \mathfrak{p} \ls r_0 \ls r_1 \ls 3/20 $,
\begin{align*}
 \beta_\mathfrak{p} ( r_0 ) - \beta_\mathfrak{p} (r_1) 
 & = \int_{r_0}^{r_1} \p_r \beta_\mathfrak{p} ( r ) \, d r 
 = \int_{r_0}^{r_1} \frac{1}{2 \pi r} \int_{\p B (0, r ) } \p_r \hat{\varphi}_\mathfrak{p} \, d \ell d r 
 \\ & 
 = \int_{r_0}^{r_1} \frac{1}{2 \pi r} \int_{\p B (0, r ) } ( 1 - \hat{A}_\mathfrak{p}^2 ) \p_r \hat{\varphi}_\mathfrak{p} \, d \ell d r 
 + c_\mathfrak{p} \mathfrak{p} 
 \int_{r_0}^{r_1} \frac{1}{2 \pi r} \int_{\p B (0, r ) } \nu_2 ( \hat{A}_\mathfrak{p}^2 - 1 ) \, d \ell d r .
\end{align*}
Therefore, by Step 5,
\begin{align*}
 \lvert \beta_\mathfrak{p} ( r_0 ) - \beta_\mathfrak{p} (r_1) \rvert
 & 
 \ls C \int_{r_0}^{r_1} \frac{dr}{\mathfrak{p}^2 r^3 } 
 + C \int_{r_0}^{r_1} \frac{d r }{ \mathfrak{p}^2 r^2 } 
 \ls \frac{C}{ ( r_0 \mathfrak{p} )^2 } + \frac{C}{ \mathfrak{p} } .
\end{align*}
We now fix $ \eta \in ] 0,1 ] $. Taking $ r_0 = 1 / ( \sqrt{\eta} \mathfrak{p} ) $ and $ r_1 = 3/20 $, we infer
\[
 \lvert \beta_\mathfrak{p} ( r_0 ) - \beta_\mathfrak{p} (r_1) \rvert
 \ls C \eta + \frac{C}{ \mathfrak{p} } . 
\]
Moreover, by \eqref{zoozoom}, we have
\[
 \beta_\mathfrak{p} ( r_0 ) 
 = \beta_\mathfrak{p} (1 / ( \sqrt{\eta} \mathfrak{p} ) ) 
 \to \beta_+ 
\]
as $ \mathfrak{p} \to + \ii $, and by Step 1 of subsection \ref{sec:cvhatx}, we deduce
\[
 \beta_\mathfrak{p} ( r_1 ) 
 \to \Theta . 
\]
As a consequence,
\[
 \lvert \beta_+ - \Theta \rvert \ls C \eta ,
\]
and the conclusion follows by letting $ \eta \to 0 $.

%%%%%%%%%%%%%%%%
\bibliographystyle{plain}
\bibliography{main.bib}

\end{document}